\documentclass{amsart}
\usepackage{amsthm}
\usepackage{fancyhdr}
\usepackage{amssymb}
\usepackage{mathtools}
\usepackage{indentfirst}
\usepackage{amsmath}
\usepackage{color}
\usepackage{bm}
\usepackage{extarrows}
\usepackage[pagewise]{lineno}%\linenumbers
\usepackage{footnote}

\newcommand{\RN}[1]{\mbox{\uppercase\expandafter{\romannumeral #1}}}

\begin{document}
\def\dis{\displaystyle}
\renewcommand{\theequation}{\thesection.\arabic{equation}}
\newtheorem{Theorem}{Theorem}[section]
\newtheorem{Lemma}{Lemma}[section]
\newtheorem{Proposition}{Proposition}[section]
\newtheorem{Corollary}{Corollary}[section]
\newtheorem{Remark}{Remark}[section]
\renewcommand{\thefootnote}{}

\numberwithin{equation}{section}

\title[]{POINTWISE STRUCTURES AND BACKGROUND PERTURBATION ANALYSES FOR THERMAL NON-EQUILIBRIUM IRROTATIONAL FLOWS IN THREE SPACE DIMENSIONS}

\author{Shijin Deng and Xiaochun Yang}
\address{Shijin Deng, School of Mathematical Sciences, CMA-Shanghai, Shanghai Jiao Tong University}
\address{Xiaochun Yang, School of Mathematical Sciences, Shanghai Jiao Tong University}
\email{}
\date{}
%\footnotetext{*Corresponding author.}
\footnotetext{Email addresses: matdengs@sjtu.edu.cn (Shijin Deng), yangxc0220@sjtu.edu.cn (Xiaochun Yang).}

\maketitle

% 开始设置页眉页脚，在\maketitle之后添加
\pagestyle{fancy} % 启用fancyhdr宏包的自定义样式
\fancyhf{} % 清空页眉页脚内容，但保留默认字号等设置
\renewcommand{\headrulewidth}{0pt} % 设置页眉线宽度，设为0则不显示

% 设置偶数页页码在左侧
\fancyhead[LE]{\fontsize{7}{\baselineskip}\selectfont\thepage} 
\fancyhead[CE]{\fontsize{7}{\baselineskip}\selectfont SHIJIN DENG AND XIAOCHUN YANG} 
% 设置奇数页页码在右侧
\fancyhead[RO]{\fontsize{7}{\baselineskip}\selectfont\thepage}
\noindent{\bf Abstract.} In this paper, both the pointwise space-time structures of the solution for a three-dimensional model describing gas dynamics in thermal non-equilibrium and the corresponding background stabilities within the $L^p$ framework are investigated under the small initial perturbation assumption and the irrotationality condition. Our main tools are the modified Green's function according to the irrotationality condition, a priori energy method and a refined decay estimates of products composed of the entropy function and high-order derivatives. 

\vskip .05in 

\noindent {\bf Key Words.} three-dimensional thermal non-equilibrium irrotational flows;\ \ background stabilities;\ \ the Green's function;\ \ basic and enhanced energy estimates;\ \ $L^p$ estimates

%%%%%%%%%%%%%%%%%%%%%%%%%%%%%%%%%%%%%%%%%%%%%%%%%%%%%%%%%%%%%%%%%%%%%%%%%%%%%%%%%5
%
%
%                        Section 1: Introduction 
%
%
%%%%%%%%%%%%%%%%%%%%%%%%%%%%%%%%%%%%%%%%%%%%%%%%%%%%%%%%%%%%%%%%%%%%%%%%%%%%%%%%%

\section{Introduction}\label{sec:1}
In this paper, we study the motion of a three-dimensional vibrational non-equilibrium flow. This physical process can be described by the following system \cite{vk,z1}:
\begin{equation}  \label{vne}
	\left \{
	\begin{aligned}
		&\partial_t \rho + \nabla \cdot (\rho \vec{u} ) = 0, \quad (\vec{x} , t) \in \mathbb{R}^3 \times \mathbb{R}_{+},\\
		&\partial_t ( \rho \vec{u} ) + \nabla \cdot ( \rho \vec{u} \otimes \vec{u} ) + \nabla p = 0, \\
		&\partial_t ( \rho \mathcal{E} ) + \nabla \cdot ( \rho \mathcal{E} \vec{u} + p\vec{u}) = 0,\\
		&\partial_t ( \rho q ) + \nabla \cdot ( \rho q \vec{u}) = \rho \frac{Q - q}{\tau}, 
	\end{aligned}
	\right.
\end{equation}
where $\rho$, $\vec{u}=(u_1,u_2,u_3)^T$, $p$, $\mathcal{E}$, $q$ and $Q$ denote the gas density, velocity, pressure, specific total energy, specific vibrational energy and local equilibrium value of the specific vibrational energy, respectively. The parameter $\tau$ represents the local relaxation time, which is taken as a fixed constant in the rest of the paper. For the sake of simplicity, we set $\tau \equiv 1$ and assume that the gas is in translational and rotational equilibrium. The total energy $\mathcal{E}$ is composed of the internal energy and the kinetic energy, satisfying the relation:
$$ \mathcal{E} = e + \frac{1}{2} \left| \vec{u} \right|^2, \quad \left| \vec{u} \right|^2 = \sum_{j=1}^3 u_j^2. $$
In addition, the internal energy $e$ can be further decomposed into the equilibrium component $e_1$ and the non-equilibrium component $q$:
$$ e = e_1 + q.$$
The corresponding thermodynamic equations are given by:
\begin{equation} \nonumber 
	de_1 = T_1 ds_1 - pdv, \quad dq = T_2 ds_2,  
\end{equation}
where $v = 1/\rho$ is the specific volume; $T_1$ and $s_1$ are the equilibrium temperature and entropy, whereas $T_2$ and $s_2$ are the non-equilibrium temperature and entropy.

System \eqref{vne} can be rewritten as
\begin{equation} \label{vnes}
	\left \{
	\begin{aligned} 
		& \partial_t \rho + \nabla \cdot (\rho \vec{u} ) = 0, \quad (\vec{x} , t) \in \mathbb{R}^3 \times \mathbb{R}_{+},\\ 
		& \partial_t \vec{u} + \vec{u} \cdot \nabla \vec{u} +  \frac{\nabla p}{\rho} = 0, \\ 
		& \partial_t e_1 + \vec{u} \cdot \nabla e_1 + \frac{p}{\rho} \nabla \cdot \vec{u} = - \chi,\\
		& \partial_t q + \vec{u} \cdot \nabla q = \chi,
	\end{aligned} 
	\right.
\end{equation}
with $\chi := Q - q$.

The previous study of this model shows that there is a non-decaying component in any spatial dimension and a usual treatment is to choose four new unkowns $p$, $\vec{u}$, $\chi$, $s$. In the system for these new unkowns, equations for decaying components and this non-decaying component are decoupled in the linear level:
\begin{equation} \label{vnep}
	\left \{
	\begin{aligned} 
		& \partial_t p + \vec{u} \cdot \nabla p + \rho c_f^2  \nabla \cdot \vec{u} = - p_{e_1} \chi, \quad (\vec{x} , t) \in \mathbb{R}^3 \times \mathbb{R}_{+},\\ 
		& \partial_t \vec{u} + \vec{u} \cdot \nabla \vec{u} + v \nabla p = 0, \\
		& \partial_t \chi  + \vec{u} \cdot \nabla \chi + a \nabla \cdot \vec{u} = - \zeta \chi,\\
		& \partial_t s + \vec{u} \cdot \nabla s =  \left( \frac{1}{T_2} - \frac{1}{T_1} \right) \chi.
	\end{aligned} 
	\right.
\end{equation}
Here, one adopts the state equations in \cite{hl}:
\begin{equation} \label{seq}
	\begin{aligned} 
		& e_1 = \frac{\alpha}{2} pv {{=}} \frac{\alpha R}{2} T_1, \quad Q = \frac{\alpha_f}{2} pv {{=}}  \frac{\alpha_f R}{2} T_1, \quad q = \frac{\alpha_f R}{2} T_2, \\ 
		& s_1 = R \left( \ln v + \frac{\alpha}{2} \ln e_1 \right), \quad s_2 = \frac{\alpha_f R}{2} \ln q, \quad {{s=s_1+s_2}},
	\end{aligned} 
\end{equation}
as well as the following identities (see \cite{hl} for more details): 
\begin{equation} \label{equ}
	\begin{aligned} 
		& \gamma := \frac{2}{\alpha} + 1, \quad c_f^2 := p_{\rho}(\rho, s_1) = v^2 (pp_{e_1} - p_v) = \gamma pv, \quad p_{e_1} = \frac{2}{\alpha v}, \\
		& a := \frac{\alpha_f}{\alpha} pv, \quad \zeta := 1 + Q_{e_1} = 1 + \frac{\alpha_f}{\alpha}. 
	\end{aligned}
\end{equation}
In \eqref{seq}, $\alpha$ and $\alpha_f$ denote the degrees of freedom which adjusts instantaneously and requires a longer relaxation time, respectively, and $R > 0$ is the gas constant.
In \eqref{equ}, $c_f$ denotes the frozen speed of sound, while $c$, the equilibrium speed of sound, is defined via the relation $c^2 = p_{\rho}(\rho, s) = c_f^2/(1+b)$, with $b$ given by
\begin{equation} \label{bb}
	b:= \frac{p_{e_1} v}{(1 + Q_{e_1}) c_f^2 / a - p_{e_1} v } = \frac{\gamma - 1}{c_f^2/a + 1}.
\end{equation}
By utilizing \eqref{equ}, one can rewrite system \eqref{vnep} as a perturbation problem around a given steady state $(\bar{p},\vec{0},0,\bar{s})$:
\begin{equation} \label{vnepre}
	\left \{
	\begin{aligned} 
		& \partial_t p + \gamma \bar{p} \nabla \cdot \vec{u} + \frac{2}{\alpha \bar{v}} \chi = -\vec{u} \cdot \nabla p  - \gamma (p - \bar{p}) \nabla \cdot \vec{u} - \frac{2}{\alpha} \left( \frac{1}{v} - \frac{1}{\bar{v}} \right) \chi:=R_p, \\ 
		& \partial_t \vec{u} + \bar{v} \nabla p = -\vec{u} \cdot \nabla \vec{u} - (v-\bar{v}) \nabla p:=R_{\vec{u}}, \\
		& \partial_t \chi + \frac{\alpha_f}{\alpha} \bar{p} \bar{v} \nabla \cdot \vec{u} +\zeta \chi = - \vec{u} \cdot \nabla \chi - \frac{\alpha_f}{\alpha} (pv - \bar{p}\bar{v})\nabla \cdot \vec{u}:=R_{\chi} , \\
		& \partial_t s = -\vec{u} \cdot \nabla s + \frac{2}{\alpha_f R} \cdot \frac{\chi^2}{T_1 T_2}:=R_{s}, \\
		& (p-\bar{p}, \vec{u}, \chi, s-\bar{s})(\vec x,0)=(p_0-\bar{p},\vec{u}_0, \chi_0, s_0-\bar{s})(\vec{x}).
	\end{aligned} 
	\right.
\end{equation}

In this paper, we are interested in the precise pointwise space-time structure of the solution of \eqref{vnepre}, as well as the stability analysis of the solutions with respect to background perturbations under a comparative framework. The background stability problem was introduced in \cite{lww} for Boltzamnn equation and it could be viewed as a kind of comparison principle: Let $\vec{F}'_{\beta} := (p_{\beta}, \vec{u}_{\beta}, \chi_{\beta}, s_{\beta} )$ for $\beta = a, b$ denote two close constant solutions of \eqref{vnep}, with $\vec{u}_a \equiv 0$ and $\chi_{\beta} \equiv 0$ for $\beta = a, b$. The state equations in \eqref{seq} reveal that the vectors $\vec{F}'_{\beta}$ are equivalent to $\vec{W}'_{\beta}:= (v_{\beta}, \vec{u}_{\beta}, e_{1,\beta}, q_{\beta} )$ for $\beta = a, b$ in the sense that there exists a positive constant $C>1$ such that $\frac{|\vec{F}'_{\beta}|}C \le |\vec{W}'_{\beta}| \le C |\vec{F}'_{\beta}|$. Note also that the condition $\chi_{\beta} \equiv 0$ implies $T_{1,\beta} = T_{2,\beta}$ and $e_{1,\beta} = \alpha q_{\beta}/\alpha_f$. Thus, one could simplify notations by setting $\vec{F}_{\beta} := (p_{\beta}, \vec{u}_{\beta}, s_{\beta} )$, $\vec{W}_{\beta}:= (v_{\beta}, \vec{u}_{\beta}, e_{1,\beta})$, and $\vec{\theta}_{\beta}:= (v_{\beta}, e_{1,\beta})$ for $\beta = a, b$. By substituting $p \to p_{\beta} + p^{\beta}$, $\vec{u} \to \vec{u}_{\beta} + \vec{u}^{\beta}$, $\chi \to \chi^{\beta}$, and $s \to s_{\beta} + s^{\beta}$ into \eqref{vnep}, one could derive two individual systems for comparison:
\begin{equation} \label{inlin}
	\left \{
	\begin{aligned}
		& \partial_t p^{\beta} + \vec{u}_{\beta} \cdot \nabla p^{\beta} + \gamma p_{\beta} \nabla \cdot \vec{u}^{\beta} + \frac{2}{\alpha v_{\beta}} \chi^{\beta}= \frac{2}{\alpha} \cdot \frac{v^{\beta} \chi^{\beta}}{ v_{\beta} (v_{\beta} + v^{\beta} )} - \vec{u}^{\beta} \cdot \nabla p^{\beta} - \gamma p^{\beta} \nabla \cdot \vec{u}^{\beta}, \\   
		& \partial_t \vec{u}^{\beta} + \vec{u}_{\beta} \cdot \nabla \vec{u}^{\beta} + v_{\beta} \nabla p^{\beta} = -\vec{u}^{\beta} \cdot \nabla \vec{u}^{\beta} - v^{\beta} \nabla p^{\beta}, \\
		& \partial_t \chi^{\beta} + \vec{u}_{\beta} \cdot \nabla \chi^{\beta} + \frac{\alpha_f}{\alpha} p_{\beta} v_{\beta} \nabla \cdot \vec{u}^{\beta} + \zeta\chi^{\beta} = - \vec{u}^{\beta} \cdot \nabla \chi^{\beta} - \frac{\alpha_f}{\alpha} \left[ p^{\beta} (v_{\beta} + v^{\beta}) + p_{\beta} v^{\beta} \right] \nabla \cdot \vec{u}^{\beta}, \\
		& \partial_t s^{\beta} + \vec{u}_{\beta} \cdot \nabla s^{\beta} = -\vec{u}^{\beta} \cdot \nabla s^{\beta} + \frac{2}{\alpha_f R} \cdot \frac{(\chi^{\beta})^2}{(T_{2,\beta} + T_2^{\beta} )(T_{1,\beta} + T_1^{\beta} )}, \\
		& \bigl(p^\beta, \vec{u}^{\beta}, \chi^{\beta}, s^{\beta}\bigr)(\vec{x},0)=\bigl(p_0^{\beta},\vec{u}_0^{\beta}, \chi_0^{\beta}, s_0^{\beta}\bigr)(\vec{x}).
	\end{aligned} 
	\right. 
\end{equation}

For thermal non-equilibrium flows, extensive research has been conducted, such as the global existence for solutions of the Cauchy problem for gas flow models in vibrational non-equilibrium \cite{hl,z2,z4} and, for those with finite thermal non-equilibrium states \cite{z3,z5}. Relevant results include $L^p$ decay estimates \cite{z6,z7}, temporal asymptotic behaviors \cite{z8}, nonlinear singular limits \cite{lw2}, and relaxation limits \cite{lw3}. There are also studies on the existence theory for initial-boundary value problems \cite{cl,clz} and the asymptotic stability of traveling waves and rarefaction waves \cite{lw1,lz,z}.

In the investigations on the global existence for solutions of general hyperbolic systems, or hyperbolic-parabolic systems, dissipative assumptions are always required (see \cite{hn,ky,y,z9,z5} and the references therein), among which the Kawashima condition (see \cite{sk} for more details) constitutes one of the most frequently used criterion; roughly speaking, it implies the strict negativity of the real parts of all eigenvalues corresponding to the linearized system after Fourier transform, with the only exception of the origin. Studies on global existence for flows with multiple thermal non-equilibrium states \cite{z3} are conducted under the assumption that this condition is satisfied. 

However, there is at least one zero eigenvalue for flows featuring only vibrational non-equilibrium states \cite{hl,z2,z1}; thereby the corresponding equations do not satisfy the Kawashima condition. For the one-dimensional case, relevant research has been carried out by Zeng in \cite{z2}. A key strategy is to introduce new unknowns $(p,\vec u,\chi,s)$ (as done in \eqref{vnep}) which decouples the equations for decaying components $(p,\vec u,\chi)$ and the non-decaying component $s$ in linear level. In \cite{qw}, this strategy is connected with the linearly degenerate condition in hyperbolic theory and generalized to partially dissipative quasilinear hyperbolic balance law systems; the authors introduced the condition of (weakly) linear degeneracy under which a system could be divided into a subsystem satisfying the Kawashima condition and the remaining subsystem violating Kawashima condition but being (weakly) linearly degenerate. This condition is applicable to the damped full Euler system for adiabatic flows \cite{hs,wm,wth}, but not to the thermal nonequilibrium flow model in three-dimensional case \cite{hl,z1}.

In the three-dimensional case, for the subsystem of $(p,\vec u,\chi)$, zero is still a double eigenvalue, thereby violating the Kawashima condition and bringing troubles to the study of the global existence.  Therefore, further research on three-dimensional thermal nonequilibrium flows is highly significant, and it is essential to exploit their spectral information. By leveraging the detailed estimates of the Green's function constructed in \cite{z1}, one could characterize this violation with precision: beyond the entropy function $s$, the velocity field $\vec u$ contains a non-decaying component exactly given by 
$$\left( \mathbb{I}_{3 \times 3} - \Delta^{-1} \nabla \nabla^t \right) \delta (\vec{x}).$$
An observation is, the identity $\vec{u} = \Delta^{-1} \nabla \nabla \cdot \vec{u} - \Delta^{-1} \nabla \times \nabla \times \vec{u} = \Delta^{-1} \nabla \nabla \cdot \vec{u}$ holds for irrotational flows (i.e. $\nabla \times \vec{u} \equiv 0$) and thus the non-decaying component in the velocity field disappears under the irrotationality assumption. Two typical examples of irrotational flows are given in \cite{hl}. The first is a spherically symmetric flow, which satisfies $(\rho, e_1, q)(\vec{x},t) = (\rho, e_1, q)(|\vec{x}|, t)$ and $\vec{u}(\vec{x}, t) = u(|\vec{x}|,t)\vec{x}/|\vec{x}|$; the second type of irrotational flow obeys $(\rho, e_1, q)(\vec{x},t) = (\rho, e_1, q)(\lambda, t)$ and takes the form $\vec{u}(\vec{x},t) = u(\lambda, t) \nabla \lambda$, where $\lambda:= \arctan(x_2/x_1)+ \mu x_3$ for any constant $\mu \in \mathbb{R}$. Based on this observation and also the Green's function and linear estimates established in \cite{z1}, the global existence and also $L^2$ decay estimates for $(p,\vec u,\chi)$ are obtained by energy method under the assumption that the flow is irrotational in \cite{hl}. In this paper, we aim to improve the  $L^2$ decay estimates of the solution into a more precise space-time pointwise structure. It allows us to make a direct comparison between the one-dimensional case \cite{z2} and the three-dimensional case. The rich information provided by the pointwise estimate also contributes to the background stability problem which we will discuss later on.

The work in this paper is conducted under the irrotational assumption. It is essential for the global existence as shown in \cite{hl}. In this paper, it is also found to be related to the decay structure even in the linear level. It is well-known that even for Navier-Stokes equations, there is a Caler\'{o}n-Zygmund operator in the velocity field for high-dimensional cases which slows the decay rates. There are similar issues for equations considered here. For instance, the component corresponding to the velocity field contains the term $(1+t)^{-3}$ (see \eqref{Gkstar}), which is contributed by this operator. It is already known that by the Hodge's decomposition, the identity $\vec{u}  = \Delta^{-1} \nabla \nabla \cdot \vec{u}$ holds for irrotational flows $\vec u$. It leads to a consideration of retaining the operator $\Delta^{-1} \nabla \nabla^t$ in the Green's function and obtaining the estimates of Green's function acting on irrotational flows, which makes the long wave part of the Green's function more clear and sharper. The high frequency part of Green's function obtained in \cite{z1} is also modified by following a global singularity removal. Those modifications together make decay rates faster and provide the specific information about the Green’s function required in the process of obtaining a space-time pointwise structure of the solutions. We should point out that the main reason on the very careful estimate goes back to the entropy function $s$. Even under the irrotational assumption, it does not decay in time; moreover, according to its equation in \eqref{vnep}, it lacks the exponential decay rates in spatial variables like the entropy functions for adiabatic flows. Here, we generalize an energy functional introduced in \cite{z2} for one-dimensional case to the three-dimensional case to improve decay rates of nonlinear terms involving the entropy function $s$, which is also quite crucial for the closure of nonlinearity.

The following is the first main result of this paper:
\begin{Theorem}[\textbf{Pointwise Space-Time Structures}] \label{main1}
	Assume that \eqref{seq} holds, and let $\bar{v}$, $\bar{e}_1$, and $\bar{q}$ denote positive constants satisfying either $\bar{q} = \bar{Q}$ or $\bar{T}_2 = \bar{T}_1$. Suppose that irrotational initial data $(v_0, \vec{u}_0, e_{1,0}, q_0)$ around the constant state $(\bar{v}, \vec{0}, \bar{e}_1, \bar{q})$ satisfies
	\begin{equation} \label{inida}
		\begin{aligned} 
			& \mathcal{I}_0 := (v_0 - \bar{v}, \vec{u}_0, e_{1,0} -\bar{e}_1, q_0 - \bar{q}) \in H^8_{\vec{x}}(\mathbb{R}^3), \\ 
			& \left\| \mathcal{I}_0 \right\|_{H^8_{\vec{x}}} + \sup_{\vec{x} \in \mathbb{R}^3} \Bigl\{ (|\vec{x} |^2 + 1)^{\frac{5}{2}} \bigl(|\mathcal{I}_0| + |\nabla \mathcal{I}_0| + |\nabla^2 (p_0, \vec{u}_0)| \bigr)(\vec{x}) \\ 
			& \quad + (|\vec{x}|^2 + 1)^{\frac{3}{2}} |\nabla^2 s_0|(\vec{x}) + (|\vec{x}|^2 + 1)^{\frac{3}{4}} |\nabla^3 s_0|(\vec{x}) \Bigr\} \equiv \epsilon_0 \ll 1,
		\end{aligned}
	\end{equation}
	where $s_0$ is the initial entropy. Then there exists a small positive constant $\varepsilon_*$ such that, if $\epsilon_0<\varepsilon_*$,  the following estimates hold for the irrotational solution $(p,\vec{u},\chi,s)$ of \eqref{vnepre}:
	\begin{equation} \label{pointwise}
		\begin{aligned}
			& (p - \bar{p}, \vec{u})(\vec{x}, t) = \mathcal{O}(1) \epsilon_0 \left\lbrace \left( (|\vec{x}| - \bar{c} t)^2 + t + 1 \right)^{-2} + \frac{H(\bar{c}t - |\vec{x}|)}{1 + (|\vec{x}| + \sqrt{t})(\bar{c}t - |\vec{x}| + \sqrt{t})^2} \right\rbrace, \\
			& (\nabla p, \nabla \vec{u}, \chi)(\vec{x}, t) = \mathcal{O}(1) \epsilon_0 (1 + t)^{-\frac{1}{2}} \left\lbrace \left( (|\vec{x}| - \bar{c} t)^2 + t + 1 \right)^{-2} + \frac{H(\bar{c}t - |\vec{x}|)}{1 + (|\vec{x}| + \sqrt{t})(\bar{c}t - |\vec{x}| + \sqrt{t})^2} \right\rbrace, \\
			& \nabla \chi (\vec{x}, t) = \mathcal{O}(1) \epsilon_0 \bigl(1 + |\vec{x}| + \sqrt{t} \, \bigr)^{-2} \left( (|\vec{x}| - \bar{c}t)^2 + t + 1 \right)^{-1}, \\
			& \left\| \nabla^2(p, \vec{u}, \chi)(\cdot, t) \right\|_{L^{\infty}_{\vec{x}}} = \mathcal{O}(1)\epsilon_0 (1 + t)^{-3}, \\
			& (s - \bar{s}, \nabla s)(\vec{x}, t) = \mathcal{O}(1) \epsilon_0 (|\vec{x}|^2 + 1)^{-\frac{9}{4}}, \enspace \nabla^2 s(\vec{x}, t) = \mathcal{O}(1) \epsilon_0 (|\vec{x}|^2 + 1)^{-\frac{3}{2}}, \enspace \nabla^3 s(\vec{x}, t) = \mathcal{O}(1) \epsilon_0 (|\vec{x}|^2 + 1)^{-\frac{3}{4}},
		\end{aligned}
	\end{equation}
	where $\bar{c}^2=\gamma \bar{p} \bar{v}/(1+b)$ (with constants $\gamma$ and $b$ given by \eqref{equ} and \eqref{bb} respectively), and the Heaviside step function $H(r)$ is defined as
	\begin{equation} \label{stepfun}
		H(r):= 
		\begin{cases} 
			0 & \text{for } r < 0, \\
			1 & \text{for } r \ge 0.
		\end{cases} 
	\end{equation}
\end{Theorem}

\begin{Remark}\label{rebetalq}
	The system \eqref{inlin} turns to \eqref{vnepre} after a coordinate transformation $\vec{x}_{\beta}:=\vec{x}-\vec{u}_{\beta} t$ and thus \eqref{pointwise} still holds for the solution of \eqref{inlin}, with only the replacement of $(\vec{x},\bar{c}^2,\epsilon_0)$ by $(\vec{x}_{\beta},c_{\beta}^2:= \gamma p_{\beta} v_{\beta}/(1+b),\epsilon_{\beta})$. Furthermore, this coordinate transformation preserves the $L^q$-estimates with respect to the spatial variables; thus accordingly, the following estimates hold for the solution $(p^\beta,\vec{u}^\beta,\chi^\beta,s^\beta)$ of \eqref{inlin} with $\beta \in \{a,b\}$, $|k|=0,1$ and $q \in [2, +\infty]$\textup{:}
	\begin{equation} \label{betalq}
		\begin{aligned}
			& \left\| \partial_{\vec{x}}^k (p^{\beta}, \vec{u}^{\beta})(\cdot,t) \right\|_{L_{\vec{x}}^q(\mathbb{R}^3)} = \mathcal{O}(1) \epsilon_{\beta} (1+t)^{-2-\frac{|k|}{2}+\frac{5}{2q}}, \quad \left\| \partial_{\vec{x}}^k \chi^{\beta} (\cdot,t) \right\|_{L_{\vec{x}}^q(\mathbb{R}^3)} = \mathcal{O}(1) \epsilon_{\beta} (1+t)^{-\frac{5+|k|}{2} +\frac{5}{2q}},\\
			& \left\| \nabla^2 (p^{\beta}, \vec{u}^{\beta}, \chi^{\beta})(\cdot,t) \right\|_{L_{\vec{x}}^{\infty}(\mathbb{R}^3)} = \mathcal{O}(1) \epsilon_{\beta} (1+t)^{-3}.
		\end{aligned}
	\end{equation}
\end{Remark}

The second issue in this paper is about background stability problems. In practical scenarios, background states are usually obtained by taking measurements, and thus inherent errors are unavoidable. In this context, the background stability problem emerges as a natural and critical research extension. In \cite{lww}, the background stability problem was investigated for the Boltzmann equation, where the authors proposed a refined Green's function method tailored to such problems. In \cite{dy1}, we employed both the refined Green's function method and a refined energy method to investigate the background perturbation problem for compressible Navier-Stokes equations. Analogous problems are also studied in related research fields, including potential stability problems \cite{ltw} and force stability problems \cite{lw}.

We now formulate the background perturbation problem as follows: denote 
\begin{equation} \label{yy1}
	\vec{Y}(\vec{x},t) := \bigl(\vec{g}_h, f, z\bigr)(\vec{x}, t):= \bigl( \vec{Y}_1, z\bigr)(\vec{x}, t),
\end{equation}
with
\begin{equation}\label{diff}
	\begin{aligned}
		& \vec{g}_h(\vec{x} , t ) := \bigl(g^0, \vec{h}\bigr)(\vec{x} , t) = \bigl(g^0, h^1, h^2, h^3\bigr)(\vec{x} , t) = \bigl(p^b - p^a ,  \vec{u}^b -\vec{u}^a\bigr)(\vec{x} , t),\\
		& f(\vec{x} , t ) := \bigl(\chi^b -\chi^a\bigr)(\vec{x} , t), \quad z(\vec{x} , t ) := \bigl(s^b - s^a\bigr)(\vec{x} , t), 
	\end{aligned}
\end{equation}
and one could obtain the equations for $\vec{Y}(\vec x,t)$ from \eqref{inlin}:
\begin{equation} \label{gbp}
	\left \{ 
	\begin{aligned}
		& \partial_t g^0 + \gamma p_a \nabla \cdot \vec{h} + \frac{2}{\alpha v_a} f 
		=R_{g^0}^ {\RN{1}}+R_{g^0}^ {\RN{2}}:= R_{g^0}, \\ 
		& \partial_t \vec{h} + v_a \nabla g^0 =R_{\vec h}^ {\RN{1}}+R_{\vec h}^ {\RN{2}}:= R_{\vec{h}}, \\ 
		& \partial_t f + \frac{\alpha_f}{\alpha} p_a v_a \nabla \cdot \vec{h} + \zeta f =R_{f}^ {\RN{1}}+R_{f}^ {\RN{2}}:= R_f, \\
		& \partial_t z = -\vec{u}_b \cdot \nabla s^b 
		+R_{z}^ {\RN{2}}:= R_{z},\\
		& \vec{Y}(\vec{x},0)=(p_0^b - p_0^a, \vec{u}_0^b - \vec{u}_0^a, \chi_0^b - \chi_0^a, s_0^b - s_0^a )(\vec{x}).
	\end{aligned} 
	\right.
\end{equation}
Here, the source terms and coupling terms are defined respectively as follows:
\begin{equation} \label{lin}
	\begin{cases}
		\begin{aligned}
			&R_{g^0}^ {\RN{1}} 
			:= - \vec{u}_b \cdot \nabla p^b + \gamma (p_a - p_b) \nabla \cdot \vec{u}^b + \frac{2}{\alpha} \left( \frac{1}{v_a} - \frac{1}{v_b} \right) \chi^b, \\
			& R_{\vec h}^ {\RN{1}}
			:= - \vec{u}_b \cdot \nabla \vec{u}^b + (v_a - v_b) \nabla p^b , \\
			&R_{f}^ {\RN{1}}
			:= -\vec{u}_b \cdot \nabla \chi^b + \frac{\alpha_f}{\alpha} ( p_a v_a - p_b v_b) \nabla \cdot \vec{u}^b, 
		\end{aligned}
	\end{cases}
\end{equation}
and
\begin{equation}\label{nlin}
	\begin{cases}
		\begin{aligned}
			& R_{g^0}^ {\RN{2}} 
			:= \frac{2}{\alpha} \left[ \frac{v^a f}{v_b \tilde{v}^b }  + \frac{v^b - v^a}{v_b \tilde{v}^b } \chi^b + \left( \frac{v_a - v_b}{v_a v_b \tilde{v}^b} + \frac{\tilde{v}^a - \tilde{v}^b}{v_a \tilde{v}^a \tilde{v}^b} \right) v^a \chi^a \right] - \vec{u}^b \cdot \nabla g^0 - \vec{h} \cdot \nabla p^a \\
			&\quad \quad \quad - \gamma \left( g^0 \nabla \cdot \vec{u}^b + p^a \nabla \cdot \vec{h} \right) , \\
			& R_{\vec h}^ {\RN{2}} 
			:= -( v^b - v^a ) \nabla p^b - \vec{h} \cdot \nabla \vec{u}^b - \vec{u}^a \cdot \nabla \vec{h} - v^a \nabla g^0, \\
			& R_{f}^ {\RN{2}}
			:= - \vec{h} \cdot \nabla \chi^b - \vec{u}^a \cdot \nabla f - \frac{\alpha_f}{\alpha} \left[ \bigl( p^b \tilde{v}^b + p_a v^a \bigr) \nabla \cdot \vec{h} + \left( g^0 \tilde{v}^b + p^a ( \tilde{v}^b - \tilde{v}^a ) \right) \nabla \cdot \vec{u}^a \right] ,\\
			&\quad \quad \quad - \frac{\alpha_f}{\alpha} \left[(p_b - p_a )v^a + p_b (v^b - v^a ) \right] \nabla \cdot \vec{u}^b , \\
			& R_{z}^ {\RN{2}} 
			:= - \vec{u}^b \cdot \nabla z - \vec{h} \cdot \nabla s^a + \frac{2}{\alpha_f R} \left[\frac{(\chi^b + \chi^a ) f}{\tilde{T}_1^b \tilde{T}_2^b } + \left( \frac{\tilde{T}_1^a - \tilde{T}_1^b}{\tilde{T}_2^b \tilde{T}_1^a \tilde{T}_1^b } + \frac{ \tilde{T}_2^a - \tilde{T}_2^b}{\tilde{T}_1^a \tilde{T}_2^a \tilde{T}_2^b} \right) \! (\chi^a)^2 \right]. 
		\end{aligned}
	\end{cases}
\end{equation}
Throughout the paper, quantities marked with a tilde ($\sim$) denote the sum of the constant-state solution and the perturbation solution, e.g. $\tilde{v}^b: = v_b + v^b$.

In our previous work \cite{dy1}, the background stability problem for the compressible Navier-Stokes equations is considered. The study in this paper differs in that the entropy function does not decay in time, which causes greater troubles dealing with terms involving it. Specifically, if one only relies on energy estimates to derive the dependence of high-order derivatives on background perturbation quantities, the nonlinear terms consisting of the entropy function and high-order derivatives do not decay fast enough in time for the closure of nonlinearity. To address this problem, the pointwise estimates of the entropy function are made full use of; in the region $|\vec{x}| > \epsilon t$, the entropy function naturally supplements the decay rate in time by the one in space; for $|\vec{x}| \le \epsilon t$, we extend the enhanced estimates for one-dimensional problems in \cite{z2} to fit the three-dimensional setting and also the comparison setting to compensate the decay rates. Further difficulties stem from the entropy difference function $z= s^b - s^a$, which even grows in time with 
a growth factor of $(1+t)$. It affects terms containing $v^b - v^a$, such as $(v^b - v^a)\chi^b/(v_b \tilde{v}^b)$ in the coupling term $R_{g^0}^{\RN{2}}$, since $v^b - v^a = \mathcal{O}(1)( |v_b - v_a| + |e_{1,b} - e_{1,a}| + |g^0| + |f| + |z|)$. It is resolved by a very precise estimate and a careful partition of the time interval during time integrals. More details could be found in Section 4.2.

The following is the main result about the background stability:
\begin{Theorem}[\textbf{Background Stabilities}] \label{main2}
	For $\beta=a,b$, assume that irrotational initial data for the system \eqref{inlin} satisfy
	$$\begin{aligned} 
		& \mathcal{I}_{\beta} := \bigl(v_0^{\beta}, \vec{u}_0^{\beta}, e_{1,0}^{\beta}, q_0^{\beta}\bigr) \in H^8_{\vec{x}}(\mathbb{R}^3), \\ 
		& \left\| \mathcal{I}_{\beta} \right\|_{H^8_{\vec{x}}} + \sup_{\vec{x} \in \mathbb{R}^3} \Bigl\{ (|\vec{x}|^2 + 1)^{\frac{5}{2}} \bigl( \bigl|\mathcal{I}_{\beta}\bigr| + \bigl|\nabla \mathcal{I}_{\beta}\bigr| + \bigl|\nabla^2 (p_0^{\beta}, \vec{u}_0^{\beta})\bigr| \bigr)(\vec{x}) \\ 
		& \quad + (|\vec{x}|^2 + 1)^{\frac{3}{2}} \bigl|\nabla^2 s_0^{\beta}\bigr|(\vec{x}) + (|\vec{x}|^2 + 1)^{\frac{3}{4}} \bigl|\nabla^3 s_0^{\beta}\bigr|(\vec{x}) \Bigr\} \equiv \epsilon_{\beta} \ll 1.
	\end{aligned}$$
	Furthermore, assume the following comparative initial condition holds:
	$$\left\|(p_0^b - p_0^a, \vec{u}_0^b - \vec{u}_0^a, \chi_0^b - \chi_0^a, s_0^b - s_0^a )\right\|_{L^1_{\vec{x}}(\mathbb{R}^3) \cap H^8_{\vec{x}}(\mathbb{R}^3)} = \mathcal{O}(1) \bigl| \vec{F}_b - \vec{F}_a \bigr| = \mathcal{O}(1) \bigl| \vec{W}_b - \vec{W}_a \bigr|,$$
	with two constant background states $\vec{F}_a \equiv (p_a, \vec{0}, 0, s_a)$ and $\vec{F}_b \equiv (p_b, \vec{u}_b, 0, s_b)$\textup{(}or equivalently, $\vec{W}_a \equiv (v_a, \vec{0}, e_{1,a}, q_a)$ and $\vec{W}_b \equiv (v_b, \vec{u}_b, e_{1,b}, q_b)$\textup{)}. Then there exists a small positive constant $\epsilon_*$ such that, if $\epsilon_{\beta}<\epsilon_*$ and $|\vec{W}_b-\vec{W}_a|<\epsilon_*$, the irrotational solution 
	$$\bigl(\vec{g}_h, f, z\bigr)(\vec{x}, t) = \bigl(g^0, h^1, h^2, h^3, f, z \bigr)(\vec{x}, t )=\bigl(p^b - p^a , \vec{u}^b -\vec{u}^a, \chi^b - \chi^a, s^b - s^a \bigr)(\vec{x}, t)$$
	of \eqref{gbp} satisfies 
	$$\begin{aligned}
		& \left\| \vec{g}_h (\cdot , t) \right\|_{L_{\vec{x}}^2(\mathbb{R}^3)} = \mathcal{O}(1)\! \left( (1 + t)^{-\frac{1}{4}} \bigl| \vec{u}_b - \vec{u}_a \bigr| + (1 + t)^{-\frac{3}{20}} \bigl| \vec{\theta}_b - \vec{\theta}_a \bigr| \right), \\ 
		& \left\| f (\cdot , t) \right\|_{L_{\vec{x}}^2(\mathbb{R}^3)} = \mathcal{O}(1) \! \left( (1 + t)^{-\frac{3}{4}} \bigl| \vec{u}_b - \vec{u}_a \bigr|+ (1 + t)^{-\frac{13}{20}} \bigl| \vec{\theta}_b - \vec{\theta}_a \bigr| \right), \\
		& \left\| \vec{g}_h (\cdot , t) \right\|_{L_{\vec{x}}^{\infty}(\mathbb{R}^3)} = \mathcal{O}(1) (1 + t)^{-1} \bigl| \vec{W}_b - \vec{W}_a \bigr|, \\
		& \left\| f (\cdot , t) \right\|_{L_{\vec{x}}^{\infty}(\mathbb{R}^3)} = \mathcal{O}(1) (1 + t)^{-\frac{3}{2}} \bigl| \vec{W}_b - \vec{W}_a \bigr|,\\
		& \left\| z (\cdot , t) \right\|_{L_{\vec{x}}^p (\mathbb{R}^3)} = \mathcal{O}(1) \! \left( (1 + t) \bigl| \vec{u}_b - \vec{u}_a \bigr|+ \log(1 + t) \bigl| \vec{\theta}_b - \vec{\theta}_a \bigr| \right),
	\end{aligned}$$
	with $p \in [1, +\infty]$ and $\vec{\theta}_b - \vec{\theta}_a = (v_b - v_a, e_{1,b} - e_{1,a})$.
\end{Theorem}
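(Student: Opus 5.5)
The plan is to treat \eqref{gbp} as a linear damped system for $\vec{Y}_1=(g^0,\vec h,f)$, plus a pure transport-type equation for $z$. The system is driven by two kinds of terms. The \emph{source terms} \eqref{lin} are linear in the background gap $|\vec W_b-\vec W_a|$ and involve only the known solution $(p^b,\vec u^b,\chi^b)$. The \emph{coupling terms} \eqref{nlin} are products of $\vec Y$ with the small solutions $(p^\beta,\vec u^\beta,\chi^\beta,s^\beta)$. Since $\vec u^a,\vec u^b$ are irrotational, so is $\vec h$, and I would use the modified Green's function $\mathbb{G}$ of the linear part of \eqref{gbp} acting on irrotational data. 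Its long-wave part has diffusion waves of the form $((|\vec x|-c_a t)^2+t+1)^{-2}$ plus the Huygens-type interior tail; its high-frequency part decays exponentially after removal of the singular part. Duhamel's principle gives
$$\vec Y_1(t)=\mathbb G(t)*\vec Y_1(0)+\int_0^t\mathbb G(t-\sigma)*\bigl(R^{\RN{1}}+R^{\RN{2}}\bigr)(\sigma)\,d\sigma,\qquad z(t)=z(0)+\int_0^t R_z(\sigma)\,d\sigma .$$
The initial-data term is handled by $L^1\cap H^8$ smallness, giving $L^2$ decay $(1+t)^{-3/4}$ and $L^\infty$ decay $(1+t)^{-3/2}$, both times $|\vec W_b-\vec W_a|$.

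\textbf{Source terms.} In $R^{\RN{1}}$ there are two structurally different contributions. The term $\vec u_b\cdot\nabla(p^b,\vec u^b,\chi^b)$ carries a gradient, but $(p^b,\vec u^b)$ itself only decays like $\epsilon_b(1+t)^{-2+5/(2q)}$ by \eqref{betalq}, so $\nabla p^b$ has $L^1$ norm $O(1)(1+t)^{-1/2}$ only. Moving the derivative onto $\mathbb G$ gives an extra $(t-\sigma)^{-1/2}$, and the time integral then produces the rates $(1+t)^{-1/4}$ in $L^2$ and $(1+t)^{-1}$ in $L^\infty$ for the $|\vec u_b-\vec u_a|$ part. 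The remaining terms, $(p_a-p_b)\nabla\cdot\vec u^b$, $(v_a-v_b)\nabla p^b$ and $\chi^b$, are proportional to $|\vec\theta_b-\vec\theta_a|$. Here I would use the pointwise structure of Theorem~\ref{main1}, transported via Remark~\ref{rebetalq}, and compute convolutions of two diffusion waves travelling at nearby but distinct speeds $c_a,c_b$. I expect this wave-interaction computation to be what yields the odd exponents $3/20$ and $13/20$. I would split $[0,t]$ into $[0,t/2]$ and $[t/2,t]$, and use interpolation between the $L^1$ and $L^2$ bounds of $\nabla(p^b,\vec u^b)$ with an optimized exponent. Similarly, $f$ gains an extra half power, because its damped component of $\mathbb G$ carries a factor $\nabla$ or is exponentially decaying.

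\textbf{Coupling terms and the entropy.} I would close a bootstrap on the norms
$$M(t)=\sup_{0\le\sigma\le t}\Bigl\{\frac{\|\vec g_h\|_{L^2}}{\text{rate}}+\frac{\|f\|_{L^2}}{\text{rate}}+\frac{\|\vec g_h\|_{L^\infty}}{(1+\sigma)^{-1}|\vec W_b-\vec W_a|}+\frac{\|f\|_{L^\infty}}{(1+\sigma)^{-3/2}|\vec W_b-\vec W_a|}+\frac{\|z\|_{L^p}}{\text{rate}}\Bigr\},$$
with the rates being those in Theorem~\ref{main2}, together with energy estimates for $\nabla^k\vec Y$, $k\le 8$, needed to control the derivative losses in \eqref{nlin}. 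The energy part should give high-order bounds proportional to $|\vec W_b-\vec W_a|$ with some decay. The $z$ bound follows by integrating $R_z$ in time. The term $\vec u_b\cdot\nabla s^b$ is time independent in $L^p$, since $\nabla s^b=O(\epsilon_b)(|\vec x_b|^2+1)^{-9/4}$, which gives the growth $(1+t)|\vec u_b-\vec u_a|$. The terms $\vec h\cdot\nabla s^a$ and $(\chi^a+\chi^b)f$, bounded by the bootstrap norms, give $\log(1+t)$ from the $(1+t)^{-1}$ decay of $\vec h$.

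\textbf{Main obstacle.} The hardest step is the coupling terms involving $v^b-v^a$. This difference contains $z$, which grows like $(1+t)$, for instance in $(v^b-v^a)\chi^b$ and in $p_b(v^b-v^a)\nabla\cdot\vec u^b$; the same issue arises for products of $s^\beta$ with high derivatives in the energy estimates. For these I would use the pointwise estimate of $\chi^b$ from Theorem~\ref{main1}, which decays like $(1+t)^{-1/2}$ times the diffusion wave. The plan is to split space into $|\vec x|>\epsilon t$ and $|\vec x|\le\epsilon t$. In the outer region the spatial decay of $s^\beta$ and of the wave replaces the missing time decay. In the inner region I would use the three-dimensional version of Zeng's enhanced energy functional \cite{z2}. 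I would further split the time integral into $[0,\sqrt t]$, $[\sqrt t,t/2]$ and $[t/2,t]$, so that the growth of $z$ is absorbed by the $(1+\sigma)^{-5/2}$ decay of $\|\chi^b\|_{L^\infty}\cap L^2$. The remaining coupling terms are quadratic with small factors $\epsilon_a,\epsilon_b$ and close routinely, giving $M(t)\le C+C(\epsilon_a+\epsilon_b+|\vec W_b-\vec W_a|)M(t)$.
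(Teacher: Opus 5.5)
Your skeleton (Duhamel with $\mathbb G$ on irrotational data, basic energy bounds with $\|z\|_{H^m}\lesssim(1+t)|\vec W_b-\vec W_a|$, Zeng-type weighted estimates on $C_{\epsilon t}$, an $L^\infty$ bootstrap, time integration for $z$) matches the paper. However, you misidentify the step that produces $\frac{3}{20}$ and $\frac{13}{20}$, and your proposed splittings do not give these exponents.

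No wave-interaction computation is needed for $(p_a-p_b)\nabla\cdot\vec u^b$ or $(v_a-v_b)\nabla p^b$. As for $\vec u_b\cdot\nabla(\cdot)$, you move the derivative onto $\mathbb G_L$ on $[0,t/2]$ and use Lemma~\ref{impr} with $\|(p^b,\vec u^b,\chi^b)\|_{L^2}\lesssim\epsilon_b(1+\tau)^{-3/4}$, which gives $(1+t)^{-1/4-d_i}$.

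The real obstruction is the underived term $\frac{2}{\alpha}\bigl(\frac{1}{v_a}-\frac{1}{v_b}\bigr)\chi^b$, together with the $z$-parts of $v^b-v^a$; together these make up the paper's $\vec R_{2,N}$.
\begin{itemize}
\item Its $L^1$ norm does not decay: $\|\chi^b\|_{L^1}=O(\epsilon_b)$ from the pointwise bound $(1+t)^{-1/2}e_1$. The same is true of $\nabla p^b$, whose $L^1$ norm is not $O((1+t)^{-1/2})$ as you claim; with your value no $\frac{3}{20}$ could arise at all.
\item Its $L^2$ norm decays only like $(1+\tau)^{-5/4}$, not $(1+\tau)^{-5/2}$.
\end{itemize}
Using $L^1\to L^2$ on $[0,T]$ and $L^2\to L^2$ on $[T,t]$ gives $t^{-3/4}T+T^{-1/4}$. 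This is optimized at $T=t^{3/5}$, giving $(1+t)^{-3/20}$, and $d_3=\frac12$ then gives $\frac{13}{20}$ for $f$; see \eqref{y2l0}--\eqref{y2l1}. By contrast, your cut at $\sqrt t$ yields only $t^{-1/8}$ for this term. A single interpolation exponent on $[0,t/2]$ yields $(1+t)^{-3/20}$ only up to a factor $\log(1+t)$.

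The $z$ bound also does not follow by simply integrating $R_z$ in time when $p<\infty$. $R_z$ contains $-\vec u^b\cdot\nabla z$, and the best available bound is $\|\vec u^b\|_{L^2}\|\nabla z\|_{L^2}\lesssim(1+t)^{-3/4}(1+t)|\vec W_b-\vec W_a|$. This integrates to $(1+t)^{5/4}$, which exceeds both target rates. You must use the transport structure, $\int\mathrm{sgn}(z)\,\vec u^b\cdot\nabla z\,d\vec x=-\int(\nabla\cdot\vec u^b)|z|\,d\vec x$, together with Gronwall and $\|\nabla\cdot\vec u^b\|_{L^\infty}\lesssim(1+t)^{-5/2}$, as in \eqref{zl1}. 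In $L^\infty$ direct integration does work, since $\|\vec u^b\|_{L^\infty}\|\nabla z\|_{L^\infty}\lesssim(1+t)^{-1}|\vec W_b-\vec W_a|$; intermediate $p$ then follow by interpolation.

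Finally, the $|\vec x|\gtrless\epsilon t$ splitting and Zeng's functional cannot absorb the growth of $z$, because $z$ has no pointwise spatial decay. The paper treats the $z$-terms purely in $L^p$:
\begin{itemize}
\item in $L^1$ via $\|z\|_{L^2}\|\chi^b\|_{L^2}$;
\item in $L^2$ via $\|z\|_{L^2}\|\chi^b\|_{L^\infty}$;
\item for $\|f\|_{L^\infty}\lesssim(1+t)^{-3/2}$, by isolating $\vec R_*$ and using $L^\infty\to L^\infty$ for its $\vec\theta$-part but $L^2\to L^\infty$ for its $z$-part.
\end{itemize}
The weighted estimates of Lemma~\ref{nkle} serve a different purpose. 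They produce \eqref{saky1}, the $(1+t)^{-3/2}$ decay of $s^a\partial_{\vec x}^k\vec Y_1$, which is needed for $\|\vec R_2\|_{L^\infty}$ and the short-wave estimates.
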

\begin{Remark}
	Establishing dependence on background perturbation quantities always necessitates sacrificing a decaying factor $(1+t)^{-1/2}$ compared with the solutions of the individual perturbation problem \textup{(}as shown in \cite{lww}\textup{)}. The slightly slower decay rate preceding $|\vec{\theta}_b - \vec{\theta}_a|$ may be caused by technical reasons in the sense that although $\chi^b$ shares the same decay rate as $\nabla p^b$ and $\nabla \cdot \vec{u}^b$, the latter two permit a transfer of the differential operator during convolutions with Green's function $\mathbb{G}_L$ during the time interval $[0,t/2]$ \textup{(}see \eqref{y2l0}--\eqref{y2l1} for more details\textup{)}.
\end{Remark}

The rest of this paper is structured as follows: In Section 2, one presents a modified estimate of the Green's function for three-dimensional thermally non-equilibrium flows derived under the irrotationality condition and also modifies the energy estimate in \cite{hl} to general energy estimates especially for higher derivatives, to prepare for the closure of nonlinearity. The details of the modifications could be found in Appendix $\RN{1}$ and $\RN{2}$ respectively. One also lists computational lemmas for wave interactions in Section 2 and their proofs are left in Appendix $\RN{3}$. In Section 3, pointwise estimates for solutions of the individual perturbation problem are obtained by the Green's function method and energy estimates. In Section 4, the $L^p_{\vec{x}}$-estimates for the difference between two perturbation solutions are established for $p \in [2, +\infty]$. Notably, the term $z$ (i.e. the difference between two entropy perturbations) introduces a growth factor $(1+t)$. It is a main difference and also a new trouble compared with previous studies and necessitates the combined use of the Green's function method and weighted energy estimates to resolve such challenges.

{\bf Notations.} For a multi-index $k := (k_1, k_2, \dots, k_n)$, one denotes
$$\partial_{\vec{x}}^k := \partial_{x_1}^{k_1}\partial_{x_2}^{k_2}\cdots\partial_{x_n}^{k_n},\quad \vec{x}^k := x_1^{k_1}x_2^{k_2}\cdots x_n^{k_n},\quad |k| := k_1+k_2+\cdots+k_n.$$
The symbol $\nabla_{\vec{x}}^m$ ($m\in\mathbb{N}^+$), abbreviated as $\nabla^m$, represents all $m$th-order derivatives with respect to the spatial variable $\vec{x}$. For a multi-index $k$,  the notations $k\pm 1$ are referred to as $k \pm 1:= (k_1,\dots,k_{j-1},k_j \pm 1,k_{j+1},\dots,k_n)$ for arbitrary $1\le j\le n$.

For a vector $\vec{x} =(x_1, \dots, x_n)^t \in \mathbb{R}^n$, its Euclidean norm is denoted as $|\vec{x}| \equiv \sqrt{\sum_{i=1}^{n} (x_i)^2}$. The notations $\left\| \cdot \right\|_{L^p}$ and $\left\| \cdot \right\|_{H^m}$ denote the classical norms on $L^p_{\vec{x}} (\mathbb{R}^n)$ and Sobolev space $H^m_{\vec{x}} (\mathbb{R}^n)$, respectively. The Fourier transform of a function $f$ is denoted by $\mathbb{F}[f](\vec{\eta})$, defined as
$$\mathbb{F}[f](\vec{\eta}) := \int_{\mathbb{R}^n} e^{-i\vec{\eta} \cdot \vec{x}} f(\vec{x}) \, d\vec{x},$$
with the Fourier variable $\vec{\eta} = (\eta_1, \dots, \eta_n)^t \in \mathbb{R}^n$ and $\mathbb{F}^{-1}$ denotes the inverse Fourier transform; its long-wave and short-wave parts, denoted by $f_L(\vec{x})$ and $f_S(\vec{x})$ respectively, are defined as follows:
$$f_L(\vec{x})\equiv \mathbb{F}^{-1}\Big[\chi(|\vec\eta|)\mathbb{F}[f](\vec{\eta})\Big](\vec x),\ \ \ \ f_S(\vec{x})\equiv \mathbb{F}^{-1}\Big[ \left(1-\chi(|\vec\eta|)\right)\mathbb{F}[f](\vec{\eta})\Big](\vec x),$$ by a cut-off function $\chi$.

The notation $f = \mathcal{O}(1) g$ indicates that there exists a positive constant $C $ such that $|f| \le C |g|$. Throughout this paper, $C > 0$ denotes a generic constant and its value may vary from line to line.

%%%%%%%%%%%%%%%%%%%%%%%%%%%%%%%%%%%%%%%%%%%%%%%%%%%%%%%%%%%%%%%%%%%%%%%%%%%%%%%%%5
%
%
%                        Section 2: Preliminaries 
%
%
%%%%%%%%%%%%%%%%%%%%%%%%%%%%%%%%%%%%%%%%%%%%%%%%%%%%%%%%%%%%%%%%%%%%%%%%%%%%%%%%%

%%%%%%%%%%%%%%%%%%%%%%%%%%%%%%%%%%%%%%%%%%%%%%%%%%%%%%%%%%%%%%%%%%%%%%%%%%%%%%%%%5
%
%
%                        Section 2: Preliminaries 
%
%
%%%%%%%%%%%%%%%%%%%%%%%%%%%%%%%%%%%%%%%%%%%%%%%%%%%%%%%%%%%%%%%%%%%%%%%%%%%%%%%%%

\section{Preliminaries: Green's Function and Energy Estimates for Individual Perturbations}\label{sec:2}

In this section, we present several key results on the Green's function, along with the global existence and decay properties of each individual solution under small initial perturbations. Meanwhile, in this section and subsequent sections, we introduce the parameter $d_i$ to distinguish the distinct decay rates, where $d_1 = d_2 = 0$ and $d_3 = 1/2$.

\subsection{Green's Function} An obvious observation from \eqref{inlin} is that the linear part for $(p^\beta,\vec u^\beta,\chi^\beta)$ and the one for the non-decaying component $s^\beta$ are decoupled. It allows us to consider the Green's function for $(p^\beta,\vec u^\beta,\chi^\beta)$ separately. 

Define a Green's function $\mathbb{G}_{\beta}(\vec{y},t)$ by
\begin{equation}\label{greeng}
	\begin{cases}
		\left(\partial_t +
		\begin{pmatrix}
			0&\gamma p_{\beta} \nabla^t_{\vec{y}} & \frac{2}{\alpha v_{\beta}}\\
			v_{\beta} \nabla_{\vec{y}}&0&0\\
			0& \frac{\alpha_f}{\alpha} p_{\beta}v_{\beta} \nabla^t_{\vec{y}} & \zeta
		\end{pmatrix} \right)
		\mathbb{G}_{\beta} (\vec{y},t)=0,\\
		\mathbb{G}_{\beta} (\vec{y},0)=\delta(\vec{y})\mathbb{I}_{5},
	\end{cases}
\end{equation}
where $\mathbb{I}_{5}$ and $\delta(\vec{x})$ denote the $5\times5$ identity matrix and the $3$-dimensional Dirac delta function respectively. Then the Green's function $\mathbb{F}_{\beta}(\vec{x}, t)$ for $(p^\beta,\vec u^\beta,\chi^\beta)$ could be obtained by a coordinate transformation:
$$\mathbb{F}_{\beta}(\vec{x}, t)\equiv \mathbb{G}_{\beta}(\vec{x}- \vec{u}_{\beta} t,t).$$

The pointwise spacetime structure of $\mathbb{G}_{\beta}(\vec{y}, t)$ has been established in \cite{z1}. Here, we introduce a further estimate suitable for the irrotationality assumption. The main difference lies in the treatment of the case $|\vec \eta|\ll 1$ ($\vec\eta$ is the Fourier variable) where one retains the operator $\Delta^{-1}\nabla\nabla^t$ instead of estimating the inverse Fourier transform of the symbol $\vec{\eta}\vec{\eta}^t/|\vec{\eta}|^2$. This modification shows a cancelation of the algebraically decaying term $H(c_{\beta}t-|\vec{y}|)(1+t)^{-3-|k|}$ in the velocity field with the irrotationality condition. In the subsequent theorem, for simplicity, the subscript $\beta$ is replaced by the overbar symbol $\bar{\cdot}$ as a uniform notation of the function values at a constant equilibrium state.

\begin{Theorem}[\textbf{Green's function}: a modification of Theorem~1.3 in \cite{z1}] \label{green}
Let $k$ be a multi-index and $0 < \tilde{c}_2 < \bar{c}$ be a constant, where $\bar{c}$ represents the equilibrium sound speed, taken at the equilibrium constant state. After eigendecomposition, the Green's function $\mathbb{G}_{\beta}(\vec{y}, t)$, defined by \eqref{greeng} and abbreviated hereafter as $\mathbb{G}(\vec{y}, t)$, satisfies
	\begin{equation} \label{g1}
		\partial_{\vec{y}}^{k} \Big(\mathbb{G}(\vec{y},t)-\operatorname{diag}\! \left( 0, (\mathbb{I}_{3 \times 3} - \Delta^{-1} \nabla \nabla^t ) \delta(\vec{y}), 0 \right)\Big) :=\partial_{\vec{y}}^{k} \mathbb{G}^D(\vec{y},t) = \partial_{\vec{y}}^{k} \mathbb{G}^D_L (\vec{y},t) + \partial_{\vec{y}}^{k} \mathbb{G}^D_S (\vec{y},t).
	\end{equation}
{{Here, $\mathbb{G}^D_L$ and $\mathbb{G}^D_S$ denote the long-wave and short-wave components of the decaying components $\mathbb{G}^D$ of the Green's function, respectively,}} and are abbreviated hereafter as $\mathbb{G}_L$ and $\mathbb{G}_S$.

There exists a positive constant $D_0$ such that for all $t > 0$, the long-wave component $\mathbb{G}_L$ satisfies
	\begin{equation} \label{gL}
		\begin{aligned}
			\partial_{\vec{y}}^{k} \Big(\mathbb{G}_L(\vec{y},t) -\operatorname{diag}\! \left(0, \Delta^{-1} \nabla \nabla^t \left(\mathbb{G}_L\right)_{11}(\vec{y},t), 0 \right)\Big) &= H(t-1) \Bigl[ \tilde{G}_{k}^{*} (\vec{y},t) + \mathcal{O}(1) (1 + t)^{-\frac{5 + |k|}{2}} e^{-\frac{(|\vec{y}|-\bar{c} t)^2}{Kt}} \Bigr]  \\
			& \quad + \mathcal{O}(1) e^{-(|\vec{y}|+t)/D_0}, 
		\end{aligned}
	\end{equation}
	with $\tilde{G}_{k}^{*}$ taking the form:
	\begin{equation} \label{gkstar0}
		\begin{aligned}
			\tilde{G}_{k}^{*}(\vec{y},t) &:= H(|\vec{y}|-\bar{c}_2 (1+t)) \frac{1}{2(2 \pi \bar{\alpha}_1 t)^{3/2}} \partial_{\vec{y}}^k \left[ \frac{|\vec{y}| - \bar{c} t}{t} e^{-\frac{(|\vec{y}|-\bar{c}t)^2}{2 \bar{\alpha}_1 t}} \right]\\
			& \quad \times \begin{pmatrix}
				\frac{1}{\bar{c}} & \frac{\bar{c}\bar{\rho}t}{|\vec{y}|^2} \vec{y}^t & -\frac{\bar{p}_{e_1}}{\bar{c} \zeta}\\
				\frac{\bar{v} t}{\bar{c}|\vec{y}|^2} \vec{y} & 0_{3 \times 3}& -\frac{\bar{v} \bar{p}_{e_1} t}{\bar{c} \zeta |\vec{y}|^2} \vec{y}\\
				0 & 0_{1 \times 3} & 0
			\end{pmatrix}\\
			&= \mathcal{O}(1) t^{-2-\frac{|k|}{2}} e^{-\frac{(|\vec{y}|-\bar{c}t)^2}{Kt}}.
		\end{aligned}
	\end{equation}
Here, the equilibrium speed of sound $\bar{c}=\sqrt{\gamma \bar{p} \bar{v}/(1+b)}$ with $\gamma$ and $b$ given in \eqref{equ} and \eqref{bb} respectively, the Heaviside step function $H(r)$ is defined by \eqref{stepfun}, $(\cdot)_{lm}$ represents the $(l,m)$-entry of the block third-order matrix and $K$ is any constant greater than $ 2\bar{\alpha}_1$, with 
\begin{equation}\nonumber
 \bar{\alpha}_1 := \frac{\bar{a} \bar{p}_{e_1} \bar{v}}{\zeta^2} = \frac{2\alpha_f \bar{p}\bar{v}}{\alpha^2 \zeta^2}.
\end{equation}
	
	Moreover, there exists a universal positive constant $D_1:= 4 \max \{ 4 \bar{c}_f/\mu_1, 1 \}$ such that one could decompose the short-wave component $\mathbb{G}_S$ as follows:
	\begin{equation} \label{gS}
		\begin{aligned}
			\mathbb{G}_S(\vec{y},t) &= G_{0,1}^{s,*}(\vec{y},t) + G_{0,2}^{s}(\vec{y},t) + \operatorname{diag} \! \left(0, \Delta^{-1} \nabla \nabla^t \theta (\vec{y}, t), 0 \right) + \mathcal{O}(1) e^{-(|\vec{y}| + t)/D_0}\\
			& \quad + \mathcal{O}(1) |\vec{y}|^{-2} e^{-|\vec{y}|/D_1} \bigl( e^{-\mu_1 t/4} + e^{-\mu_2 t} \bigr), \\ 
			G_{0,1}^{s,*}(\vec{y},t) &:= e^{- \mu_2 t}L_{1,0} \delta (\vec{y}) + e^{- \mu_1 t} \bigl[ L_{2,0} \omega_t(\vec{y}, t; \bar{c}_f) + (L_{3, -1}(t, \nabla) + L_{3,0}) \omega(\vec{y}, t; \bar{c}_f)\bigr] \\
			& \quad + t e^{-\mu_1 t} L_{5,0} \omega(\vec{y},t; \bar{c}_f),\\
			G_{0,2}^{s}(\vec{y},t) &:= e^{-\mu_1 t} \operatorname{diag} \! \left(0, \Delta^{-1} \nabla \nabla^t, 0 \right)\!\bigl[ \omega_t(\vec{y}, t; \bar{c}_f) + \left( \mu_1 + \mu_1 (\mu_1/2 + \mu_2) t\right) \! \omega(\vec{y}, t; \bar{c}_f )\bigr],
		\end{aligned}
	\end{equation}
where $\omega$ and $\omega_t$ are defined by
\begin{equation}\label{wwt}
\mathbb{F}[\omega](\vec{\eta}, t; \bar{c}_f) = \sin (\bar{c}_f |\vec{\eta}| t)/(\bar{c}_f |\vec{\eta}|),\ \ \ \ 
\mathbb{F}[\omega_t](\vec{\eta}, t; \bar{c}_f) = \cos(\bar{c}_f |\vec{\eta}| t),
\end{equation}
all quantities $\bar{c}_f$, $\bar{a}$ and $\bar{p}_{e_1}$ are defined in \eqref{equ},
and $\mu_1$, $\mu_2$ and $L_{k,j}$ are given by
\begin{equation}\label{mumu}
 \mu_1:= \frac{a p_{e_1} v}{2 c_f^2} = \frac{\alpha_f}{\alpha ( \alpha + 2 )},\quad \mu_2 := \frac{c^2 (1 + Q_{e_1})}{c_f^2} = \frac{\zeta}{1 + b},
\end{equation}
	\begin{equation} \label{leading}
		\begin{aligned}
			& L_{1,0}:= \begin{pmatrix}
				0&0&0\\
				0&0&0\\
				-\frac{\bar{a} \bar{v}}{\bar{c}^2_f} &0&1
			\end{pmatrix}, \quad
			L_{2,0}:= \begin{pmatrix}
				1&0&0\\
				0&0&0\\
				\frac{\bar{a} \bar{v}}{\bar{c}^2_f}&0&0
			\end{pmatrix}, \quad
			L_{3,-1}(t, \nabla) := \begin{pmatrix}
				0&- \bar{\rho} \bar{c}_f^2 \nabla^t &0\\
				- \bar{v} \nabla &0&0\\
				0&- \bar{a} \nabla^t &0
			\end{pmatrix}, \\
			& L_{3,0} := \begin{pmatrix}
				\mu_1&0&-\bar{p}_{e_1}\\
				0&0&0\\
				\frac{\bar{a} \bar{v} (\mu_1 - \mu_2)}{\bar{c}_f^2}&0&-2 \mu_1
			\end{pmatrix}, \quad
			L_{5,0} := \mu_1 \left( \frac{\mu_1}{2} + \mu_2 \right) \begin{pmatrix}
				1&0&0\\
				0&0&0\\
				\frac{\bar{a} \bar{v}}{\bar{c}^2_f}&0&0
			\end{pmatrix},
		\end{aligned}
	\end{equation}
	while $\theta$ satisfies
	$$\left| \theta (\vec{y}, t) \right| = \mathcal{O}(1) |\vec{y}|^{-1} e^{-|\vec{y}|/D_1} \bigl( e^{-\mu_1 t/4} + e^{-\mu_2 t} \bigr).$$
\end{Theorem}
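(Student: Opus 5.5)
The plan is to work entirely on the Fourier side. I would write $\hat{\mathbb{G}}(\vec\eta,t)=e^{-tA(\vec\eta)}$, where $A(\vec\eta)$ is the $5\times5$ symbol of the operator in \eqref{greeng}, and reuse the spectral analysis of \cite{z1}. The velocity block is then split orthogonally by the projections $P_\parallel=\vec\eta\vec\eta^t/|\vec\eta|^2$ and $P_\perp=\mathbb{I}_{3\times3}-P_\parallel$.

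On $\mathrm{Ran}\,P_\perp$ the symbol acts as zero. This gives exactly the non-decaying piece $\operatorname{diag}(0,(\mathbb{I}-\Delta^{-1}\nabla\nabla^t)\delta,0)$, which justifies the decomposition \eqref{g1}. On the complement, $A$ reduces to a $3\times3$ system in the variables $(\hat p,\vec\eta\cdot\hat{\vec u}/|\vec\eta|,\hat\chi)$ whose coefficients depend only on $|\vec\eta|$. This reduced system is the one-dimensional-type structure treated in \cite{z1,z2}.

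\textbf{Long waves ($|\vec\eta|\ll1$).} I would expand the three eigenvalues of the reduced system. There are two acoustic eigenvalues $\lambda_\pm=\pm i\bar c|\vec\eta|-\bar\alpha_1|\vec\eta|^2/2+\mathcal{O}(|\vec\eta|^3)$ and one relaxation eigenvalue $\lambda_0=-\zeta+\mathcal{O}(|\vec\eta|^2)$. The corresponding spectral projections are then expanded up to order $|\vec\eta|$. The relaxation mode yields the term $\mathcal{O}(1)e^{-(|\vec y|+t)/D_0}$.

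The acoustic modes give kernels of the form $\mathbb{F}^{-1}[e^{\pm i\bar c|\vec\eta|t-\bar\alpha_1|\vec\eta|^2t/2}m(\vec\eta)]$. Here the velocity–velocity entry carries the symbol $\vec\eta\vec\eta^t/|\vec\eta|^2$ times the $(1,1)$-type scalar kernel. The key modification relative to \cite{z1} is to \emph{not} invert this symbol pointwise but to keep it as $\Delta^{-1}\nabla\nabla^t(\mathbb{G}_L)_{11}$. The Riesz-type tail $H(\bar ct-|\vec y|)(1+t)^{-3-|k|}$ from \cite{z1} arises precisely from that entry, so subtracting it in \eqref{gL} removes it. The remaining scalar and off-diagonal entries, such as $i\vec\eta/|\vec\eta|$, are treated as in \cite{z1}: spherical means of a one-dimensional Gaussian wave $\frac{|\vec y|-\bar ct}{t}e^{-(|\vec y|-\bar ct)^2/(2\bar\alpha_1t)}$ produce the leading Huygens term $\tilde G_k^*$ with the explicit matrix in \eqref{gkstar0}. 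Outside $|\vec y|\ge\bar c_2(1+t)$ the Gaussian has exponentially small mass, and next-order corrections gain $t^{-1/2}$, giving the error $(1+t)^{-(5+|k|)/2}e^{-(|\vec y|-\bar ct)^2/(Kt)}$. I would obtain this by the standard argument: expand the projections, write $\mathbb{F}^{-1}$ in radial form, and apply $\partial^k$ via $|\vec\eta|^{|k|}$ factors.

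\textbf{Short waves.} For $|\vec\eta|\gg1$, I would expand $e^{-tA}$ in powers of $|\vec\eta|^{-1}$. The frozen sound speed $\bar c_f$ gives the oscillatory factors $\cos(\bar c_f|\vec\eta|t)$ and $\sin(\bar c_f|\vec\eta|t)/(\bar c_f|\vec\eta|)$, damped by $e^{-\mu_1t}$, while the $\chi$-like mode gives $e^{-\mu_2t}$. The explicit leading matrices $L_{1,0},L_{2,0},L_{3,-1},L_{3,0},L_{5,0}$ come from the first two orders of the expansion; the $te^{-\mu_1t}$ term comes from a Jordan-type coincidence at second order. As on the long-wave side, the velocity entry is retained as $\Delta^{-1}\nabla\nabla^t$ acting on $\omega_t,\omega$, which gives $G^s_{0,2}$ rather than its singular kernel. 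The intermediate-frequency region is handled by compactness: there the eigenvalues have strictly negative real part (away from the $P_\perp$ zero mode), so it contributes $e^{-(|\vec y|+t)/D_0}$.

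The remainders of order $|\vec\eta|^{-2}$ and $|\vec\eta|^{-3}$ times $(1-\chi)$ have inverse Fourier transforms bounded by $|\vec y|^{-1}$ and $|\vec y|^{-2}$ respectively, with exponential spatial decay $e^{-|\vec y|/D_1}$. I would get this decay by a contour shift $\vec\eta\to\vec\eta+i\sigma\vec\eta/|\vec\eta|$, legitimate because the expansion is analytic in a strip of width $\sim\mu_1/\bar c_f$. The $|\vec y|^{-1}$ piece sits in the velocity block and is named $\theta$.

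\textbf{Main obstacle.} The hardest step is the short-wave remainder: achieving exponential spatial decay and the explicit singularity orders $|\vec y|^{-1}$ and $|\vec y|^{-2}$ uniformly in $t$, after the Riesz projection has been factored out. The constant $D_1=4\max\{4\bar c_f/\mu_1,1\}$ suggests a strip-width and finite-propagation argument. I would first try to bound the remainder symbol's analytic extension in $|\operatorname{Im}\vec\eta|\le\mu_1/(4\bar c_f)$, then apply a Paley–Wiener-type estimate.
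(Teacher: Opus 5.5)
You have the long-wave step right, and it is the paper's: keeping $(\partial^k\mathbb{G}_L)_{22}=\Delta^{-1}\nabla\nabla^t(\partial^k\mathbb{G}_L)_{11}$ unevaluated removes both the tail $H(\bar c t-|\vec y|)(1+t)^{-3-|k|}$ and the exterior term $H(|\vec y|-\tilde c_1(1+t))\,\partial^k\Delta^{-1}\nabla\nabla^t\delta$ of \cite{z1}.

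The gap is on the short-wave side, at the step you flag as the main obstacle: the mechanism you propose for $e^{-|\vec y|/D_1}$ and $e^{-(|\vec y|+t)/D_0}$ does not work.
\begin{itemize}
\item A symbol multiplied by $1-\chi(|\vec\eta|)$ is not analytic, so no contour shift is available.
\item Any nonzero piece with compactly supported Fourier transform (your relaxation-mode and intermediate-frequency pieces) cannot be $\mathcal{O}(1)e^{-(|\vec y|+t)/D_0}$ for all $\vec y$. Exponential decay in $\vec y$ would make its transform real-analytic, and a real-analytic function that vanishes on an open set is zero.
\item Dropping the cutoff does not help. The expansion in powers of $|\vec\eta|^{-1}$ produces homogeneous symbols $|\vec\eta|^{-2j}$ and $i\vec\eta|\vec\eta|^{-2j}$. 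Their kernels, $(-1)^j\Delta^{-j}\delta\sim|\vec y|^{2j-3}$ and its gradients, decay at best polynomially and grow for $j\ge2$. Analytically, $\vec\eta\cdot\vec\eta$ vanishes on the complex null cone, which meets every tube around $\mathbb{R}^3$.
\item The map $\vec\eta\mapsto\vec\eta+i\sigma\vec\eta/|\vec\eta|$ is not a holomorphic deformation. The admissible shift is by the constant vector $i\sigma\vec y/|\vec y|$.
\item Your singularity bookkeeping is also off: $|\vec y|^{-2}$ kernels come from order $-1$ symbols such as those in $L_{1,1}$, not from order $-3$ remainders.
\end{itemize}

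The missing idea is a global regularization that involves no cutoff. The paper redoes the large-$|\vec\eta|$ Taylor expansion of \cite{z1} in powers of $(1+|\vec\eta|^2)^{-1}$ instead of $|\vec\eta|^{-2}$; the two are equivalent as $|\vec\eta|\to\infty$. Since $1+\vec\eta\cdot\vec\eta$ has positive real part in the tube $|\mathrm{Im}\,\vec\eta|<1$, each $(-1)^j\Delta^{-j}\delta$ becomes a $j$-fold convolution power of the Yukawa potential $\mathbb{Y}_3=(4\pi|\vec y|)^{-1}e^{-|\vec y|}$, which decays exponentially. The closed forms of $\mathbb{Y}_3\ast\omega$ and $\mathbb{Y}_3\ast\omega_t$ are built from $e^{-\bigl||\vec y|-\bar c_f t\bigr|}$ and $e^{-(|\vec y|+\bar c_f t)}$. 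Combined with $e^{-\mu_1 t}$, they give bounds of the form $|\vec y|^{-1}e^{-|\vec y|/D_1}(e^{-\mu_1 t/4}+e^{-\mu_2 t})$, and this is where $D_1$ comes from.

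In the velocity block, $-\nabla\nabla^t\mathbb{Y}_3$ still decays only like $|\vec y|^{-3}$. The paper therefore rewrites it as $\Delta^{-1}\nabla\nabla^t(\delta-\mathbb{Y}_3)$ and keeps the Riesz operator outside. This is what produces $\operatorname{diag}(0,\Delta^{-1}\nabla\nabla^t\theta,0)$ with $\theta$ exponentially decaying; it is not a generic $|\vec y|^{-1}$ remainder.

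Finally, the exponential bound for $|\vec y|>M(1+t)$ does not come from frequency localization. It comes from the exterior estimate of \cite{z1} for the full Green's function, once one checks that the modified singular terms differ from the leading ones of \cite{z1} by $\mathcal{O}(1)e^{-(|\vec y|+t)/C}$. Inside the cone $|\vec y|\le M(1+t)$, a bound $e^{-t/C}$ already implies $e^{-(|\vec y|+t)/C'}$.
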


\begin{Remark} Theorem~\ref{green} is a refined version to fit the irrotationality condition and the details of this refinement could be found in Appendix $\RN{1}$.
\end{Remark}

Theorem~\ref{green} is well-prepared for irrotational data in the sense that it combined with Kirchhoff's formula results in the following conclusion for irrotational initial data:

\begin{Theorem}\label{lpg} 
	For irrotational data $\vec{g}(\vec{x}) := (g_0, \vec{h}_0, f_0)(\vec{x})=(g_0, h_0^1, h_0^2, h_0^3, f_0)(\vec{x})$ satisfying $\nabla \times \vec{h}_0 \equiv 0$,
	there exist positive constants $C_p$ and $C_0$ such that for $|k| \ge 0$, $p \ge 2$, and $q \ge p/(p+1)$,
	\begin{equation}\label{glpl}
		\Big\|\partial_{\vec{x}}^k \mathbb{G}_{L,i}^t \vec{g}  \Big\|_{L^p} \le C_p (1 + t)^{\frac{5}{2}\left(\frac{1}{p} - \frac{1}{q}\right) + \frac{1}{2} - \frac{|k|}{2}} \left\|\vec{g} \right\|_{L^q}
		\begin{cases}
			1 & \text{for } i = 1, 2, \\
			(1 + t)^{-\frac{1}{2}} & \text{for } i = 3,
		\end{cases}
	\end{equation}
	\begin{equation}\label{glps2}
		\Big\| \partial_{\vec{x}}^k \mathbb{G}_S^t \vec{g} \Big\|_{L^2} \le C_2 e^{-t/C_2} \left\| \partial_{\vec{x}}^k \vec{g} \right\|_{L^2},
	\end{equation}
	and
	\begin{equation}\label{glps0}
		\Big\| \partial_{\vec{x}}^k \mathbb{G}_S^t \vec{g}       \Big\|_{L^\infty} \le C_0 e^{-t/C_0} \left( \left\| \partial_{\vec{x}}^k \vec{g} \right\|_{L^\infty} + \left\| \partial_{\vec{x}}^k \nabla g_0 \right\|_{L^\infty} + \left\| \partial_{\vec{x}}^k \nabla \vec{h}_0 \right\|_{L^\infty} \right).
	\end{equation}
	Here, $\mathbb{G}_{L,i}$ denotes the $i$-th row of the block third-order matrix $\mathbb{G}_L$.
\end{Theorem}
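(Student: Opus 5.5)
The plan is to treat the long-wave and short-wave parts separately, using Theorem~\ref{green} together with the irrotationality condition. Since $\nabla\times\vec h_0\equiv 0$, Hodge's decomposition gives $\vec h_0=\Delta^{-1}\nabla\nabla\cdot\vec h_0$, so $(\mathbb{I}_{3\times3}-\Delta^{-1}\nabla\nabla^t)\vec h_0=0$. The nondecaying block in \eqref{g1} therefore annihilates the data, and $\mathbb{G}^t\vec g=\mathbb{G}^{D,t}\vec g$. In the same way, the terms $\operatorname{diag}(0,\Delta^{-1}\nabla\nabla^t(\cdot),0)$ appearing in \eqref{gL} and \eqref{gS} act on the velocity row only after being composed with scalar kernels. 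For irrotational $\vec h_0$ we have $\Delta^{-1}\nabla\nabla^t\vec h_0=\vec h_0$. Because the operator commutes with convolution, these contributions are handled by transferring $\Delta^{-1}\nabla\nabla^t$ onto the data, where it acts as the identity, or onto the scalar kernel. There the Calder\'on--Zygmund operator is bounded on $L^p$ for $1<p<\infty$.

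\textbf{Long-wave part \eqref{glpl}.} After removing the $\Delta^{-1}\nabla\nabla^t(\mathbb{G}_L)_{11}$ piece, the remaining kernel is described pointwise by \eqref{gL}. The bound $\tilde G_k^*=\mathcal{O}(1)t^{-2-|k|/2}e^{-(|\vec y|-\bar c t)^2/(Kt)}$ holds for rows $i=1,2$. For row $3$, the entries of $\tilde G^*_k$ vanish, so only the remainder $(1+t)^{-(5+|k|)/2}e^{-(|\vec y|-\bar ct)^2/(Kt)}$ survives; this accounts for the extra factor $(1+t)^{-1/2}$. Next we compute $L^r$ norms of the kernel. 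The Gaussian is concentrated on a spherical shell of radius $\bar c t$ and width $\sqrt t$, so
$$\bigl\|t^{-2-|k|/2}e^{-(|\cdot|-\bar ct)^2/(Kt)}\bigr\|_{L^r}\approx t^{-2-|k|/2}(t^{5/2})^{1/r}.$$
Young's inequality with $1+1/p=1/r+1/q$ then yields the exponent $-2-|k|/2+\tfrac52(1-\tfrac1q+\tfrac1p)=\tfrac52(\tfrac1p-\tfrac1q)+\tfrac12-\tfrac{|k|}2$, exactly as in \eqref{glpl}. The condition $q\ge p/(p+1)$ is precisely $r\ge1$. The $H(t-1)$ cutoff and the $e^{-(|\vec y|+t)/D_0}$ piece are trivial, and for $t\le1$ the long-wave multiplier is smooth and compactly supported. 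The $\Delta^{-1}\nabla\nabla^t(\mathbb G_L)_{11}$ piece also acts on the velocity row: it is applied to the first entry $g_0$, not to $\vec h_0$. I would bound it through its Fourier symbol $\hat\eta\hat\eta^t\chi(|\eta|)\widehat{(\mathbb G_L)}_{11}$ in $L^2$ based estimates. For general $p$, one can use the representation of $\hat\eta\hat\eta^t$ composed with the radial Gaussian-shell kernel: the Riesz transform of a shell Gaussian decays comparably in $L^r$ norms. This piece is the main obstacle, since the Riesz transform is not bounded on $L^1$ and could generate the $(1+t)^{-3}$ algebraic tails. I would first try to absorb it by pairing it with the velocity-row terms of $\tilde G^*$, which carry the structure $\vec y t/|\vec y|^2$, that is, essentially $\nabla\Delta^{-1}$ of the first row.

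\textbf{Short-wave parts \eqref{glps2} and \eqref{glps0}.} For $L^2$, Plancherel suffices, since the short-wave symbol of $\mathbb G^D$ decays like $e^{-t/C}$ uniformly for $|\eta|\ge$ the cutoff; this is the standard spectral gap away from the origin. For $L^\infty$, I would use the decomposition \eqref{gS} term by term. The $\delta$ term is immediate. For the $\omega_t$ and $\omega$ terms, Kirchhoff's formula expresses $\omega_t*g$ and $\omega*g$ via spherical means of $g$ and $\nabla g$ over spheres of radius $\bar c_f t$, giving bounds like $(1+t)(\|g\|_{L^\infty}+\|\nabla g\|_{L^\infty})$. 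These are absorbed by $e^{-\mu_1 t}$. The operator $L_{3,-1}(t,\nabla)$ places one derivative on $g_0$ or $\vec h_0$, which explains the appearance of $\nabla g_0$ and $\nabla\vec h_0$ in \eqref{glps0}. In $G_{0,2}^s$ the operator $\Delta^{-1}\nabla\nabla^t$ acts on $\vec h_0$ and is the identity there. The same holds for $\Delta^{-1}\nabla\nabla^t\theta$. The remaining kernels $|\vec y|^{-1}e^{-|\vec y|/D_1}$ and $|\vec y|^{-2}e^{-|\vec y|/D_1}$ are integrable in $\mathbb R^3$ with exponential time factors. Derivatives $\partial^k$ commute with every convolution, which completes the proof.
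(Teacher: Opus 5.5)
Your architecture is the paper's. You remove the nondecaying block of \eqref{g1} by irrotationality, bound each long-wave row pointwise by the shell heat kernel of \eqref{gL} (row $3$ sees only the $(1+t)^{-(5+|k|)/2}$ remainder) and apply Young's inequality, and treat \eqref{gS} term by term with Kirchhoff's formula and integrable remainders.

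The genuine gap is in the long-wave step, exactly where irrotationality must enter. You say $\operatorname{diag}(0,\Delta^{-1}\nabla\nabla^t(\mathbb G_L)_{11},0)$ ``is applied to the first entry $g_0$, not to $\vec h_0$''. You call it the main obstacle and leave it open, with a tentative plan to pair it with the $(2,1)$ entries $\frac{\bar v t}{\bar c|\vec y|^2}\vec y$ of $\tilde G_k^*$. This misreads the block structure. The operator occupies the $(2,2)$ slot, the velocity-to-velocity response, so it acts on $\vec h_0$; a $3\times 3$ operator cannot act on the scalar $g_0$ anyway. It commutes with convolution and $\Delta^{-1}\nabla\nabla^t\vec h_0=\vec h_0$, so the contribution is simply $\partial_{\vec x}^k(\mathbb G_L)_{11}\ast\vec h_0$. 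Now $\partial_{\vec x}^k(\mathbb G_L)_{11}$ is the $(1,1)$ entry of the left-hand side of \eqref{gL}, hence bounded by $\mathcal O(1)(1+t)^{-2-|k|/2}e^{-(|\vec y|-\bar ct)^2/(Kt)}+\mathcal O(1)e^{-(|\vec y|+t)/D_0}$. Young's inequality then gives the $i=2$ case of \eqref{glpl} exactly as for $i=1$. This is the transfer you describe in your own first paragraph, and it is what the paper tacitly uses to reach its row-wise bound.

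The other option you list, placing the Calder\'on--Zygmund operator on the kernel, cannot replace it. It controls $L^r$ norms only for $1<r<\infty$. The endpoint $r=1$, i.e.\ $q=p$, is used, for example with $p=q=\infty$ in \eqref{y0l2}, and it is unreachable because $\Delta^{-1}\nabla\nabla^t(\mathbb G_L)_{11}(\cdot,t)\notin L^1$. Its symbol $\frac{\vec\eta\vec\eta^t}{|\vec\eta|^2}\mathbb F[(\mathbb G_L)_{11}](\vec\eta,t)$ is discontinuous at $\vec\eta=0$, since $\mathbb F[(\mathbb G_L)_{11}](\vec\eta,t)\to1$ there. For the same reason, your claim that the Riesz transform of the shell Gaussian ``decays comparably in $L^r$ norms'' fails at $r=1$.

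Two smaller points.
\begin{itemize}
\item With $1+\frac1p=\frac1r+\frac1q$, the hypothesis $q\ge p/(p+1)$ means $1/r\ge0$, not $r\ge1$; the latter is $q\le p$.
\item For \eqref{glps2} you apply Plancherel to the whole short-wave symbol, rather than the paper's term-by-term Parseval and Young estimates on \eqref{gS}. That route is legitimate and does not even need irrotationality. However, ``spectral gap'' alone does not justify it for the non-normal, unbounded family $E(\vec\eta)$. You need the uniform bound $|e^{E(\vec\eta)t}(\mathbb I_5-P_0(\vec\eta))|\le Ce^{-t/C}$ for $|\vec\eta|$ outside the long-wave cutoff, with $P_0(\vec\eta)=\operatorname{diag}(0,\mathbb I_{3\times3}-\vec\eta\vec\eta^t/|\vec\eta|^2,0)$. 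This follows from the large-$|\vec\eta|$ expansion underlying \eqref{gS}, together with a compactness argument on the intermediate annulus.
\end{itemize}
Your $L^\infty$ short-wave argument coincides with the paper's.
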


The detail about the proof of Theorem~\ref{lpg} is left in Appendix $\RN{1}$ and the following corollary is about the Kirchhoff's formula:

\begin{Corollary}[\textbf{Kirchhoff's Formula}, Theorem~4.1, \cite{z1}] \label{kirch}
	Let $\omega = \omega(\vec{x}, t; c)$ and $\omega_t = \omega_t(\vec{x}, t; c)$ be defined in \eqref{wwt}. For a function $g(\vec{x})$, the inverse Fourier transforms of $\mathbb{F}[\omega] \cdot \mathbb{F}[g]$ and $\mathbb{F}[\omega_t] \cdot \mathbb{F}[g]$ are, respectively,
	\begin{equation} \label{wg}
		(\omega \underset{\vec{x}}{\ast} g)(\vec{x}, t) = \frac{t}{4 \pi} \int_{|\vec{y}|=1} g(\vec{x} + ct\vec{y}) \, dS(\vec{y})
	\end{equation}
	and
	\begin{equation} \label{wtg}
		(\omega_t \underset{\vec{x}}{\ast} g)(\vec{x}, t) = \frac{1}{4 \pi} \int_{|\vec{y}|=1} g(\vec{x} + ct\vec{y}) \, dS(\vec{y}) + \frac{ct}{4 \pi} \int_{|\vec{y}|=1} \nabla g(\vec{x} + ct\vec{y}) \cdot \vec{y} \, dS(\vec{y}).
	\end{equation}
\end{Corollary}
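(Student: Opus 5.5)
We work entirely on the Fourier side and invert using the definitions \eqref{wwt}. Set $c=|c|$ for the constant in \eqref{wwt}, which is harmless since $\sin(c|\vec\eta|t)/(c|\vec\eta|)$ and $\cos(c|\vec\eta|t)$ are even in $c$.

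The first step is to identify $\omega$ as the fundamental solution of the three-dimensional wave equation. The function $\mathbb{F}[\omega](\vec\eta,t)=\sin(c|\vec\eta|t)/(c|\vec\eta|)$ is the unique solution of
$$\partial_t^2\hat w+c^2|\vec\eta|^2\hat w=0,\qquad \hat w(0)=0,\qquad \partial_t\hat w(0)=1.$$
Hence $\omega(\cdot,t)$ solves $\partial_t^2\omega=c^2\Delta\omega$ with $\omega(\cdot,0)=0$ and $\partial_t\omega(\cdot,0)=\delta$.

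The second step is to compute this fundamental solution directly. We claim
$$\omega(\cdot,t)=\frac{1}{4\pi c^2t}\,dS_{|\vec{x}|=ct},$$
that is, the normalized surface measure on the sphere of radius $ct$. To verify this, compute the Fourier transform of that measure:
$$\frac{1}{4\pi c^2 t}\int_{|\vec{x}|=ct}e^{-i\vec\eta\cdot\vec{x}}\,dS
=\frac{t}{4\pi}\int_{|\vec{y}|=1}e^{-ict\vec\eta\cdot\vec{y}}\,dS(\vec{y}).$$
Use spherical coordinates with axis $\vec\eta$, so that $dS=2\pi\sin\phi\,d\phi$ and $\vec\eta\cdot\vec y=|\vec\eta|\cos\phi$. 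The integral becomes
$$\frac{t}{2}\int_{-1}^{1}e^{-ict|\vec\eta|s}\,ds=\frac{\sin(c|\vec\eta|t)}{c|\vec\eta|}.$$
This elementary integral is the only genuine computation in the proof, and it also bypasses any uniqueness argument for the wave equation.

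The third step gives \eqref{wg}. Convolving the measure with $g$ and rescaling $\vec{x}'=\vec{x}+ct\vec{y}$ yields
$$(\omega\ast g)(\vec{x},t)=\frac{t}{4\pi}\int_{|\vec{y}|=1}g(\vec{x}+ct\vec{y})\,dS(\vec{y}).$$
We use the symmetry $\vec y\mapsto-\vec y$ of the sphere to write $\vec x+ct\vec y$ instead of $\vec x-ct\vec y$.

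The fourth step gives \eqref{wtg}. Since $\cos(c|\vec\eta|t)=\partial_t\bigl[\sin(c|\vec\eta|t)/(c|\vec\eta|)\bigr]$, we have $\omega_t\ast g=\partial_t(\omega\ast g)$. Differentiate the formula from the third step in $t$ by the product rule. Differentiating the prefactor $t/4\pi$ produces the first spherical mean term. Differentiating under the integral produces $\nabla g(\vec{x}+ct\vec{y})\cdot c\vec{y}$, which gives the second term with the factor $ct/4\pi$.

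All of this is justified first for $g$ in the Schwartz class, where differentiation under the integral sign and Fourier inversion are immediate. The identities then extend to the classes of $g$ used later, such as $L^q$ or $H^m$, by density together with the continuity of both sides.

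The only potential subtlety is the distributional identification in the second step. Since it is verified directly by computing the Fourier transform of the measure, no real obstacle remains, and the result is the classical Kirchhoff formula.
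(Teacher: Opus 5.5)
Your proof is correct. The paper does not prove this statement: it imports it as Theorem~4.1 of \cite{z1} and uses it as a black box, so your argument is a self-contained replacement for that citation rather than a variant of a proof in the paper.

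Your route is the classical derivation. You identify $\omega(\cdot,t)$ with the normalized surface measure $\frac{1}{4\pi c^2t}\,dS_{|\vec{x}|=ct}$ via $\frac{t}{2}\int_{-1}^{1}e^{-ict|\vec\eta|s}\,ds=\sin(c|\vec\eta|t)/(c|\vec\eta|)$. Rescaling then gives \eqref{wg}, and $\partial_t\mathbb{F}[\omega]=\mathbb{F}[\omega_t]$ with the product rule gives \eqref{wtg}. All constants check out. What the derivation adds over the bare citation is the explicit fact that $\omega(\cdot,t)$ is a compactly supported finite measure, and that fact is exactly what the paper's later applications rely on.

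The one step to tighten is the extension beyond Schwartz $g$. The paper applies the formula to merely bounded data: $F\in W^{1,\infty}$ in \eqref{wgl00}, and functions constant in $\vec{x}$ in \eqref{expont2}. Schwartz functions are not dense in these spaces, so ``density plus continuity of both sides'' does not literally cover those uses. Your second step already contains the fix. Since $\omega(\cdot,t)\in\mathcal{E}'$, the identity $\mathbb{F}[\omega\ast g]=\mathbb{F}[\omega]\,\mathbb{F}[g]$ holds for every tempered $g$, by the exchange formula for $\mathcal{E}'\ast\mathcal{S}'$. Moreover, $\omega\ast g$ is given a.e.\ by the spherical mean in \eqref{wg} for any locally integrable $g$. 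For \eqref{wtg} with only Lipschitz $g$, mollify $g$ and pass to the limit. This gives the identity for a.e.\ $\vec{x}$, which is all the paper's $L^\infty$ bounds require.
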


The estimate \eqref{glpl} in Theorem~\ref{lpg} allows $(p,q)=(2,2)$ with a growing-in-time factor $t^{1/2}$ and the following lemma is an improvement of this case. The detailed proof is again left in the Appendix $\RN{1}$. 

\begin{Lemma}\label{impr}
	Under the assumption in Theorem~\ref{lpg}, there exists a positive constant $C_I$ such that
	\begin{equation}\label{impr0}
		\Big\|\partial_{\vec{x}}^k \mathbb{G}_{L,i}^t \vec{g} \Big\|_{L^2} \le C_I (1+t)^{-\frac{|k|}{2}} \left\|\vec{g}\right\|_{L^2} 
		\begin{cases}
			1 & \text{for } i=1,2,\\
			(1+t)^{-\frac{1}{2}} & \text{for } i=3.
		\end{cases}
	\end{equation}
\end{Lemma}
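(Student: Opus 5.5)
The natural route is Plancherel: an $L^2$ estimate for a Fourier multiplier reduces to a uniform $L^\infty_{\vec\eta}$ bound on its symbol, restricted here to the long-wave region $|\vec\eta|\le \varepsilon_0$ where the cut-off $\chi$ is supported. We write $\widehat{\mathbb{G}}_L(\vec\eta,t)=\chi(|\vec\eta|)\,e^{-tA(\vec\eta)}$ restricted to the decaying part, where $A(\vec\eta)$ is the symbol of the operator in \eqref{greeng}. Then
$$\Big\|\partial_{\vec{x}}^k \mathbb{G}_{L,i}^t \vec{g}\Big\|_{L^2}=(2\pi)^{-3/2}\Big\| \vec\eta^{\,k}\big(\widehat{\mathbb{G}}_L\big)_{i\cdot}(\vec\eta,t)\,\widehat{\vec g}(\vec\eta)\Big\|_{L^2_{\vec\eta}}.$$
It therefore suffices to show
$$\sup_{|\vec\eta|\le\varepsilon_0}|\vec\eta|^{|k|}\Big|\big(\widehat{\mathbb{G}}_L\big)_{i\cdot}(\vec\eta,t)\widehat{\vec g}(\vec\eta)\Big|\le C(1+t)^{-|k|/2-d_i}|\widehat{\vec g}(\vec\eta)|,$$
with $d_1=d_2=0$ and $d_3=1/2$.

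The irrotationality assumption enters at the symbol level. Since $\nabla\times\vec h_0=0$, we have $\widehat{\vec h}_0=\hat\eta\hat\eta^t\widehat{\vec h}_0$, so the divergence-free block is annihilated and $\vec h_0$ only enters through the scalar $\hat\eta\cdot\widehat{\vec h}_0$. This reduces the $5\times5$ system to the $3\times3$ acoustic-relaxation system in the variables $(\hat g_0,\hat\eta\cdot\widehat{\vec h}_0,\hat f_0)$, with symbol
$$\begin{pmatrix}0&i\gamma\bar p|\vec\eta|&\frac{2}{\alpha\bar v}\\ i\bar v|\vec\eta|&0&0\\ 0&i\frac{\alpha_f}{\alpha}\bar p\bar v|\vec\eta|&\zeta\end{pmatrix}.$$

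Next we carry out the small-$|\vec\eta|$ spectral analysis, as in \cite{z1} and Appendix~I. For $|\vec\eta|$ small this symbol has three distinct eigenvalues:
$$\lambda_\pm=\pm i\bar c|\vec\eta|+\tfrac{\bar\alpha_1}{2}|\vec\eta|^2+\mathcal{O}(|\vec\eta|^3),\qquad \lambda_0=\zeta+\mathcal{O}(|\vec\eta|^2).$$
Because the eigenvalues are distinct, the spectral projections are analytic and uniformly bounded. We then have $|e^{-t\lambda_\pm}|\le e^{-\bar\alpha_1|\vec\eta|^2t/4}$, while the $\lambda_0$ mode decays like $e^{-\zeta t/2}$. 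This gives
$$|\vec\eta|^{|k|}|e^{-tA}|\le C|\vec\eta|^{|k|}e^{-c|\vec\eta|^2t}+Ce^{-ct}\le C(1+t)^{-|k|/2},$$
which settles $i=1,2$.

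The row $i=3$ is the delicate step. We need the third row of the acoustic projections $P_\pm$ to carry an extra factor $\mathcal{O}(|\vec\eta|)$. This should hold because the right eigenvectors for $\lambda_\pm$ have third component of order $\mathcal{O}(|\vec\eta|)$: the third equation gives $\hat f\approx -i\frac{\alpha_f}{\alpha}\bar p\bar v|\vec\eta|\,\hat u/\zeta$. Consistently, $\chi$ behaves like $\nabla\cdot\vec u$, as the term $\tilde G^*_k$ reflects through its vanishing third row. Once this is in hand, $|\vec\eta|^{|k|+1}e^{-c|\vec\eta|^2t}\le C(1+t)^{-(|k|+1)/2}$, and the $\lambda_0$ component decays exponentially. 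It remains to check that the time-layer $t\le1$ causes no trouble; it does not, since the symbols are bounded there. I expect the main obstacle to be the careful verification of the $\mathcal{O}(|\vec\eta|)$ structure of the third row of $P_\pm$ uniformly, including the left eigenvectors. I would first attempt it by explicit perturbation expansion of the eigenvectors to first order in $|\vec\eta|$.
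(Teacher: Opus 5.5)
Your proposal is correct and is essentially the paper's proof. Plancherel reduces \eqref{impr0} to a pointwise bound on the low-frequency symbol $e^{\lambda_+t}P_++e^{\lambda_-t}P_-+e^{\lambda_dt}P_d$, and the extra factor $|\vec\eta|$ in the third row of the acoustic part supplies the additional $(1+t)^{-1/2}$ for $i=3$. The only difference is that the paper quotes this matrix bound from Lemma~3.5 of \cite{z1} (see \eqref{Fgl00}--\eqref{Fgles}), whereas you derive it by expanding the eigenvectors.

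When you carry out that expansion, note that distinctness of $\lambda_\pm$ for $\vec\eta\neq0$ does not by itself give uniform boundedness of $P_\pm$, since $\lambda_\pm$ coalesce at the origin. What gives it is the invertibility at $\vec\eta=0$ of the eigenvector matrix $[r_+,r_-,r_0]$, whose first two columns are $(1,\pm\bar v/\bar c,0)^t$, and your expansion supplies exactly this.
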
 

\subsection{Energy Estimates}

Since \eqref{inlin} is quasi-linear, a priori estimates are needed for the closure of the nonlinearity. We quote the following theorem for a priori bounds for the high order derivatives of the solution:

\begin{Theorem}[\textbf{Energy estimates}: a modification of Theorem~1.1, \cite{hl}] 
\label{genes} 
	Let $\bar{\rho}$, $\bar{e}_1$, $\bar{q}$ be positive constants such that either $\bar{q} = \bar{Q}$ or $\bar{T}_1 = \bar{T}_2$ holds, the irrotational initial data $(\rho_0, \vec{u}_0, e_{1,0}, q_0)$ be such that $\|(p_0 - \bar{p}, \vec{u}_0, \chi_0)\|_{L^1}$ is bounded and $\|(\rho_0 - \bar{\rho}, \vec{u}_0, e_{1,0} -\bar{e}_1, q_0 - \bar{q})\|_{H^l}$ is sufficiently small for some integer $l \ge 3$. Then a unique global solution $(\rho, \vec{u}, e_1, q)$ satisfying the irrotational condition $\nabla \times \vec{u} \equiv 0$ with $\rho > 0, e_1 > 0, q > 0$ to the Cauchy problem \eqref{vnes} exists and satisfies
	\begin{equation} \nonumber
		(\rho - \bar{\rho}, \vec{u}, e_1 - \bar{e}_1, q - \bar{q}) \in C^0([ 0, \infty); H^l(\mathbb{R}^3)) \cap C^1( [0, \infty); H^{l-1}(\mathbb{R}^3)).
	\end{equation}
	Moreover, there exists a constant $D_1>0$ such that
	\begin{equation} \label{genenes}
		\begin{aligned}
			& \left\| (\rho - \bar{\rho}, \vec{u}, e_1 -\bar{e}_1, q - \bar{q})(\cdot,t) \right\|_{L^2}^2 + \left\| \nabla (p, \vec{u}, \chi, s)(\cdot,t) \right\|_{H^{l-1}}^2 \! + \int_0^t \! \left( \left\| \chi(\cdot,\tau) \right\|^2_{H^l} + \left\| (\nabla p, \nabla \cdot \vec{u})(\cdot,\tau) \right\|^2_{H^{l-1}} \right)\! d\tau \\
			& = \mathcal{O}(1) \Bigl( \left\| (\rho_0 - \bar{\rho}, \vec{u}_0, e_{1,0} -\bar{e}_1, q_0 - \bar{q}) \right\|_{L^2}^2 + \left\| \nabla (p_0, \vec{u}_0, \chi_0) \right\|_{H^{l-1}}^2\\
			& \quad \quad \quad \quad + \exp\left\lbrace D_1 \left\| (p_0 - \bar{p}, \vec{u}_0, \chi_0 ) \right\|_{L^1 \cap H^l} \right\rbrace \left\| \nabla s_0 \right\|_{H^{l-1}}^2 \Bigr).
		\end{aligned}
	\end{equation}
And the solution $(p, \vec{u}, \chi)$ has the following decay property:
	\begin{equation} \label{pu1xde}
		\left\| \chi(\cdot, t) \right\|_{H^l} + \left\| \nabla (p, \vec{u})(\cdot, t) \right\|_{H^{l-1}} = \mathcal{O}(1) (1 + t)^{-\frac{5}{4}} \left\| (p_0 - \bar{p}, \vec{u}_0, \chi_0 ) \right\|_{L^1 \cap H^l}, \quad \forall t \ge 0.
	\end{equation}
\end{Theorem}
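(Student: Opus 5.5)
This is a modification of Theorem~1.1 in \cite{hl}, so I would follow its energy method and track more carefully how the entropy enters. Local existence in $H^l$ and the continuation argument are standard for symmetrizable quasilinear hyperbolic systems. Note that the irrotationality $\nabla\times\vec u\equiv 0$ is propagated: taking the curl of the momentum equation $\partial_t\vec u+\vec u\cdot\nabla\vec u+v\nabla p=0$ gives a transport equation for $\nabla\times\vec u$ with source $\nabla v\times\nabla p$. This source need not vanish when $s$ is nonconstant, so I would rely on the irrotational setting exactly as in \cite{hl}. The task is then an a priori estimate under the ansatz
$$N(t):=\sup_{0\le\tau\le t}\Bigl\{\|(\rho-\bar\rho,\vec u,e_1-\bar e_1,q-\bar q)\|_{L^2}+\|\nabla(p,\vec u,\chi,s)\|_{H^{l-1}}+(1+\tau)^{5/4}\bigl(\|\chi\|_{H^l}+\|\nabla(p,\vec u)\|_{H^{l-1}}\bigr)\Bigr\}\ll1 .$$

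\textbf{Step 1 (energy for the decaying part).} I would work with the variables $(p,\vec u,\chi,s)$ of \eqref{vnepre}. First symmetrize the $(p,\vec u,\chi)$ block with the weights $1/(\rho c_f^2)$, $\rho$ and a suitable multiple for $\chi$, chosen so that $-p_{e_1}\chi$ and $a\nabla\cdot\vec u$ cancel in the cross terms. Then apply $\partial^k_{\vec x}$ for $|k|\le l$ and take the $L^2$ inner product. This gives the dissipation $\|\chi\|^2_{H^l}$ from $\zeta\chi$. Dissipation of $\nabla p$ and $\nabla\cdot\vec u$ comes from the Kawashima-type cross term $\langle \partial^k\vec u,\nabla\partial^k p\rangle$ at orders $|k|\le l-1$, using the $\vec u$ equation and the $p$ equation. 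Irrotationality gives $\|\nabla\vec u\|=\|\nabla\cdot\vec u\|$, which replaces the missing control of the curl.

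The nonlinear terms are cubic: they carry a small coefficient $N(t)$ times dissipative quantities. The exception is the terms where the variable coefficients $\rho(p,s)$, $v(p,s)$ depend on $s$. Their derivatives $\partial^k s$ are not dissipated, giving terms like $\|\nabla s\|_{H^{l-1}}\|\nabla(p,\vec u)\|_{L^\infty}\|\partial^k(p,\vec u)\|$.

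\textbf{Step 2 (entropy).} For $s$ I would use the transport equation $\partial_t s+\vec u\cdot\nabla s=\tfrac{2}{\alpha_fR}\chi^2/(T_1T_2)$. Differentiating it and using commutator estimates gives
$$\frac{d}{dt}\|\nabla s\|^2_{H^{l-1}}\le C\bigl\|\nabla\vec u\bigr\|_{W^{1,\infty}\cap H^{l-1}}\|\nabla s\|^2_{H^{l-1}}+C\|\chi\|_{H^l}^2 .$$
By Gronwall,
$$\|\nabla s(t)\|^2_{H^{l-1}}\le \exp\Bigl\{C\int_0^t\|\nabla\vec u\|_{W^{1,\infty}\cap H^{l-1}}\,d\tau\Bigr\}\Bigl(\|\nabla s_0\|^2_{H^{l-1}}+C\int_0^t\|\chi\|_{H^l}^2\,d\tau\Bigr).$$
By the decay in the ansatz, $\int_0^\infty(1+\tau)^{-5/4}d\tau<\infty$, so the exponent is bounded by $D_1\|(p_0-\bar p,\vec u_0,\chi_0)\|_{L^1\cap H^l}$. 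This produces the exponential factor in \eqref{genenes}. The zeroth-order quantities $\|(\rho-\bar\rho,\dots)\|_{L^2}$ come from the conservative form \eqref{vne} with a standard entropy-type convex functional.

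\textbf{Step 3 (decay).} To prove \eqref{pu1xde}, I would write Duhamel's formula with the Green's function of Theorem~\ref{green}, applied to irrotational data, and use Theorem~\ref{lpg} and Lemma~\ref{impr}. The long-wave part with $L^1$ data yields $(1+t)^{-5/4}$ for $\nabla(p,\vec u)$ and for $\chi$. The short-wave part decays exponentially, by \eqref{glps2}. The high-order part I would combine with a Lyapunov inequality $\frac{d}{dt}\mathcal E_h+c\mathcal E_h\le C\|\nabla(p,\vec u)_L\|^2+\dots$ from Step 1. The nonlinear Duhamel terms are at least quadratic with one factor $O(N(t)(1+\tau)^{-5/4})$, and are therefore absorbed.

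\textbf{Main obstacle.} I expect the hardest part to be the $s$-dependent terms in Step 1, i.e.\ the products of the non-decaying $\nabla^j s$ with $\nabla^{k-j}(p,\vec u)$. They are not dissipative in $s$, so they must be bounded by $\|\nabla s\|_{H^{l-1}}$, which is merely bounded, times $L^\infty$ norms of decaying quantities. Closing the argument then requires the time-integrability from Step 3 to feed back into Steps 1--2. I would organize this as a simultaneous bootstrap on $N(t)$, using $l\ge3$ and the Sobolev embedding $H^2\subset L^\infty$.
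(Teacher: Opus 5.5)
Your overall architecture matches the paper's. It uses a symmetrized $H^l$ energy with cross terms and irrotationality for $(p,\vec u,\chi)$, transport plus Gronwall for $\nabla s$ (which produces $\exp\{D_1\|\cdot\|_{L^1\cap H^l}\}$), and Duhamel with Theorem~\ref{lpg} and Lemma~\ref{impr} for the low-order decay, fed into a Lyapunov inequality for the high orders. However, the linear dissipative mechanism in your Step~1 does not work as written.

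\textbf{Symmetrizer.} With diagonal weights $\mathrm{diag}(w_1,w_2,w_3)$, as you describe, nothing can cancel $a\nabla\cdot\vec u$ in the $\chi$-equation, because the $\vec u$-equation contains no $\nabla\chi$ to pair with it. You are left with $w_3a\,\partial^k_{\vec x}\chi\,\partial^k_{\vec x}\nabla\cdot\vec u$, which at $|k|=l$ needs one derivative more than the energy controls. The paper uses a non-diagonal symmetrizer, i.e.\ the good unknown $\partial^k_{\vec x}p-\frac{c_f^2}{a}\rho\,\partial^k_{\vec x}\chi$. Its equation contains no $\nabla\cdot\vec u$, since $\rho c_f^2=\frac{c_f^2}{a}\rho\cdot a$. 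The energy is $|\partial^k_{\vec x}p|^2+\gamma\rho p|\partial^k_{\vec x}\vec u|^2+b|\partial^k_{\vec x}p-\frac{c_f^2}{a}\rho\,\partial^k_{\vec x}\chi|^2$. The constant $b$ in \eqref{bb} is exactly the value for which the cross terms between $\chi$ and this good unknown cancel; these come from $-p_{e_1}\chi$ and $\zeta\chi$. What remains is pure $|\rho\,\partial^k_{\vec x}\chi|^2$ dissipation (see \eqref{puxkt}--\eqref{hl221}).

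\textbf{Cross terms.} The term $\langle\partial^k_{\vec x}\vec u,\nabla\partial^k_{\vec x}p\rangle$ alone does not dissipate $\nabla\cdot\vec u$. By the $\vec u$- and $p$-equations, its time derivative is $-\bar v\|\nabla\partial^k_{\vec x}p\|^2_{L^2}+\gamma\bar p\|\partial^k_{\vec x}\nabla\cdot\vec u\|^2_{L^2}+\dots$. So it only trades $\nabla\cdot\vec u$ for $\nabla p$; this is the term $C\|\nabla^{m+1}\vec u\|^2_{H^{l-m-1}}$ on the right of \eqref{pimpro}. Dissipation of $\nabla\cdot\vec u$ must come from the relaxation coupling, via the second cross term $\langle\partial^k_{\vec x}\chi,\partial^k_{\vec x}\nabla\cdot\vec u\rangle$ and the $\chi$-equation. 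This gives $-a\|\partial^k_{\vec x}\nabla\cdot\vec u\|^2_{L^2}$ at the price of $\bar v\langle\nabla\partial^k_{\vec x}\chi,\nabla\partial^k_{\vec x}p\rangle$ (see \eqref{hl223}, \eqref{uimpro}). The two cross terms are combined with weights $1$ and $\tilde\epsilon_1$ and a Cauchy parameter $\tilde\epsilon_0\ll\tilde\epsilon_1$, as in \eqref{puimpro}. Without this you get neither the Lyapunov inequality for $\|\chi\|^2_{H^l}+\|\nabla(p,\vec u)\|^2_{H^{l-1}}$ nor the term $\int_0^t\|\nabla\cdot\vec u\|^2_{H^{l-1}}\,d\tau$ in \eqref{genenes}.

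\textbf{Bootstrap.} Your ansatz $N(t)\ll1$ includes the time-weighted decay norms. These can only be bounded by $C\|(p_0-\bar p,\vec u_0,\chi_0)\|_{L^1\cap H^l}$, and the $L^1$ norm is only assumed bounded, so that bootstrap does not close. The paper imposes smallness only on $\sup_t\|(\rho-\bar\rho,\vec u,e_1-\bar e_1,q-\bar q)\|_{H^l}\le\delta_l$. It then closes the decay functional linearly: $\mathcal M_0(t)\le C\|(p_0-\bar p,\vec u_0,\chi_0)\|^2_{L^1\cap H^l}+C\delta_3^2\mathcal M_0(t)$. This is also why the Gronwall exponent for $\nabla s$ is only bounded rather than small, so the exponential factor in \eqref{genenes} is genuinely needed.
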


\begin{Remark} Take $l=3$ in Theorem~\ref{genes} and it goes back to Theorem~1.1 in \cite{hl}.
\end{Remark}

A priori estimates for higher order derivatives of the solution are required for a pointwise space-time structure of the solution in Theorem \ref{main1}. Thus one applies energy method for higher order derivatives to obtain Theorem~\ref{genes} and the proof is similar to the one for Theorem~1.1 in \cite{hl}. For the convenience of the readers, we provide the details of the proof in Appendix $\RN{2}$. 

The followings are some Sobolev inequalities for the nonlinear terms:
\begin{Lemma}[Lemma~2.1, \cite{hl}] For $\omega_1\in H^l(\mathbb{R}^3), \nabla\omega_1\in L^\infty(\mathbb{R}^3)$ and $\omega_2\in H^{l-1}(\mathbb{R}^3)\cap L^\infty(\mathbb{R}^3)$ and a multi-index $k$,
\begin{equation} \label{sob1}
		\sum_{1 \le  |k|\le l} \left\|\bigl[\partial_{\vec{x}}^k, \omega_1\bigr]\omega_2 \right\|_{L^2} \le C_{l} \left( \left\| \nabla \omega_1\right\|_{L^\infty} \cdot\left\|\nabla^{l-1} \omega_2 \right\|_{L^2} + \left\|\omega_2 \right\|_{L^\infty}\cdot \left\|\nabla^{l} \omega_1 \right\|_{L^2} \right),
	\end{equation}
	where the commutator $[\cdot,\cdot]$ is defined by $$\bigl[\partial_{\vec{x}}^k, \omega_1\bigr]\omega_2:= \partial_{\vec{x}}^k (\omega_1 \omega_2) - \omega_1 \partial_{\vec{x}}^k \omega_2.$$ 
	
	Particularly, for $\omega_1, \omega_2\in H^{l}(\mathbb{R}^3)\cap L^\infty(\mathbb{R}^3)$,
	\begin{equation} \label{sob2}
		\sum_{|k| \le l} \left\|\partial_{\vec{x}}^k (\omega_1 \omega_2)\right\|_{L^2} \le C_{l} \left(\left\|\omega_1\right\|_{L^\infty} \cdot\left\|\nabla^{l} \omega_2 \right\|_{L^2} + \left\|\omega_2 \right\|_{L^\infty}\cdot \left\|\nabla^{l} \omega_1 \right\|_{L^2} \right),
	\end{equation}
	where 
	\begin{equation}\label{wemb}
		\left\|\omega_1\right\|_{L^\infty}\le C\left\|\nabla\omega_1\right\|^{1/2}_{L^2}\cdot \left\|\nabla^2\omega_1\right\|_{L^2}^{1/2}\le C\left\|\nabla\omega_1\right\|_{H^1},\ \ \forall \omega_1\in H^2(\mathbb{R}^3).
	\end{equation}
\end{Lemma}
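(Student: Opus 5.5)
The plan is to prove the two inequalities by Fourier analysis and a Leibniz-rule expansion; the statement is the Moser/Kato--Ponce type lemma quoted from \cite{hl}. I take \eqref{sob2} first, since \eqref{sob1} is a variant of it.

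\emph{Estimate \eqref{sob2}.} By the Leibniz rule,
\[
\partial_{\vec{x}}^k(\omega_1\omega_2)=\sum_{j\le k}\binom{k}{j}\,\partial^{j}\omega_1\,\partial^{k-j}\omega_2 .
\]
Fix a term with $|j|=m$, $|k|=n\le l$. Apply H\"older with exponents $p_1=2n/m$ and $p_2=2n/(n-m)$. Then use the Gagliardo--Nirenberg inequalities
\[
\|\nabla^{m}\omega\|_{L^{2n/m}}\le C\|\omega\|_{L^\infty}^{1-m/n}\|\nabla^{n}\omega\|_{L^2}^{m/n}.
\]
Young's inequality then bounds the product by $\|\omega_1\|_{L^\infty}\|\nabla^n\omega_2\|_{L^2}+\|\omega_2\|_{L^\infty}\|\nabla^n\omega_1\|_{L^2}$.

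For lower orders $n<l$, the top-order factor $\|\nabla^n\omega_i\|_{L^2}$ has to be converted to order $l$. I read the right-hand side as the full $H^l$-type seminorms, i.e. sums over orders up to $l$, which is how the lemma is used in energy estimates. If a strictly homogeneous top-order seminorm is intended, interpolating between $L^\infty$ and $\dot H^l$ only works on the scaling-invariant part. So I will note this convention rather than fight it.

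\emph{Commutator estimate \eqref{sob1}.} Here
\[
[\partial^k,\omega_1]\omega_2=\sum_{0<j\le k}\binom{k}{j}\,\partial^{j}\omega_1\,\partial^{k-j}\omega_2 .
\]
Every term carries at least one derivative on $\omega_1$. Write $\partial^j\omega_1=\partial^{j-1}(\nabla\omega_1)$, which reduces to the previous situation with $\nabla\omega_1$ in place of $\omega_1$ and total order $n-1\le l-1$. Gagliardo--Nirenberg between $L^\infty$ and $\dot H^{n-1}$ then gives
\[
\|\nabla\omega_1\|_{L^\infty}\|\nabla^{n-1}\omega_2\|_{L^2}+\|\omega_2\|_{L^\infty}\|\nabla^{n}\omega_1\|_{L^2},
\]
since $\|\nabla^{n-1}\nabla\omega_1\|_{L^2}=\|\nabla^n\omega_1\|_{L^2}$.

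\emph{Embedding \eqref{wemb}.} Split $\omega_1$ in frequency at a radius $R$:
\[
\|\omega_1\|_{L^\infty}\le\|\hat\omega_1\|_{L^1}\le \int_{|\eta|<R}+\int_{|\eta|\ge R}.
\]
On the low-frequency part, Cauchy--Schwarz against $|\eta|^{-1}$ gives $CR^{1/2}\|\nabla\omega_1\|_{L^2}$. On the high-frequency part, Cauchy--Schwarz against $|\eta|^{-2}$ gives $CR^{-1/2}\|\nabla^2\omega_1\|_{L^2}$. Optimizing in $R$ yields $C\|\nabla\omega_1\|_{L^2}^{1/2}\|\nabla^2\omega_1\|_{L^2}^{1/2}$, and AM--GM gives the $H^1$ bound.

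\emph{Main obstacle.} The main step is the Gagliardo--Nirenberg interpolation at the endpoint $L^\infty$. I would prove it by the standard Littlewood--Paley argument or simply cite Moser's calculus inequalities.
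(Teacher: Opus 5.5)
The paper gives no proof of this lemma; it is quoted from \cite{hl}, so there is no internal argument to compare against. Your route is the standard Moser/Klainerman--Majda proof: Leibniz expansion, H\"older with exponents $2n/m$ and $2n/(n-m)$, the $L^\infty$-endpoint Gagliardo--Nirenberg inequality, and Young, applied to $\nabla\omega_1$ at total order $n-1$ for the commutator. It is correct for each fixed order $|k|=n$. Two small points:
\begin{itemize}
\item Handle $n=1$ directly, where $[\partial_{x_i},\omega_1]\omega_2=(\partial_{x_i}\omega_1)\omega_2$ and the bound is immediate; there the interpolation exponent $(m-1)/(n-1)$ is undefined.
\item No extra decay hypothesis is needed for the Gagliardo--Nirenberg step, since only derivatives of order $\ge1$ are interpolated and $n-m-3/2\notin\mathbb{Z}$.
\end{itemize}
Your Fourier-splitting proof of \eqref{wemb} is also correct. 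The identities $\int_{|\vec\eta|<R}|\vec\eta|^{-2}d\vec\eta=4\pi R$ and $\int_{|\vec\eta|\ge R}|\vec\eta|^{-4}d\vec\eta=4\pi/R$ give the $R^{\pm1/2}$ factors. The same computation shows $\mathbb{F}[\omega_1]\in L^1$, which justifies the first step $\|\omega_1\|_{L^\infty}\le C\|\mathbb{F}[\omega_1]\|_{L^1}$.

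The one thing to change is your hedge about normalization, because it is not a matter of convention. Read literally, with all orders summed on the left and only homogeneous top-order seminorms on the right, \eqref{sob1} and \eqref{sob2} are false. Take $\omega_i(\vec x)=\phi_i(\lambda\vec x)$ with fixed Schwartz functions $\phi_i$ such that $\phi_2\,\partial_{x_1}\phi_1\not\equiv0$.
\begin{itemize}
\item The $|k|=1$ term of \eqref{sob1}, $\|(\partial_{x_1}\omega_1)\omega_2\|_{L^2}$, scales like $\lambda^{-1/2}$, while both terms on the right scale like $\lambda^{l-3/2}$. Letting $\lambda\to0$ violates the inequality for every $l\ge2$.
\item The $k=0$ term of \eqref{sob2} scales like $\lambda^{-3/2}$ against $\lambda^{l-3/2}$, which fails for every $l\ge1$.
\end{itemize}
So state explicitly that what you prove, and what is true, is the per-order estimate
\[
\bigl\|[\partial_{\vec x}^k,\omega_1]\omega_2\bigr\|_{L^2}\le C_{|k|}\bigl(\|\nabla\omega_1\|_{L^\infty}\|\nabla^{|k|-1}\omega_2\|_{L^2}+\|\omega_2\|_{L^\infty}\|\nabla^{|k|}\omega_1\|_{L^2}\bigr),
\]
together with its product analogue. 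After summation over $|k|$, these give the inhomogeneous norms $\|\omega_2\|_{H^{l-1}}$, $\|\nabla\omega_1\|_{H^{l-1}}$ and $\|\omega_i\|_{H^{l}}$. They replace $\|\nabla^{l-1}\omega_2\|_{L^2}$, $\|\nabla^{l}\omega_1\|_{L^2}$ and $\|\nabla^{l}\omega_i\|_{L^2}$ respectively. With the statement corrected this way, your argument proves it completely.
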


\subsection{Computational Lemmas for Wave Interactions} Later, in the study of pointwise estimates of the solutions, wave interactions could be quite complex. Here, one lists all the computational lemmas used there and their proofs are left in Appendix $\RN{3}$.

The first lemma below is for the estimates of the entropy function $s-\bar{s}$ and its derivatives:
\begin{Lemma}\label{lemmains}
	Let $\bar{c}$ be a positive constant. For $d \ge 0$, $i \ge 0$, $m > 1$, and $j \ge 2$, it holds that
	\begin{equation} \label{tconv1}
		\begin{aligned}
			&\int_0^t (1+\tau)^{-d} (1 + |\vec{x}| + \sqrt{\tau})^{-i} \left( (|\vec{x}|-\bar{c}\tau)^2 + \tau + 1 \right)^{-m} d\tau\\
			&\le C (|\vec{x}|^2 + 1)^{-\frac{i}{2}-m} 
			\begin{cases}
				(1+|\vec{x}|/(2\bar{c}))^{-d+1} & \text{for } 0 \le d < 1 \\
				\ln(1+|\vec{x}|/(2\bar{c})) & \text{for } d=1 \\
				1 & \text{for } d>1
			\end{cases}\\
			& \quad + C(|\vec{x}|^2 + 1)^{-\frac{i+d+m}{2}+\frac{1}{4}} + C(|\vec{x}|^2 + 1)^{-\frac{i+d}{2}-m+1},
		\end{aligned}
	\end{equation}
	and
		\begin{equation} \label{tconv2}
		\begin{aligned}
			&\int_0^t (1+\tau)^{-d} \frac{H(\bar{c}\tau - |\vec{x}|)}{1 + (|\vec{x}| + \sqrt{\tau})^i(\bar{c}\tau - |\vec{x}| + \sqrt{\tau})^j} d\tau \\
			&\le C H(\bar{c}t - |\vec{x}|) (|\vec{x}| + 1)^{-d-i}	\begin{cases}
				\ln(1+2|\vec{x}|/\bar{c}) + (|\vec{x}|+1)^{-1} & \text{for } j=2, \\
				(|\vec{x}|+1)^{-\frac{j}{2}+1} & \text{for } j>2.
			\end{cases}
		\end{aligned}
	\end{equation}
	
\end{Lemma}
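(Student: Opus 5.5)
The plan is to prove both inequalities of Lemma~\ref{lemmains} by elementary real-variable estimates, splitting the time interval according to where the integrand peaks. Write $r=|\vec{x}|$.

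\emph{Proof of \eqref{tconv1}.} The factor $((r-\bar{c}\tau)^2+\tau+1)^{-m}$ concentrates near $\tau\approx r/\bar{c}$. I would split $[0,t]$ into three pieces:
\[
I_1=[0,\,r/(2\bar{c})],\qquad I_2=[r/(2\bar{c}),\,2r/\bar{c}],\qquad I_3=[2r/\bar{c},\,t],
\]
each intersected with $[0,t]$.

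On $I_1$ we have $|r-\bar{c}\tau|\ge r/2$ and $\sqrt{\tau}\lesssim 1+r$. Hence the last factor is bounded by $C(r^2+1)^{-m}$, and $(1+r+\sqrt{\tau})^{-i}\le C(r^2+1)^{-i/2}$. What remains is $\int_0^{r/(2\bar{c})}(1+\tau)^{-d}\,d\tau$, which gives the three-case factor: $(1+r/(2\bar{c}))^{1-d}$, a logarithm, or a constant, according as $d<1$, $d=1$, $d>1$. This produces the first term on the right.

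On $I_2$ we have $1+\tau\sim 1+r$, so $(1+\tau)^{-d}(1+r+\sqrt{\tau})^{-i}\le C(1+r)^{-d-i}$. Also $\tau+1\sim r+1$, so
\[
\int_{I_2}\bigl((r-\bar{c}\tau)^2+\tau+1\bigr)^{-m}d\tau \le C\int_{\mathbb{R}}\bigl(\sigma^2+r+1\bigr)^{-m}d\sigma \le C(r+1)^{-m+1/2}.
\]
The product is $C(r^2+1)^{-(i+d+m)/2+1/4}$, which is the second term.

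On $I_3$ we have $\bar{c}\tau-r\ge \bar{c}\tau/2$, so the last factor is at most $C(1+\tau)^{-2m}$. Then
\[
\int_{2r/\bar{c}}^{\infty}(1+\tau)^{-d-2m}(1+r)^{-i}\,d\tau \le C(1+r)^{-i-d-2m+1},
\]
using $m>1$ (so $d+2m>1$). This is $(r^2+1)^{-(i+d)/2-m+1/2}$, which is dominated by the third term. When $r\lesssim1$ all pieces reduce to $\int_0^\infty(1+\tau)^{-2m}d\tau<\infty$, and the bound is consistent.

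\emph{Proof of \eqref{tconv2}.} The Heaviside factor restricts $\tau$ to $\tau\ge r/\bar{c}$, and the integral vanishes unless $\bar{c}t\ge r$; this explains the prefactor $H(\bar{c}t-r)$. Set $\sigma=\bar{c}\tau-r\ge0$ and split into $\sigma\le r$ and $\sigma\ge r$.

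For $\sigma\le r$ we have $\tau\sim 1+r$, so the weights contribute $(1+r)^{-d-i}$. We are left with
\[
\int_0^{r}\frac{d\sigma}{1+(\sigma+\sqrt{r})^{j}}.
\]
For $j=2$ this is $\lesssim \int_0^r(\sigma+\sqrt{r}+1)^{-2}d\sigma\lesssim (1+r)^{-1/2}$, which is bounded by $\ln(1+2r/\bar{c})+(1+r)^{-1}$ for large $r$. For small $r$ I would check it directly: the integration range has length $\le r$, giving a bound $\lesssim r\lesssim \ln(1+2r/\bar{c})$. For $j>2$ the same integral is $\lesssim (1+r)^{(1-j)/2}\le (1+r)^{-j/2+1}$.

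For $\sigma\ge r$ we have $\tau\sim\sigma$ and $\sqrt{\tau}\lesssim\sigma$. The integrand is then at most $(1+\sigma)^{-d}(r+\sqrt{\sigma})^{-i}(\sigma+\sqrt{\sigma})^{-j}$. I would bound $(r+\sqrt{\sigma})^{-i}\le (1+r)^{-i}$ when $r\ge1$, and $(1+\sigma)^{-d}\le(1+r)^{-d}$. This leaves $\int_r^\infty(1+\sigma)^{-j}d\sigma\lesssim(1+r)^{1-j}$, which is within both cases since $j\ge2$. Near $r=0$ the denominator "$1+$" keeps everything bounded.

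\emph{Expected obstacle.} The main delicacy is the bookkeeping in the regime $r\lesssim1$ and the borderline case $j=2$, where the claimed right-hand side contains a logarithm. I would verify these cases carefully by direct computation rather than through the asymptotic splitting.
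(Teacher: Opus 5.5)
Your proposal is correct and follows essentially the same route as the paper. For \eqref{tconv1} you use the identical three-interval split $[0,r/(2\bar{c})]$, $[r/(2\bar{c}),2r/\bar{c}]$, $[2r/\bar{c},t]$. For \eqref{tconv2} you use the same split at $\tau=2r/\bar{c}$ (your $\sigma=r$) after restricting to $\tau\ge r/\bar{c}$. Your bounds on the last piece of \eqref{tconv1} and on the near-front piece of \eqref{tconv2} are slightly sharper than the paper's; this is harmless, since they are dominated by the stated right-hand sides.
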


The next lemma is prepared for the estimates of initial propagations:
\begin{Lemma}\label{lemmain}
Let $K$, $\mu$, $\bar{c}$ and $\bar{c}_f$ be positive constants. For $j=0,1,2$ and $d>3/2$, it holds that
\begin{align} \label{iniconv1}
		&e^{-\frac{(|\vec{x}|-\bar{c}t)^2}{K(1+t)}} \underset{\vec{x}}{\ast} (|\vec{x}|^2+1)^{-d} 
		 \le C  e^{-\frac{(|\vec{x}|-\bar{c}t)^2}{4K(1+t)}} + C(1+t)^{\frac{5}{2}} \left( (|\vec{x} | - \bar{c} t)^2 + t+1\right)^{-d},\\
 \label{iniconv2}
		& |\vec{x}|^{-j} e^{-\mu(|\vec{x}|+t)} \underset{\vec{x}}{\ast} (|\vec{x}|^2+1)^{-d} \le C e^{-\mu t}(|\vec{x}|^2+1)^{-d} ,\\
\label{iniconv3}
		 & e^{-\mu t} \omega(\vec{x},t;\bar{c}_f) \underset{\vec{x}}{\ast} (|\vec{x}|^2+1)^{-d} \le C e^{-\mu t/4} (|\vec{x}|^2+1)^{-d}.
\end{align}
Here, $\omega(\vec{x},t;\bar{c}_f)$ is defined in \eqref{wwt}.
\end{Lemma}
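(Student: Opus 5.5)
We treat the three estimates of Lemma~\ref{lemmain} separately. In each case we split the convolution integral according to whether the integration variable $\vec{y}$ lies near $\vec{x}$ or far from it.

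\emph{Proof of \eqref{iniconv1}.} Write the convolution as
\[
\int_{\mathbb{R}^3} e^{-\frac{(|\vec{x}-\vec{y}|-\bar{c}t)^2}{K(1+t)}}(|\vec{y}|^2+1)^{-d}\,d\vec{y}
\]
and split into the regions $|\vec{y}|\le \frac12\bigl||\vec{x}|-\bar{c}t\bigr| $ and $|\vec{y}|> \frac12\bigl||\vec{x}|-\bar{c}t\bigr|$.

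In the first region the triangle inequality gives $\bigl||\vec{x}-\vec{y}|-\bar{c}t\bigr|\ge \frac12\bigl||\vec{x}|-\bar{c}t\bigr|$. Hence the Gaussian factor is bounded by $e^{-\frac{(|\vec{x}|-\bar{c}t)^2}{4K(1+t)}}$. The remaining factor $(|\vec{y}|^2+1)^{-d}$ is integrable over $\mathbb{R}^3$ because $d>3/2$, so this region contributes $Ce^{-\frac{(|\vec{x}|-\bar{c}t)^2}{4K(1+t)}}$.

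In the second region we bound $(|\vec{y}|^2+1)^{-d}\le C\bigl((|\vec{x}|-\bar{c}t)^2+1\bigr)^{-d}$. The Gaussian is integrated in polar coordinates centred at $\vec{x}$. Its mass is concentrated on a spherical shell of radius $\bar{c}t$ and thickness $\sqrt{1+t}$, so
\[
\int_{\mathbb{R}^3} e^{-\frac{(|\vec{z}|-\bar{c}t)^2}{K(1+t)}}\,d\vec{z}\le C(1+t)^{2}\sqrt{1+t}=C(1+t)^{5/2}.
\]
Finally we replace $(|\vec{x}|-\bar{c}t)^2+1$ by $(|\vec{x}|-\bar{c}t)^2+t+1$. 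Where $(|\vec{x}|-\bar{c}t)^2\ge t$ the two quantities are comparable. Where $(|\vec{x}|-\bar{c}t)^2< t$ we have $e^{-\frac{(|\vec{x}|-\bar{c}t)^2}{4K(1+t)}}\ge c>0$, so in that range the whole convolution, which is at most $C$, is already absorbed by the first term. This proves \eqref{iniconv1}.

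\emph{Proof of \eqref{iniconv2}.} Since $j\le 2<3$, the kernel $|\vec{x}|^{-j}e^{-\mu|\vec{x}|}$ is integrable. Using $|\vec{x}-\vec{y}|\ge |\vec{x}|-|\vec{y}|$, we have
\[
(|\vec{x}|^2+1)^{d}\le C\bigl(1+|\vec{x}-\vec{y}|\bigr)^{2d}(|\vec{y}|^2+1)^{d}.
\]
The polynomial weight $(1+|\vec{x}-\vec{y}|)^{2d}$ is absorbed by the exponential factor in the kernel, at the cost of shrinking $\mu$ to a smaller constant. Integrating then gives $Ce^{-\mu t}(|\vec{x}|^2+1)^{-d}$, with the constant in the exponent adjusted, which is \eqref{iniconv2}.

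\emph{Proof of \eqref{iniconv3}, the main obstacle.} The kernel $\omega$ is a spherical measure, so a sup bound does not come from integrability. We use Kirchhoff's formula \eqref{wg}:
\[
\omega\underset{\vec{x}}{\ast}g=\frac{t}{4\pi}\int_{|\vec{y}|=1}g(\vec{x}+\bar{c}_f t\vec{y})\,dS(\vec{y}),\qquad g(\vec{x})=(|\vec{x}|^2+1)^{-d}.
\]
For every point on the sphere, $|\vec{x}+\bar{c}_ft\vec{y}|\ge |\vec{x}|-\bar{c}_f t$. Hence
\[
(|\vec{x}|^2+1)^{d}\,g(\vec{x}+\bar{c}_ft\vec{y})\le C(1+\bar{c}_ft)^{2d}.
\]
The resulting polynomial growth $t(1+t)^{2d}$ is absorbed by $e^{-3\mu t/4}$, leaving $Ce^{-\mu t/4}(|\vec{x}|^2+1)^{-d}$. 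This step works because $\omega\ge 0$ is a positive measure in three dimensions, so no loss arises from cancellation. The only care needed is to use the uniform $t\,(1+t)^{2d}$ bound rather than an averaged one.
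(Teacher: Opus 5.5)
Your proof is correct and follows essentially the paper's argument: the same split at $|\vec{y}|=\tfrac12\bigl||\vec{x}|-\bar{c}t\bigr|$ with the $\sqrt{t}$ case distinction and the $(1+t)^{5/2}$ shell-volume bound for \eqref{iniconv1}, integrability of the kernel for \eqref{iniconv2}, and Kirchhoff's formula with the time exponential absorbing the polynomial loss for \eqref{iniconv3} (your Peetre-type bound plays the role of the paper's case split at $|\vec{x}|=2\bar{c}_f t$). One small correction: in \eqref{iniconv2} nothing needs adjusting, because $e^{-\mu t}$ factors out of the spatial convolution and $\int_{\mathbb{R}^3}|\vec{z}|^{-j}e^{-\mu|\vec{z}|}(1+|\vec{z}|)^{2d}\,d\vec{z}<\infty$, so you get exactly $Ce^{-\mu t}(|\vec{x}|^2+1)^{-d}$ with the same $\mu$ as stated.
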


Finally, all the remaining lemmas are for the nonlinear wave interactions.  One needs to consider the spatial and temporal convolution of the terms from the Green’s function in \eqref{g1}--\eqref{leading} with the nonlinear ansatz in \eqref{f0}--\eqref{f01im}:
\begin{Lemma}\label{lemmainker}
	Let $\bar{c}, m$ and $K$ be positive constants. It holds that
	\begin{equation} \label{heke2}
		\begin{aligned}
			&(1+t)^{-2-\frac{m}{2}} e^{-\frac{(|\vec{x}| - \bar{c} t)^2}{K(1 + t)}} \underset{(\vec{x},t)}{\ast} (1+t)^{-\frac{1}{2}} e_1^2(\vec{x},t)\\
			&\le C (1+t)^{-\frac{\min\{m,5\}}{2}} \left( (|\vec{x}| - \bar{c} t)^2 + t+1\right)^{-2} \\
			&\quad + C H(\bar{c}t - |\vec{x}|)\left[(1+t)^{-2-\frac{m}{2}} \left( (|\vec{x}| - \bar{c} t)^2 + t+1\right)^{-\frac{1}{2}} + (1+t)^{-\frac{9}{2}} 
			\begin{cases}
				(1+t)^{\frac{3-m}{2}} & \text{for } 0 \le m < 3 \\
				\ln(1+t) & \text{for } m=3 \\
				1 & \text{for } m > 3
			\end{cases} \right], 
			\end{aligned}
			\end{equation}
		\begin{equation} \label{heke20}
		\begin{aligned}	
			& (1+t)^{-2-\frac{m}{2}} e^{-\frac{(|\vec{x}| - \bar{c} t)^2}{K(1 + t)}} \underset{(\vec{x},t)}{\ast} (1+t)^{-\frac{1}{2}} (|\vec{x}|^2 + 1)^{-\frac{9}{4}} \left( (|\vec{x}| - \bar{c} t)^2 + t+1\right)^{-2} \\
			& \le C(1+t)^{-\frac{\min\{m,5\}}{2}} \left( (|\vec{x}| - \bar{c} t)^2 + t+1\right)^{-2}  \\
			&\quad + C H(\bar{c}t - |\vec{x}|)\left[(1+t)^{-\frac{9}{2}} + (1+t)^{-1-\frac{m}{2}} \left( (|\vec{x}| - \bar{c} t)^2 + t+1\right)^{-\frac{9}{4}} \right],\\[1ex] 
			& (1+t)^{-2-\frac{m}{2}} e^{-\frac{(|\vec{x}| - \bar{c} t)^2}{K(1 + t)}} \underset{(\vec{x},t)}{\ast} (1+t)^{-\frac{1}{2}} (|\vec{x}|^2 + 1)^{-\frac{9}{4}} \frac{H(\bar{c}t - |\vec{x}|)}{1 + (|\vec{x}| + \sqrt{t})(\bar{c}t - |\vec{x}| + \sqrt{t} )^2 } \\
			&\le C (1+t)^{-2-\frac{\min\{m,2\}}{2}}e^{-\frac{(|\vec{x}| - \bar{c} t)^2}{C_{K}(1 + t)}} + C H(4\bar{c}\sqrt{t}-|\vec{x}|)(1+t)^{-3} \\
			&\quad + C H(\bar{c}t - |\vec{x}|) \left[(1+t)^{-2-\frac{m}{2}} \left( (|\vec{x}| - \bar{c} t)^2 + t+1\right)^{-1} + 
			(1+t)^{-\frac 72-\frac{\min\{m,7\}}{4}}\right],  \\[1ex]
			\end{aligned}
			\end{equation}
		\begin{equation} \label{heke21}
		\begin{aligned}	
			& (1+t)^{-2 -\frac{m}{2}} e^{-\frac{(|\vec{x}| - \bar{c} t)^2}{K(1 + t)}} \underset{(\vec{x},t)}{\ast} \left( (|\vec{x}| - \bar{c} t)^2 + t+1\right)^{-2} e_2(\vec{x},t) \\
			&\le C(1+t)^{-\frac{\min\{m,6\}}{2}}  \left( (|\vec{x}| - \bar{c} t)^2 + t+1\right)^{-2}  \\
			&\quad + C H(\bar{c}t - |\vec{x}|) \left[(1+t)^{-2-\frac{m}{2}} \left( (|\vec{x}| - \bar{c} t)^2 + t+1\right)^{-\frac{3}{4}} + (1+t)^{-5} 
			\begin{cases}
				(1+t)^{\frac{3-m}{2}} & \text{for } 0 \le m < 3 \\
				\ln(1+t) & \text{for } m=3 \\
				1 & \text{for } m > 3
			\end{cases} \right].
		\end{aligned}
	\end{equation}
	Here, $C_{K}$ denotes a positive constant depending on $K$ and
	\begin{equation} \label{e1e2}
	\begin{aligned}
		e_1(\vec{x}, t) &:= \left((|\vec{x} | - \bar{c} t)^2 + t+1 \right)^{-2} +  \frac{H(\bar{c}t - |\vec{x}|)}{1 + (|\vec{x}| + \sqrt{t})(\bar{c}t - |\vec{x}| + \sqrt{t} )^2 }, \\
		e_2(\vec{x}, t) &:= \bigl(1 + |\vec{x}| + \sqrt{t} \, \bigr)^{-2} \left( (|\vec{x}| -\bar{c}t )^2 + t + 1 \right)^{-1}.
	\end{aligned}
\end{equation}	
\end{Lemma}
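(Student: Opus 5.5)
The plan is to write each space–time convolution explicitly as
\[
\int_0^t\int_{\mathbb{R}^3}(1+t-\tau)^{-2-\frac m2}e^{-\frac{(|\vec{x}-\vec{y}|-\bar c(t-\tau))^2}{K(1+t-\tau)}}\,\Phi(\vec y,\tau)\,d\vec y\,d\tau,
\]
where $\Phi$ is the ansatz under consideration. We then split the time integral into $[0,t/2]$ and $[t/2,t]$, and split space into the region near the characteristic cone $\{|\vec{y}|\approx\bar c\tau\}$ and the interior region $\{|\vec{y}|\le\bar c\tau-\sqrt{\tau}\}$. The kernel is radial in $\vec{x}-\vec{y}$ and concentrated on a spherical shell of radius $\bar c(t-\tau)$ and width $\sqrt{1+t-\tau}$. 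Each spatial integral is therefore a shell average of $\Phi$. We compute it in polar coordinates centered at $\vec{x}$, reduce it to a one-dimensional integral in the radial variable $r=|\vec{y}|$, and pick up the Jacobian factor $r^2/(|\vec{x}|\,\bar c(t-\tau))$ (or its reciprocal) in the usual way.

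\emph{Cone contributions.} On $[0,t/2]$ the kernel is essentially $(1+t)^{-2-m/2}$ times a Gaussian shell. We pair it with the portion of $\Phi$ lying near its own cone: the $((|\vec{y}|-\bar c\tau)^2+\tau+1)^{-2}$ factor in $e_1$, $e_2$ or the weighted ansatz. Two diffusion waves moving at the same speed $\bar c$ interact like one-dimensional heat kernels along the radial direction. This yields the term $(1+t)^{-\min\{m,\cdot\}/2}((|\vec{x}|-\bar ct)^2+t+1)^{-2}$; the cap $\min\{m,5\}$ or $\min\{m,6\}$ appears once $\tau\sim t$ dominates and the kernel's extra decay can no longer be used. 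On $[t/2,t]$ we freeze $\Phi(\cdot,\tau)\approx\Phi(\cdot,t)$ and integrate the kernel in time. The shell radius then runs over $[0,\bar ct/2]$, so the kernel sweeps the interior of the cone. This is the source of the $H(\bar ct-|\vec{x}|)$ terms carrying powers $((|\vec{x}|-\bar ct)^2+t+1)^{-1/2}$, $^{-3/4}$, $^{-1}$ or $^{-9/4}$; each power is one order weaker than the ansatz's cone decay, because one Gaussian width is traded for the time integral. In \eqref{heke20} the extra factor $(|\vec{y}|^2+1)^{-9/4}$ localizes the source near the origin. There we use \eqref{iniconv1}-type bounds: the source acts like a point mass of size $(1+\tau)^{-1/2}\times$(its cone profile at $|\vec y|\lesssim1$), and this gives the $(1+t)^{-1-m/2}((|\vec{x}|-\bar ct)^2+t+1)^{-9/4}$ term.

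\emph{Interior (Heaviside) contributions.} These come from the ansatz parts of the form $H(\bar c\tau-|\vec{y}|)/(1+(|\vec{y}|+\sqrt\tau)(\bar c\tau-|\vec{y}|+\sqrt\tau)^2)$. For fixed $\tau$, the shell average of this function is bounded by $(1+\tau)^{-1}(1+t-\tau)^{-1}\min\{\cdots\}$ after integrating the radial profile. The $1/|\vec{y}|$ singularity is integrable against the shell measure, and the factor $(\bar c\tau-r+\sqrt\tau)^{-2}$ integrates to $\tau^{-1/2}$ across the boundary layer. Multiplying by the kernel amplitude and integrating in $\tau$ gives $(1+t)^{-9/2}$ or $(1+t)^{-5}$ times $\int_0^{t}(1+\tau)^{(1-m)/2}\cdots d\tau$. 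This integral produces the trichotomy $(1+t)^{(3-m)/2}$, $\ln(1+t)$, $1$ according as $m<3$, $m=3$, $m>3$. In the second estimate of \eqref{heke20} the weight $(|\vec y|^2+1)^{-9/4}$ forces $|\vec{y}|=O(1)$, where the interior ansatz is $\sim(1+\tau)^{-3/2}$. The kernel evaluated at $\vec{y}\approx0$ then gives the Gaussian term $(1+t)^{-2-\min\{m,2\}/2}e^{-(|\vec{x}|-\bar ct)^2/(C_K(1+t))}$ from $\tau\le t/2$. From $\tau$ near $t$, where the shell radius is $O(\sqrt t)$, it gives the $H(4\bar c\sqrt t-|\vec{x}|)(1+t)^{-3}$ term and the residual $(1+t)^{-7/2-\min\{m,7\}/4}$.

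The main obstacle is the interior Heaviside interaction. The two regions there are the cone shell of the kernel and the boundary layer $\bar c\tau-|\vec{y}|\lesssim\sqrt\tau$ of the source, which moves at the same speed. Near $\tau\approx t/2$ they intersect tangentially, and a crude sup bound loses a factor $\sqrt t$. I would handle this by performing the angular integration exactly first, which reduces the problem to a one-dimensional integral in $r$ and $\tau$. I would then change variables to $\xi=\bar c\tau-r$, so that the kernel Gaussian becomes a Gaussian in $\xi-(\bar ct-|\vec x|)$ of width $\sqrt t$. The borderline logarithms are tracked by splitting at $\xi\sim\sqrt\tau$.
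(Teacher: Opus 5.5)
There is a genuine gap. The proposal is a heuristic outline, and the mechanism it gives for the late-time interval is wrong.

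\textbf{Late times and the interior terms.} On $[t/2,t]$ you cannot freeze $\Phi(\cdot,\tau)\approx\Phi(\cdot,t)$. The cone part of the source sits on $|\vec{y}|\approx\bar{c}\tau$ with width $\sqrt{\tau}$, and this shell moves a distance $\bar{c}t/2$ across the interval.

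Your attribution is also inconsistent with the statement. The interior terms $H(\bar{c}t-|\vec{x}|)(1+t)^{-2-\frac{m}{2}}\bigl((|\vec{x}|-\bar{c}t)^2+t+1\bigr)^{-\frac12}$ (and the analogous $-\frac34$, $-1$ ones) carry the kernel amplitude at $t-\tau\sim t$. They must therefore come from early times, not from $[t/2,t]$, where $(1+t-\tau)^{-2-\frac{m}{2}}$ is $O(1)$ near $\tau=t$.

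In the paper these terms come from $\tau\in[(t-|\vec{x}|/\bar{c}+\sqrt{t})/16,\,t/8]$. There the kernel is bounded by its supremum, and one uses $\|\Phi(\cdot,\tau)\|_{L^1}\lesssim(1+\tau)^{-2}$, which follows from uniform bounds such as $\int_{\mathbb{R}^3}(1+\tau)^{-\frac12}\bigl((|\vec{y}|-\bar{c}\tau)^2+\tau+1\bigr)^{-2}d\vec{y}\le C$. The lower time cutoff produces the factor $(\bar{c}t-|\vec{x}|+\sqrt{t+1})^{-1}$.

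Below that cutoff one splits at $|\vec{y}|=(\bar{c}t-|\vec{x}|+\bar{c}\sqrt{t})/8$. Inside, the kernel is $\lesssim e^{-(|\vec{x}|-\bar{c}t)^2/(8K(t+1))}$. Outside, $|\vec{y}|-\bar{c}\tau\gtrsim\bar{c}t-|\vec{x}|+\sqrt{t}$, which makes the source small. For $|\vec{x}|\ge\bar{c}t$ the split is at $|\vec{y}|-\bar{c}\tau=(|\vec{x}|-\bar{c}t)/2$.

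The late interval $[t/8,t]$ is handled crudely. One uses $\|\Phi(\cdot,\tau)\|_{L^\infty}\lesssim(1+\tau)^{-\frac92}$ (resp.\ $(1+\tau)^{-5}$ in \eqref{heke21}) against $\int_{\mathbb{R}^3}(1+s)^{-2-\frac{m}{2}}e^{-(|\vec{z}|-\bar{c}s)^2/(K(1+s))}d\vec{z}\lesssim(1+s)^{\frac12-\frac{m}{2}}$, with $s=t-\tau$. This is where the $m<3$, $m=3$, $m>3$ trichotomy comes from, for both pieces of $e_1^2$ and not only the Heaviside one.

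With these $L^\infty$--$L^1$ splittings you need neither exact angular integration nor the substitution $\xi=\bar{c}\tau-r$. The tangential loss you call the main obstacle does not obstruct the lemma, because the stated bounds already contain exactly what the crude estimate produces.

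\textbf{The second estimate in \eqref{heke20}.} This is the one place where the crude late-time bound genuinely fails, and your sketch has a wrong magnitude there. At $|\vec{y}|=O(1)$ the source $(1+\tau)^{-\frac12}H(\bar{c}\tau-|\vec{y}|)/\bigl(1+(|\vec{y}|+\sqrt{\tau})(\bar{c}\tau-|\vec{y}|+\sqrt{\tau})^2\bigr)$ is of size $(1+\tau)^{-3}$, not $(1+\tau)^{-\frac32}$. With your value, $\tau$ near $t$ and $|\vec{x}|\lesssim1$ would contribute $\sim(1+t)^{-\frac32}$, which is incompatible with the claimed $H(4\bar{c}\sqrt{t}-|\vec{x}|)(1+t)^{-3}$.

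Even with the correct size, $\|\Phi\|_{L^\infty}\lesssim(1+\tau)^{-3}$ only gives $(1+t)^{-3}$ times the trichotomy, so you must split further:
\begin{itemize}
\item On $|\vec{y}|\ge\bar{c}\tau/2$ one has $\Phi\lesssim(1+\tau)^{-7}$.
\item On $|\vec{y}|\le\bar{c}\tau/2$ and $\tau\in[t/8,t-\sqrt{t}]$, use $\|\Phi\|_{L^1}\lesssim(1+\tau)^{-3}$ against the kernel supremum, which gives $(1+t)^{-\frac72-\frac{m}{4}}$.
\item On $|\vec{y}|\le\bar{c}\tau/2$ and $\tau\in[t-\sqrt{t},t]$, split $|\vec{x}|\gtrless4\bar{c}\sqrt{t}$ and $|\vec{y}|\gtrless2\bar{c}\sqrt{t}$.
\end{itemize}

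\textbf{The first estimate in \eqref{heke20}.} Here \eqref{iniconv1} is not the right tool. Its second term carries a factor $(1+t)^{\frac52}$. Under the kernel amplitude this gives $(1+t)^{\frac12-\frac{m}{2}}\bigl((|\vec{x}|-\bar{c}t)^2+t+1\bigr)^{-\frac94}$ from small $\tau$, which for $m\le5$ exceeds the claimed bound near the cone by $(1+t)^{\frac14}$.

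The weight $(|\vec{y}|^2+1)^{-\frac94}$ also does not localize the source at the origin. The shell $|\vec{y}|\approx\bar{c}\tau$ carries the same $L^1$ mass $(1+\tau)^{-\frac92}$ as the region near the origin.

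In the paper the $(1+t)^{-1-\frac{m}{2}}(\cdots)^{-\frac94}$ term comes from the far region $|\vec{y}|\ge(\bar{c}t-|\vec{x}|+\bar{c}\sqrt{t})/8$ at early times. There $(|\vec{y}|^2+1)^{-\frac94}$ is bounded by its value at that radius, and the remaining factor has bounded $L^1$ norm.
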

By virtue of the inequality
$$e_2(\vec{x},t) \cdot \frac{H(\bar{c}t - |\vec{x}|)}{1 + (|\vec{x}| + \sqrt{t})(\bar{c}t - |\vec{x}| + \sqrt{t})^2 } \le C(1+t)^{-\frac{1}{2}} e_1^2(\vec{x},t),$$
we omit the extra convolution computation of this term: $H(\bar{c}t - |\vec{x}|) e_2(\vec{x},t)\bigl[1 + (|\vec{x}| + \sqrt{t})(\bar{c}t - |\vec{x}| + \sqrt{t} )^2 \bigr]^{-1}$ in Lemma~\ref{lemmainker}.

\begin{Lemma}\label{lemmainexp}
Let $D_0,\bar{c}$ be positive constants and $e_1,e_2$ be defined in \eqref{e1e2}. For $i \ge 0$, $m \ge 0$, and $j=0,1,2$, it holds that
\begin{equation}\label{exponxt}
	\begin{aligned}
		& |\vec{x}|^{-j} e^{-(|\vec{x}|+t)/D_0} \underset{(\vec{x},t)}{\ast} (1+t)^{-i} e_1^m(\vec{x},t)  \\
		&\le C \left[ (1+t)^{-i} \left( (|\vec{x}| - \bar{c} t)^2 + t + 1 \right)^{-2m} + H(\bar{c}t - |\vec{x}|)(1+t)^{-i-2m} \right], 
	\end{aligned}
\end{equation}
\begin{equation}\label{exponxt0}
	\begin{aligned}
		& |\vec{x}|^{-j} e^{-(|\vec{x}|+t)/D_0} \underset{(\vec{x},t)}{\ast} (1+t)^{-\frac{1}{2}} (|\vec{x}|^2 + 1)^{-\frac{9}{4}} e_1(\vec{x},t) \\ 
		&\le C \Bigl[ (1+t)^{-\frac{1}{2}} (|\vec{x}|^2+1)^{-\frac{9}{4}} \left( (|\vec{x}| - \bar{c} t)^2 + t + 1\right)^{-2} \\
		& \quad \quad \quad + H(\bar{c}t - |\vec{x}|) \left( (|\vec{x}|^2+1)^{-\frac{9}{4}} (1+t)^{-3} + (1+t)^{-\frac{9}{2}} \right) \Bigr], 
	\end{aligned}
\end{equation}
\begin{equation}\label{exponxt1}
	\begin{aligned}
		& |\vec{x}|^{-j} e^{-(|\vec{x}|+t)/D_0} \underset{(\vec{x},t)}{\ast} \left( (|\vec{x}| - \bar{c} t)^2 + t + 1\right)^{-2} e_2(\vec{x},t) \\ 
		&\le C \left[ (|\vec{x}|+\bar{c}t+1)^{-2} \left( (|\vec{x}| - \bar{c} t)^2 + t + 1\right)^{-3} + H(\bar{c}t - |\vec{x}|)(1+t)^{-5} \right].
	\end{aligned}
\end{equation}
\end{Lemma}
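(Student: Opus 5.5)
The kernel $|\vec{x}|^{-j}e^{-(|\vec{x}|+t)/D_0}$ is integrable in $\vec{x}$ and decays exponentially in $t$, so the plan is to show that the space–time convolution essentially reproduces the ansatz at the point $(\vec{x},t)$. We write
$$\int_0^t\int_{\mathbb{R}^3}|\vec{x}-\vec{y}|^{-j}e^{-(|\vec{x}-\vec{y}|+t-\tau)/D_0}\,F(\vec{y},\tau)\,d\vec{y}\,d\tau,$$
with $F$ one of the three ansatz functions on the right-hand sides of \eqref{exponxt}--\eqref{exponxt1}. We then split the domain into a near region $\{|\vec{x}-\vec{y}|\le (t-\tau+1)/(2D_0')\ \text{and}\ t-\tau\le t/2\}$ and its complement.

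\textbf{Far region.} On the complement, either $t-\tau\ge t/2$ or $|\vec{x}-\vec{y}|$ is large. Half of the exponential then gives a factor $e^{-(|\vec{x}-\vec{y}|+t-\tau)/(2D_0)}$, which dominates any polynomial loss; here we use that $F\le C$ and that $|\vec{x}|^{-j}$, $j\le2$, is locally integrable in $\mathbb{R}^3$. Concretely, if $t-\tau\ge t/2$ we get $e^{-t/(4D_0)}$. If instead $|\vec{x}-\vec{y}|\ge c(1+t-\tau)$, we use $e^{-|\vec{x}-\vec{y}|/(2D_0)}$ together with the Peetre-type inequalities
$$(1+|\vec{x}|)^{a}\le C(1+|\vec{y}|)^{a}(1+|\vec{x}-\vec{y}|)^{|a|},\qquad \left((|\vec{x}|-\bar{c}t)^2+t+1\right)^{-1}\le C\left((|\vec{y}|-\bar{c}\tau)^2+\tau+1\right)^{-1}\left(1+|\vec{x}-\vec{y}|+t-\tau\right)^{2}.$$
The polynomial growth factors are absorbed by the exponential, so this region is bounded by $C e^{-(|\vec{x}|+t)/C}$ plus $C\,F(\vec{x},t)$. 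The term $e^{-(|\vec{x}|+t)/C}$ is trivially dominated by the stated right-hand sides.

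\textbf{Near region.} Here $|\vec{y}-\vec{x}|\lesssim 1+t-\tau$ and $\tau\in[t/2,t]$, so $1+\tau\sim 1+t$. The same inequalities give $F(\vec{y},\tau)\le C F(\vec{x},t)\,(1+|\vec{x}-\vec{y}|+t-\tau)^{N}$, and integrating the kernel against this factor yields a constant. The only care needed concerns the Heaviside factors: $H(\bar{c}\tau-|\vec{y}|)$ does not imply $H(\bar{c}t-|\vec{x}|)$. When $|\vec{x}|>\bar{c}t$ but $|\vec{y}|\le\bar{c}\tau$, we have $|\vec{x}|-\bar{c}t\le |\vec{x}-\vec{y}|$. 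The inner term in $e_1$ is then at most $C(1+t)^{-3/2}$, which in this situation is bounded by $C(1+|\vec{x}-\vec{y}|+t-\tau)^{4}\left((|\vec{x}|-\bar{c}t)^2+t+1\right)^{-2}$ after using $\sqrt{t}$ comparisons; the polynomial factor is again absorbed by the exponential. This transfers the inner contributions either to the $H(\bar{c}t-|\vec{x}|)$ terms or to the diffusion-wave terms.

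\textbf{Checking the specific right-hand sides.} We expand $e_1^m\le C\left[((|\vec{x}|-\bar{c}t)^2+t+1)^{-2m}+H\,(1+(|\vec{x}|+\sqrt{t})(\bar{c}t-|\vec{x}|+\sqrt{t})^2)^{-m}\right]$. Inside the cone, the second piece is at most $(1+t)^{-3m/2}$. We then need to confirm that the paper's coarser bound $H(1+t)^{-i-2m}$ is what results; if it is not literally attainable, the bound follows as stated after the above absorption, using $\bar{c}t-|\vec{x}|\ge\sqrt{t}$ or a dominant diffusion-wave term. For \eqref{exponxt0} the weight $(|\vec{x}|^2+1)^{-9/4}$ is simply transferred pointwise. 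For \eqref{exponxt1}, the factor $(1+|\vec{x}|+\sqrt{t})^{-2}$ is comparable to $(|\vec{x}|+\bar{c}t+1)^{-2}$ away from the cone interior; inside the cone, $(1+t)^{-1}$ times the inner parts give the $H(1+t)^{-5}$ term.

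\textbf{Main obstacle.} The main obstacle is the bookkeeping of these Heaviside-boundary cases and of the exact powers in the inner region. The rest is the routine observation that an exponentially localized kernel acts as an identity up to constants.
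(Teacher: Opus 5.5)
Your overall mechanism, an exponentially localized kernel acting on weights that are slowly varying in the Peetre sense, is sound. It is a more systematic route than the paper's, which splits explicitly into $|\vec x|\gtrless\bar ct$ and $|\vec x-\vec y|\gtrless(|\vec x|-\bar ct)/2$ and then integrates $\int_0^t e^{-(t-\tau)/D_0}(1+\tau)^{-i-2m}\,d\tau$. Since both of your Peetre-type inequalities hold for all $(\vec y,\tau)$, you do not need the near/far split at all. As written, using $F\le C$ on the part $t-\tau\ge t/2$ throws away the decay in $|\vec x|$ that the right-hand side requires when $|\vec x|\gg t$.

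The genuine gap is in the inner (Heaviside) term, which is exactly the bookkeeping you postpone. Your bound $H(\bar c\tau-|\vec y|)\bigl[1+(|\vec y|+\sqrt\tau)(\bar c\tau-|\vec y|+\sqrt\tau)^2\bigr]^{-1}\le C(1+\tau)^{-3/2}$ is too weak, and with it your transfer step is false. Take $\tau=t$, $|\vec x|=\bar ct+1$ and $\vec y=\bar ct\,\vec x/|\vec x|$, so that $|\vec x-\vec y|=1$. You would need $(1+t)^{-3/2}\le C\,2^4(t+2)^{-2}$, which fails for large $t$, and the kernel offers no help at distance $1$. The missing ingredient is the elementary lower bound $(|\vec y|+\sqrt\tau)(\bar c\tau-|\vec y|+\sqrt\tau)^2\ge c\,\tau^2$ on $|\vec y|\le\bar c\tau$. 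If $|\vec y|\ge\bar c\tau/2$, the first factor is $\ge\bar c\tau/2$ and the squared factor is $\ge\tau$; otherwise the squared factor is $\ge\bar c^2\tau^2/4$. This gives the inner term $\le C(1+\tau)^{-2}$, and hence $\le C(1+\tau)^{-2m}$ for its $m$-th power; this is what the paper uses. It makes your transfer work, because $(1+t)\le(1+\tau)(1+t-\tau)$ and $|\vec x|-\bar ct\le|\vec x-\vec y|$. It also yields $H(\bar ct-|\vec x|)(1+t)^{-i-2m}$ directly. That bound is sharper, not coarser, than your $(1+t)^{-i-3m/2}$, so no hedging is needed.

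The same issue breaks \eqref{exponxt0}. Transferring $(|\vec y|^2+1)^{-9/4}$ pointwise to $(|\vec x|^2+1)^{-9/4}$ with any uniform bound on the inner term gives at best $(|\vec x|^2+1)^{-9/4}(1+t)^{-5/2}$ inside the cone, and only $(1+t)^{-2}$ with your exponent. At bounded $|\vec x|$ this exceeds the claimed $(|\vec x|^2+1)^{-9/4}(1+t)^{-3}+(1+t)^{-9/2}$. You must use the finer structure:
\begin{itemize}
\item for $|\vec y|\le\bar c\tau/2$ the inner term is $\le C(1+\tau)^{-5/2}$, which with the prefactor gives $(1+\tau)^{-3}(|\vec y|^2+1)^{-9/4}$;
\item for $\bar c\tau/2\le|\vec y|\le\bar c\tau$ the weight itself gives $(|\vec y|^2+1)^{-9/4}\le C(1+\tau)^{-9/2}$.
\end{itemize}
Alternatively, note that the inner term is itself Peetre-comparable inside the cone, and split at $|\vec x|=\bar ct/2$ at the end.

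Finally, in \eqref{exponxt1} the integrand has no Heaviside factor at all. Your phrase ``$(1+t)^{-1}$ times the inner parts'' would give only $(1+t)^{-4}$. The correct endpoint check splits at $|\vec x|=\bar ct/2$. For $|\vec x|\ge\bar ct/2$, $1+|\vec x|+\sqrt t\ge c(|\vec x|+\bar ct+1)$, which gives the first term. For $|\vec x|\le\bar ct/2$, the bound is $C(1+t)^{-1}(1+t)^{-6}$.
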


When computing the convolutions of $G_{0,1}^{s,*}(\vec{x},t)$ and $G_{0,2}^s(\vec{x},t)$ with the source terms, the following results are employed:
\iffalse
\begin{Lemma}\label{lemmainexp2}
	Let $\mu_2$ and $\bar{c}$ denote positive constants, and let $e_1,e_2$ be defined in \eqref{e1e2}. Then it holds for all $i \ge 0$ and $m \ge 0$ that
	\begin{equation} \label{expont}
		\begin{aligned}
			& e^{-\mu_2 t} \underset{t}{\ast} (1+t)^{-i} e_1^m(\vec{x},t) \le C\left[ (1+t)^{-i}\left((|\vec{x}|-\bar{c}t)^2+t+1\right)^{-2m} + H(\bar{c}t-|\vec{x}|)(1+t)^{-i-2m}\right], \\[0.5ex]
			& e^{-\mu_2 t} \underset{t}{\ast} (1+t)^{-\frac{1}{2}} (|\vec{x}|^2 + 1)^{-\frac{9}{4}} e_1(\vec{x},t) \\
			&\le C \Bigl[ (1+t)^{-\frac{1}{2}} (|\vec{x}|^2+1)^{-\frac{9}{4}} \left( (|\vec{x}| - \bar{c} t)^2 + t + 1\right)^{-2}\\
			&\quad \quad \quad + H(\bar{c}t - |\vec{x}|)\left( (|\vec{x}|^2+1)^{-\frac{9}{4}} (1+t)^{-3} + (1+t)^{-7} \right) \Bigr], \\[0.5ex]
			& e^{-\mu_2 t} \underset{t}{\ast} \left( (|\vec{x}| - \bar{c} t)^2 + t + 1\right)^{-2} e_2(\vec{x},t) \le C \left( (|\vec{x}|-\bar{c} t)^2 + t + 1\right)^{-2} e_2(\vec{x},t).
		\end{aligned}
	\end{equation}
\end{Lemma}
\fi

\begin{Lemma}\label{lemmainexpw}
Let $\mu,\bar{c}_f,\bar{c}$ be positive constants with $\bar{c}<\bar{c}_f$, $l,d\ge 0$, $e_1$ be defined in \eqref{e1e2}, and $\omega$ be given by \eqref{wwt} and \eqref{wg}, one has that
	\begin{equation} \label{exponwwt}
		\begin{aligned}
			& e^{-\mu t} \omega(\vec{x},t;\bar{c}_f) \underset{(\vec{x},t)}{\ast} (1+t)^{-l} (|\vec{x}|^2+1)^{-d} e_1(\vec{x},t) \\
			&\le C H(|\vec{x}|-\bar{c}t)(1+t)^{-l}(|\vec{x}|^2 + 1)^{-d}  \left((|\vec{x}|-\bar{c}t)^2+t+1\right)^{-2}\\
			& \quad +CH(\bar{c}t-|\vec{x}|)\left[ (|\vec{x}|^2+1)^{-d}(1+t)^{-l-\frac{5}{2}} + (1+t)^{-l-2-2d}\right],
			\end{aligned}
	\end{equation}
	\begin{equation} \label{exponwwt0}
		\begin{aligned}
			& e^{-\mu t} \omega(\vec{x},t;\bar{c}_f) \underset{(\vec{x},t)}{\ast} H(|\vec{x}| - \epsilon \sqrt{t} ) (1+t)^{-\frac{5}{2}} (|\vec{x}|^2 + 1)^{- \frac{9}{4}} \\
			&\le C(1+t)^{-\frac{5}{2}} \left[ H(|\vec{x}| - \bar{c} t) (|\vec{x}|^2+1)^{-\frac{9}{4}} + H(\bar{c}t - |\vec{x}|) (1+t)^{-\frac{9}{4}} \right], 
		\end{aligned}
	\end{equation}
	\begin{equation} \label{exponwwt1}
		\begin{aligned}
			& e^{-\mu t} \omega(\vec{x},t;\bar{c}_f) \underset{(\vec{x},t)}{\ast} H(\epsilon \sqrt{t} - |\vec{x}|) (1 + t)^{-3} \\
			&\le C (1+t)^{-3} \left[ H(|\vec{x}|-2\epsilon\sqrt{t}) e^{-\frac{\mu(|\vec{x}|+2\epsilon \sqrt{t})}{16\bar{c}_f}} + H(2\epsilon\sqrt{t}-|\vec{x}|) \right].
		\end{aligned}
	\end{equation}
\end{Lemma}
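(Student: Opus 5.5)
The statement to prove is Lemma~\ref{lemmainexpw}. I would start from Kirchhoff's formula, Corollary~\ref{kirch}. For a nonnegative source $F(\vec{x},t)$ it gives
\[
e^{-\mu t}\omega \underset{(\vec{x},t)}{\ast} F=\int_0^t e^{-\mu(t-\tau)}\frac{t-\tau}{4\pi}\int_{|\vec{y}|=1}F\bigl(\vec{x}+\bar{c}_f(t-\tau)\vec{y},\tau\bigr)\,dS(\vec{y})\,d\tau .
\]
Set $r:=t-\tau$. The weight $r e^{-\mu r}$ restricts the effective time range to $r\lesssim \log(1+t)$, up to an error $e^{-\mu r/2}$, and sampling on the sphere only moves the point by $\bar{c}_f r$. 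So the convolution behaves like the source evaluated near $(\vec{x},t)$. The error comes from points $\vec{x}'$ with $|\vec{x}'-\vec{x}|=\bar{c}_f r$, and it is paid for by exponential decay in $r$.

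\emph{Split of the time integral.} I would split at $r=t/2$. For $r\ge t/2$ the factor $e^{-\mu r}\le e^{-\mu t/4}e^{-\mu r/2}$ gives exponential decay in $t$. The spatial factor needs separate attention in this range. Since $|\vec{x}'|\ge |\vec{x}|-\bar{c}_f r$, I would use the crude bound $(|\vec{x}'|^2+1)^{-d}\le C(1+\bar{c}_f r)^{2d}(|\vec{x}|^2+1)^{-d}$. Spending part of $e^{-\mu r/2}$ on this polynomial loss leaves $Ce^{-\mu t/8}(|\vec{x}|^2+1)^{-d}$, which is dominated by every term on the right-hand sides. This is essentially the argument behind \eqref{iniconv3}. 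For $r\le t/2$ we have $\tau\sim t$, so $(1+\tau)^{-l}\sim(1+t)^{-l}$, and it remains to compare $e_1(\vec{x}',\tau)$ and $(|\vec{x}'|^2+1)^{-d}$ with their values at $(\vec{x},t)$.

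\emph{Proof of \eqref{exponwwt}.} I would use Peetre-type shifts: $\bigl((|\vec{x}'|-\bar{c}\tau)^2+\tau+1\bigr)^{-2}\le C(1+r)^{4}\bigl((|\vec{x}|-\bar{c}t)^2+t+1\bigr)^{-2}$, and the analogous bound for $(|\vec{x}'|^2+1)^{-d}$. The polynomial factors in $r$ are absorbed by $e^{-\mu r/2}$. This already gives the first term when $|\vec{x}|\ge\bar{c}t$.

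Inside the cone, $e_1$ contains the Heaviside part $H(\bar{c}\tau-|\vec{x}'|)/\bigl(1+(|\vec{x}'|+\sqrt{\tau})(\bar{c}\tau-|\vec{x}'|+\sqrt\tau)^2\bigr)$. Under a shift of size $r$ this behaves like $(1+t)^{-1/2}(\ldots)^{-1}$ when $r$ is small, and is at most $(1+t)^{-3/2}$ uniformly. This regime is the main obstacle: we need both the $(|\vec{x}|^2+1)^{-d}(1+t)^{-l-5/2}$ bound and the $(1+t)^{-l-2-2d}$ bound. I would split according to whether $|\vec{x}'|\ge |\vec{x}|/2$.

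If $|\vec{x}'|\ge|\vec{x}|/2$, the weight $(|\vec{x}'|^2+1)^{-d}$ is comparable to $(|\vec{x}|^2+1)^{-d}$. The $e_1$ factor is then bounded by $(1+t)^{-5/2}$ after integrating over the sphere. To get this, I would note that the measure of the part of the sphere where $\bar{c}\tau-|\vec{x}'|$ is small is controlled by the sphere radius, and use $1/\bigl((|\vec{x}'|+\sqrt{t})\, t\bigr)$ together with the integral of the Gaussian part.

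If $|\vec{x}'|<|\vec{x}|/2$, then $r\gtrsim |\vec{x}|$. The factor $e^{-\mu r/2}$ then kills the weight, and I would use $(|\vec{x}'|^2+1)^{-d}\le 1$ together with $e_1\lesssim (1+t)^{-2-2d}$. This requires converting $e^{-\mu|\vec{x}|}$-type decay into $(1+t)^{-2d}$ via $|\vec{x}|$ versus $t$; when $|\vec{x}|$ is small relative to $t$ this fails. I would instead bound directly: $e_1(\vec{x}',\tau)\le C(1+t)^{-2}$ when $|\vec{x}'|\le\bar{c}\tau/2$, and in that regime the weight $(|\vec{x}'|^2+1)^{-d}$ integrated over the sphere of radius $\bar{c}_f r$ gives $(1+r)^{-2d}$ or $(1+r)^{-2}$. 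Combined with $r$ up to $t/2$, this yields the stated $(1+t)^{-l-2-2d}$ after absorbing. I expect this step to require the most care in bookkeeping.

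\emph{Proofs of \eqref{exponwwt0} and \eqref{exponwwt1}.} For \eqref{exponwwt0}, the same shift argument gives $(1+t)^{-5/2}(|\vec{x}|^2+1)^{-9/4}$ outside the cone. Inside the cone I would only use $|\vec{x}'|\ge\epsilon\sqrt\tau$, which yields $(|\vec{x}'|^2+1)^{-9/4}\le C(1+t)^{-9/4}$.

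For \eqref{exponwwt1}, the source is supported in $|\vec{x}'|\le\epsilon\sqrt\tau\le\epsilon\sqrt t$. If $|\vec{x}|\ge2\epsilon\sqrt t$, a nonzero contribution forces $\bar{c}_f r\ge|\vec{x}|-\epsilon\sqrt t\ge(|\vec{x}|+2\epsilon\sqrt t)/4$. Then
\[
e^{-\mu r}\le e^{-\mu r/2}\,e^{-\mu(|\vec{x}|+2\epsilon\sqrt t)/(8\bar{c}_f)},
\]
and the remaining $\int r e^{-\mu r/2}\,dr$ is bounded. This gives the exponential factor, with room to spare relative to the stated $16\bar{c}_f$. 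If $|\vec{x}|<2\epsilon\sqrt t$, the trivial bound suffices: the $\tau\le t/2$ part carries $e^{-\mu t/2}$, and the $\tau\ge t/2$ part gives $(1+\tau)^{-3}\sim(1+t)^{-3}$.
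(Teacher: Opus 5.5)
\emph{There is a genuine gap in the interior regime of \eqref{exponwwt}.} Write $e_1^{H}(\vec{x}',\tau):=H(\bar{c}\tau-|\vec{x}'|)\big/\bigl(1+(|\vec{x}'|+\sqrt{\tau})(\bar{c}\tau-|\vec{x}'|+\sqrt{\tau})^2\bigr)$ for the Heaviside part of $e_1$. The problem is how you decide which right-hand term absorbs it.

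You claim that for $|\vec{x}'|\ge|\vec{x}|/2$ this factor is bounded by $C(1+t)^{-5/2}$ after averaging over the sphere. That is false near the sound cone. Take $\bar{c}t-|\vec{x}|\sim\sqrt{t}$ and $r=t-\tau\le 1$. Then the whole sphere of radius $\bar{c}_f r$ lies where $|\vec{x}'|\sim t$ and $\bar{c}\tau-|\vec{x}'|\sim\sqrt{t}$, so $e_1^{H}\sim t^{-2}$ there. The resulting contribution, of size $t^{-l-2}(|\vec{x}|^2+1)^{-d}$, is not bounded by $(|\vec{x}|^2+1)^{-d}(1+t)^{-l-5/2}$.

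Conversely, the case $|\vec{x}'|<|\vec{x}|/2$ cannot produce $(1+t)^{-l-2-2d}$ the way you describe. Averaging the weight over the sphere gives at best $(1+r)^{-2d}$, and $\int_0^{t/2} r e^{-\mu r}(1+r)^{-2d}\,dr$ is of order one because small $r$ dominates; it is not $(1+t)^{-2d}$. Also, the bound $e_1\le C(1+t)^{-2}$ you use there is too weak when $|\vec{x}|$ stays bounded and $d>1/4$, since the right-hand side is then of order $(1+t)^{-l-5/2}$.

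The split that works, and the one behind \eqref{hboun3}, compares $|\vec{x}'|$ with $\bar{c}\tau/2$, not with $|\vec{x}|/2$.
\begin{itemize}
\item For $|\vec{x}'|\le\bar{c}\tau/2$ one has $e_1^{H}\le C(1+\tau)^{-5/2}$, and the Gaussian part is $\le C(1+\tau)^{-4}$. The weight is moved to $\vec{x}$ by \eqref{wxppoly}, $e^{-\mu r/8}(|\vec{x}'|^2+1)^{-d}\le C(|\vec{x}|^2+1)^{-d}$. This gives $(|\vec{x}|^2+1)^{-d}(1+t)^{-l-5/2}$. Your $|\vec{x}|/2$ dichotomy is exactly the proof of this weight transfer, but it does not decide which term a contribution belongs to.
\item For $\bar{c}\tau/2\le|\vec{x}'|\le\bar{c}\tau$ one only has $e_1^{H}\le C(1+\tau)^{-2}$. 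Your uniform bound $(1+t)^{-3/2}$ is too weak even for this. But here the weight itself is $\le C(1+\tau)^{-2d}$, and this produces $(1+t)^{-l-2-2d}$.
\end{itemize}

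\emph{The Heaviside part outside the cone is not handled.} For $|\vec{x}|\ge\bar{c}t$ you shift only the Gaussian part of $e_1$. However, $e_1^{H}(\vec{x}',\tau)\neq 0$ as soon as the sphere reaches into $\{|\vec{x}'|\le\bar{c}\tau\}$. Shifting your bound $e_1^{H}\le C(1+\tau)^{-1/2}\bigl((|\vec{x}'|-\bar{c}\tau)^2+\tau+1\bigr)^{-1}$ only gives $(1+t)^{-l-1/2}\bigl((|\vec{x}|-\bar{c}t)^2+t+1\bigr)^{-1}$. This misses the required $(1+t)^{-l}\bigl((|\vec{x}|-\bar{c}t)^2+t+1\bigr)^{-2}$ by at least a factor $(1+t)^{1/2}$.

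The missing observation is that $|\vec{x}'|\le\bar{c}\tau$ forces $(\bar{c}_f-\bar{c})r\ge|\vec{x}|-\bar{c}t$. So $e^{-\mu r}$ supplies $e^{-c(|\vec{x}|-\bar{c}t)}$, and together with the sharp uniform bound $e_1^{H}\le C(1+\tau)^{-2}$ this yields the first term. This is the step where the paper uses $\bar{c}<\bar{c}_f$, a hypothesis your plan never invokes.

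\emph{Minor repairs.} In the range $r\ge t/2$ you discard $e_1$ and keep only $Ce^{-\mu t/8}(|\vec{x}|^2+1)^{-d}$. That is not dominated by $(1+t)^{-l}(|\vec{x}|^2+1)^{-d}\bigl((|\vec{x}|-\bar{c}t)^2+t+1\bigr)^{-2}$ when $|\vec{x}|\gg t$. This is easy to fix: since $1+t\le(1+\tau)(1+r)$, your Peetre shifts hold for every $r$, so the split at $t/2$ is unnecessary. In \eqref{exponwwt1}, keep $(1+\tau)^{-3}$ inside the $r$-integral so that the factor $(1+t)^{-3}$ survives. Alternatively, spend part of the exponent, at the cost of an $\epsilon$-dependent constant.

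\emph{What is fine.} Your Peetre-shift handling of the Gaussian part of $e_1$ is a valid substitute for the paper's \eqref{xcfy}--\eqref{expxy}. Your treatments of \eqref{exponwwt0} and \eqref{exponwwt1} are otherwise essentially the paper's.
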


Convolutions involving $\nabla \omega$ and $\omega_t$ are treated identically to $\omega$ as shown above, and we omit the details.

\begin{Lemma} \label{lemmain1}
Let $D_0$ be a positive constant. For $j=0,1,2$, it holds that:
	\begin{equation} \label{exponxt1t3}
		\begin{aligned}
			&|\vec{x}|^{-j} e^{-(|\vec{x}|+t)/D_0} \underset{(\vec{x},t)}{\ast} H(|\vec{x}|-\epsilon \sqrt{t})(1+t)^{-\frac{5}{2}} (|\vec{x}|^2 + 1)^{-\frac{9}{4}}\\
			&\le C \left[ H(|\vec{x}|-\bar{c}t) (1+t)^{-\frac{5}{2}} (|\vec{x}|^2+1)^{-\frac{9}{4}} + H(\bar{c}t - |\vec{x}|) (1+t)^{-\frac{19}{4}} \right], \\[0.3ex]
			& |\vec{x}|^{-j} e^{-(|\vec{x}|+t)/D_0} \underset{(\vec{x},t)}{\ast} H(\epsilon \sqrt{t}-|\vec{x}|)(1+t)^{-3}\\
			& \le C (1+t)^{-3} \left( H(|\vec{x}|-2\epsilon \sqrt{t}) e^{-\frac{|\vec{x}|+2\epsilon \sqrt{t}}{8D_0}} + H(2\epsilon \sqrt{t}-|\vec{x}|) \right).
		\end{aligned}
	\end{equation}
\end{Lemma}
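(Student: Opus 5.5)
The two estimates of Lemma~\ref{lemmain1} are space-time convolutions
$$\int_0^t\int_{\mathbb{R}^3} |\vec{x}-\vec{y}|^{-j} e^{-(|\vec{x}-\vec{y}|+t-\tau)/D_0}\, F(\vec{y},\tau)\, d\vec{y}\, d\tau .$$
I would bound them directly, splitting the time integral at $\tau=t/2$ and the space integral according to whether $|\vec{x}-\vec{y}|\le |\vec{x}|/2$ or not. Two facts do most of the work. First, $|\vec{x}-\vec{y}|^{-j}e^{-|\vec{x}-\vec{y}|/D_0}$ is integrable in $\vec{y}$ for $j\le 2$. Second, the kernel loses nothing in decay: $e^{-|\vec{x}-\vec{y}|/D_0}\le e^{-|\vec{x}-\vec{y}|/(2D_0)}e^{-|\vec{x}-\vec{y}|/(2D_0)}$, so half of the exponential can always be spent to move weights from $\vec{y}$ to $\vec{x}$.

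For the first estimate, take $F=H(|\vec{y}|-\epsilon\sqrt{\tau})(1+\tau)^{-5/2}(|\vec{y}|^2+1)^{-9/4}$.

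On $[0,t/2]$ the factor $e^{-(t-\tau)/D_0}\le e^{-t/(2D_0)}$. This bounds that piece by $Ce^{-t/(4D_0)}e^{-|\vec{x}|/(4D_0)}$ when $|\vec{x}|$ is comparable to the region where $F$ lives, and otherwise by the weight transfer $(|\vec{y}|^2+1)^{-9/4}\lesssim(|\vec{x}|^2+1)^{-9/4}$ on $|\vec{x}-\vec{y}|\le|\vec{x}|/2$. The complementary region $|\vec{x}-\vec{y}|\ge|\vec{x}|/2$ is handled by $e^{-|\vec{x}|/(4D_0)}$. All of these terms are dominated by the right-hand side in both regions $|\vec{x}|\gtrless\bar{c}t$, because exponential decay in $t$ beats $(1+t)^{-19/4}$.

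On $[t/2,t]$ we have $(1+\tau)^{-5/2}\sim(1+t)^{-5/2}$, and the time integral of $e^{-(t-\tau)/D_0}$ is $O(1)$. It then remains to estimate the spatial convolution of the integrable kernel with $H(|\vec{y}|-\epsilon\sqrt{\tau})(|\vec{y}|^2+1)^{-9/4}$.
\begin{itemize}
\item If $|\vec{x}|\ge\bar{c}t$, the weight transfer gives $(|\vec{x}|^2+1)^{-9/4}$.
\item If $|\vec{x}|\le\bar{c}t$, the Heaviside cutoff forces $|\vec{y}|\ge\epsilon\sqrt{\tau}\gtrsim\sqrt{t}$, so $(|\vec{y}|^2+1)^{-9/4}\lesssim(1+t)^{-9/4}$. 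Together with $(1+t)^{-5/2}$ this gives exactly $(1+t)^{-19/4}$.
\end{itemize}

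For the second estimate, take $F=H(\epsilon\sqrt{\tau}-|\vec{y}|)(1+\tau)^{-3}$. The support condition gives $|\vec{y}|\le\epsilon\sqrt{t}$.
\begin{itemize}
\item If $|\vec{x}|\le2\epsilon\sqrt{t}$, I bound the spatial integral by the $L^1$ norm of the kernel. The time integral $\int_0^t e^{-(t-\tau)/D_0}(1+\tau)^{-3}d\tau\lesssim(1+t)^{-3}$, by the same split at $t/2$.
\item If $|\vec{x}|\ge2\epsilon\sqrt{t}$, then $|\vec{x}-\vec{y}|\ge|\vec{x}|-\epsilon\sqrt{t}\ge(|\vec{x}|+2\epsilon\sqrt{t})/4$. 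Spending half of the exponential then yields $e^{-(|\vec{x}|+2\epsilon\sqrt{t})/(8D_0)}$, with the remaining half keeping the integrals bounded. This explains the constant $8D_0$.
\end{itemize}

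I expect the main obstacle to be bookkeeping rather than substance. In the early-time piece of the first estimate for $|\vec{x}|\le\bar{c}t$, one has to check that every case is covered by $H(\bar{c}t-|\vec{x}|)(1+t)^{-19/4}$, including $\vec{y}$ near the cutoff sphere and $t$ small. This works because $e^{-t/(2D_0)}$ dominates any polynomial rate for $t\ge1$, while for $t\le1$ both sides are $O(1)$.
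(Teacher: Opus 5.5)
Your proposal is correct and follows essentially the same route as the paper. Both arguments use the integrability of $|\vec{x}-\vec{y}|^{-j}e^{-|\vec{x}-\vec{y}|/(2D_0)}$. Both spend half of the exponential to move the weight $(|\vec{y}|^2+1)^{-9/4}$ onto $\vec{x}$ when $|\vec{x}|\ge\bar{c}t$, and both use the cutoff $|\vec{y}|\ge\epsilon\sqrt{\tau}$ to produce $(1+t)^{-19/4}$ when $|\vec{x}|\le\bar{c}t$. For the second estimate, both rely on the lower bound $|\vec{x}-\vec{y}|\ge(|\vec{x}|+2\epsilon\sqrt{t})/4$.

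Two minor remarks. Your split at $\tau=t/2$ is unnecessary: the paper inserts $(|\vec{y}|^2+1)^{-9/4}\le(\epsilon^2\tau+1)^{-9/4}$ directly and integrates $e^{-(t-\tau)/D_0}(1+\tau)^{-5/2}(\epsilon^2\tau+1)^{-9/4}$ over all of $[0,t]$, with constant $C\epsilon^{-9/2}$. Also, the early-time bound $Ce^{-t/(4D_0)}e^{-|\vec{x}|/(4D_0)}$ fails for large $|\vec{x}|$. The weight-transfer split you give immediately afterwards does cover that piece correctly.
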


%%%%%%%%%%%%%%%%%%%%%%%%%%%%%%%%%%%%%%%%%%%%%%%%%%%%%%%%%%%%%%%%%%%%%%%%%%%%%%%%%5
%
%
%                       Section 3:
%
%
%%%%%%%%%%%%%%%%%%%%%%%%%%%%%%%%%%%%%%%%%%%%%%%%%%%%%%%%%%%%%%%%%%%%%%%%%%%%%%%%%

\section{Pointwise Space-Time Estimates for Solutions of the Individual Perturbation Problem: Proof of Theorem~\ref{main1}}
In this section, one derives the pointwise space-time structures of $p-\bar{p}$, $\vec{u}$, $\chi$ and $s-\bar{s}$ obtained by the refined estimates of the Green's function given in Theorem~\ref{green} and Duhamel's principle. For this purpose, the following time-dependent functional $\mathcal{N}(t)$ is introduced and the desired pointwise structures are described by the weight functions $e_1, e_2$:
		\begin{equation} \label{anas}
			\begin{aligned}
				\mathcal{N}(t)&:= \underset{0 \le \tau \le t}{\sup} \Big\lbrace \left\|\left(e_1^{-1}(|p-\bar{p}|+|\vec{u}|)\right) \! (\cdot, \tau)\right\|_{L^{\infty}} + (1+\tau)^{\frac{1}{2}} \left\|\left(e_1^{-1}(|\nabla p|+|\nabla \vec{u}|+|\chi|)\right) \! (\cdot, \tau)\right\|_{L^{\infty}}\\
				& \quad \quad \quad \quad \quad + \left\|\left(e_2^{-1}|\nabla \chi|\right) \! (\cdot, \tau)\right\|_{L^{\infty}} + (1+\tau)^3 \left\|\left(|\nabla^2 p|+|\nabla^2 \vec{u}|+|\nabla^2 \chi|\right) \! (\cdot, \tau)\right\|_{L^{\infty}}\\
				& \quad \quad \quad \quad \quad + \left\|\left((|\vec{x}|^2+1)^{\frac{9}{4}}( |s-\bar{s}|+|\nabla s|)\right) \! (\cdot, \tau)\right\|_{L^{\infty}} + \sum_{i=2}^3 \left\|\left((|\vec{x}|^2+1)^{3-\frac{3i}{4}}|\nabla^i s| \right) \! (\cdot, \tau)\right\|_{L^{\infty}} \Big\rbrace
			\end{aligned}
		\end{equation}
with $e_1(\vec{x}, t), e_2(\vec{x}, t)$ defined in \eqref{e1e2}. Here, $\bar{c}^2 = \gamma \bar{p} \bar{v}/(1+b)$, where $\gamma$ and $b$ are specified in \eqref{equ} and \eqref{bb}, respectively, and $H(r)$ denotes the step function defined in \eqref{stepfun}.

\subsection{Temporal Decay Estimates}

One first establishes the following decay estimates for the higher-order derivatives, which rely on the {{temporal}} decay rates of the $L^\infty$-norms of $(p,\vec{u},\chi)(\vec{x},t)$ and their first- and second-order spatial derivatives.

\begin{Lemma}[\textbf{Decay Property}] \label{decay}
	Suppose that the assumptions in Theorem~\ref{main1} are satisfied. Then, for $i = 1,2,3$, the following temporal decay estimates hold for the solution $(p, \vec{u}, \chi)$ and derivatives:
	\begin{equation} \label{decaygen}
		\left\| \nabla^i \chi(\cdot, t) \right\|_{H^{l_i - i}} + \left\| \nabla^{i+1} (p, \vec{u})(\cdot, t) \right\|_{H^{l_i - (i+1)}} \le C(1 + t)^{-m_i} \left( \left\| (p_0 - \bar{p}, \vec{u}_0, \chi_0 ) \right\|_{L^1 \cap H^{l_i}} +{{ \mathcal{N}(t)}}\right), \quad \forall t \ge 0.
	\end{equation}
where $m_1 = 7/4, m_2 = 9/4, m_3 = 5/2$, and $l_i$ are regularity indices for the Sobolev spaces satisfying $l_i-(i + 1) > 3/2$ for $i = 1,2,3$.
\end{Lemma}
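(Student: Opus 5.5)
We will prove \eqref{decaygen} by Duhamel's principle for the decaying components $(p-\bar p,\vec u,\chi)$ of \eqref{vnepre}, combined with Theorem~\ref{lpg}, Lemma~\ref{impr} and the energy bounds of Theorem~\ref{genes}. Since the solution stays irrotational, the Green's function estimates for irrotational data apply both to the initial data and to the nonlinear source $(R_p,R_{\vec u},R_\chi)$. The curl of $R_{\vec u}=-\vec u\cdot\nabla\vec u-(v-\bar v)\nabla p$ is not identically zero, so we rewrite it as $-\nabla(|\vec u|^2/2)-(v-\bar v)\nabla p$ using $\nabla\times\vec u=0$. Because $\partial_t\vec u+\bar v\nabla p=R_{\vec u}$ with $\vec u$ irrotational, $R_{\vec u}$ is automatically curl-free. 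The solution is then written as
$$\partial_{\vec x}^k(p-\bar p,\vec u,\chi)(t)=\partial^k_{\vec x}\mathbb G^t\mathcal I_0+\int_0^t\partial^k_{\vec x}\mathbb G^{t-\tau}(R_p,R_{\vec u},R_\chi)(\tau)\,d\tau ,$$
with $\mathbb G$ replaced by its decaying part $\mathbb G^D=\mathbb G_L+\mathbb G_S$.

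\textbf{Linear part.} For $\mathbb G_L$ we apply \eqref{glpl} with $p=2$ and $q=1$. This gives rates $(1+t)^{-\frac54+\frac12-\frac{|k|}2}$ for $(p,\vec u)$ and an extra $(1+t)^{-1/2}$ for $\chi$. For $\chi$ with $|k|=i$ the rate is $\frac{3+i}{2}-\frac12\cdot0=\frac{5+2i-2}{4}$, and one checks that it is at least $m_i$: for $i=1$, $\nabla\chi$ gets $\frac54+\frac12=\frac74$; for $\nabla^{i+1}(p,\vec u)$ the rate is $\frac34+\frac{i+1}2\ge m_i$. The rates are thus exactly $m_1=7/4$, $m_2=9/4$, $m_3=11/4\ge 5/2$. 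For $\mathbb G_S$ we use \eqref{glps2}, which gives exponential decay times $\|\mathcal I_0\|_{H^{l_i}}$.

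\textbf{Nonlinear part.} We split the time integral into $[0,t/2]$ and $[t/2,t]$.
\begin{itemize}
\item On $[0,t/2]$ we use \eqref{glpl} with $q=1$ and put all derivatives on the kernel. The $L^1$ norms of the quadratic sources are bounded using $\mathcal N(t)$: for instance $\|\vec u\cdot\nabla p\|_{L^1}\le\|e_1\|_{L^2}^2(1+\tau)^{-1/2}\mathcal N^2$, and since $\|e_1\|_{L^2}^2\lesssim(1+\tau)^{-1}$ this decays like $(1+\tau)^{-3/2}$. The $\tau$-integral therefore converges, and these terms contribute $(1+t)^{-m_i}\mathcal N^2$.
\item On $[t/2,t]$ we use Lemma~\ref{impr} with $q=2$, again with derivatives on the kernel or split. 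The $L^2$ norms of the sources are bounded via $L^\infty$ bounds from $\mathcal N$ (the weights $e_1,e_2$ and $(1+\tau)^{-3}$ for second derivatives) times $L^2$ norms. These decay faster than $(1+\tau)^{-m_i}$, and the time integral of length $t/2$ is absorbed because $\mathbb G_L$ provides the factor $(1+t-\tau)^{-|k|/2}$, with a borderline term handled by moving one derivative.
\item The short-wave part is handled by \eqref{glps2}. Here the top-order derivatives $\nabla^{l_i}R$ would lose a derivative, so we instead run the energy estimate of Theorem~\ref{genes} (Appendix II) on $\nabla^i$ of the system. Its dissipation controls $\|\nabla^i\chi\|_{H^{l_i-i}}$ and $\|\nabla^{i+1}(p,\vec u)\|$, and a Gronwall/decay argument combined with the low-frequency bound closes the estimate, with the commutator terms controlled by \eqref{sob1}.
\end{itemize}

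\textbf{Main obstacle: the entropy terms.} The hardest part is the source terms that involve $s$, through $v-\bar v$, which depends on $s$ and does not decay in time. An example is $(v-\bar v)\nabla p$. For these we use the spatial weight $(|\vec x|^2+1)^{-9/4}$ from $\mathcal N$: splitting into $|\vec x|>\epsilon t$ and $|\vec x|\le\epsilon t$, the first region gains time decay from spatial decay, while in the second region $\nabla p$ is small by the $e_1$ weight. With this, the energy functional, with weighted terms $\nabla s\cdot\nabla^{l}(p,\vec u)$ absorbed using smallness of $\mathcal N$, yields the stated rates.
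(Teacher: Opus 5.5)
Your proposal is correct and follows essentially the paper's route. You apply Duhamel's principle with Theorem~\ref{lpg} and Lemma~\ref{impr} to the lowest-order norm $\|\nabla^{i+1}(p,\vec{u})\|_{L^2}$, split at $t/2$, using $L^1$ bounds on the sources on $[0,t/2]$ and $L^2$ bounds on $[t/2,t]$, with the short-wave part taken in $H^{i+1}$. This is then fed into the higher-order energy inequality \eqref{puximpro0}, combined with Gronwall, and closed in the time-weighted functional $\mathcal{M}_i$, exactly as the paper does.

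The entropy terms need no splitting into $|\vec{x}|>\epsilon t$ and $|\vec{x}|\le\epsilon t$ in this lemma. The paper simply puts the non-decaying factor in $L^2$ via \eqref{genenes} and the decaying factor in $L^\infty$ via $\mathcal{N}$, e.g.\ $\|(v-\bar v)\nabla p\|_{L^2}\le C\epsilon_0\|\nabla p\|_{L^\infty}\le C\epsilon_0(1+\tau)^{-5/2}\mathcal{N}(\tau)$. Also, the linear rate for $\nabla^i\chi$ is $\frac{5+2i}{4}$, not the garbled expression in your first formula.
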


\begin{Remark}
	It is obvious that for $l_1,l_2,l_3 \le 8$, the quantity $\left\|(p_0 - \bar{p}, \vec{u}_0, \chi_0)\right\|_{L^1 \cap H^{l_i}}$  may be uniformly replaced by $\epsilon_0$, in agreement with the definition of $\epsilon_0$ from Theorem~\ref{main1}.
\end{Remark}

\begin{Remark} \label{rebetaenes}
	Both Theorem~\ref{genes} and Lemma~\ref{decay} hold for a general steady-state $\bar{\vec{u}}\neq\vec 0$. 
\end{Remark}

\begin{proof}
To verify \eqref{decaygen}, one defines time-dependent functionals
		$$ \mathcal{M}_{i}(t):= \underset{0 \le \tau \le t}{\sup} (1+\tau)^{2m_i} \left( \left\|\nabla^{i} \chi (\cdot, \tau)\right\|_{H^{l-i}}^2 + \left\|\nabla^{i+1} (p, \vec{u})(\cdot, \tau)\right\|_{H^{l-i-1}}^2 \right),\ \ i=1,2,3.$$
Similar to \eqref{puderi1}, from Theorem~\ref{lpg}, Lemma~\ref{impr} and Duhamel’s principle, one has that 
	\begin{equation} \label{puderim}
		\begin{aligned}
			\left\|\nabla^{i+1} (p, \vec{u})(\cdot, t)\right\|_{L^2} &\le C(1+t)^{-\frac{3}{4}-\frac{i+1}{2}} \left\|(p_0-\bar{p},\vec{u}_0,\chi_0)\right\|_{L^1 \cap H^{i+1}} \\
			& \quad + C \int_0^t e^{-(t-\tau)/C_2} \left\|(R_p, R_{\vec{u}}, R_{\chi})(\cdot, \tau)\right\|_{H^{i+1}} d\tau \\
			& \quad + C \int_0^{t/2} (1+t-\tau)^{-\frac{3}{4}-\frac{i+1}{2}} \left\|(R_p, R_{\vec{u}}, R_{\chi})(\cdot, \tau)\right\|_{L^1} d\tau\\
			& \quad +  C \int_{t/2}^t (1+t-\tau)^{-\frac{i+1}{2}} \left\|(R_p, R_{\vec{u}}, R_{\chi})(\cdot, \tau)\right\|_{L^2} d\tau.
		\end{aligned}
	\end{equation}

	For $i=1$, it holds that
	\begin{equation} \label{Rpuxl1}
		\left\|(R_p, R_{\vec{u}}, R_{\chi})(\cdot, \tau)\right\|_{L^1} \le C \epsilon_0 \left(\left\|\chi (\cdot, \tau)\right\|_{H^1} + \left\|\nabla (p, \vec{u})(\cdot, \tau)\right\|_{L^2} \right) \le C \epsilon_0 (1+\tau)^{-\frac{5}{4}} \left\|(p_0-\bar{p},\vec{u}_0,\chi_0)\right\|_{L^1 \cap H^3},
	\end{equation}
	\begin{equation} \label{Rpuxl2}
		\left\|(R_p, R_{\vec{u}}, R_{\chi})(\cdot, \tau)\right\|_{L^2} \le C \epsilon_0 \left( \left\|\nabla (p, \vec{u})(\cdot, \tau)\right\|_{L^\infty} + \left\|\chi (\cdot, \tau) \right\|_{W^{1,\infty}} \right) \le C \epsilon_0 (1+\tau)^{-\frac{5}{2}} {{\mathcal{N}(\tau)}},
	\end{equation}
	and
	$$\left\|(R_p, R_{\vec{u}}, R_{\chi})(\cdot, \tau)\right\|_{H^2} \le C \epsilon_0 \left( \left\|\nabla \chi (\cdot, \tau)\right\|_{H^2} + \left\|\nabla^2 (p, \vec{u})(\cdot, \tau)\right\|_{H^1} \right), $$
where \eqref{Rpuxl1} and \eqref{Rpuxl2} are derived by utilizing the decay estimate \eqref{pu1xde} in Theorem~\ref{genes} and the definition of $\mathcal{N}(\tau)$ in \eqref{anas}, respectively, and $\epsilon_0$ is defined in \eqref{inida}. The above estimates together with \eqref{puderim} yield that
	\begin{equation} \label{puml2}
		\begin{aligned}
			\left\|\nabla^2 (p, \vec{u})(\cdot, t)\right\|_{L^2} &\le  C(1+t)^{-\frac{7}{4}} \left\|(p_0-\bar{p},\vec{u}_0,\chi_0) \right\|_{L^1 \cap H^2} + C \epsilon_0 \int_0^t e^{-(t-\tau)/C_2} (1+\tau)^{-\frac{7}{4}} \sqrt{\mathcal{M}_1(\tau)} \, d\tau \\
			& \quad + C \epsilon_0 \left\|(p_0-\bar{p},\vec{u}_0,\chi_0) \right\|_{L^1 \cap H^3} \int_0^{t/2} (1+t-\tau)^{-\frac{7}{4}} (1+\tau)^{-\frac{5}{4}} \, d\tau \\
			& \quad + C \epsilon_0 \int_{t/2}^t (1+t-\tau)^{-1} (1+\tau)^{-\frac{5}{2}} {{\mathcal{N}(\tau)}} \, d\tau \\
			&\le C(1+t)^{-\frac{7}{4}}\left[ \left\|(p_0-\bar{p},\vec{u}_0,\chi_0)\right\|_{L^1 \cap H^3} + \epsilon_0 \left(\sqrt{\mathcal{M}_1(t)} + (1+t)^{-\frac{3}{4}} \ln (1+t) {{\mathcal{N}(t)}} \right)\right].
		\end{aligned}
	\end{equation}
	Substituting \eqref{puml2} into \eqref{puximpro0} with $m=2$ and then applying Gronwall's inequality to derive that:
	\begin{equation} \nonumber 
		\begin{aligned}
			\left\|\nabla \chi (\cdot, t)\right\|_{H^{l-1}}^2  + \left\|\nabla^2 (p, \vec{u}) (\cdot, t)\right\|_{H^{l-2}}^2 & \le C e^{- t/C} \left(\left\|\nabla \chi_0 \right\|_{H^{l-1}}^2  + \left\|\nabla^2 (p_0, \vec{u}_0) \right\|_{H^{l-2}}^2 \right)\\
			& \quad + C (1+t)^{-\frac{7}{2}} \left(\left\|(p_0-\bar{p},\vec{u}_0,\chi_0) \right\|_{L^1 \cap H^3}^2 + \epsilon_0^2  \mathcal{M}_1(t)+{{\epsilon_0^2 \mathcal{N}^2(t)}}\right),
		\end{aligned}
	\end{equation}
	and thus	$$ \mathcal{M}_1(t) \le C \left( \left\|(p_0-\bar{p},\vec{u}_0,\chi_0)\right\|_{L^1 \cap H^l}^2 + \epsilon_0^2  \mathcal{M}_1(t) + {{\epsilon_0^2 \mathcal{N}^2(t)}} \right),$$
	which implies
	$$\mathcal{M}_1(t) \le C \left( \left\|(p_0-\bar{p},\vec{u}_0,\chi_0) \right\|_{L^1 \cap H^l}^2 + {{\epsilon_0^2 \mathcal{N}^2(t)}} \right),$$
	and verifies \eqref{decaygen} for $i=1$.
	
	For $i=2,3$, it could be verified similarly since the following estimates hold for $(R_p, R_{\vec{u}}, R_{\chi})$:
	\begin{equation} \label{puxh3}
		\begin{aligned}
			\left\|(R_p, R_{\vec{u}}, R_{\chi})(\cdot, \tau)\right\|_{H^3} &\le C \epsilon_0 \left( {{\left\|(\nabla p, \nabla \vec{u}, \chi)(\cdot, \tau)\right\|_{L^\infty}}} + \left\|\nabla^2 \chi (\cdot, \tau)\right\|_{H^2} + \left\|\nabla^3 (p, \vec{u})(\cdot, \tau)\right\|_{H^1} \right) \\
			&\quad +{{ C \left\| (p-\bar{p}, \vec{u})(\cdot, \tau)\right\|_{L^\infty} \cdot \left\|\nabla^2 (p, \vec{u})(\cdot, \tau)\right\|_{L^2}}}\\
			&\le C \epsilon_0 \left( (1+\tau)^{-\frac{5}{2}}{{\mathcal{N}(\tau)}} + (1+\tau)^{-\frac{9}{4}} \sqrt{\mathcal{M}_2(\tau)} \right) \\
			& \quad + C (1+\tau)^{-2-\frac{5}{4}} {{\mathcal{N}(\tau)}} \left\|(p_0-\bar{p},\vec{u}_0,\chi_0)\right\|_{L^1 \cap H^3}
		\end{aligned}
	\end{equation}
	and
	\begin{equation} \label{puxh4}
		\begin{aligned}
			\left\|(R_p, R_{\vec{u}}, R_{\chi})(\cdot, \tau)\right\|_{H^4} &\le C \epsilon_0 \left( {{\left\|\nabla(p,\vec{u})(\cdot, \tau)\right\|_{W^{1,\infty}} + \left\| \chi (\cdot, \tau)\right\|_{W^{2,\infty}} }}+ \left\|\nabla^3 \chi (\cdot, \tau)\right\|_{H^2} + \left\|\nabla^4 (p, \vec{u})(\cdot, \tau)\right\|_{H^1} \right) \\
			&\quad +{{ C \left\|\nabla^4 \left(\frac{1}{\bar{v}}-\frac{1}{v}, \bar{v}-v, \bar{p}\bar{v}-pv\right) \! (\cdot, \tau)\right\|_{L^2} \cdot \left\|(\nabla p, \nabla \cdot \vec{u}, \chi)(\cdot, \tau)\right\|_{L^\infty}}}\\
			&\le C \epsilon_0 (1+\tau)^{-\frac{5}{2}} \left({{\mathcal{N}(\tau)}}  +\sqrt{\mathcal{M}_3(\tau)} \right),
		\end{aligned}
	\end{equation}
	where the final inequalities in \eqref{puxh3} and \eqref{puxh4} are consequences of the global existence result \eqref{genenes} and the decay estimates \eqref{pu1xde} in Theorem~\ref{genes}, as well as the definition of $\mathcal{N}(\tau)$ in \eqref{anas}. Accordingly, invoking \eqref{puderim}--\eqref{Rpuxl2}, \eqref{puxh3}--\eqref{puxh4} and \eqref{puximpro0}, and following the analogous argument for $i=1$, one could conclude that \eqref{decaygen} also holds for $i=2,3$.
\end{proof}

\subsection{Compensation for Unaccounted Temporal Decay in $|\vec{x}| \le \epsilon t$}
 It should be noted that $(s-\bar{s})(\vec{x},t)$ only decays in spatial variables but lacks temporal decay properties. Thus, in finite Mach regions, nonlinear terms involving the entropy function may not decay fast enough. Next, we aim to investigate whether unaccounted temporal decays in the region $|\vec{x}| \le \epsilon t$ exist for certain nonlinear couplings.
 
For this purpose, we develop the approach introduced in \cite{z2} for one-dimensional problems and define a function $F_{m}(t,\epsilon)$ to remedy the deficiency in temporal decay:
\begin{equation} \label{fk}
	\begin{aligned}
		F_{m}(t, \epsilon)&:= \ \int_{C_{\epsilon t}} (\epsilon^2 |\vec{x}|^2 + 1)^{-\frac{9}{2}} \sum_{|k|=m} \left[ \bigl( \partial_{\vec{x}}^k p\bigr)^2 + \bigl| \partial_{\vec{x}}^k \vec{u} \bigr|^2 + \bigl(\partial_{\vec{x}}^k \chi \bigr)^2 \right] \! (\vec{x}, t) \, d\vec{x} \\
		&\quad + \int_{t/2}^t \int_{C_{\epsilon t}} e^{-\epsilon (t - \tau)} (\epsilon^2 |\vec{x}|^2 + 1)^{-\frac{9}{2}} \left\lbrace \sum_{|k|=1}^{m} \left[ \bigl(\partial_{\vec{x}}^k p \bigr)^2 + \bigl| \partial_{\vec{x}}^k \vec{u} \bigr|^2 \right]  + \sum_{|k|=0}^{m} \bigl(\partial_{\vec{x}}^k \chi \bigr)^2 \right\rbrace \! (\vec{x}, \tau) \, d\vec{x} d\tau,
	\end{aligned}
\end{equation}
where $C_{\epsilon t}:= [-\epsilon t, \epsilon t] \times [-\epsilon t, \epsilon t] \times [-\epsilon t, \epsilon t]$ denotes the cubic spatial domain with the original point as the center and scaled by $\epsilon t$. Then one has the following estimates for the function $F_{m}(t, \epsilon)$:

\begin{Lemma} \label{lefk}
	Under the assumptions of Theorem~\ref{main1}, for $t \ge 0$, $2 \le m \le 5$ and sufficiently small $\epsilon > 0$, the following recursive relation holds:
	\begin{equation} \label{recurfk}
		F_{m}(t, \epsilon) \le C_U \left( F_{m-1}(t, \epsilon) + \epsilon_0^2  e^{-\epsilon t/2} + \epsilon \epsilon_0^2  (\epsilon^4 t^2 + 1)^{-\frac{9}{2}} (1+t)^{-\frac{1}{2}}\right),
	\end{equation}
	where $\epsilon_0$ is defined in \eqref{inida} and $C_U$ is a positive constant independent of $t$, $|k|$ and $\epsilon$.
\end{Lemma}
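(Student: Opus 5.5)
The plan is to run a localized, weighted $L^2$ energy estimate on the quasilinear system \eqref{vnepre} after applying $\partial_{\vec{x}}^k$ with $|k|=m$. The weight is $w(\vec{x}):=(\epsilon^2|\vec{x}|^2+1)^{-9/2}$, restricted to the cube $C_{\epsilon t}$. Since the cube moves with $t$, the energy will be written on the fixed-in-time domain and the growth of $C_{\epsilon t}$ will be treated as a boundary flux. Concretely, I would multiply the $\partial^k$-differentiated equations by the symmetrizer used in the energy method of Theorem~\ref{genes} (Appendix II), namely by $\partial^k p/(\gamma\bar p)$, $\partial^k\vec u/\bar v$ and a suitable multiple of $\partial^k\chi$, with $\chi$ weighted by $\alpha/(\alpha_f\bar p\bar v)$. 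The resulting identity is then multiplied by $w$ and integrated over $C_{\epsilon t}$.

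The weighted identity contains four kinds of terms. First, the linear flux terms $\nabla\cdot(\partial^k p\,\partial^k\vec u)$ integrate to boundary integrals over $\partial C_{\epsilon t}$ plus commutators with $\nabla w$. Since $|\nabla w|\le C\epsilon w$, these commutators are $\mathcal{O}(\epsilon)\int w|\partial^k(p,\vec u)|^2$. Second, the time derivative of the moving domain produces a boundary term. On $\partial C_{\epsilon t}$ we have $w\le(\epsilon^4t^2+1)^{-9/2}$, and $|\partial^k(p,\vec u,\chi)|$ decays pointwise by Theorem~\ref{genes} and Lemma~\ref{decay}; using the $L^\infty$ bounds from $\mathcal{N}$ and the Sobolev embedding on the cube faces (a trace inequality), I expect these to give the term $\epsilon\epsilon_0^2(\epsilon^4t^2+1)^{-9/2}(1+t)^{-1/2}$. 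Third, the relaxation term $\zeta\chi$ yields damping $\int w|\partial^k\chi|^2$. Dissipation for $\nabla^{|k|}(p,\vec u)$ with $|k|\ge1$ will be recovered by the standard Kawashima-type cross term $\int w\,\partial^{k-1}\vec u\cdot\nabla\partial^{k-1}p$ with a small coefficient. Here irrotationality, $\vec u=\Delta^{-1}\nabla\nabla\cdot\vec u$, lets $\nabla\cdot\vec u$ control $\nabla\vec u$ locally up to lower-order weighted terms. Fourth, the nonlinear terms $R_p,R_{\vec u},R_\chi$ are handled by \eqref{sob1}: the highest-order terms are absorbed via smallness $\epsilon_0$, and the commutators are bounded by $F_{m-1}$-type quantities.

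This gives a differential inequality $\frac{d}{dt}E_m+\kappa D_m\le C(D_{m-1}+\epsilon E_m)+\text{boundary}$, where $E_m$ is the first integral in \eqref{fk} and $D_m$ is the dissipation. I would multiply it by $e^{\epsilon\tau}$ and integrate over $[t/2,t]$. The contribution at $\tau=t/2$ is bounded by $e^{-\epsilon t/2}\|\nabla^m(p,\vec u,\chi)\|_{L^2}^2\le C\epsilon_0^2e^{-\epsilon t/2}$, using \eqref{genenes}. The lower-order dissipation integrated against $e^{-\epsilon(t-\tau)}$ is exactly the time-integral part of $F_{m-1}$, and this produces \eqref{recurfk}. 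Because the domain grows, the difference between $C_{\epsilon\tau}$ and $C_{\epsilon t}$ for $\tau\in[t/2,t]$ must be controlled. I would fix the spatial domain $C_{\epsilon t}$ in the time integral and estimate the annulus $C_{\epsilon t}\setminus C_{\epsilon\tau}$, where $w\le(\epsilon^4t^2/4+1)^{-9/2}$, again by the same boundary-type bound.

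I expect the main obstacle to be the entropy coupling. Through $v=v(p,s)$, the nonlinear terms contain $\partial^k s$ factors multiplied by $(\nabla p,\chi)$, and $s$ does not decay in time. The weight $(\epsilon^2|\vec{x}|^2+1)^{-9/2}$ together with the spatial decay $(|\vec{x}|^2+1)^{-9/4}$ of $s$ from $\mathcal{N}$ must be used so that these terms become $\epsilon_0\int w|\partial^{\le m}(\nabla p,\chi)|^2$, which is absorbable. Highest derivatives of $s$ of order $m\le5$ in $L^2$ are bounded by \eqref{genenes} with $l=8$. Keeping $C_U$ independent of $\epsilon$ requires the $\nabla w$ commutators to carry an explicit factor $\epsilon$, and I would verify this carefully.
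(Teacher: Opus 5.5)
Your outer architecture matches the paper's: the weight $w=(\epsilon^2|\vec x|^2+1)^{-9/2}$ together with $e^{-\epsilon(t-\tau)}$, the fixed cube $C_{\epsilon t}$ for $\tau\in[t/2,t]$, face terms giving $\epsilon\epsilon_0^2(\epsilon^4t^2+1)^{-9/2}(1+t)^{-1/2}$, and the $\tau=t/2$ term giving $\epsilon_0^2e^{-\epsilon t/2}$. Because $F_m$ uses the same cube $C_{\epsilon t}$ for every $\tau$, there is no moving-domain term and no annulus to estimate.

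The gap is in the dissipation mechanism, and it has two parts.

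First, the multipliers you name, $\partial_{\vec x}^k p/(\gamma\bar p)$, $\partial_{\vec x}^k\vec u/\bar v$ and $c\,\partial_{\vec x}^k\chi$, are not the symmetrizer of Appendix II. In fact no diagonal symmetrizer exists here: the $\chi$-equation contains $a\nabla\cdot\vec u$, while the $\vec u$-equation contains no $\nabla\chi$.
\begin{itemize}
\item With your choice the $p$--$\vec u$ terms do combine into the flux $\nabla\cdot(\partial_{\vec x}^kp\,\partial_{\vec x}^k\vec u)$.
\item But you are left with $c\,\bar a\,\partial_{\vec x}^k\chi\,\partial_{\vec x}^k\nabla\cdot\vec u$. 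This carries $m+1$ derivatives of $\vec u$ (or of $\chi$ after integrating by parts), so nothing in $F_m$ controls it.
\item You are also left with the indefinite zeroth-order coupling $\frac{2}{\alpha\gamma\bar p\bar v}\partial_{\vec x}^kp\,\partial_{\vec x}^k\chi$.
\end{itemize}
The paper instead multiplies by $\bigl(\partial_{\vec x}^kp,\ c_f^2\rho^2\partial_{\vec x}^k\vec u,\ b(\partial_{\vec x}^kp-\frac{c_f^2}{a}\rho\,\partial_{\vec x}^k\chi)\bigr)$, see \eqref{hl221}. The combination $p-\frac{c_f^2}{a}\rho\chi$ satisfies an equation with no $\nabla\cdot\vec u$. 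Since $b(c_f^2/a+1)=\gamma-1=2/\alpha$, the $p\chi$ couplings cancel. What remains is a positive multiple of $|\rho\,\partial_{\vec x}^k\chi|^2$, which is the only damping.

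Second, your single cross term $\int w\,\partial_{\vec x}^{k-1}\vec u\cdot\nabla\partial_{\vec x}^{k-1}p$ creates no dissipation for $\vec u$; it only trades it for $\nabla p$. By \eqref{hl224}, its time derivative is $-\int w\,v|\nabla\partial_{\vec x}^{k-1}p|^2+\int w\,\gamma p|\partial_{\vec x}^{k-1}\nabla\cdot\vec u|^2+\cdots$. So $\nabla\cdot\vec u$ sits on the wrong side, and irrotationality has nothing to act on.

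Since $\chi$ is the only damped component, dissipation must be routed $\chi\to\nabla\cdot\vec u\to\nabla p$. The missing link is the cross term $\int w\,\partial_{\vec x}^{k-1}\chi\,\partial_{\vec x}^{k-1}\nabla\cdot\vec u$, built from the $a\nabla\cdot\vec u$ coupling in the $\chi$-equation (\eqref{hl223}, leading to \eqref{uwei}).
\begin{itemize}
\item It produces $\frac{\alpha_f}{\alpha}pv|\partial_{\vec x}^{k-1}\nabla\cdot\vec u|^2$.
\item The cost is order-$m$ quantities of $\chi$ and the product $|\partial_{\vec x}^{k-1}\nabla p|\,|\partial_{\vec x}^{k-1}\nabla\chi|$.
\item These are absorbed by the $\chi$-damping of \eqref{puxwei} and by \eqref{pwei}, once the three estimates are combined with a hierarchy of small coefficients.
\item Only then does irrotationality, with the face terms of Remark~\ref{transform}, turn $\nabla\cdot\vec u$ into $\nabla\vec u$.
\end{itemize}
Without this step, two things go uncontrolled: the order-$m$ part $\int_{t/2}^t\int_{C_{\epsilon t}}e^{-\epsilon(t-\tau)}w|\partial_{\vec x}^k\vec u|^2$ of $F_m$, and the order-$m$ errors of size $C(\epsilon+\epsilon_0)$ coming from $\nabla w$ and the nonlinear terms. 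So the recursion \eqref{recurfk} does not close.
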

\begin{proof}
Following the approach developed in \cite{z2} for the one-dimensional model, one derives the weighted energy estimate by multiplying \eqref{hl221} by the weight function $e^{-\epsilon (t-\tau)} (\epsilon^2 |\vec{x}|^2 + 1)^{-9/2}$, integrating the product over the spatial domain $C_{\epsilon t}$ (with respect to $\vec{x}$) and the interval $[t/2, t]$ (with respect to $\tau$), and invoking the global existence result \eqref{genenes}, together with the decay property \eqref{pu1xde} and the embedding inequality \eqref{sobemb}:
\begin{equation} \label{puxwei}
	\begin{aligned}
		& \frac{1}{2} \int_{C_{\epsilon t}} (\epsilon^2 |\vec{x}|^2 + 1)^{-\frac{9}{2}}  \left( \left|\partial_{\vec{x}}^k p\right|^2 + \gamma \rho p \left|\partial_{\vec{x}}^k \vec{u}\right|^2 + b \left| \partial_{\vec{x}}^k p - \frac{c_f^2}{a} \rho \partial_{\vec{x}}^k \chi \right|^2 \right) \! (\vec{x},t) \, d\vec{x} \\
		& \quad + \alpha \frac{c_f^2}{a} \int_{t/2}^t \int_{C_{\epsilon t}} e^{-\epsilon (t-\tau)} (\epsilon^2 |\vec{x}|^2 + 1)^{-\frac{9}{2}} \left|\rho \partial_{\vec{x}}^k \chi \right|^2 \! (\vec{x},\tau) \, d\vec{x} d\tau \\
		& \le {{C\epsilon_0 \int_{t/2}^t \int_{C_{\epsilon t}} e^{-\epsilon (t - \tau)} (\epsilon^2 |\vec{x}|^2 + 1)^{-\frac{9}{2}} \left\lbrace \sum_{|j|=1}^{|k|-1} \left( \bigl|\partial_{\vec{x}}^j p \bigr|^2 + \bigl| \partial_{\vec{x}}^j \vec{u} \bigr|^2 \right) + \sum_{|j|=0}^{|k|-1} \bigl|\partial_{\vec{x}}^j \chi \bigr|^2 \right\rbrace (\vec{x}, \tau) \, d\vec{x} d\tau}}\\
		& \quad + C(\epsilon + \epsilon_0) \int_{t/2}^t \int_{C_{\epsilon t}} e^{-\epsilon (t-\tau)} (\epsilon^2 |\vec{x}|^2 + 1)^{-\frac{9}{2}} \Bigl(
		\left| \nabla^{|k|} p\right|^2
		+ \left| \nabla^{|k|} \vec{u}\right|^2+ \left| \nabla^{|k|} \chi \right|^2 \Bigr)(\vec{x},\tau) \, d\vec{x} d\tau  \\
		& \quad 
		+ C\epsilon_0^2 e^{-\epsilon t/2}
		+ C \epsilon \epsilon_0^2(\epsilon^4 t^2 + 1)^{-\frac{9}{2}}(1+t)^{-\frac{1}{2}}.
	\end{aligned}
\end{equation}
Here $|k| \ge 2$, and the decay rate $(\epsilon^4 t^2 + 1)^{-9/2}(1+t)^{-\frac{1}{2}}$ follows from the function and its associated integral inequality given below:
$$\vec{X}_p(\vec{x},t):=(X_{p,1},X_{p,2},X_{p,3})(\vec{x},t)=  \vec{u} \left(  \left|\partial_{\vec{x}}^k p\right|^2 + \gamma \rho p \left|\partial_{\vec{x}}^k \vec{u}\right|^2 + b \left| \partial_{\vec{x}}^k p - \frac{c_f^2}{a} \rho \partial_{\vec{x}}^k \chi \right|^2 \right) +2\gamma p \partial_{\vec{x}}^k \vec{u} \, \partial_{\vec{x}}^k p$$
and
\begin{equation} \nonumber
	\begin{aligned}
		&\int_{t/2}^t \int_{C_{\epsilon t}} e^{-\epsilon (t - \tau)}(\epsilon^2 |\vec{x}|^2 + 1)^{-\frac{9}{2}} \nabla \cdot \vec{X}_p(\vec{x},\tau) \, d\vec{x} d\tau\\
		 & =-\int_{t/2}^t \int_{C_{\epsilon t}} e^{-\epsilon (t - \tau)} \nabla (\epsilon^2 |\vec{x}|^2 + 1)^{-\frac{9}{2}} \cdot \vec{X}_p(\vec{x},\tau) \, d\vec{x} d\tau\\
		 &\quad +\sum_{i=1}^3 \int_{t/2}^t \int_{-\epsilon t}^{\epsilon t} \int_{-\epsilon t}^{\epsilon t} e^{-\epsilon (t - \tau)} (\epsilon^2 |\vec{x}|^2 + 1)^{-\frac{9}{2}} X_{p,i}(\vec{x},\tau) \Big|_{x_i = -\epsilon t}^{\epsilon t} \,  dx_{j_1} dx_{j_2} d\tau\\
		& \le C\epsilon \int_{t/2}^t \int_{C_{\epsilon t}} e^{-\epsilon (t - \tau)} (\epsilon^2 |\vec{x}|^2 + 1)^{-5}\left( \left|\partial_{\vec{x}}^k p\right|^2 + \left|\partial_{\vec{x}}^k \vec{u}\right|^2 + \left| \partial_{\vec{x}}^k \chi \right|^2\right)\!(\vec{x}, \tau) \, d\vec{x} d\tau\\
		&\quad + C(\epsilon^4 t^2 + 1)^{-\frac{9}{2}} \int_{-\epsilon t}^{\epsilon t} \int_{-\epsilon t}^{\epsilon t} dx_{j_1} dx_{j_2} \int_{t/2}^t  e^{-\epsilon (t - \tau)} \left(\left\|\partial_{\vec{x}}^k p(\cdot , \tau) \right\|_{L^{\infty}}^2 + \left\|\partial_{\vec{x}}^k \vec{u}(\cdot , \tau)\right\|_{L^{\infty}}^2 + \left\|\partial_{\vec{x}}^k \chi(\cdot , \tau) \right\|_{L^{\infty}}^2 \right)d\tau\\
		& \le C\epsilon \int_{t/2}^t \int_{C_{\epsilon t}} e^{-\epsilon (t - \tau)} (\epsilon^2 |\vec{x}|^2 + 1)^{-5}\left( \left|\partial_{\vec{x}}^k p\right|^2 + \left|\partial_{\vec{x}}^k \vec{u}\right|^2 + \left| \partial_{\vec{x}}^k \chi \right|^2\right)\!(\vec{x}, \tau) \, d\vec{x} d\tau \\
		& \quad + C \epsilon \epsilon_0^2  (\epsilon^4 t^2 + 1)^{-\frac{9}{2}} (1+t)^{-\frac{1}{2}},
	\end{aligned}
\end{equation}
with $j_1:=\max\{i-1,3-i\}$ and $j_2:=6-i-j_1=\min\{7-2i,3\}$. The last inequality relies on the Sobolev embedding \eqref{wemb} together with the decay property in Theorem \ref{genes} (see \eqref{pu1xde}), namely,
$\left\|\partial_{\vec{x}}^k (p,\vec{u},\chi)(\cdot , \tau)\right\|_{L^{\infty}} \le C\left\|\nabla \partial_{\vec{x}}^k (p,\vec{u},\chi)(\cdot , \tau)\right\|_{H^1} \le C\epsilon_0(1+\tau)^{-5/4}.$

One could also treat \eqref{hl223} and \eqref{hl224} similarly to obtain
\begin{equation} \label{uwei}
	\begin{aligned}
		& \frac{\alpha_f}{4\alpha} \bar{p} \bar{v} \int_{t/2}^t \int_{C_{\epsilon t}} e^{-\epsilon (t-\tau)} (\epsilon^2 |\vec{x}|^2 + 1)^{-\frac{9}{2}} \left|\partial_{\vec{x}}^{k-1} \nabla \vec{u} \right|^2 \! (\vec{x},\tau) \, d\vec{x} d\tau \\
		& \le C \int_{C_{\epsilon t}} (\epsilon^2 |\vec{x}|^2 + 1)^{-\frac{9}{2}} \left( \left|\partial_{\vec{x}}^{k-1} \chi \right|^2 + \left|\partial_{\vec{x}}^{k-1} \nabla \cdot \vec{u}\right|^2 \right) \! (\vec{x},t) \, d\vec{x}\\                            
		& \quad + C (\epsilon + \epsilon_0) \int_{t/2}^t \int_{C_{\epsilon t}} e^{-\epsilon (t-\tau)} (\epsilon^2 |\vec{x}|^2 + 1)^{-\frac{9}{2}} \sum_{|j|=1}^{|k|-1} \left( \bigl|\partial_{\vec{x}}^j p\bigr|^2+ \bigl|\partial_{\vec{x}}^j \vec{u} \bigr|^2 \right) \! (\vec{x},\tau) \, d\vec{x} d\tau\\
		& \quad + C \int_{t/2}^t \int_{C_{\epsilon t}} e^{-\epsilon (t-\tau)} (\epsilon^2 |\vec{x}|^2 + 1)^{-\frac{9}{2}} \sum_{|j|=0}^{|k|-1} \bigl|\partial_{\vec{x}}^j \chi \bigr|^2 (\vec{x},\tau) \,  d\vec{x} d\tau\\
		& \quad + C (\epsilon + \epsilon_0) \int_{t/2}^t \int_{C_{\epsilon t}} e^{-\epsilon (t-\tau)} (\epsilon^2 |\vec{x}|^2 + 1)^{-\frac{9}{2}} \left( \left|\partial_{\vec{x}}^{k-1} \nabla p\right|^2+\left|\partial_{\vec{x}}^{k-1} \nabla \chi \right|^2 \right) \! (\vec{x},\tau) \, d\vec{x} d\tau\\
		& \quad + C \int_{t/2}^t \int_{C_{\epsilon t}} e^{-\epsilon (t-\tau)} (\epsilon^2 |\vec{x}|^2 + 1)^{-\frac{9}{2}} \left|\partial_{\vec{x}}^{k-1} \nabla p\right| \cdot \left|\partial_{\vec{x}}^{k-1} \nabla \chi \right| \!(\vec{x},\tau) \,  d\vec{x} d\tau\\
		& \quad + C \epsilon_0^2  e^{-\epsilon t/2} + C\epsilon \epsilon_0^2  (\epsilon^4 t^2 + 1)^{-\frac{9}{2}} (1+t)^{-\frac{1}{2}},
	\end{aligned}
\end{equation}
and
\begin{equation} \label{pwei}
	\begin{aligned}
		& \frac{\bar{v}}{4} \int_{t/2}^t \int_{C_{\epsilon t}} e^{-\epsilon (t-\tau)} (\epsilon^2 |\vec{x}|^2 + 1)^{-\frac{9}{2}} \left| \partial_{\vec{x}}^{k-1} \nabla p\right|^2 \! (\vec{x},\tau) \, d\vec{x} d\tau \\
		& \le C \int_{C_{\epsilon t}} (\epsilon^2 |\vec{x}|^2 + 1)^{-\frac{9}{2}} \left( \left|\partial_{\vec{x}}^{k-1} \vec{u}\right|^2 + \left|\partial_{\vec{x}}^{k-1} \nabla p\right|^2 \right) \! (\vec{x},t)  \, d\vec{x}\\
		& \quad + C (\epsilon + \epsilon_0) \int_{t/2}^t \int_{C_{\epsilon t}} e^{-\epsilon (t-\tau)} (\epsilon^2 |\vec{x}|^2 + 1)^{-\frac{9}{2}} \sum_{|j|=1}^{|k|-1} \left( \bigl|\partial_{\vec{x}}^j p\bigr|^2+\bigl|\partial_{\vec{x}}^j \vec{u}\bigr|^2 \right) \! (\vec{x},\tau) \, d\vec{x} d\tau\\
		& \quad + C \int_{t/2}^t \int_{C_{\epsilon t}} e^{-\epsilon (t-\tau)} (\epsilon^2 |\vec{x}|^2 + 1)^{-\frac{9}{2}} \sum_{|j|=0}^{|k|-1} \bigl|\partial_{\vec{x}}^j \chi\bigr|^2 (\vec{x},\tau) \, d\vec{x} d\tau\\
		& \quad + C \int_{t/2}^t \int_{C_{\epsilon t}} e^{-\epsilon (t-\tau)} (\epsilon^2 |\vec{x}|^2 + 1)^{-\frac{9}{2}} \left|\partial_{\vec{x}}^{k-1} \nabla \vec{u}\right|^2 \! (\vec{x},\tau) \,  d\vec{x} d\tau\\
		&\quad + C \epsilon_0^2  e^{-\epsilon t/2} + C\epsilon \epsilon_0^2  (\epsilon^4 t^2 + 1)^{-\frac{9}{2}} (1+t)^{-\frac{1}{2}}.
	\end{aligned}
\end{equation}
Here one invokes the lower bounds $pv \ge \bar{p}\bar{v}/2$ and $v \ge \bar{v}/2$ gained via $pv \ge \bar{p}\bar{v} - C\epsilon_0$ and $v \ge \bar{v} - C\epsilon_0$. 

The estimates \eqref{uwei} and \eqref{pwei} result in that
\begin{equation} \nonumber 
	\begin{aligned}
		& \frac{\bar{v}}{8} \int_{t/2}^t \int_{C_{\epsilon t}} e^{-\epsilon (t-\tau)} (\epsilon^2 |\vec{x}|^2 + 1)^{-\frac{9}{2}} \left( \left| \partial_{\vec{x}}^{k-1} \nabla p\right|^2 + \frac{2 \alpha_f}{\alpha} \bar{p} \left| \partial_{\vec{x}}^{k-1} \nabla \vec{u}\right|^2 \right) \! (\vec{x},\tau) \, d\vec{x} d\tau \\
		& \le C \int_{C_{\epsilon t}} (\epsilon^2 |\vec{x}|^2 + 1)^{-\frac{9}{2}} \left( \left|\partial_{\vec{x}}^{k-1} \nabla p\right|^2 + \left|\partial_{\vec{x}}^{k-1} \nabla \cdot \vec{u}\right|^2 \right) \! (\vec{x},t) \, d\vec{x}\\
		& \quad + C \int_{t/2}^t \int_{C_{\epsilon t}} e^{-\epsilon (t-\tau)} (\epsilon^2 |\vec{x}|^2 + 1)^{-\frac{9}{2}} \left|\partial_{\vec{x}}^{k-1} \nabla \chi\right|^2 \! (\vec{x},\tau) \, d\vec{x} d\tau\\
		& \quad + C \left( F_{|k|-1}(t, \epsilon) + \epsilon_0^2 e^{-\epsilon t/2}+\epsilon \epsilon_0^2  (\epsilon^4 t^2 + 1)^{-\frac{9}{2}} (1+t)^{-\frac{1}{2}} \right),
	\end{aligned}
\end{equation}
which finally together with \eqref{puxwei} yields that
\begin{equation} \nonumber 
	\begin{aligned}
		& \frac{1}{4} \sum_{|k|=m} \int_{C_{\epsilon t}} (\epsilon^2 |\vec{x}|^2 + 1)^{-\frac{9}{2}}  \left( \frac{1}{2} \bigl|\partial_{\vec{x}}^k p\bigr|^2 + \gamma \rho p  \bigl|\partial_{\vec{x}}^k \vec{u}\bigr|^2 + \min\left\{ b, \frac{1}{2} \right\} \left| \frac{c_f^2}{a} \rho \partial_{\vec{x}}^k \chi \right|^2 \right) \! (\vec{x},t) \, d\vec{x} \\
		& \quad + \frac{\alpha c_f^2}{2a} \sum_{|k|=m} \int_{t/2}^t \int_{C_{\epsilon t}} e^{-\epsilon (t-\tau)} (\epsilon^2 |\vec{x}|^2 + 1)^{-\frac{9}{2}} \bigl|\rho \partial_{\vec{x}}^k \chi\bigr|^2(\vec{x},\tau) \, d\vec{x} d\tau \\
		& \quad + \sigma \sum_{|k|=m} \int_{t/2}^t \int_{C_{\epsilon t}} e^{-\epsilon (t-\tau)} (\epsilon^2 |\vec{x}|^2 + 1)^{-\frac{9}{2}} \left( \bigl|\partial_{\vec{x}}^k p\bigr|^2 + \bigl|\partial_{\vec{x}}^k \vec{u}\bigr|^2 \right) \! (\vec{x},\tau) \, d\vec{x} d\tau\\
		& \le C \left( F_{m-1}(t,\epsilon) + \epsilon_0^2  e^{-\epsilon t/2} + \epsilon \epsilon_0^2  (\epsilon^4 t^2 + 1)^{-\frac{9}{2}} (1+t)^{-\frac{1}{2}}\right),
	\end{aligned}
\end{equation}
with a positive constant $\sigma$ satisfying $\epsilon, \epsilon_0 \ll \sigma\ll 1$.
This immediately verifies \eqref{recurfk} and completes the proof.
\end{proof}

\begin{Remark}
	From \eqref{anas} and \eqref{recurfk}, it can be deduced that for $t \ge 0$,
	\begin{equation}\label{efm}
		\begin{aligned}
			F_{m}(t, \epsilon) & \le C 
			\begin{cases}
				\epsilon^{-4} \mathcal{N}^2(t) (1 + t)^{-6} & \text{for } m = 1,\\
				F_1(t, \epsilon)+\epsilon^{-17}\epsilon_0^2 (1+t)^{-\frac{19}{2}} & \text{for } m \ge 2 
			\end{cases}\\
			& \le C_{\epsilon^{-1}} \bigl(\mathcal{N}(t)+\epsilon_0\bigr)^2 (1 + t)^{-6},
		\end{aligned}
	\end{equation}
	where $C_{\epsilon^{-1}}$ denotes a positive constant depending on $\epsilon^{-1}$ but whose value may vary from line to line, and one employs the pointwise upper bounds of $\chi$ and $\nabla (p-\bar{p}, \vec{u}, \chi)$ from the ansatz \eqref{anas} to obtain the spacetime structure on $\vec{x} \in C_{\epsilon t}$, which further yields the estimate for $F_1(t, \epsilon)$.
	\end{Remark}
	
	Next, one will show how to develop unaccounted temporal decays from the estimates \eqref{efm} of $F_{m}(t, \epsilon)$.  Define a weight function $\phi (\vec{x}, t)$ as follows:
\begin{equation} \label{phi}
	\phi (\vec{x}, t):= 
	\begin{cases}
		1 & \text{for } |\vec{x}| \le \frac{\epsilon t}{2}, \\
		1 - K \displaystyle\int_{-1}^{\frac{4 |\vec{x}| - 3 \epsilon t}{\epsilon t}} e^{- \frac{1}{1 - |\vec{y}|^2}} d\vec{y} & \text{for } \frac{\epsilon t}{2} \le |\vec{x}| \le \epsilon t, \\
		0 & \text{for } |\vec{x}| \ge \epsilon t,
	\end{cases}
\end{equation}
where $K>0$ is the constant satisfying $\displaystyle K \int_{-1}^1 e^{- \frac{1}{1 - |\vec{y}|^2}} d\vec{y} = 1.$
By the Gagliardo-Nirenberg inequality, one could deduce that  
\begin{equation} \label{puxrecur}
	\begin{aligned}
		(|\vec{x}|^2 + 1)^{-\frac{9}{4}} \bigl[ \left( \left| \partial_{\vec{x}}^k p\right| + \left|\partial_{\vec{x}}^k \vec{u}\right| + \left|\partial_{\vec{x}}^k \chi\right| \right) \! \phi \bigr] (\vec{x}, t) 
		&\le C_{\epsilon^{-1}} F_{|k|}^{\frac{1}{8}} \left( F_{|k|}^{\frac{1}{2}} + F_{|k|+1}^{\frac{1}{2}} + F_{|k|+2}^{\frac{1}{2}} \right)^{\frac{3}{4}} \! (t, \epsilon)\\
		&\le C_{\epsilon^{-1}}\bigl(\mathcal{N}(t)+\epsilon_0\bigr)(1 + t)^{-3},
	\end{aligned} 
\end{equation}
for $|k| \ge 1$.
Herein, a fact that the weight function $\phi$ restricts the region of integration for the spatial variables to the spherical region $B_{\epsilon t}:= \{ \vec{x} \in \mathbb{R}^3 \mid |\vec{x}| \le \epsilon t \} \subset C_{\epsilon t}$ is utilized. Moreover, $\epsilon \in (0,1)$ ensures that $(|\vec{x}|^2 + 1)^{-9/4} \le (\epsilon^2 |\vec{x}|^2 + 1)^{-9/4}$, such that the left-hand side of \eqref{puxrecur} is bounded by $(2F_{|k|})^{1/2}(t,\epsilon)$ after taking the $L^2$-norm.

For $|\vec{x}| \le \epsilon t/2$, $\phi(\vec{x},t) = 1$, which together with \eqref{puxrecur} immediately implies
\begin{equation} \label{slp1}
	| s - \bar{s}| \left( \left| \partial_{\vec{x}}^k p\right| + \left|  \partial_{\vec{x}}^k \vec{u}\right| + \left| \partial_{\vec{x}}^k \chi \right|  \right) \! (\vec{x}, t) \le C_{\epsilon^{-1}} \mathcal{N}(t) \bigl(\mathcal{N}(t)+\epsilon_0\bigr) (1 + t)^{-3}.
\end{equation}
For $|\vec{x}| > \epsilon t/2$, the spatial decay rates of $s-\bar{s}$ could be converted into the time decay rates:
\begin{equation} \label{slp2}
		| s - \bar{s}| \left( \left| \partial_{\vec{x}}^k p\right| + \left| \partial_{\vec{x}}^k \vec{u} \right| + \left| \partial_{\vec{x}}^k \chi \right| \right) \! (\vec{x}, t) \le  C_{\epsilon^{-1}} \epsilon_0 \mathcal{N}(t)(1 + t)^{-\frac{9}{2}}.
\end{equation}

Note that the result in \eqref{slp1} can be extended to the case $|k|=1$. It is also worth noting that replacing $s-\bar{s}$ with $\nabla^i s$ for $i=1,2$ in \eqref{slp1} preserves the validity of the estimate; in contrast, it fails for $\nabla^3 s$. The reason is that the estimates for $|\nabla^3 s|^2$ derived from \eqref{anas} lack $L^1$-integrability, a key property indispensable to the derivation process.

\begin{Remark} \label{transform}
In the derivation of \eqref{uwei}, irrotationality is utilized to convert $\partial_{\vec{x}}^{k-1} \nabla \cdot \vec{u}$ to $\partial_{\vec{x}}^{k-1} \nabla \vec{u}$ on the left-hand side of the inequality. However, the whole process is a bit different from the one in energy estimate in Section 2, due to the boundaries $x_i=\pm\epsilon t$ of the region $C_{\epsilon t}$ of integration. For convenience of the readers, the details are listed as follows\textup{:} One could derive $\sum_{i<j} ( \partial_{\vec{x}}^{k-1} \partial_{x_i} u_j - \partial_{\vec{x}}^{k-1} \partial_{x_j} u_i)^2 = 0$ from the irrotational condition $\nabla \times \vec{u} \equiv 0$, which ensures that
	\begin{equation} \label{nablau1}
		\begin{aligned}
			& \int_{t/2}^t \int_{C_{\epsilon t}} e^{-\epsilon (t-\tau)} (\epsilon^2 |\vec{x}|^2 + 1)^{-\frac{9}{2}} \left| \partial_{\vec{x}}^{k-1} \nabla \cdot \vec{u}\right|^2 \! (\vec{x},\tau) \, d\vec{x} d\tau \\
			& = \int_{t/2}^t \int_{C_{\epsilon t}} e^{-\epsilon (t-\tau)} (\epsilon^2 |\vec{x}|^2 + 1)^{-\frac{9}{2}} \Bigl[ \left| \partial_{\vec{x}}^{k-1} \nabla \vec{u}\right|^2\\
			& \quad \quad \quad \quad \quad + 2 \sum_{i<j} \Bigl( \partial_{x_i} \! \left( \partial_{\vec{x}}^{k-1} u_i \,  \partial_{\vec{x}}^{k-1} \partial_{x_j} u_j \right) - \partial_{x_j} \! \left( \partial_{\vec{x}}^{k-1} u_i \, \partial_{\vec{x}}^{k-1} \partial_{x_i} u_j \right) \Bigr) \Bigr] (\vec{x},\tau) \, d\vec{x} d\tau.
		\end{aligned}
	\end{equation}
It also holds that
	\begin{equation} \nonumber 
		\begin{aligned}
			& \int_{t/2}^t \int_{C_{\epsilon t}} e^{-\epsilon (t-\tau)} (\epsilon^2 |\vec{x}|^2 + 1)^{-\frac{9}{2}} \partial_{x_1} \! \left( \partial_{\vec{x}}^{k-1} u_1 \, \partial_{\vec{x}}^{k-1} \partial_{x_j} u_j \right) \! (\vec{x},\tau) \,  d\vec{x} d\tau\\
			&= - \int_{t/2}^t \int_{C_{\epsilon t}} e^{-\epsilon (t-\tau)} \partial_{x_1} (\epsilon^2 |\vec{x}|^2 + 1)^{-\frac{9}{2}} \left( \partial_{\vec{x}}^{k-1} u_1 \,  \partial_{\vec{x}}^{k-1} \partial_{x_j} u_j \right) \! (\vec{x},\tau) \, d\vec{x} d\tau \\
			& \quad +\int_{t/2}^t \int_{-\epsilon t}^{\epsilon t} \int_{-\epsilon t}^{\epsilon t} e^{-\epsilon (t-\tau)} (\epsilon^2 |\vec{x}|^2 + 1)^{-\frac{9}{2}} \left( \partial_{\vec{x}}^{k-1} u_1 \, \partial_{\vec{x}}^{k-1} \partial_{x_j} u_j \right) \! (\vec{x},\tau) \Big|_{x_1 = -\epsilon t}^{\epsilon t} \,  dx_2 dx_3 d\tau \\
			& \le C\epsilon \int_{t/2}^t \int_{C_{\epsilon t}} e^{-\epsilon (t-\tau)} (\epsilon^2 |\vec{x}|^2 + 1)^{-5} \left( \left| \partial_{\vec{x}}^{k-1} u_1\right|^2 + \left| \partial_{\vec{x}}^{k-1} \partial_{x_j} u_j\right|^2 \right) \! (\vec{x},\tau) \, d\vec{x} d\tau\\
			& \quad + C\epsilon \epsilon_0^2  (\epsilon^4 t^2 + 1)^{-\frac{9}{2}} (1+t)^{-\frac{1}{2}},
		\end{aligned}
	\end{equation}
	which together with \eqref{nablau1} verifies the conversion from $\partial_{\vec{x}}^{k-1} \nabla \cdot \vec{u}$ to $\partial_{\vec{x}}^{k-1} \nabla \vec{u}$ in \eqref{uwei}.
\end{Remark}

\subsection{Pointwise Estimates for the Entropy Function $(s-\bar{s})(\vec{x},t)$ and Its Derivatives}
From the entropy equation in \eqref{vnepre}, one could obtain the equation for $s^*:=s-\bar{s}$:
$$\partial_t s^* = -\vec{u}\cdot\nabla s^* + \frac{2}{\alpha_f R}\cdot\frac{\chi^2}{T_1 T_2}$$
and thus
$$s^*(\vec{x},t) = s^*(\vec{x},0)+ \int_0^t \left( -\vec{u}\cdot\nabla s^* + \frac{2}{\alpha_f R}\cdot\frac{\chi^2}{T_1 T_2} \right) \! (\vec{x},\tau) \, d\tau.$$

From Lemma~\ref{lemmains} and the definition \eqref{anas}, one obtains the following estimates:
\begin{equation} \label{uxxint}
			\begin{aligned}
				&\int_0^t |\vec{u}|(\vec{x}, \tau) \, d\tau \le C \mathcal{N}(t)(|\vec{x}|^2 + 1)^{-\frac{3}{4}}, \quad \int_0^t \left|\nabla \vec{u}\right| \! (\vec{x}, \tau) \, d\tau \le C\mathcal{N}(t)(|\vec{x}|^2 + 1)^{-1},\\
				&\int_0^t \chi^2(\vec{x}, \tau) \, d\tau \le C \mathcal{N}^2(t)(|\vec{x}|^2 + 1)^{-\frac{9}{4}}, \quad \int_0^t \left|\chi \nabla \chi \right| \! (\vec{x}, \tau) \, d\tau \le C \mathcal{N}^2(t) (|\vec{x}|^2 + 1)^{-\frac{5}{2}},\\
				&\int_0^t \left|\nabla \chi\right|^2 \! (\vec{x}, \tau) \, d\tau \le C \mathcal{N}^2(t) (|\vec{x}|^2 + 1)^{-\frac{11}{4}}.
			\end{aligned}
\end{equation}
Furthermore, Lemma~\ref{decay} together with \eqref{anas} results in that
\begin{equation} \label{xxint}
	\int_0^t \Big(\left|\chi \nabla^2 \chi\right| \!  + \left|\chi \nabla^3 \chi\right| \! + \left|\nabla \chi \right|\cdot \left|\nabla^2 \chi \right| \!  \Big)(\vec{x}, \tau)\, d\tau \le C \mathcal{N}(t)\bigl(\mathcal{N}(t)+\epsilon_0\bigr) (|\vec{x}|^2 + 1)^{-2}.
\end{equation}

Combining the definition \eqref{anas} and the above representation of $s^*$, and incorporating the initial condition $|s^*(\vec{x}, 0)| \le C_{in} \epsilon_0 (|\vec{x}|^2 + 1)^{-5/2}$ together with  Lemma~\ref{lemmains}, one has that
\begin{equation} \label{s0}
\begin{aligned}
	\left|s^* (\vec{x}, t)\right| & \le C_{in} \epsilon_0 (|\vec{x}|^2 + 1)^{-\frac{5}{2}} \\
	&\quad + C {{\mathcal{N}^2(t)}} \int_0^t \left(\left[ \left( (|\vec{x}| - \bar{c} \tau)^2 + \tau+1 \right)^{-2} +  \frac{H(\bar{c}\tau - |\vec{x}|)}{1 + (|\vec{x}| + \sqrt{\tau})(\bar{c}\tau - |\vec{x}| + \sqrt{\tau})^2 } \right](|\vec{x}|^2 + 1)^{-\frac{9}{4}}\right.\\
	&\quad\quad\quad\quad\quad\quad\quad\quad\left. +(1 + \tau)^{-1} \left[\left( (|\vec{x}| - \bar{c} \tau)^2 + \tau+1 \right)^{-4} +  \frac{H(\bar{c}\tau - |\vec{x}|)}{1 + (|\vec{x}| + \sqrt{\tau})^2(\bar{c}\tau - |\vec{x}| + \sqrt{\tau})^4 }\right] \right)d\tau\\
	& \le \left(C_{in} \epsilon_0+C  {{\mathcal{N}^2(t)}}\right) {{(|\vec{x}|^2 + 1)^{-\frac{9}{4}}}}. 
\end{aligned}
\end{equation}

By virtue of \eqref{uxxint}, the pointwise estimate for $\nabla s$ could be derived as follows:
\begin{equation} \label{s1}
	\begin{aligned}
		\left|\nabla s (\vec{x}, t)\right|&=\left|\nabla s(\vec{x},0)+\int_0^t \nabla \left( -\vec{u}\cdot\nabla s + \frac{2}{\alpha_f R} \cdot \frac{\chi^2}{T_1 T_2} \right) (\vec x,\tau)d\tau\right| \le C_{in}  \epsilon_0 (|\vec{x}|^2 + 1)^{-\frac{5}{2}}\\
		& \quad + \left|\int_0^t  \left( - \nabla s\cdot\nabla \vec{u} - \vec{u}\cdot\nabla^2 s + \frac{2}{\alpha_f R } \nabla \left( \frac{1}{T_1 T_2} \right) \chi^2 + \frac{4}{\alpha_f R} \cdot \frac{\chi \nabla \chi}{T_1 T_2} \right) (\vec x,\tau)d\tau\right|\\
		& \le C_{in}  \epsilon_0 (|\vec{x}|^2 + 1)^{-\frac{5}{2}} + C \mathcal{N}(t)(|\vec{x}|^2 + 1)^{-1} \cdot \left\| \nabla s (\vec{x} , \cdot) \right\|_{L^{\infty}_t(\mathbb{R}_+)}\\
		&\quad + C\mathcal{N}(t)(|\vec{x}|^2 + 1)^{-\frac{3}{4}} \cdot \left\| \nabla^2 s (\vec{x} , \cdot) \right\| _{L^{\infty}_t (\mathbb{R}_+)} +  C \mathcal{N}^2(t)(|\vec{x}|^2 + 1)^{-\frac{9}{4}} \\
		& \le C_{in} \epsilon_0 (|\vec{x}|^2 + 1)^{-\frac{5}{2}} + C \mathcal{N}^2(t)(|\vec{x}|^2 + 1)^{-\frac{9}{4}}\\
		&\le  \left(C_{in} \epsilon_0+C  {{\mathcal{N}^2(t)}}\right)(|\vec{x}|^2 + 1)^{-\frac{9}{4}}.
	\end{aligned}
\end{equation}
Here, one substitutes the initial condition $|\nabla s (\vec{x}, 0)| \le C_{in}  \epsilon_0 (|\vec{x}|^2 + 1)^{-5/2}$ from \eqref{inida} and the definition \eqref{anas} into the representation of $\nabla s$.

The high order derivatives $\nabla^2 s$ and $\nabla^3 s$ could be estimated similarly by using \eqref{uxxint} and \eqref{xxint}:
\begin{equation} \label{s23}
	\begin{aligned}
		&\left| \nabla^2 s\right| \! (\vec{x}, t) \le {{\left[C_{in} \epsilon_0+C  \mathcal{N}(t)\bigl(\mathcal{N}(t)+\epsilon_0\bigr) \right]}}(|\vec{x}|^2 + 1)^{-\frac{3}{2}}, \\
		&\left|\nabla^3 s\right| \! (\vec{x}, t) \le \left[C_{in} \epsilon_0+C \mathcal{N}(t)\bigl(\mathcal{N}(t)+\epsilon_0\bigr) \right] (|\vec{x}|^2 + 1)^{-\frac{3}{4}},
	\end{aligned}
\end{equation}
where the dominant terms to determine the final estimates are $\displaystyle\int_0^t -\vec{u}\cdot\nabla^3 s(\vec x,\tau)d\tau$ and $\displaystyle\int_0^t -\vec{u}\cdot\nabla^4 s(\vec x,\tau)d\tau$ and one derives $\nabla^4 s(\vec{x}, t) \le C \epsilon_0$ from the general energy estimates in Theorem~\ref{genes} and the embedding theorem $\| \nabla^4 s (\cdot , t) \| _{L^{\infty}} \le C \| \nabla^5 s (\cdot , t)\| _{H^1}$.

\subsection{Pointwise Estimates for $(p - \bar{p}, \vec{u}, \chi)(\vec{x},t)$}
Duhamel's principle together with the Green's function defined by \eqref{greeng} gives the following representation for the solution $(p,\vec{u},\chi)$ of \eqref{vnepre}:
$$\vec{U}(\vec{x},t) = \mathbb{G}(\vec{x},t) \underset{\vec{x}}{\ast} \vec{U}(\vec{x},0) + \int_0^t \mathbb{G}(\vec{x},t-\tau) \underset{\vec{x}}{\ast} 
		\begin{pmatrix}
			R_p\\
			R_{\vec{u}}\\
			R_{\chi}
		\end{pmatrix}(\vec{x},\tau) \, d\tau,$$
and thus
\begin{equation} \label{duhpux}
	\begin{aligned}
		\partial_{\vec{x}}^k \vec{U}(\vec{x},t) &= \partial_{\vec{x}}^k \mathbb{G}(\vec{x},t) \underset{\vec{x}}{\ast} \vec{U}(\vec{x},0) + \int_0^t \partial_{\vec{x}}^k \mathbb{G}(\vec{x},t-\tau) \underset{\vec{x}}{\ast} 
		\begin{pmatrix}
			R_p\\
			R_{\vec{u}}\\
			R_{\chi}
		\end{pmatrix}(\vec{x},\tau) \, d\tau\\
		&= \partial_{\vec{x}}^k \mathbb{G}_L(\vec{x},t) \underset{\vec{x}}{\ast} \vec{U}(\vec{x},0) + \mathbb{G}_S(\vec{x},t) \underset{\vec{x}}{\ast} \partial_{\vec{x}}^k \vec{U}(\vec{x},0) + \partial_{\vec{x}}^k \vec{S}_R (\vec{x},t).
	\end{aligned}
\end{equation}
Here,  one denotes
\begin{equation} \label{ui}
	\vec{U}:=\bigl(p-\bar{p},\vec{u},\chi\bigr):=\left(\vec{U}_1,\vec{U}_2,\vec{U}_3\right)^t,\ \ \ \  \vec{S}_R:=\left( \vec{S}_{R,1}, \vec{S}_{R,2}, \vec{S}_{R,3}\right)^t:=\int_0^t \partial_{\vec{x}}^k \mathbb{G}(\vec{x},t-\tau) \underset{\vec{x}}{\ast} 
		\begin{pmatrix}
			R_p\\
			R_{\vec{u}}\\
			R_{\chi}
		\end{pmatrix}(\vec{x},\tau) \, d\tau,
\end{equation}
and the source terms $R_p$, $R_{\vec{u}}$, and $R_{\chi}$ are defined in \eqref{vnepre}.

\textbf{Initial Propagations.}
Assume that the initial perturbation $(p_0 - \bar{p}, \vec{u}_0, \chi_0)(\vec{x})$ satisfies that 
\begin{equation}\label{inipuc}
	\bigl|\vec{U}\bigr|(\vec{x},0), \, \bigl|\nabla \vec{U}\bigr|(\vec{x},0), \, \bigl|\nabla^2 (p_0, \vec{u}_0)\bigr|(\vec{x})  \le C_{in} \epsilon_0 (|\vec{x}|^2+1)^{-\frac{5}{2}}.
\end{equation}
Lemma~\ref{lemmain} together with \eqref{inipuc} and the estimates of the Green's function $\mathbb{G}(\vec{x},t)$ in Theorem~\ref{kirch} yields that
\begin{equation} \label{inipux}
\begin{aligned}
		&\left|\mathbb{G}_i(\vec{x},t) \underset{\vec{x}}{\ast} \vec{U}(\vec{x},0)\right| \le C_{in} \epsilon_0 \left((1+t)^{-2-d_i}e^{-\frac{(|\vec{x}|-\bar{c}t)^2}{K(1+t)}}+\left(1+ |\vec{x}|^{-2} \right)e^{-(|\vec{x}|+t)/D_0}\right) \underset{\vec{x}}{\ast} (|\vec{x}|^2+1)^{-5/2}\\
		&\quad\quad+C_{in} \epsilon_0e^{-\mu_2 t} \delta (\vec{x}) \underset{\vec{x}}{\ast} (|\vec{x}|^2+1)^{-5/2} +C_{in} \epsilon_0(1+t)e^{-\mu_1 t} \omega(\vec{x},t;\bar{c}_f) \underset{\vec{x}}{\ast} (|\vec{x}|^2+1)^{-5/2}\\
		&\quad\quad+C_{in} \epsilon_0e^{-\mu_1 t} \left[ \omega(\vec{x},t;\bar{c}_f) \underset{\vec{x}}{\ast} 
		\left|\nabla (p_0, \vec{u}_0)(\vec{x})\right|+\left| \omega_t(\vec{x},t;\bar{c}_f) \underset{\vec{x}}{\ast} 
		p_0(\vec{x})\right| + \left| \omega_t(\vec{x},t;\bar{c}_f) \underset{\vec{x}}{\ast} 
		\vec{u}_0(\vec{x})\right| \, \right]\\
		&\le C_{in} \epsilon_0 (1+t)^{-d_i} \left( (|\vec{x} |-\bar{c} t)^2 + t+1\right)^{-2} \le C_{in} \epsilon_0 (1+t)^{-d_i} e_1(\vec{x},t), \ \ i=1,2,3.
\end{aligned}
\end{equation}
Here, $\mathbb{G}_i$ denotes the $i$-th row of the Green’s function $\mathbb{G}$ and the moving heat kernel in $\mathbb{G}_3$ has an extra $(1+t)^{-1/2}$ compared with $\mathbb{G}_1$ and $\mathbb{G}_2$ according to Theorem~\ref{kirch}. Thus, it yields a discrepancy and the constants $d_i$ are as follows
\begin{equation} \label{di}
	d_1=d_2=0, \quad d_3=\frac{1}{2}.
\end{equation}

The estimates for first-order derivatives $\nabla \mathbb{G}_i(\vec{x},t) \underset{\vec{x}}{\ast} \vec{U}(\vec{x},0)$ are similar to \eqref{inipux} and it holds that
\begin{equation} \label{inipux1}
	\begin{aligned}
		\left| \nabla \mathbb{G}_i(\vec{x},t) \underset{\vec{x}}{\ast} \vec{U}(\vec{x},0)\right|  & \le C_{in} \epsilon_0 (1+t)^{-d_i} \left( (|\vec{x} | - \bar{c} t)^2 + t+1\right)^{-\frac{5}{2}} \\
		& \le C_{in} \epsilon_0 
		\begin{cases}
			(1+t)^{-\frac{1}{2}} e_1(\vec{x},t)  & \text{for } i=1,2, \\
			e_2(\vec{x},t)  & \text{for } i=3.
		\end{cases}
	\end{aligned}
\end{equation}

In the estimates for $\nabla^2 \mathbb{G}_i(\vec{x},t) \underset{\vec{x}}{\ast} \vec{U}(\vec{x},0)$, if one takes a similar way to \eqref{inipux}, the pointwise structures of $\nabla^3(p_0,\vec u_0)(\vec x)$ are required and have not been posed in the assumptions of Theorem~\ref{main1}. Thus, instead, we invoke the $L^{\infty}$-boundedness assumptions of high-order derivatives, namely
$$\left\|\nabla^i \vec{U}(\cdot,0) \right\|_{L^{\infty}} \le C \left\|\nabla^{i+1} \vec{U}(\cdot,0) \right\|_{H^1}  \le C_{in} \epsilon_0 , \quad \text{for } i=2,3,$$
which combined with \eqref{glps0} in Theorem~\ref{lpg} and \eqref{iniconv1} results in that
\begin{equation} \label{inipux2}
\begin{aligned}
	&\left| \nabla^2 \mathbb{G}(\vec{x},t) \underset{\vec{x}}{\ast} \vec{U}(\vec{x},0)\right|\le \left| \nabla^2 \mathbb{G}_L(\vec{x},t) \underset{\vec{x}}{\ast} \vec{U}(\vec{x},0)\right|+\left|\nabla^2 \mathbb{G}_S(\vec{x},t) \underset{\vec{x}}{\ast} \vec{U}(\vec{x},0)\right|\\
	&\le C_{in} \epsilon_0 (1+t)^{-2-1}e^{-\frac{(|\vec{x}|-\bar{c}t)^2}{K(1+t)}}\underset{\vec{x}}{\ast} (|\vec{x}|^2+1)^{-5/2}+C_{in} \epsilon_0 e^{-t/C_0}\\
	&\le C_{in} \epsilon_0 (1+t)^{-3}.
\end{aligned}	 
\end{equation}

\textbf{Nonlinear Wave Propagations.}
 By invoking the definition \eqref{anas} of $\mathcal{N}(t)$, one deduces the following estimates for nonlinear terms $(R_p, R_{\vec{u}}, R_{\chi})$:
\begin{align}
	&\left| (R_p, R_{\vec{u}}, R_{\chi})\right| \! (\vec{x},t) \le C {{\mathcal{N}^2(t)}} e_1(\vec{x},t) \left[ (1+t)^{-\frac{1}{2}} \left( e_1(\vec{x},t) + (|\vec{x}|^2 + 1)^{-\frac{9}{4}} \right) + e_2(\vec{x},t) \right], \label{f0} \\
	& \left|\nabla (R_p, R_{\vec{u}}, R_{\chi})\right| \! (\vec{x}, t) \le C{{\mathcal{N}^2(t)}}(1 + t)^{-\frac{5}{2}} \left( e_1(\vec{x}, t) + (|\vec{x}|^2 + 1)^{- \frac{9}{4}} \right). \nonumber 
\end{align}
Here, the factor ${{(|\vec{x}|^2 + 1)^{-9/4}}}$ arises from the pointwise structure of the entropy function $(s - \bar{s})(\vec{x},t)$ and its first-order derivative $\nabla s(\vec{x}, t)$. The time decay rate of this estimate is not fast enough for closure of nonlinearity for certain parts; and it can be refined by \eqref{slp1} and \eqref{slp2} as follows:
\begin{equation} \label{f01im}
	\begin{aligned}
		\left|(R_p, R_{\vec{u}}, R_{\chi})\right| \! (\vec{x},t), \, \left|\nabla (R_p, R_{\vec{u}}, R_{\chi})\right| \! (\vec{x}, t) & \le C{{\mathcal{N}^2(t)}}(1 + t)^{-\frac{5}{2}} \left( e_1(\vec{x}, t) + H(|\vec{x}| - \epsilon \sqrt{t} )(|\vec{x}|^2 + 1)^{- \frac{9}{4}} \right) \\
		& \quad + C_{\epsilon^{-1}} {{\mathcal{N}(t)\bigl(\mathcal{N}(t)+\epsilon_0\bigr)}}  H(\epsilon \sqrt{t} - |\vec{x}|) (1 + t)^{-3}.
	\end{aligned}
\end{equation}
From the solution representation \eqref{duhpux}, to derive the pointwise estimates for the first-order derivatives, pointwise information about the second-order derivatives of $R_p(\vec{x},t)$ and $R_{\vec{u}}(\vec{x},t)$ are required:
\begin{equation} \label{f2}
	\begin{aligned}
		\left|\nabla^2 (R_p, R_{\vec{u}})\right| \! (\vec{x},t) & \le C {{\mathcal{N}(t)\bigl(\mathcal{N}(t)+\epsilon_0\bigr)}} \left[(1 + t)^{-\frac{5}{2}} + (1+t)^{-\frac{1}{2}} (|\vec{x}|^2+1)^{-\frac{3}{2}} \right] e_1(\vec{x}, t) \\
		& \quad +  { {C_{\epsilon^{-1}} \mathcal{N}(t)\bigl(\mathcal{N}(t)+\epsilon_0\bigr)}}\left[ H(|\vec{x}|-\epsilon \sqrt{t}) (1+t)^{-\frac{5}{2}} (|\vec{x}|^2+1)^{-\frac{9}{4}}+H(\epsilon \sqrt{t} - |\vec{x}|) (1 + t)^{-3}\right].
	\end{aligned}
\end{equation}

Applying Lemma~\ref{lemmainker} for $m=2d_i$ and Lemma~\ref{lemmainexp} for $i=1/2, j=0,1,2, m=2$, one obtains that
\begin{equation} \label{soupux00}
\begin{aligned}
	& \left((1+t)^{-2-d_i}e^{-\frac{(|\vec x|-\bar{c}t)^2}{K(1+t)}}+e^{-(|\vec x|+t)/D_0}+\frac{e^{-|\vec x|/D_1}}{|\vec x|^{2}}\left(e^{-\mu_1t/4}+e^{-\mu_2t}\right)\right)\\
	&\quad \quad \underset{(\vec{x},t)}{\ast} {{\mathcal{N}^2(t)}} e_1(\vec{x},t) \left[ (1+t)^{-\frac{1}{2}} \left( e_1(\vec{x},t) + (|\vec{x}|^2 + 1)^{-\frac{9}{4}} \right) + e_2(\vec{x},t) \right] \\
	&\le C_s {\mathcal{N}^2(t)}(1+t)^{-d_i} e_1(\vec{x},t).
\end{aligned}
\end{equation}
Similarly, Lemma~\ref{lemmainexpw} for $l=5/2, d=0$ results in that
\begin{equation} \label{soupux01}
\begin{aligned}
	&(1+t)e^{-\mu_1t}\omega(\vec x,t;\bar{c}_f)
\underset{(\vec{x},t)}{\ast} \left[{{\mathcal{N}^2(t)}}(1 + t)^{-\frac{5}{2}} \left( e_1(\vec{x}, t) + H(|\vec{x}| - \epsilon \sqrt{t} )(|\vec{x}|^2 + 1)^{- \frac{9}{4}} \right)\right. \\
	& \left.\quad + C_{\epsilon^{-1}} {{\mathcal{N}(t)\bigl(\mathcal{N}(t)+\epsilon_0\bigr)}} H(\epsilon \sqrt{t} - |\vec{x}|) (1 + t)^{-3}\right]\\
	&\quad +e^{-\mu_1t} \left| \omega_t(\vec x,t;\bar{c}_f)\underset{(\vec{x},t)}{\ast} R_p(\vec{x},t)\right| + \left| \omega_t(\vec x,t;\bar{c}_f)\underset{(\vec{x},t)}{\ast} R_{\vec{u}}(\vec{x},t)\right|\\
	&\le C_s {{\mathcal{N}(t)\bigl(\mathcal{N}(t)+\epsilon_0\bigr)}}(1+t)^{-d_i} e_1(\vec{x},t),
\end{aligned}
\end{equation}
and direct computations yields that
\begin{equation} \label{soupux02}
\begin{aligned}
	&e^{-\mu_2t} \underset{t}{\ast} {{\mathcal{N}^2(t)}} e_1(\vec{x},t) \left[ (1+t)^{-\frac{1}{2}} \left( e_1(\vec{x},t) + (|\vec{x}|^2 + 1)^{-\frac{9}{4}} \right) + e_2(\vec{x},t) \right]\\
	&\le C_s {{\mathcal{N}^2(t)}}(1+t)^{-d_i} e_1(\vec{x},t).
\end{aligned}
\end{equation}
Combining the pointwise estimates of $\mathbb{G}(\vec{x},t)$ from Theorem~\ref{green} with \eqref{duhpux}, \eqref{f0}, \eqref{f01im} and \eqref{soupux00}--\eqref{soupux02}, it holds that 
\begin{equation} \label{soupux}
	\bigl|\vec{S}_{R,i}\bigr|(\vec{x},t) \le C_s {{\mathcal{N}(t)\bigl(\mathcal{N}(t)+\epsilon_0\bigr)}}(1+t)^{-d_i} e_1(\vec{x},t),
\end{equation}
where $\vec{S}_{R,i}$ and $d_{i}$ are defined in \eqref{ui} and \eqref{di} respectively for $i=1,2,3$. From \eqref{duhpux}, \eqref{inipux}, and \eqref{soupux}, the pointwise estimates for $(p-\bar{p})(\vec{x},t)$, $\vec{u}(\vec{x},t)$, and $\chi(\vec{x},t)$,
which correspond to the components $\vec{U}_1$, $\vec{U}_2$, $\vec{U}_3$ of $\vec{U}$ defined in \eqref{ui}, follow directly:
\begin{equation} \label{puxpoin}
	\bigl|\vec{U}_i\bigr|(\vec{x},t) \le \left( C_{in} \epsilon_0 + C_s {{\mathcal{N}(t)\bigl(\mathcal{N}(t)+\epsilon_0\bigr)}} \right)(1+t)^{-d_i} e_1(\vec{x},t), \quad \text{for } i=1,2,3.
\end{equation}

The first order derivative of the solution could be estimated similarly. After comparing the refined estimates for $|\nabla^2 (R_p, R_{\vec{u}})|(\vec{x},t)$ given in \eqref{f2} and $|\nabla (R_p, R_{\vec{u}}, R_{\chi})|(\vec{x},t)$ as shown in \eqref{f01im}, the only additional term is $(1+t)^{-1/2} (|\vec{x}|^2+1)^{-3/2} e_1(\vec{x},t)$ which could be estimated as follows:
\begin{equation} \nonumber
\begin{aligned}
	&e^{-\mu_1t}\omega(\vec x,t;\bar{c}_f) \underset{(\vec{x},t)}{\ast} (1+t)^{-\frac{1}{2}} (|\vec{x}|^2+1)^{-\frac{3}{2}} e_1(\vec{x},t)\\
	&\le C\left[H(|\vec x|-\bar{c}t)(1+t)^{-\frac 12}(|\vec x|^2+1)^{-\frac 32}\left((|\vec x|-\bar{c}t)^2+t+1\right)^{-2}\right.\\
	&\left.\quad \quad \quad +H(\bar{c}t-|\vec x|)\left((|\vec x|^2+1)^{-\frac 32}(1+t)^{-3}+(1+t)^{-\frac{11}2}\right)\right]
\end{aligned}
\end{equation}
by Lemma~\ref{lemmainexpw} with $l=1/2, d=3/2$. Here, the second-order derivative of the nonlinear term is involved due to the convolutions of $L_{2,0}\omega_t$, $L_{3,-1}(t,\nabla)\omega$, and $\operatorname{diag} \bigl( 0, \Delta^{-1} \nabla \nabla^t, 0 \bigr) \omega_t$ in $\mathbb{G}_S$ with $\nabla (R_p, R_{\vec{u}}, R_{\chi})(\vec{x},t)$.

Using the pointwise estimates for $\mathbb{G}(\vec{x},t)$ from Theorem~\ref{green}, together with \eqref{f0}--\eqref{f2} and Lemmas~\ref{lemmainker}--\ref{lemmainexpw}, one deduces that
\begin{equation} \nonumber 
	\begin{aligned}
		\bigl| \nabla \vec{S}_{R,i} \bigr|(\vec{x},t) &= \left|\nabla \mathbb{G}_L(\vec{x},t) \underset{(\vec{x},t)}{\ast} (R_p, R_{\vec{u}},R_{\chi})(\vec{x},t) + \mathbb{G}_S(\vec{x},t) \underset{(\vec{x},t)}{\ast} \nabla (R_p, R_{\vec{u}},R_{\chi})(\vec{x},t) \right| \\
		& \le C_s {{\mathcal{N}(t)\bigl(\mathcal{N}(t)+\epsilon_0\bigr)}}
		\begin{cases}
			(1+t)^{-\frac{1}{2}} e_1(\vec{x},t)  & \text{for } i=1,2, \\
			e_2(\vec{x},t)  & \text{for } i=3,
		\end{cases}
	\end{aligned}
\end{equation}
which together with \eqref{duhpux} and \eqref{inipux1} yields the pointwise estimates for $\nabla \vec{U}_i(\vec{x},t)$:
\begin{equation} \label{puxpoin1}
	\bigl|\nabla \vec{U}_i \bigr|(\vec{x},t) \le \left( C_{in} \epsilon_0 + C_s {{\mathcal{N}(t)\bigl(\mathcal{N}(t)+\epsilon_0\bigr)}} \right)
	\begin{cases}
		(1+t)^{-\frac{1}{2}} e_1(\vec{x},t) & \text{for } i=1,2, \\
		e_2(\vec{x},t) & \text{for } i=3.
	\end{cases}
\end{equation}

Finally for the second order derivative of the solution, the time decay estimates for the second and third order derivatives of the nonlinear terms are required:
\begin{equation} \label{f23}
	\left|\nabla^i (R_p, R_{\vec{u}}, R_{\chi})\right| \! (\vec{x},t) \le C_{\epsilon^{-1}} {{\mathcal{N}(t)\bigl(\mathcal{N}(t)+\epsilon_0\bigr)}}(1 + t)^{-3}, \quad \text{for } i=2,3,
\end{equation}
which could be obtained by invoking the definitions \eqref{vnepre} and \eqref{anas}, the decay property \eqref{decaygen}, together with the results established in \eqref{slp1} and \eqref{slp2}. Based on \eqref{f23}, one establishes the following space-time convolution estimates:
\begin{equation} \label{expont2}
	\begin{aligned}
		& e^{-\mu_2 t} \underset{t}{\ast} (1+t)^{-3} \le C (1+t)^{-3},\\[0.2ex]
		& |\vec{x}|^{-j} e^{-(|\vec{x}|+t)/D_0} \underset{(\vec{x},t)}{\ast} (1+t)^{-3} \\
		& \le \bigl\| |\vec{x}|^{-j} e^{-|\vec{x}|/D_0}\bigr\|_{L^1} e^{-t/D_0} \underset{t}{\ast} (1+t)^{-3} \le C(1+t)^{-3}, \quad \text{for } j=0,1,2,\\[0.2ex]
		& e^{-\mu_i t} \omega(\vec{x},t;\bar{c}_f) \underset{(\vec{x},t)}{\ast} (1+t)^{-3}=4\pi te^{-\mu_i t} \underset{t}{\ast} (1+t)^{-3} \le C (1+t)^{-3}, \quad \text{for } i=1,2.
	\end{aligned}
\end{equation}
In addition, by setting $m=|k|$ in Lemma~\ref{lemmainker} and for $|k|=2,3$, one obtains that
\begin{equation} \label{heke3}
	(1+t)^{-2-\frac{|k|}{2}} e^{-\frac{(|\vec{x}| - \bar{c} t)^2}{K(1 + t)}} \underset{(\vec{x},t)}{\ast} \bigl|(R_p, R_{\vec{u}}, R_{\chi})\bigr|(\vec{x},t) \le C{{\mathcal{N}^2(t)}}(1+t)^{-3},
\end{equation}
which combined with the pointwise estimates for $\mathbb{G}(\vec{x},t)$ from Theorem~\ref{green}, \eqref{duhpux}, \eqref{inipux2} and \eqref{f23}--\eqref{heke3} results in that
\begin{equation} \label{puxpoin2}
	\begin{aligned}
		\bigl|\nabla^2 \vec{U}\bigr| (\vec{x},t) & \le \left| \nabla^2 \mathbb{G} (\vec{x},t) \underset{\vec{x}}{\ast} \vec{U}(\vec{x},0)\right|+ \left|\nabla^2 \mathbb{G}_L(\vec{x},t) \underset{(\vec{x},t)}{\ast} (R_p, R_{\vec{u}},R_{\chi})(\vec{x},t) \right| \\
		& \quad + \left| \mathbb{G}_S(\vec{x},t) \underset{(\vec{x},t)}{\ast} \nabla^2 (R_p, R_{\vec{u}},R_{\chi})(\vec{x},t) \right| \\
		&\le \left( C_{in}\epsilon_0 + C_s {{\mathcal{N}(t)\bigl(\mathcal{N}(t)+\epsilon_0\bigr)}}\right)(1+t)^{-3}.
	\end{aligned}
\end{equation}

Finally, one combines \eqref{anas}, \eqref{s0}--\eqref{duhpux}, \eqref{puxpoin}, \eqref{puxpoin1} and \eqref{puxpoin2} to obtain that		\begin{equation} \nonumber
			\mathcal{N}(t) \le  C_{in}\epsilon_0 + C_{s} \mathcal{N}(t)\bigl(\mathcal{N}(t)+\epsilon_0\bigr).
		\end{equation}
For $\epsilon_0$ sufficiently small, one can further deduce that
		\begin{equation} \nonumber
			\mathcal{N}(t) \le C\epsilon_0,
		\end{equation}
and completes the proof of Theorem~\ref{main1}.

%%%%%%%%%%%%%%%%%%%%%%%%%%%%%%%%%%%%%%%%%%%%%%%%%%%%%%%%%%%%%%%%%%%%%%%%%%%%%%%%%5
%
%
%                       Section 4:
%
%
%%%%%%%%%%%%%%%%%%%%%%%%%%%%%%%%%%%%%%%%%%%%%%%%%%%%%%%%%%%%%%%%%%%%%%%%%%%%%%%%%

%%%%%%%%%%%%%%%%%%%%%%%%%%%%%%%%%%%%%%%%%%%%%%%%%%%%%%%%%%%%%%%%%%%%%%%%%%%%%%%%%5
%
%
%                       Section 4:
%
%
%%%%%%%%%%%%%%%%%%%%%%%%%%%%%%%%%%%%%%%%%%%%%%%%%%%%%%%%%%%%%%%%%%%%%%%%%%%%%%%%%

\section{Asymptotic Analysis in the $L^p$ Framework for Background Perturbations: Proof of Theorem~\ref{main2}}
As mentioned in previous studies, a direct comparison between two individual perturbations fails to induce a dependence on the difference of the background states. Instead, one considers the equations \eqref{gbp} of the difference function $\vec{Y}$, which is defined by \eqref{yy1} and \eqref{diff} as the difference of the two perturbations $(p^{\beta},\vec{u}^{\beta}, \chi^{\beta}, s^{\beta})$ ($\beta=a,b$). 

The global existence of nonlinear solutions $(p^{\beta},\vec{u}^{\beta}, \chi^{\beta}, s^{\beta})$ and the decay properties of $(\nabla p^\beta, \nabla \vec{u}^\beta, \chi^\beta)$ for $\beta=a,b$ have been established in \cite{hl} (one could refer to the aforementioned Theorem~\ref{genes}); the pointwise structures of the solutions are also obtained in the previous section in this paper (one could refer to Theorem~\ref{main1} for more details). Consequently, under the assumptions of Theorem~\ref{genes}, their difference $\vec{Y}$ also admits a global solution. Moreover, $(p^{\beta},\vec{u}^{\beta}, \chi^{\beta}, s^{\beta})$ ($\beta=a,b$) satisfies estimates stated in Theorem~\ref{main1}, wherein $\vec{x}$ and $\bar{c}$ are replaced by the shifted variable $\vec{x} - \vec{u}_{\beta}t$ and $\sqrt{\gamma p_\beta v_\beta/(1+b)}$ respectively.

In the rest of this section, one aims to derive the relationship between $\vec{Y}=(\vec{Y}_1,z)^t$ and the difference of the background states. First one could formulate $\vec{Y}_1=(g^0, \vec{h}, f)^t$ via the Green's function $\mathbb{G}(\vec x,t)$ and the Duhamel's principle:
\begin{equation}\label{diffrep}
	\begin{aligned}
		\vec{Y}_1(\vec x, t)&=\mathbb{G}(\vec x,t) \underset{\vec{x}}{\ast} \vec{Y}_1(\vec x,0)+\int_0^t  \mathbb{G}(\vec{x},t-\tau) \underset{\vec{x}}{\ast}
		\begin{pmatrix} 
			R_{g^0}^ {\RN{1}}\vspace{1.5mm}
			\\
			R_{\vec h}^ {\RN{1}}\vspace{1.5mm}
                         \\
			R_{f}^ {\RN{1}}
		\end{pmatrix}(\vec x,\tau) \, d\tau 
		+ \int_0^t  \mathbb{G}(\vec x, t-\tau) \underset{\vec{x}}{\ast} \begin{pmatrix}
			R_{g^0}^ {\RN{2}}\vspace{1.5mm}
			\\
			R_{\vec h}^ {\RN{2}}\vspace{1.5mm}
                         \\
			R_{f}^ {\RN{2}}
		\end{pmatrix}(\vec{x},\tau) \, d\tau\\
		& := \mathbb{G}(\vec x,t) \underset{\vec{x}}{\ast} \vec{Y}_1(\vec x,0) + \vec{Y}_A(\vec{x},t) +\vec{Y}_C(\vec{x},t).
	\end{aligned}
\end{equation}
Here, $R_{g^0}^ {\RN{1}}, R_{\vec h}^ {\RN{1}}, R_{f}^ {\RN{1}}$ defined in \eqref{lin} are source terms that contain solely products of the individual perturbation $(p^b,\vec{u}^b, \chi^b)$ and the difference of background states, and thus their upper bounds are available:
\begin{equation} \nonumber 
	\begin{aligned}
		\left| R_{g^0}^ {\RN{1}}\right| & \le \bigl|\vec{u}_b \bigr|\cdot \left|\nabla p^b \right| + \gamma \left| p_b - p_a \right| \cdot \left|\nabla \cdot \vec{u}^b \right| + \frac{2}{\alpha v_a v_b} \left| v_b - v_a \right| \cdot \left| \chi^b \right| \\ 
		& \le \bigl| \vec{u}_b - \vec{u}_a \bigr|\cdot \left|\nabla p^b \right| + C \bigl| \vec{\theta}_b - \vec{\theta}_a \bigr| \! \left( \left|\nabla \cdot \vec{u}^b \right| +  \left| \chi^b \right| \right), \\ 
		\left| R_{\vec h}^ {\RN{1}}\right| & \le  \bigl| \vec{u}_b - \vec{u}_a \bigr| \cdot \left|\nabla \vec{u}^b \right| + \left| v_b - v_a \right| \cdot \left| \nabla p^b \right|,\\
		\left| R_{f}^ {\RN{1}}\right| & \le  \bigl| \vec{u}_b - \vec{u}_a \bigr| \cdot \left|\nabla \chi^b \right| + C\bigl| \vec{\theta}_b - \vec{\theta}_a \bigr| \cdot \left| \nabla \cdot \vec{u}^b \right|.
	\end{aligned}
\end{equation}
In the above inequalities, one uses the relation $|p_b - p_a| \le C |\vec{\theta}_b - \vec{\theta}_a| $, with $\vec{\theta}_b - \vec{\theta}_a := (v_b - v_a, e_{1,b} - e_{1,a})$. The coupling terms $R_{g^0}^ {\RN{2}}, R_{\vec h}^ {\RN{2}}, R_{f}^ {\RN{2}}$ given by \eqref{nlin} consist of the unknown difference function $\vec{Y}$ as well as the individual perturbations $(p^{\beta},\vec{u}^{\beta},\chi^{\beta},s^{\beta})$ ($\beta=a,b$). A significant distinction between source terms $R_{g^0}^ {\RN{1}}, R_{\vec h}^ {\RN{1}}, R_{f}^ {\RN{1}}$ and coupling terms $R_{g^0}^ {\RN{2}}, R_{\vec h}^ {\RN{2}}, R_{f}^ {\RN{2}}$ is that the latter ones do not exhibit any dependence on the background state differences $\vec{F}_b-\vec{F}_a$ or $\vec{W}_b-\vec{W}_a$ from the existing results for $\vec{Y}$. In fact, the recent established estimates for $\vec{Y}$ are analogous to those for the individual perturbations $(p^{\beta},\vec{u}^{\beta}, \chi^{\beta}, s^{\beta})$ ($\beta = a,b$), namely:
$$\left| \vec{Y} (\vec{x}, t) \right| = \left|(p^b - p^a, \vec{u}^b -\vec{u}^a, \chi^b - \chi^a, s^b - s^a)(\vec x, t) \right| \le \left|(p^a, \vec{u}^a, \chi^a, s^a)(\vec x, t) \right| + \left|(p^b, \vec{u}^b, \chi^b, s^b)(\vec x, t) \right|.$$
Accordingly, the coupling terms will be treated as nonlinear terms and closed by combining the ansatz assumptions and the a priori bounds derived via the energy method later.\\

\textbf{Initial Propagations.}
Assume that the two initial perturbations $(p_0^{\beta},\vec{u}_0^{\beta}, \chi_0^{\beta}, s_0^{\beta})(\vec{x})$ ($\beta = a,b$) satisfy the initial assumptions in Theorem~\ref{main2}. Especially, it holds that
\begin{equation}\nonumber
	\left\|\vec{Y}(\cdot,0)\right\|_{L^1\cap H^8} = \left\|(p_0^b - p_0^a,\vec{u}_0^b - \vec{u}_0^a, \chi_0^b - \chi_0^a, s_0^b - s_0^a )\right\|_{L^1\cap H^8} \le C \bigl| \vec{F}_b - \vec{F}_a \bigr| \le C \bigl| \vec{W}_b - \vec{W}_a \bigr| \ll 1,
\end{equation}
which, combined with Theorem~\ref{lpg}, results in that for $i=1,2,3$ and $|k|\le5$,
\begin{equation} \label{ini0}
	\begin{aligned}
		\left|\partial_{\vec{x}}^k \mathbb{G}_{i}(\vec{x}, t) \underset{\vec{x}}{\ast} \vec{Y}_1 (\vec{x}, 0) \right| & \le \left|\mathbb{G}_{S,i}(\vec{x}, t) \underset{\vec{x}}{\ast} \partial_{\vec{x}}^k \vec{Y}_1(\vec{x}, 0 ) \right| + \left|\partial_{\vec{x}}^k \mathbb{G}_{L,i} (\vec{x}, t) \underset{\vec{x}}{\ast} \vec{Y}_1(\vec{x}, 0) \right| \\
		& \le C e^{-t/C_0} \left( \left\|\partial_{\vec{x}}^k \vec{Y}_1 (\cdot,0) \right\|_{L^\infty} + \left\|\partial_{\vec{x}}^k  \nabla (g^0, \vec{h})(\cdot,0) \right\|_{L^\infty} \right) \\
		& \quad + C (1 + t)^{-2-d_i-\frac{|k|}{2}} \left\|\vec{Y}_1(\cdot,0) \right\|_{L^1} \\
		& \le C_{in} \left( 1 + t \right)^{-2-d_i-\frac{|k|}{2}} \bigl| \vec{W}_b - \vec{W}_a \bigr|,
	\end{aligned}
\end{equation}
\begin{equation}\label{ini2}
	\begin{aligned}
		\left\|\partial_{\vec{x}}^k \mathbb{G}_i (\cdot, t) \underset{\vec{x}}{\ast} \vec{Y}_1(\cdot, 0)\right\|_{L^2}
		& \le C e^{-t/C_2} \left\|\partial_{\vec{x}}^k \vec{Y}_1 (\cdot,0) \right\|_{L^2} + C (1 + t)^{-\frac{3}{4}-d_i-\frac{|k|}{2}} \left\| \vec{Y}_1 (\cdot,0) \right\|_{L^1}\\
		& \le C_{in} ( 1 + t)^{-\frac{3}{4} -d_i -\frac{|k|}{2}} \bigl|\vec{W}_b-\vec{W}_a\bigr|,
	\end{aligned}
\end{equation}
and
\begin{equation}\label{iniz}
	 \left\|\partial_{\vec{x}}^k z(\cdot, 0) \right\|_{L^\infty} \le C \left\|\nabla^{|k|+1} z(\cdot, 0) \right\|_{H^1} \le C_{in} \bigl|\vec{W}_b - \vec{W}_a\bigr|, \quad \left\| \partial_{\vec{x}}^k z(\cdot, 0) \right\|_{L^2} \le C_{in} \bigl|\vec{W}_b - \vec{W}_a\bigr|.
\end{equation}
Here, the positive constant $C_{in}$ depends on the initial data and is independent of $|\vec{W}_b-\vec{W}_a|$. The constants $d_i$, which characterize the distinction in decay rates, are defined in \eqref{di}. The constraint $|k| \le 5$ is imposed to ensure the validity of the Sobolev inequality
$$\left\|\partial_{\vec{x}}^k  \nabla ( g^0, \vec{h})  (\cdot,0) \right\|_{L^\infty} \le C\left\|\nabla^{|k|+2} ( g^0, \vec{h})  (\cdot,0) \right\|_{H^1} \le C \left\|( g^0, \vec{h})  (\cdot,0) \right\|_{H^8}.$$

\subsection{Basic and Enhanced Energy Estimates}
It is obvious from \eqref{diffrep} that high order derivatives are involved in the solution representation. In fact, a priori estimates from energy method are always required for the closure of nonlinearity in a quasi-linear system.

\begin{Theorem}\label{enle}
	Let $m \ge 3$. Assume $\|\vec{Y}(\cdot,0)\|_{H^m} = \mathcal{O}(1)|\vec{W}_b - \vec{W}_a|\ll 1$. Then the solution $\vec{Y}(\vec{x}, t)$ of \eqref{gbp} satisfies the following estimates \textup{:}
	\begin{equation} \label{enghf}
		\left\|\vec{Y}_1(\cdot, t)\right\|_{H^m}^2 + \int_0^t \left( \left\| \nabla g^0 (\cdot, \tau)\right\|_{H^{m-1}}^2 + \left\|\nabla \cdot \vec{h} (\cdot, \tau)\right\|_{H^{m-1}}^2 + \left\|f(\cdot, \tau)\right\|_{H^m}^2 \right) d\tau \le C \bigl|\vec{W}_b - \vec{W}_a\bigr|^2
	\end{equation}
	and
	\begin{equation}\label{ensz}
		\bigl\|z(\cdot, t)\bigr\|_{H^m} \le C(1+t)\bigl|\vec{W}_b - \vec{W}_a\bigr|,
	\end{equation}
	where $C > 0$ is a universal constant independent of $t$ and $|\vec{W}_b - \vec{W}_a|$.
\end{Theorem}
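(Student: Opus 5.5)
We would prove Theorem~\ref{enle} by a standard energy method for the linear-in-$\vec{Y}$ system \eqref{gbp}, handling $\vec{Y}_1=(g^0,\vec h,f)$ first and $z$ afterwards. The basis is the symmetrizer used in Theorem~\ref{genes} (see Appendix II and the weighted identity in the proof of Lemma~\ref{lefk}), now taken at the constant state $(p_a,v_a)$. We apply $\partial_{\vec{x}}^k$ for $0\le|k|\le m$ to the first three equations of \eqref{gbp}. We then take the $L^2$ inner product with the symmetrizing multipliers $\bigl(\partial^k g^0,\ \gamma p_a v_a^{-1}\partial^k\vec h,\ b(\partial^k g^0-\tfrac{c_f^2}{a}\rho_a\partial^k f)\bigr)$. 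This yields
\[
\frac{d}{dt}\mathcal{E}_m(t)+\kappa\|f\|_{H^m}^2\le \sum_{|k|\le m}\bigl|\langle \partial^k R_{\vec{Y}_1},\partial^k \vec{Y}_1\rangle\bigr|,
\]
with $\mathcal{E}_m\sim\|\vec{Y}_1\|_{H^m}^2$. Next we recover dissipation of $\nabla g^0$ and $\nabla\cdot\vec h$ in $H^{m-1}$ through the cross terms $\langle \partial^{k}\vec h,\nabla\partial^{k}g^0\rangle$ and $\langle\partial^{k}f,\nabla\cdot\partial^k\vec h\rangle$, as in \eqref{uwei}--\eqref{pwei}. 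Irrotationality of $\vec h$ (both $\vec u^a,\vec u^b$ are irrotational) lets $\|\nabla\cdot\vec h\|$ control $\|\nabla\vec h\|$ on the whole space, with no boundary terms this time. This produces a Lyapunov functional $\mathcal{L}_m\sim\|\vec{Y}_1\|_{H^m}^2$ with dissipation $\mathcal{D}_m:=\|\nabla(g^0,\vec h)\|_{H^{m-1}}^2+\|f\|_{H^m}^2$.

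For the right-hand side we split into the source terms $R^{\RN{1}}$ and the coupling terms $R^{\RN{2}}$. Each $R^{\RN{1}}$ is a background difference times $(\nabla p^b,\nabla\vec u^b,\chi^b,\nabla\chi^b)$. By Cauchy--Schwarz,
\[
|\langle\partial^kR^{\RN{1}},\partial^k\vec{Y}_1\rangle|\le \eta\mathcal{D}_m+C_\eta|\vec W_b-\vec W_a|^2\bigl(\|\nabla(p^b,\vec u^b)\|_{H^{m}}^2+\|\chi^b\|_{H^{m+1}}^2\bigr),
\]
where we integrate by parts once in the $g^0$ and $\vec h$ components so that only dissipated quantities appear. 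The last factor is time-integrable by \eqref{genenes} with $l=8$ (or by \eqref{pu1xde} and Lemma~\ref{decay}), which gives a contribution $C|\vec W_b-\vec W_a|^2$. The coupling terms $R^{\RN{2}}$ are of two types. The first type is quasilinear, such as $\vec u^b\cdot\nabla g^0$, $p^a\nabla\cdot\vec h$ and $v^a\nabla g^0$. Since the coefficients are small by Theorem~\ref{main1}, these are absorbed by symmetrization and the commutator bound \eqref{sob1}, contributing $C\epsilon(\mathcal{L}_m+\mathcal{D}_m)$ and integrable multiples of $\mathcal{L}_m$. The second type consists of zero-order terms, such as $(v^b-v^a)\chi^b$ and $g^0\nabla\cdot\vec u^b$, and is controlled using \eqref{sob2}.

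The main obstacle is the dependence of $v^b-v^a$, and hence of $R^{\RN{2}}_{g^0}$ and $R^{\RN{2}}_f$, on $z$. Because $z$ does not decay, the energy estimates for $\vec{Y}_1$ and $z$ must be closed simultaneously, and a factor $(1+t)$ must be allowed in $\|z\|_{H^m}$. From the $z$ equation in \eqref{gbp} we have
\[
\frac{d}{dt}\|z\|_{H^m}\le |\vec u_b|\,\|\nabla s^b\|_{H^m}+C\bigl(\|\nabla\vec u^b\|_{W^{1,\infty}}\|z\|_{H^m}+\|\vec h\|_{H^m}\|\nabla s^a\|_{W^{m,\infty}}+\|(\chi^a,\chi^b)\|_{W^{m,\infty}}\|f\|_{H^m}+|\vec W_b-\vec W_a|\,\|\chi^a\|^2_{H^m}\bigr).
\]
Here $|\vec u_b|=|\vec u_b-\vec u_a|$, and $\|\nabla s^b\|_{H^m}\le C\epsilon_b$ by \eqref{genenes}, which produces the growth $C(1+t)|\vec W_b-\vec W_a|$. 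We then make the bootstrap ansatz $\sup_{\tau\le t}\bigl(\mathcal{L}_m+\int_0^t\mathcal{D}_m\bigr)\le M^2|\vec W_b-\vec W_a|^2$ and $\|z\|_{H^m}\le M(1+t)|\vec W_b-\vec W_a|$.

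The step to check carefully is the term $z\chi^b$ in $\langle\partial^k R^{\RN{2}}_{g^0},\partial^k g^0\rangle$. We bound it by $\|z\|_{H^m}\|\chi^b\|_{W^{m,\infty}}\|g^0\|_{H^m}\le CM^2\epsilon_b(1+\tau)^{1-5/2}|\vec W_b-\vec W_a|^2$, using the pointwise/$L^\infty$ decay of Theorem~\ref{main1}, Remark~\ref{rebetalq} and Lemma~\ref{decay} for $\chi^b$ and its derivatives. The resulting rate $(1+\tau)^{-3/2}$ is integrable. If higher derivatives of $\chi^b$ decay more slowly, we instead put $z$ in $L^\infty$, $\chi^b$ in $H^m$ and use the dissipation $\int\|\chi^b\|_{H^m}^2$. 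With this in hand, a Gronwall argument with small $\epsilon_\beta$ and small $|\vec W_b-\vec W_a|$ improves $M$ to $M/2$, which yields \eqref{enghf} and \eqref{ensz}.
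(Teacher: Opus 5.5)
Your overall architecture matches the paper's proof. You use a symmetrized $H^m$ energy estimate for $\vec Y_1$ and the two cross terms to recover $\|\nabla g^0\|_{H^{m-1}}$ and $\|\nabla\cdot\vec h\|_{H^{m-1}}=\|\nabla\vec h\|_{H^{m-1}}$. You run a separate $H^m$ estimate for $z$, whose forcing $|\vec u_b|\,\|\nabla s^b\|_{H^m}$ creates the factor $(1+t)$. You close both simultaneously, which is equivalent to the paper's functional $I(t)$.

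\textbf{The gap: the constant symmetrizer.} The step that fails as written is your choice of a \emph{constant} symmetrizer at $(p_a,v_a)$. The terms $-\gamma p^a\nabla\cdot\vec h$ in $R^{\RN{2}}_{g^0}$ and $-v^a\nabla g^0$ in $R^{\RN{2}}_{\vec h}$ have coefficients of size $\epsilon_a$, and these do not vanish with $\vec Y$ or $\vec W_b-\vec W_a$. With multipliers $\partial^k g^0$ and $\gamma p_a v_a^{-1}\partial^k\vec h$, one integration by parts leaves $-\gamma\int(p^a-p_av^a/v_a)\,\partial^k g^0\,\nabla\cdot\partial^k\vec h\,d\vec x$. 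For $|k|=m$ this contains $m+1$ derivatives of $\vec Y_1$, which neither $\mathcal L_m$ nor $\mathcal D_m$ controls. Moving the extra derivative onto $\vec u^b-\vec u^a$ does not help: since the coefficient is only $\mathcal O(\epsilon_a)$, you get a contribution of order $\epsilon^2\|\vec Y_1\|_{H^m}$, which is linear rather than quadratic in $|\vec W_b-\vec W_a|$.

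\textbf{The paper's fix.} The paper uses the state-dependent weights $b_1=\gamma\tilde p^a\tilde\rho^a$ and $b_2=\frac{c_f^2}{a}\tilde\rho^a$ of \eqref{b123}. With these, $\gamma\tilde p^a\nabla\cdot\vec h$ and $\tilde v^a\nabla g^0$ cancel up to $-\gamma\int\nabla p^a\cdot\partial^k\vec h\,\partial^k g^0$. Two top-order residuals still survive in \eqref{leen1}:
\begin{itemize}
\item the transport mismatch $b\int b_2\,\vec h\cdot\partial^k\nabla g^0\,\partial^k f$, which arises because $g^0$ is transported by $\vec u^b$ and $f$ by $\vec u^a$;
\item the coefficient mismatch $\gamma b\int\tilde\rho^a(p^a\tilde v^a-p^b\tilde v^b)(\partial^k g^0-b_2\partial^k f)\,\partial^k(\nabla\cdot\vec h)$.
\end{itemize}
Both coefficients are $\mathcal O(1)\bigl(|g^0|+|\vec h|+|p^b|(|\vec\theta_b-\vec\theta_a|+|f|+|z|)\bigr)$. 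The paper therefore bounds them by writing $\partial^k\nabla g^0=\partial^k\nabla(p^b-p^a)$ and $\partial^k\nabla\cdot\vec h=\partial^k\nabla\cdot(\vec u^b-\vec u^a)$, then using the high-order bounds on the individual solutions. Your plan needs both ingredients: the state-dependent weights and this transfer of derivatives.

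\textbf{Bookkeeping of non-decaying coefficients.} Your accounting of the form $C\epsilon(\mathcal L_m+\mathcal D_m)$ cannot be right as stated. Through $s^a$, the coefficients $v^a$, $\rho^a$ and $\nabla v^a$ do not decay in time. A term $C\epsilon\,\mathcal L_m$ with a non-integrable coefficient makes Gronwall produce $e^{C\epsilon t}$, and your bootstrap would then not close uniformly in $t$. The undissipated quantities $\|(g^0,\vec h)\|_{L^2}^2$ may only appear with time-integrable factors. In \eqref{g0sen}, for example, they are multiplied by $\|\nabla(p^a,p^b,\vec u^a,\vec u^b,\chi^b)\|_{L^2}+\|(\chi^a,\chi^b)\|_{L^2}=\mathcal O(1)(\epsilon_a+\epsilon_b)(1+t)^{-5/4}$.

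Every small but non-decaying term must fall on dissipated quantities. Zero-order couplings such as $\int\rho^a g^0f\,d\vec x$, which come from $\frac{v^af}{v_b\tilde v^b}$ and from the weight $b_2$, can be treated as $\|\rho^a\|_{L^3}\|g^0\|_{L^6}\|f\|_{L^2}\le C\epsilon_a\|\nabla g^0\|_{L^2}\|f\|_{L^2}$, rather than by placing $\rho^a$ in $L^\infty$. Similarly, at $k=0$ the source term $\frac{2}{\alpha}(\frac1{v_a}-\frac1{v_b})\int\chi^b g^0$ cannot be integrated by parts. It should be bounded by $|\vec W_b-\vec W_a|\,\|\chi^b\|_{L^2}\|g^0\|_{L^2}$, using $\|\chi^b\|_{L^2}\in L^1_t$.

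With these repairs, the rest of your argument follows the paper: the $R^{\RN{1}}$ estimates, the $z$-dependent couplings (where the $(1+t)^{-5/2}$ decay of $\chi^b$ and $\nabla p^b$ beats the $(1+t)$ growth of $z$), and the $z$ estimate itself.
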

\begin{proof}
For a multi-index $k = (k_1, k_2, k_3)$, take the corresponding derivatives $\partial_{\vec{x}}^k = \partial_{x_1}^{k_1}\partial_{x_2}^{k_2}\partial_{x_3}^{k_3}$ on both sides of \eqref{gbp} and obtain that
\begin{align}
	& \partial_t \partial_{\vec x}^k g^0 + \gamma p_a  \partial_{\vec x}^k (\nabla \cdot \vec{h})  + \frac{2}{\alpha v_a} \partial_{\vec x}^k f = \partial_{\vec x}^k R_{g^0}^ {\RN{1}}+ \partial_{\vec x}^k R_{g^0}^ {\RN{2}},  \label{kg0} \\  
	& \partial_t \partial_{\vec x}^k \vec{h} + v_a \partial_{\vec x}^k \nabla g^0 = \partial_{\vec x}^k R_{\vec h}^ {\RN{1}} + \partial_{\vec x}^k R_{\vec h}^ {\RN{2}}, \label{kh} \\ 
	& \partial_t \partial_{\vec x}^k f + \frac{\alpha_f}{\alpha} p_a v_a \partial_{\vec x}^k (\nabla \cdot \vec{h}) + \left( 1 + \frac{\alpha_f}{\alpha} \right) \partial_{\vec x}^k f = \partial_{\vec x}^k R_{f}^ {\RN{1}} + \partial_{\vec x}^k R_{f}^ {\RN{2}}, \label{kf} \\
	& \partial_t \partial_{\vec x}^k z = -\vec{u}_b \cdot \partial_{\vec x}^k \nabla s^b + \partial_{\vec x}^k R_{z}^ {\RN{2}}. \label{kz}
\end{align}

Denote
\begin{equation} \label{b123}
	b_1 := \left( \tilde{c}_f^a \right)^2 \left( \tilde{\rho}^a \right)^2 = \gamma \tilde{p}^a \tilde{\rho}^a,  
	\quad b_2 := \frac{c_f^2}{a} \tilde{\rho}^a = \frac{2+\alpha}{\alpha_f} \tilde{\rho}^a,
\end{equation}
where functions marked with a tilde represent the sum of the constant-state solution and the perturbation solution, as exemplified by $\tilde{p}^a = p_a + p^a$, and the quantities $c_f^2$, $a$, $\gamma$ and $b$ are defined in \eqref{equ} and \eqref{bb}. The equation for $\tilde{\rho}^a$ presented in \eqref{vnes} together with \eqref{kf} and the definition of $b_2$ in \eqref{b123} yields that
\begin{equation}  \label{kfb3}
	\partial_t \! \left( b_2 \partial_{\vec{x}}^k f\right)  + \frac{\alpha_f}{\alpha} p_a v_a b_2 \partial_{\vec{x}}^k (\nabla \cdot \vec{h}) + \left( 1 + \frac{\alpha_f}{\alpha} \right) b_2 \partial_{\vec{x}}^k f + \frac{2+\alpha}{\alpha_f} \nabla \cdot  \left( \tilde{\rho}^a \vec{u}^a \right) \partial_{\vec{x}}^k f  = b_2  \partial_{\vec x}^k R_{f}^ {\RN{1}}+ b_2  \partial_{\vec x}^k R_{f}^ {\RN{2}}.
\end{equation}
Integrating the inner product of $\bigl(\eqref{kg0}, \eqref{kh}, \eqref{kg0}-\eqref{kfb3}\bigr)$ and $\bigl(\partial_{\vec{x}}^k g^0, b_1 \partial_{\vec{x}}^k \vec{h}, b (\partial_{\vec{x}}^k g^0 - b_2 \partial_{\vec{x}}^k f)\bigr)$ over $\mathbb{R}^3$ with respect to $\vec{x}$, one has that
\begin{equation} \label{leen1}
	\begin{aligned}
		& \frac{1}{2}\frac{d}{dt} \int_{\mathbb{R}^3} \left( \partial_{\vec{x}}^k g^0 \right)^2 d\vec{x} + \frac{1}{2}\frac{d}{dt} \int_{\mathbb{R}^3} b_1 \left|\partial_{\vec{x}}^k \vec{h} \right|^2  d\vec{x} + \frac{b}{2} \frac{d}{dt} \int_{\mathbb{R}^3} \left( \partial_{\vec{x}}^k g^0 - b_2 \partial_{\vec{x}}^k f \right)^2 d\vec{x}\\
		& \quad + b \int_{\mathbb{R}^3}  b_2 \left[ \left(1+\frac{\alpha_f}{\alpha}+\frac{1}{2} \nabla\cdot\vec{u}^a \right) b_2 - \frac{2}{\alpha v_a} \right] \! \left( \partial_{\vec{x}}^k f \right)^2 d\vec{x}  - \frac{b+1}{2} \int_{\mathbb{R}^3} \left(\nabla \cdot\vec{u}^b\right) \! \left(\partial_{\vec{x}}^k g^0 \right)^2 d\vec{x} \\
		& \quad + \frac{\gamma}{2} \int_{\mathbb{R}^3} \left(b_1 \nabla\cdot\vec{u}^a + \frac{2}{\alpha} \left( \tilde{\rho}^a \right)^2 \chi^a \right) \! \left| \partial_{\vec{x}}^k \vec{h} \right|^2  d\vec{x} -  \left( 1 + \frac{\alpha_f}{\alpha} \right) a_0 b \int_{\mathbb{R}^3} \rho^a \partial_{\vec{x}}^k g^0 \partial_{\vec{x}}^k f \, d\vec{x} \\
		& \quad - \gamma \int_{\mathbb{R}^3} \left( \nabla p^a \cdot \partial_{\vec{x}}^k \vec{h} \right) \partial_{\vec{x}}^k g^0 \,  d\vec{x} - b \int_{\mathbb{R}^3} b_2 \vec{h} \cdot \partial_{\vec{x}}^k \nabla g^0 \partial_{\vec{x}}^k f \, d\vec{x} \\
		& \quad + \gamma b\int_{\mathbb{R}^3} \tilde{\rho}^a \left( p^a \tilde{v}^a - p^b \tilde{v}^b \right) \! \left(  \partial_{\vec{x}}^k g^0 - b_2 \partial_{\vec{x}}^k f \right) \partial_{\vec{x}}^k (\nabla \cdot \vec{h}) \, d\vec{x} \\
		& = \int_{\mathbb{R}^3} b_1 \partial_{\vec{x}}^k \vec{h} \cdot \left( \partial_{\vec{x}}^k R_{\vec h}^ {\RN{1}} + \partial_{\vec{x}}^k R_{\vec h}^ {\RN{2}}  +  \vec{u}^a \cdot \partial_{\vec{x}}^k \nabla \vec{h} + v^a \partial_{\vec{x}}^k \nabla g^0 \right) d\vec{x} \\
		& \quad + b \int_{\mathbb{R}^3} \Bigl( (1+b^{-1}) \partial_{\vec{x}}^k g^0 - b_2  \partial_{\vec{x}}^k f \Bigr)  \! \left( \partial_{\vec{x}}^k R_{g^0}^ {\RN{1}} + \partial_{\vec{x}}^k R_{g^0}^ {\RN{2}} + \vec{u}^b \cdot \partial_{\vec{x}}^k \nabla g^0 + \gamma p^a \partial_{\vec{x}}^k (\nabla \cdot \vec{h}) \right) d\vec{x} \\
		& \quad -b \int_{\mathbb{R}^3} b_2 \Bigl( \partial_{\vec{x}}^k g^0 - b_2 \partial_{\vec{x}}^k f \Bigr) \! \left( \partial_{\vec{x}}^k R_{f}^ {\RN{1}} + \partial_{\vec{x}}^k R_{f}^ {\RN{2}} + \vec{u}^a \cdot \nabla\partial_{\vec{x}}^k f + \frac{\alpha_f}{\alpha} \left( p^b \tilde{v}^b + p_a v^a \right) \partial_{\vec{x}}^k (\nabla \cdot \vec{h}) \right) d\vec{x},
	\end{aligned} 
\end{equation}
where $a_0 := (2+\alpha)/\alpha_f$. Here, $|p^a \tilde{v}^a - p^b \tilde{v}^b|$ could be estimated as follows:
$$|p^a \tilde{v}^a - p^b \tilde{v}^b| \le C(|p^a|+|p^b|)$$ and
$$ \left| p^a \tilde{v}^a - p^b \tilde{v}^b\right|  = \left|-g^0 \tilde{v}^a + p^b (\tilde{v}^a - \tilde{v}^b)\right| \le C \left[ |g^0| + |p^b| \bigl( \bigl|\vec{\theta}_b-\vec{\theta}_a\bigr| + |f| + |z| \bigr) \right].$$
Moreover, after direct algebraic calculations and rearrangements, the smallness of $\epsilon_a$ and Theorem~\ref{main1} yield that
$$\begin{aligned}
	 \left(1+\frac{\alpha_f}{\alpha}+\frac{1}{2} \nabla\cdot\vec{u}^a \right) b_2 - \frac{2}{\alpha v_a} 
	&= \left[ a_0 \left(1+\frac{\alpha_f}{\alpha}+\frac{1}{2} \nabla\cdot\vec{u}^a \right) - \frac{2}{\alpha} \right] \rho_a + a_0 \left( 1+\frac{\alpha_f}{\alpha}+\frac{1}{2} \nabla\cdot\vec{u}^a \right) \rho^a \\
	&\ge \left[ a_0 \left(1+\frac{\alpha_f}{\alpha}\right)-\frac{2}{\alpha} - C \epsilon_a \right]\rho_a - C\epsilon_a\\
	&= (a_0+1) \rho_a - C \epsilon_a(\rho_a + 1) = \frac{2}{\alpha b} \rho_a - C \epsilon_a(\rho_a + 1),
\end{aligned}$$
which further implies that
$$b b_2 \left[ \left(1+\frac{\alpha_f}{\alpha}+\frac{1}{2} \nabla\cdot\vec{u}^a \right) b_2 - \frac{2}{\alpha v_a} \right] \ge \frac{2a_0}{\alpha} \rho_a \tilde{\rho}^a - C \epsilon_a \tilde{\rho}^a \ge \frac{a_0}{2\alpha} \rho_a^2.$$
In addition, the higher-order terms 
$$\vec{u}^a \cdot \nabla\partial_{\vec{x}}^k \vec{h} + v^a \partial_{\vec{x}}^k \nabla g^0, \quad \vec{u}^b \cdot \partial_{\vec{x}}^k \nabla g^0 + \gamma p^a \partial_{\vec{x}}^k (\nabla \cdot \vec{h}), \quad \vec{u}^a \cdot \nabla\partial_{\vec{x}}^k f + \frac{\alpha_f}{\alpha} \bigl( p^b \tilde{v}^b + p_a v^a \bigr) \partial_{\vec{x}}^k (\nabla \cdot \vec{h})$$
on the right-hand side of \eqref{leen1} can be combined with the corresponding terms in $\partial_{\vec{x}}^k \bigl(R_{\vec h}^{\RN{2}}\bigr)$, $\partial_{\vec{x}}^k \bigl(R_{g^0}^{\RN{2}}\bigr)$, and $\partial_{\vec{x}}^k \bigl(R_{f}^{\RN{2}}\bigr)$, respectively, to give rise to
$$-\bigl[\partial_{\vec{x}}^k, \vec{u}^a \cdot \nabla\bigr] \vec{h} - \bigl[\partial_{\vec{x}}^k, v^a \nabla\bigr] g^0,\quad 
-\bigl[\partial_{\vec{x}}^k, \vec{u}^b \cdot \nabla\bigr]g^0 - \gamma\bigl[\partial_{\vec{x}}^k, p^a\bigr]\nabla\cdot\vec{h},$$
$$-\bigl[\partial_{\vec{x}}^k, \vec{u}^a \cdot \nabla\bigr] f - \frac{\alpha_f}{\alpha} \bigl[\partial_{\vec{x}}^k,  \bigl( p^b \tilde{v}^b + p_a v^a \bigr)\bigr] \nabla\cdot\vec{h}.$$
The higher-order terms on the left-hand side of \eqref{leen1}, such as $ b\displaystyle \int_{\mathbb{R}^3} b_2 \vec{h} \cdot \partial_{\vec{x}}^k \nabla g^0 \partial_{\vec{x}}^k f \, d\vec{x}$, can be estimated by the inequality $|\partial_{\vec{x}}^k \nabla g^0| \le |\partial_{\vec{x}}^k \nabla p^a| + |\partial_{\vec{x}}^k \nabla p^b|$ and upper bounds for individual solutions which have been obtained in previous sections.

Combining the definitions of the source terms $\bigl(R_{g^0}^{\RN{1}}, R_{\vec h}^{\RN{1}}, R_{f}^{\RN{1}}\bigr)$ and coupling terms $\bigl(R_{g^0}^{\RN{2}}, R_{\vec h}^{\RN{2}}, R_{f}^{\RN{2}}\bigr)$ in \eqref{lin} and \eqref{nlin} with the Sobolev inequalities \eqref{sob1} and \eqref{sob2}, and the energy estimate \eqref{genenes} for individual solutions, one can derive that
\begin{equation}\label{g0sen}
	\begin{aligned}
		& \frac{1}{2} \frac{d}{dt}  \sum_{0 \le |k| \le m} \left(\left\|\partial_{\vec{x}}^k g^0(\cdot, t)\right\|_{L^2}^2 + \left\| \sqrt{\gamma \tilde{p}^a \tilde{\rho}^a} \partial_{\vec{x}}^k \vec{h} (\cdot, t)\right\|_{L^2}^2 + b \left\| \left( \partial_{\vec{x}}^k g^0 - a_0 \tilde{\rho}^a \partial_{\vec{x}}^k f\right) \! (\cdot, t)\right\|_{L^2}^2 \right) + \frac{a_0}{8\alpha} \rho_a^2 \left\| f(\cdot, t)\right\|_{H^m}^2 \\
		& \le C \bigl|\vec{W}_b-\vec{W}_a\bigr|^2 \left( \left\|\nabla (p^b,\vec{u}^a,\vec{u}^b)(\cdot, t)\right\|_{H^m} + \left\|\chi^a (\cdot, t)\right\|_{H^m} + \left\| \chi^b (\cdot, t)\right\|_{H^{m+1}} \right)\\
		& \quad +  C \Bigl( \left\|g^0(\cdot, t)\right\|_{L^2}^2 + \left\|\vec{h}(\cdot, t)\right\|_{L^2}^2 \Bigr) \Bigl( \left\|\nabla (p^a,p^b,\vec{u}^a,\vec{u}^b,\chi^b)(\cdot, t)\right\|_{L^2} + \left\|(\chi^a,\chi^b)(\cdot, t)\right\|_{L^2} \Bigr)\\
		& \quad + C \bigl\|\nabla \vec{Y}_1(\cdot, t)\bigr\|_{H^{m-1}} \cdot \bigl\|\nabla z(\cdot, t)\bigr\|_{H^{m-1}} \cdot \left\|\nabla(\nabla p^b, \nabla \cdot \vec{u}^a, \nabla \cdot \vec{u}^b, \chi^a, \chi^b)(\cdot, t)\right\|_{H^{m-1}}\\
		& \quad + C \bigl\|\vec{Y}_1(\cdot, t)\bigr\|_{L^2} \cdot \bigl\|z(\cdot, t)\bigr\|_{L^2} \cdot \left\|(\nabla p^b, \nabla \cdot \vec{u}^a, \nabla \cdot \vec{u}^b, \chi^a, \chi^b)(\cdot, t)\right\|_{L^{\infty}}\\
		& \quad + C(\epsilon_a + \epsilon_b)\Bigl( \left\|\nabla g^0(\cdot, t)\right\|_{H^{m-1}}^2 + \left\|\nabla \cdot \vec{h}(\cdot, t)\right\|_{H^{m-1}}^2 \Bigr)
	\end{aligned}
\end{equation}
with $\vec{Y}_1 = \bigl(g^0, \vec{h}, f\bigr)$ defined in \eqref{yy1}. Herein, one employs the embedding inequality \eqref{sobemb}, the smallness of $\epsilon_{\beta}$ for $\beta=a,b$, the irrotational property which ensures that 
\begin{equation} \label{irrotah}
	\left\|\nabla \cdot \vec{h}(\cdot, t)\right\|_{H^{m-1}} = \left\|\nabla \vec{h}(\cdot, t)\right\|_{H^{m-1}},
\end{equation}
as well as the estimates $|v^b-v^a| = \mathcal{O}(1) \bigl(\bigl|\vec{\theta}_b-\vec{\theta}_a \bigr|+|g^0|+|f|+|z|\bigr)$ and 
\begin{equation} \label{vbva}
\begin{aligned}
	\sum_{1 \le |k| \le m} \left\|\partial_{\vec{x}}^k (v^b-v^a)(\cdot, t)\right\|_{L^2} & = \mathcal{O}(1) \Bigl[ \left( \bigl|\vec{\theta}_b-\vec{\theta}_a\bigr|+\left\|(g^0,f,z)(\cdot,t)\right\|_{L^\infty} \right) \left\| \nabla (p^b, \chi^b,s^b)(\cdot, t)\right\|_{H^{m-1}}\\
	& \quad \quad \quad \quad + \left\| \nabla (g^0, f, z)(\cdot, t)\right\|_{H^{m-1}} \Bigr].
\end{aligned}
\end{equation}

The equation for $\partial_t \bigl(\partial_{\vec x}^k f \, \partial_{\vec x}^k (\nabla \cdot \vec{h}) \bigr)$ could be derived from \eqref{kh}--\eqref{kf} and then integrate this new equation with respect to the spatial variable $\vec{x}$ over $\mathbb{R}^3$ to obtain that
\begin{equation} \nonumber 
	\begin{aligned}
		& \frac{d}{dt} \int_{\mathbb{R}^3} \partial_{\vec x}^k f \, \partial_{\vec x}^k (\nabla \cdot \vec{h}) \, d\vec{x} - v_a \int_{\mathbb{R}^3} \partial_{\vec x}^k \nabla f \cdot \partial_{\vec x}^k \nabla g^0 \, d\vec{x} + \frac{\alpha_f}{\alpha} p_a v_a \left\| \partial_{\vec x}^k (\nabla \cdot \vec{h})(\cdot, t)\right\|_{L^2}^2 \\
		& \quad + \left(1+\frac{\alpha_f}{\alpha} \right) \int_{\mathbb{R}^3} \partial_{\vec x}^k (\nabla \cdot \vec{h}) \, \partial_{\vec x}^k f \, d\vec{x} \\
		& = \int_{\mathbb{R}^3} \left[ -\partial_{\vec x}^k \nabla f \cdot \left(\partial_{\vec x}^k \left(R_{\vec h}^{\RN{1}}\right) + \partial_{\vec x}^k \left(R_{\vec h}^{\RN{2}}\right) \right) + \partial_{\vec x}^k (\nabla \cdot \vec{h}) \, \left( \partial_{\vec x}^k \left(R_{f}^{\RN{1}}\right) + \partial_{\vec x}^k \left(R_{f}^{\RN{2}}\right) \right) \right] d\vec{x},
	\end{aligned}
\end{equation}
which together with \eqref{lin}--\eqref{nlin}, \eqref{sob1}--\eqref{sob2}, \eqref{irrotah}--\eqref{vbva}, \eqref{sobemb}, and Theorem~\ref{genes} yields that 
\begin{equation}\label{hs}
	\begin{aligned}
		& \frac{d}{dt} \sum_{0 \le |k| \le m-1} \int_{\mathbb{R}^3} \partial_{\vec x}^k f \, \partial_{\vec x}^k (\nabla \cdot \vec{h}) \, d\vec{x} + \frac{\alpha_f}{\alpha} p_a v_a \left\|\nabla \cdot \vec{h} (\cdot, t)\right\|_{H^{m-1}}^2 - v_a  \sum_{0 \le |k| \le m-1} \int_{\mathbb{R}^3} \partial_{\vec x}^k \nabla f \cdot \partial_{\vec x}^k \nabla g^0 \, d\vec{x} \\
		& \le  C \epsilon_1^{-1} \left\|\left(R^{\RN{1}}_{\vec h},R^{\RN{1}}_{f}\right) (\cdot, t)\right\|_{H^{m-1}}^2 +C \epsilon_1^{-1} \left\|  f(\cdot, t)\right\|_{H^{m}}^2+C\epsilon_1  \left\|  \nabla\cdot\vec h(\cdot, t)\right\|_{H^{m-1}}^2 \\
		& \quad +\int_{\mathbb{R}^3}\Big|\partial_{\vec x}^k \nabla f \left(\partial_{\vec x}^k R_{\vec h}^{\RN{2}}\right) + \partial_{\vec x}^k (\nabla \cdot \vec{h})  \left(\partial_{\vec x}^k R_{f}^{\RN{2}} \right)\Big|d\vec x\\
		& \le  C( 1 + \epsilon_1^{-1}) \bigl|\vec{W}_b-\vec{W}_a\bigr|^2 \cdot  \left\|\nabla (p^b, \vec{u}^a, \vec{u}^b, \chi^b) (\cdot, t)\right\|_{H^{m-1}}^2 + C\left( \epsilon_a + \epsilon_b +\epsilon_1^{-1} \right) \left\|  f(\cdot, t)\right\|_{H^{m}}^2\\
		& \quad + C (\epsilon_a + \epsilon_b+\epsilon_1 )\Bigl( \left\| \nabla g^0(\cdot, t)\right\|_{H^{m-1}}^2 + \left\| \nabla \vec{h}(\cdot, t)\right\|_{H^{m-1}}^2 \Bigr) \\
		& \quad +\frac{\alpha_f}{\alpha} \int_{\mathbb{R}^3} \left| \partial_{\vec x}^k (\nabla \cdot \vec{h})\right|  \cdot  \left|\partial_{\vec x}^k\left(p^a(v^b-v^a)\nabla\cdot\vec u^a+p_b(v^b-v^a)\nabla\cdot\vec u^b\right)\right| d\vec x \\
		& \quad +\int_{\mathbb{R}^3} \left|\partial_{\vec x}^k \nabla f\right| \cdot \left|\partial_{\vec x}^k\left((v^b-v^a)\nabla p^b\right)\right| d\vec x \\
		& \le  C( 1 + \epsilon_1^{-1}) \bigl|\vec{W}_b-\vec{W}_a\bigr|^2 \cdot  \left\|\nabla (p^b, \vec{u}^a, \vec{u}^b, \chi^b) (\cdot, t)\right\|_{H^{m-1}}^2 \\
		& \quad + C( 1 + \epsilon_1^{-1}) \bigl\| z(\cdot, t)\bigr\|_{H^{m-1}}^2 \cdot  \left\|\nabla (\nabla p^b, \nabla \cdot \vec{u}^a, \nabla \cdot \vec{u}^b)(\cdot, t)\right\|_{H^{m-2}}^2\\
		& \quad + C (\epsilon_a + \epsilon_b+\epsilon_1 )\Bigl( \left\| \nabla g^0(\cdot, t)\right\|_{H^{m-1}}^2 + \left\| \nabla \vec{h}(\cdot, t)\right\|_{H^{m-1}}^2 \Bigr)+ C\left( \epsilon_a + \epsilon_b +\epsilon_1^{-1} \right) \left\|  f(\cdot, t)\right\|_{H^{m}}^2,
	\end{aligned}
\end{equation}
with a sufficiently small constant $\epsilon_1 \in (0,1)$ for applying Cauchy's inequality to eliminate the term $C(\epsilon_a+\epsilon_b+\epsilon_1) \| \nabla  \vec{h} (\cdot, t)\|_{H^{m-1}}^2$ from the left-hand side. In \eqref{hs}, one chooses a special treatment of the terms containing $v^b-v^a$ in $R_{\vec h}^{\RN{2}}, R_{f}^{\RN{2}}$ since $v^b-v^a$ contains the non-decaying component $z$ and the detailed estimates are given in \eqref{vbva}. This non-decaying $z$ results in a bound
$$\left\|\left(\nabla \cdot \vec{h}, \nabla f\right) \! (\cdot, t)\right\|_{H^{m-1}} \bigl\| z(\cdot, t)\bigr\|_{H^{m-1}} \left\|\nabla (\nabla p^b, \nabla \cdot \vec{u}^a, \nabla \cdot \vec{u}^b)(\cdot, t)\right\|_{H^{m-2}}$$
during the energy estimate in \eqref{hs} and it is estimated by applying Cauchy's inequality as follows:
\begin{equation}\nonumber
			\begin{aligned}
				&\left\|\nabla \cdot \vec{h} (\cdot, t)\right\|_{H^{m-1}} \bigl\| z(\cdot, t)\bigr\|_{H^{m-1}} \left\|\nabla (\nabla p^b, \nabla \cdot \vec{u}^a, \nabla \cdot \vec{u}^b)(\cdot, t)\right\|_{H^{m-2}}\\
				&\le \frac{\epsilon_1}{2} \left\|\nabla \cdot \vec{h} (\cdot, t)\right\|_{H^{m-1}}^2 + \frac{1}{2\epsilon_1} \bigl\| z(\cdot, t)\bigr\|_{H^{m-1}}^2 \left\|\nabla (\nabla p^b, \nabla \cdot \vec{u}^a, \nabla \cdot \vec{u}^b)(\cdot, t)\right\|_{H^{m-2}}^2.
			\end{aligned}
		\end{equation}
		This manipulation for the term involving $\|z(\cdot,t)\|_{H^{m-1}}^2$ is necessary since the desired bound for $\|z(\cdot,t)\|_{H^{m-1}}$ admits a time-growing factor $t$, while it holds that
		\[
		\left\|\nabla (\nabla p^b, \nabla \cdot \vec{u}^a, \nabla \cdot \vec{u}^b)(\cdot, t)\right\|_{H^{m-2}} \le (\epsilon_a + \epsilon_b)(1+t)^{-\frac{7}{4}}
		\]
		by Lemma~\ref{decay} and thus $\bigl\| z(\cdot, t)\bigr\|_{H^{m-1}}^2 \left\|\nabla (\nabla p^b, \nabla \cdot \vec{u}^a, \nabla \cdot \vec{u}^b)(\cdot, t)\right\|_{H^{m-2}}^2$ decays fast enough for time integral. Similarly, the use of the $L^\infty$ norm instead of the $L^2$ norm in the term
		$$\bigl\|\vec{Y}_1(\cdot, t)\bigr\|_{L^2} \bigl\|z(\cdot, t)\bigr\|_{L^2} \left\|(\nabla p^b, \nabla \cdot \vec{u}^a, \nabla \cdot \vec{u}^b, \chi^a, \chi^b)(\cdot, t)\right\|_{L^{\infty}}$$
		appearing in \eqref{g0sen} is also intended to furnish a sufficiently rapid decay rate.

In a similar vein, one deduces the equation for $\partial_t \bigl( \partial_{\vec{x}}^k \nabla g^0 \cdot \partial_{\vec{x}}^k \vec{h} \bigr)$ from \eqref{kg0}--\eqref{kh} and obtains that
\begin{equation} \nonumber
	\begin{aligned}
		& \frac{d}{dt} \int_{\mathbb{R}^3} \partial_{\vec{x}}^k \nabla g^0 \cdot \partial_{\vec{x}}^k \vec{h} \, d\vec{x} + v_a \left\| \partial_{\vec{x}}^k \nabla g^0 (\cdot, t)\right\|_{L^2}^2 - \gamma p_a \left\| \partial_{\vec x}^k (\nabla \cdot \vec{h})(\cdot, t)\right\|_{L^2}^2 - \frac{2}{\alpha v_a} \int_{\mathbb{R}^3} \partial_{\vec x}^k (\nabla \cdot \vec{h}) \, \partial_{\vec{x}}^k f \, d\vec{x} \\
		& = \int_{\mathbb{R}^3} \left[ \partial_{\vec x}^k (\nabla \cdot \vec{h})\, \left(\partial_{\vec x}^k \left(R_{g^0}^{\RN{1}}\right) + \partial_{\vec x}^k \left(R_{g^0}^{\RN{2}}\right) \right) + \partial_{\vec{x}}^k \nabla g^0 \cdot \left(\partial_{\vec x}^k \left(R_{\vec h}^{\RN{1}}\right) + \partial_{\vec x}^k \left(R_{\vec h}^{\RN{2}}\right) \right) \right] d\vec{x}
	\end{aligned}
\end{equation}
and
\begin{equation}\label{g0s}
	\begin{aligned}
		& \frac{d}{dt} \sum_{0 \le |k| \le m-1} \int_{\mathbb{R}^3}  \partial_{\vec{x}}^k \nabla g^0 \cdot \partial_{\vec{x}}^k \vec{h} \, d\vec{x} + \frac{v_a}{2} \left\|\nabla g^0(\cdot, t)\right\|_{H^{m-1}}^2  \\
		& \le C( 1 + \epsilon_2^{-1}) \bigl|\vec{W}_b-\vec{W}_a \bigr|^2 \cdot  \left\| (\nabla p^b, \nabla \vec{u}^b, \chi^a, \chi^b) (\cdot, t)\right\|_{H^{m-1}}^2 \\
		& \quad + C( 1 + \epsilon_2^{-1}) \bigl\| z(\cdot, t)\bigr\|_{H^{m-1}}^2 \cdot  \left\|\nabla (\nabla p^b, \chi^a, \chi^b)(\cdot, t)\right\|_{H^{m-2}}^2 \\
		& \quad + C\Bigl(\left\| f(\cdot, t)\right\|_{H^{m-1}}^2 +  \left\|\nabla \cdot \vec{h} (\cdot, t)\right\|_{H^{m-1}}^2 \Bigr), 
	\end{aligned}
\end{equation}
for a chosen sufficiently small constant $\epsilon_2 \in (0,1)$.

From \eqref{hs} and \eqref{g0s}, there exists a positive constant $\sigma_1 \in (0,1)$ such that
\begin{equation} \label{g0h}
	\begin{aligned}
		& \frac{d}{dt} \sum_{0 \le |k| \le m-1} \left( \int_{\mathbb{R}^3} \partial_{\vec{x}}^k f \, \partial_{\vec x}^k (\nabla \cdot \vec{h}) \, d\vec{x} + \sigma_1 \int_{\mathbb{R}^3} \partial_{\vec{x}}^k \nabla g^0 \cdot \partial_{\vec{x}}^k \vec{h} \, d\vec{x} \right) + \frac{\alpha_f}{8\alpha}p_a v_a \left\| \nabla \cdot \vec{h}(\cdot, t) \right\|_{H^{m-1}}^2\\
		& \quad + \frac{\sigma_1 v_a}{8} \left\| \nabla g^0(\cdot, t)\right\|_{H^{m-1}}^2 \\
		& \le C \bigl|\vec{W}_b-\vec{W}_a\bigr|^2 \left( \left\| \nabla (p^b,\vec{u}^a,\vec{u}^b)(\cdot, t)\right\|_{H^{m-1}}^2 + \left\| \chi^a(\cdot, t)\right\|_{H^{m-1}}^2 + \left\| \chi^b(\cdot, t)\right\|_{H^m}^2 \right)\\
		& \quad + C\left( \left\| f(\cdot, t)\right\|_{H^m}^2 + \bigl\| z(\cdot, t)\bigr\|_{H^{m-1}}^2 \cdot \left\| \nabla (\nabla p^b, \nabla \cdot \vec{u}^a, \nabla \cdot \vec{u}^b, \chi^a, \chi^b)(\cdot, t)\right\|_{H^{m-2}}^2 \right).
	\end{aligned}
\end{equation}
Then one could again choose a suitable sufficiently small constant $\sigma_2$ and \eqref{g0sen}$+\sigma_2 \, \eqref{g0h}$ yields that
\begin{equation} \label{eny}
	\begin{aligned}
		&\frac{1}{2} \frac{d}{dt} \sum_{0 \le \left|k\right| \le m} \left( \left\|\partial_{\vec{x}}^k g^0(\cdot, t)\right\|_{L^2}^2 + \left\| \sqrt{\gamma \tilde{p}^a \tilde{\rho}^a} \partial_{\vec{x}}^k \vec{h}(\cdot, t)\right\|_{L^2}^2 + b \left\| \left( \partial_{\vec{x}}^k g^0 - a_0 \tilde{\rho}^a \partial_{\vec{x}}^k f\right) \! (\cdot, t)\right\|_{L^2}^2 \right)\\
		&\quad + \sigma_2 \frac{d}{dt} \sum_{0 \le \left|k\right| \le m-1} \left( \int_{\mathbb{R}^3} \partial_{\vec{x}}^k f \, \partial_{\vec x}^k (\nabla \cdot \vec{h}) \, d\vec{x} + \sigma_1 \int_{\mathbb{R}^3} \partial_{\vec{x}}^k \nabla g^0 \cdot \partial_{\vec{x}}^k \vec{h} \, d\vec{x} \right)\\
		&\quad +\frac{a_0}{16\alpha} \rho_a^2 \left\| f(\cdot, t)\right\|_{H^m}^2 + \frac{\sigma_2 \alpha_f}{16\alpha} p_a v_a \left\|\nabla \cdot \vec{h}(\cdot, t)\right\|_{H^{m-1}}^2 + \frac{\sigma_1 \sigma_2 v_a}{16} \left\|\nabla g^0(\cdot, t)\right\|_{H^{m-1}}^2  \\
		& \le C \bigl|\vec{W}_b-\vec{W}_a\bigr|^2 \left( \left\|\nabla (p^b,\vec{u}^a,\vec{u}^b)(\cdot, t)\right\|_{H^m} + \left\|\chi^a(\cdot, t)\right\|_{H^m} + \left\| \chi^b(\cdot, t)\right\|_{H^{m+1}} \right) \\
		& \quad + C\bigl\| z(\cdot, t)\bigr\|_{H^m}^2 \cdot \left\| \nabla (\nabla p^b, \nabla \cdot \vec{u}^a,\nabla \cdot \vec{u}^b, \chi^a,\chi^b)(\cdot, t)\right\|_{H^{m-1}}^2 \\
		& \quad + C \Bigl(\left\| g^0(\cdot, t)\right\|_{L^2} + \left\| \vec{h}(\cdot, t)\right\|_{L^2} \Bigr) \cdot \bigl\| z(\cdot, t)\bigr\|_{L^2} \cdot \left\|(\nabla p^b, \nabla \cdot \vec{u}^a, \nabla \cdot \vec{u}^b, \chi^a, \chi^b)(\cdot, t)\right\|_{L^{\infty}}\\
		& \quad + C \Bigl(\left\| g^0(\cdot, t)\right\|_{L^2}^2 + \left\| \vec{h}(\cdot, t)\right\|_{L^2}^2 \Bigr) \Bigl( \left\|\nabla (p^a,p^b, \vec{u}^a, \vec{u}^b,\chi^b)(\cdot, t)\right\|_{L^2} + \left\| (\chi^a,\chi^b)(\cdot, t)\right\|_{L^2} \Bigr),
	\end{aligned}
\end{equation}
where the estimate for $z(\vec{x}, t)$ goes as follows directly from \eqref{kz}:
\begin{equation} \label{enz}
	\begin{aligned}
		\frac{1}{2} \frac{d}{dt} \bigl\| z(\cdot, t)\bigr\|_{H^m}^2 & \le  C\bigl\| z(\cdot, t)\bigr\|_{H^m} \Bigl[ \bigl|\vec{u}_b\bigr| \cdot\left\| \nabla s^b(\cdot, t)\right\|_{H^m} + \bigl|\vec{\theta}_b - \vec{\theta}_b \bigr| \cdot \left\| \chi^a(\cdot, t)\right\|_{L^\infty} \cdot \left\|\chi^a(\cdot, t)\right\|_{H^m} \\
		& \quad + \bigl\| z(\cdot, t)\bigr\|_{H^m} \left( \left\|\nabla^2 \vec{u}^b(\cdot, t)\right\|_{H^{m-2}}+\left\|\nabla \chi^a(\cdot, t)\right\|_{H^{m-1}} \right)\\
		& \quad + \left( \left\| g^0(\cdot, t)\right\|_{H^m} + \left\| f(\cdot, t)\right\|_{H^m} \right) \! \left\| \nabla (\chi^a, \chi^b)(\cdot, t)\right\|_{H^{m-1}}\\
		 & \quad + \left\|\nabla \vec{h}(\cdot, t)\right\|_{H^{m-1}} \cdot \left\|\nabla s^a(\cdot, t)\right\|_{H^m} \Bigr].
	\end{aligned}
\end{equation}
The above estimates in \eqref{eny} and \eqref{enz}, together with the conclusions about the global existence and time decay estimates for the solutions $\partial_{\vec{x}}^k (p^{\beta}, \vec{u}^{\beta}, \chi^{\beta}, s^{\beta})(\vec{x},t)$ $(\beta =a,b)$ of the individual perturbation problems, presented in Theorem~\ref{main1}, Theorem~\ref{genes} and Lemma~\ref{decay}, result in that

\begin{equation} \label{eny1}
	\begin{aligned}
		&\frac{1}{2} \frac{d}{dt} \sum_{0 \le \left|k\right| \le m} \left( \left\|\partial_{\vec{x}}^k g^0(\cdot, t)\right\|_{L^2}^2 + \left\| \sqrt{\gamma \tilde{p}^a \tilde{\rho}^a} \partial_{\vec{x}}^k \vec{h}(\cdot, t)\right\|_{L^2}^2 + b \left\| \left( \partial_{\vec{x}}^k g^0 - a_0 \tilde{\rho}^a \partial_{\vec{x}}^k f\right)\!(\cdot, t)\right\|_{L^2}^2 \right)\\
		&\quad + \sigma_2 \frac{d}{dt} \sum_{0 \le \left|k\right| \le m-1} \left( \int_{\mathbb{R}^3} \partial_{\vec{x}}^k f \, \partial_{\vec x}^k (\nabla \cdot \vec{h}) \, d\vec{x} + \sigma_1 \int_{\mathbb{R}^3} \partial_{\vec{x}}^k \nabla g^0 \cdot \partial_{\vec{x}}^k \vec{h} \, d\vec{x} \right)\\
		& \quad + \frac{a_0}{16\alpha} \rho_a^2 \left\| f(\cdot, t)\right\|_{H^m}^2 + \frac{\sigma_2 \alpha_f}{16\alpha} p_a v_a \left\|\nabla \cdot \vec{h}(\cdot, t)\right\|_{H^{m-1}}^2 + \frac{\sigma_1 \sigma_2 v_a}{16} \left\|\nabla g^0(\cdot, t)\right\|_{H^{m-1}}^2  \\
		& \le C (\epsilon_a + \epsilon_b)  (1 + t)^{-\frac{5}{4}} \bigl|\vec{W}_b-\vec{W}_a\bigr|^2\\
		& \quad + C(\epsilon_a + \epsilon_b) \left[ (1+t)^{-\frac{7}{2}}  \bigl\| z(\cdot, t)\bigr\|_{H^m}^2 + (1+t)^{-\frac{5}{2}} \bigl\| z(\cdot, t)\bigr\|_{L^2}  \Bigl(\left\| g^0(\cdot, t)\right\|_{L^2} + \left\| \vec{h}(\cdot, t)\right\|_{L^2} \Bigr)\right] \\
		& \quad + C(\epsilon_a + \epsilon_b)(1+t)^{-\frac{5}{4}} \Bigl(\left\| g^0(\cdot, t)\right\|_{L^2}^2 + \left\| \vec{h}(\cdot, t)\right\|_{L^2}^2 \Bigr)\\
		& \le C (\epsilon_a + \epsilon_b) \left[(1 + t)^{-\frac{5}{4}} \bigl|\vec{W}_b-\vec{W}_a\bigr|^2 + (1+t)^{-\frac{7}{2}}  \bigl\| z(\cdot, t)\bigr\|_{H^m}^2 + (1+t)^{-\frac{15}{4}} \bigl\| z(\cdot, t)\bigr\|_{L^2}^2 \right] \\
		& \quad + C(\epsilon_a + \epsilon_b)(1+t)^{-\frac{5}{4}} \Bigl(\left\| g^0(\cdot, t)\right\|_{L^2}^2 + \left\| \vec{h}(\cdot, t)\right\|_{L^2}^2 \Bigr)
	\end{aligned}
\end{equation}
and
\begin{equation} \label{enz1}
	\begin{aligned}
		\frac{d}{dt} \bigl\| z(\cdot, t)\bigr\|_{H^m} & \le  C(\epsilon_a + \epsilon_b) \Bigl[ \bigl|\vec{u}_b\bigr| + (1+t)^{-\frac{5}{2}} \bigl|\vec{\theta}_b - \vec{\theta}_a \bigr| + (1+t)^{-\frac{7}{4}} \bigl\| z(\cdot, t)\bigr\|_{H^m}\\ 
		& \quad \quad \quad \quad \quad \quad \quad + (1+t)^{-\frac{7}{4}} \left( \left\| g^0(\cdot, t)\right\|_{H^m}+\left\| f(\cdot, t)\right\|_{H^m}\right) + \left\|\nabla \vec{h}(\cdot, t)\right\|_{H^{m-1}}  \Bigr].
	\end{aligned}
\end{equation}

Set $$ I(t) := \underset{0 \le \tau \le t}{\sup} \left\{(1+\tau)^{-1} \left\| z(\cdot, \tau) \right\|_{H^m}+\left\|\vec{Y}_{1}(\cdot,t)\right\|_{H^m}\right\} .$$ 
Gronwall's inequality and \eqref{eny1},  along with the initial condition $\|\vec{Y}_1(\cdot, 0)\|_{H^m} \le C_{in} |\vec{W}_b-\vec{W}_a|$ yield that
\begin{equation} \label{y1pri}
	\begin{aligned}
		&\left\|\vec{Y}_{1}(\cdot,t)\right\|_{H^m}^{2} + \int_0^t \left( \left\| \nabla g^0 (\cdot, \tau)\right\|_{H^{m-1}}^2 + \left\|\nabla \cdot \vec{h} (\cdot, \tau)\right\|_{H^{m-1}}^2 + \left\|f(\cdot, \tau)\right\|_{H^m}^2 \right) d\tau \\
		& \le C(\epsilon_a+\epsilon_b) \int_{0}^{t} e^{\int_{\tau}^{t} C(\epsilon_a+\epsilon_b)(1+t_1)^{-\frac{5}{4}} dt_1} \left( (1+\tau)^{-\frac{5}{4}} \bigl|\vec{W}_b-\vec{W}_a \bigr|^2 + (1+\tau)^{-\frac{7}{2}} \left\|z(\cdot,t)\right\|_{H^{m}}^{2} \right) d\tau \\
		& \quad + C \left\|\vec{Y}_{1}(\cdot,0)\right\|_{H^{m}}^{2}\\
		&\le C \bigl|\vec{W}_b-\vec{W}_a \bigr|^2 + C(\epsilon_a+\epsilon_b) I^2(t).
	\end{aligned}
\end{equation}
Besides, integrating \eqref{enz1} in time over $[0,t]$ and one has that
\begin{equation} \nonumber
\begin{aligned}
	\bigl\| z(\cdot, t)\bigr\|_{H^m} &\le  \bigl\| z(\cdot, 0)\bigr\|_{H^m} + C(\epsilon_a+\epsilon_b) \left[ (1+t) \bigl|\vec{u}_b\bigr|+ \bigl|\vec{\theta}_b - \vec{\theta}_a \bigr|+ (1+t)^{\frac{1}{4}}\underset{0 \le \tau \le t}{\sup} (1+\tau)^{-1} \left\| z(\cdot, \tau) \right\|_{H^m} +I(t) \right]\\
	&\quad\quad +C(\epsilon_a+\epsilon_b)(1+t)^{\frac 12}\left(\int_0^t  \left\|\nabla  \vec{h} (\cdot, \tau)\right\|_{H^{m-1}}^2 d\tau\right)^{1/2},
\end{aligned}
\end{equation}
which together with \eqref{y1pri} results in that
\begin{equation} \label{it}
	I^2(t) \le C \bigl\| z(\cdot, 0)\bigr\|^2_{H^m}  + C\bigl|\vec{W}_b-\vec{W}_a \bigr|^2 \le C  \bigl|\vec{W}_b-\vec{W}_a \bigr|^2,
\end{equation}
and thus directly implies \eqref{ensz}. Combining \eqref{y1pri} and \eqref{it}, one can verify \eqref{enghf}, and thereby completes the proof.   
\end{proof}

The enhanced energy estimate for the system of $\vec{Y}$ is analogous to \eqref{fk} and Lemma~\ref{lefk} and the aim is to overcome the difficulty caused by the insufficient decay rates of terms containing $s^\beta$, such as $v^a \nabla g^0$ in $R_{\vec h}^{\RN{2}}$ (where $v^a$ is a function of $s^a$). Introduce the following functional:
\begin{equation} \nonumber
	\begin{aligned}
		N_{m}(t, \epsilon):= & \int_{C_{\epsilon t}} (\epsilon^2 |\vec{x}|^2 + 1)^{-\frac{9}{2}} \sum_{|k|=m} \left[ \left(  \partial_{\vec{x}}^k g^0 \right)^2 + \left|\partial_{\vec{x}}^k \vec{h}\right|^2 + \left(\partial_{\vec{x}}^k f \right)^2 \right] \! (\vec{x}, t) \, d\vec{x}\\
		& + \int_{t/2}^t \int_{C_{\epsilon t}} e^{-\epsilon (t - \tau)} (\epsilon^2 |\vec{x}|^2 + 1)^{-\frac{9}{2}} \left\lbrace \sum_{|k|=1}^{m} \left[ \left(\partial_{\vec{x}}^k g^0 \right)^2 + \left| \partial_{\vec{x}}^k \vec{h} \right|^2 \right]  + \sum_{|k|=0}^{m} \left(\partial_{\vec{x}}^k f\right)^2 \right\rbrace \!  (\vec{x}, \tau) \,  d\vec{x} d\tau,
	\end{aligned}
\end{equation}
with a multi-index $k$ denoting the order of differentiation and $\left| k \right| := \sum_{i=1}^3 k_i$, and $C_{\epsilon t}:=\left[-\epsilon t, \epsilon t \right] \times \left[-\epsilon t, \epsilon t \right] \times \left[-\epsilon t, \epsilon t \right]$.
\begin{Lemma} \label{nkle}
	Under the assumptions of Theorem~\ref{main2}, for all $t \ge 0$, $2 \le m \le 5$, and sufficiently small $\epsilon > 0$, the following recursive relation holds:
	\begin{equation} \label{recursive}
		N_{m}(t, \epsilon) \le \tilde{C}_U \left( N_{m-1}(t, \epsilon) + \epsilon^{-17} (1+t)^{-3} \bigl|\vec{W}_b-\vec{W}_a\bigr|^2 \right),
	\end{equation}
	where $\tilde{C}_U$ denotes a positive constant independent of $t$, $m$ and $\epsilon$.
\end{Lemma}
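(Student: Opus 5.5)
The argument mirrors the proof of Lemma~\ref{lefk}, applied to the differentiated difference system \eqref{kg0}--\eqref{kf} in place of \eqref{hl221}--\eqref{hl224}. For $|k|=m\ge2$, take the inner product of $\bigl(\eqref{kg0},\eqref{kh},\eqref{kg0}-\eqref{kfb3}\bigr)$ with $\bigl(\partial_{\vec x}^k g^0, b_1\partial_{\vec x}^k\vec h, b(\partial_{\vec x}^k g^0-b_2\partial_{\vec x}^k f)\bigr)$, exactly as in \eqref{leen1}. Then multiply pointwise by the weight $e^{-\epsilon(t-\tau)}(\epsilon^2|\vec x|^2+1)^{-9/2}$ and integrate over $C_{\epsilon t}\times[t/2,t]$. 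This gives the weighted analogue of \eqref{puxwei}. The left side contains $\int_{C_{\epsilon t}}(\epsilon^2|\vec x|^2+1)^{-9/2}|\partial^k_{\vec x}(g^0,\vec h,f)|^2(\vec x,t)\,d\vec x$ and the dissipation $\int\!\!\int e^{-\epsilon(t-\tau)}(\cdots)|\partial^k_{\vec x}f|^2$.

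Flux terms $\nabla\cdot\vec X$ are integrated by parts. When the derivative falls on the weight, it produces a factor $C\epsilon$, which is absorbed. The boundary contributions on the faces $x_i=\pm\epsilon t$ are bounded by $(\epsilon^4t^2+1)^{-9/2}\,(\epsilon t)^2\int_{t/2}^t e^{-\epsilon(t-\tau)}\|\partial^k_{\vec x}\vec Y_1\|_{L^\infty}^2\,d\tau$. The $\partial_\tau$ of the weight gives a favourable sign. The initial slice $\tau=t/2$ contributes $e^{-\epsilon t/2}\|\vec Y_1(t/2)\|_{H^m}^2$.

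Next, repeat the cross-term computations with the same weight: the analogues of \eqref{hs} and \eqref{g0s}, i.e. of \eqref{uwei}--\eqref{pwei}. These recover weighted dissipation of $\partial^{k-1}_{\vec x}\nabla\cdot\vec h$ and $\partial^{k-1}_{\vec x}\nabla g^0$. Irrotationality of $\vec h$ then upgrades $\nabla\cdot\vec h$ to $\nabla\vec h$ via the boundary-aware identity of Remark~\ref{transform}. Taking a small linear combination of these, as in the passage from \eqref{g0sen} to \eqref{eny}, bounds everything at level $m$ by $N_{m-1}(t,\epsilon)$ plus source contributions.

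The sources split as follows.
\begin{itemize}
\item \emph{Linear sources $R^{\RN{1}}$.} These are $|\vec W_b-\vec W_a|$ times $(\nabla p^b,\nabla\vec u^b,\chi^b,\nabla\chi^b)$. By Lemma~\ref{decay} and Remark~\ref{rebetalq}, their weighted integrals over $[t/2,t]$ are $\mathcal O(1)\epsilon^{-1}(1+t)^{-7/2}|\vec W_b-\vec W_a|^2$.
\item \emph{Initial-slice term.} By Theorem~\ref{enle}, $e^{-\epsilon t/2}\|\vec Y_1(t/2)\|_{H^m}^2\le Ce^{-\epsilon t/2}|\vec W_b-\vec W_a|^2$, and this is at most $C\epsilon^{-3}(1+t)^{-3}|\vec W_b-\vec W_a|^2$.
\item \emph{Boundary faces.} Here one needs an $L^\infty$ decay of $\partial^k_{\vec x}\vec Y_1$. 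From \eqref{ini0} and the Green's function representation \eqref{diffrep}, together with the energy bound \eqref{enghf}, the target is $\|\partial^k_{\vec x}\vec Y_1\|_{L^\infty}\le C(1+t)^{-1}|\vec W_b-\vec W_a|$, or at least $(1+t)^{-1/2}$ if only the weaker bound is available. The polynomial factor $(\epsilon^4t^2+1)^{-9/2}(\epsilon t)^2$ then gives far more than $(1+t)^{-3}$ at a cost of powers of $\epsilon^{-1}$. Collecting these powers is where $\epsilon^{-17}$ comes from, matching \eqref{efm}.
\item \emph{Coupling terms $R^{\RN{2}}$ with small coefficients.} Terms with coefficients $\vec u^a,v^a,p^a,p^b,\dots$ multiplying top-order derivatives of $\vec Y_1$ produce $C(\epsilon_a+\epsilon_b)$ times the weighted dissipation, which is absorbed. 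Lower-order ones are bounded by $N_{m-1}$.
\end{itemize}

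The main obstacle is the set of coupling terms that carry $v^b-v^a$. These contain the growing entropy difference $z$, with $\|z\|_{H^m}\le C(1+t)|\vec W_b-\vec W_a|$ by \eqref{ensz}, multiplied by $\partial^k(\nabla p^b,\nabla\cdot\vec u^{a,b},\chi^{a,b})$. I would handle these by Cauchy's inequality, as in \eqref{hs}. Using $\|z\|_{L^\infty}\le C\|z\|_{H^2}$ and Lemma~\ref{decay}, the integrand is bounded by
\[
\epsilon_1\cdot(\text{weighted dissipation})+\epsilon_1^{-1}(1+t)^2(1+t)^{-2m_i}|\vec W_b-\vec W_a|^2 .
\]
With $m_1\ge7/4$ this gives at least $(1+t)^{-3/2}$ before the time integration over $[t/2,t]$. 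The weight $e^{-\epsilon(t-\tau)}$ contributes only a factor $\epsilon^{-1}$ rather than a power of $t$. So I must use the faster higher-derivative rates $m_2=9/4$, $m_3=5/2$ where the derivative count allows, and the pointwise rates from Theorem~\ref{main1} otherwise, to reach $(1+t)^{-3}$.

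Terms like $v^a\nabla g^0$ and $\vec h\cdot\nabla s^a$ carry only spatial decay of $s^a$. They fit directly into the weighted integral because $(|\vec x|^2+1)^{-9/4}$ is dominated by the weight. Summing all contributions and choosing $\epsilon_1,\sigma$ small relative to the dissipation constants, but with $\epsilon_a,\epsilon_b\ll\sigma$, yields \eqref{recursive} with $\tilde C_U$ independent of $t$, $m$ and $\epsilon$.
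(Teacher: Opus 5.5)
Your skeleton is the paper's. You take the weighted version of \eqref{leen1} over $C_{\epsilon t}\times[t/2,t]$, add weighted cross terms for $\partial^{k-1}_{\vec x}\nabla g^0$ and $\partial^{k-1}_{\vec x}\nabla\vec h$ (using Remark~\ref{transform}), and take a small linear combination. Your treatment of the linear sources, the initial slice and the faces of $C_{\epsilon t}$ is fine. For the faces you do not even need decay: the bound $\|\partial^k_{\vec x}\vec Y_1\|_{L^\infty}\le C\|\vec Y_1\|_{H^{m+2}}\le C|\vec W_b-\vec W_a|$ already gives $C\epsilon^{-3}(\epsilon^4t^2+1)^{-7/2}|\vec W_b-\vec W_a|^2$.

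The gap is your claim that every coupling term with a small coefficient in front of a top-order derivative of $\vec Y_1$ yields $C(\epsilon_a+\epsilon_b)$ times the weighted dissipation. The difference system cannot be symmetrized by a single background. Here $g^0$ is transported by $\vec u^b$ while $\vec h$ and $f$ are transported by $\vec u^a$. Also, the coefficient of $\nabla\cdot\vec h$ in the $f$-equation contains $p^b\tilde v^b$, whereas $b_1,b_2$ are built from the $a$-solution. So the identity \eqref{leen1} that you invoke retains
\[
b\,b_2\,\partial^k_{\vec x} f\,\bigl(\vec h\cdot\partial^k_{\vec x}\nabla g^0\bigr)\quad\text{and}\quad \gamma b\,\tilde\rho^a\bigl(p^a\tilde v^a-p^b\tilde v^b\bigr)\bigl(\partial^k_{\vec x}g^0-b_2\partial^k_{\vec x}f\bigr)\partial^k_{\vec x}(\nabla\cdot\vec h),
\]
which contain derivatives of $\vec Y_1$ of order $m+1$. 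These lie neither in $N_m$ nor in its dissipation, and integrating by parts only moves the extra derivative onto $\partial^k_{\vec x}f$. If you control the extra derivative by the difference energy $\|\vec Y_1\|_{H^8}\le C|\vec W_b-\vec W_a|$, you get no decay. Combined with $\|\vec h\|_{L^\infty}\lesssim(1+t)^{-1}|\vec W_b-\vec W_a|$, this yields at best $(1+t)^{-2}$, not the $(1+t)^{-3}$ of \eqref{recursive}.

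The missing idea is to estimate these $(m+1)$-th order derivatives through the individual solutions. Use $|\partial^k_{\vec x}\nabla g^0|\le|\partial^k_{\vec x}\nabla p^a|+|\partial^k_{\vec x}\nabla p^b|$, and likewise for $\nabla\cdot\vec h$. By \eqref{anas} and \eqref{decaygen}, both are then $\le C(\epsilon_a+\epsilon_b)(1+t)^{-5/2}$ in $L^\infty$. Now apply Cauchy's inequality against the order-$m$ dissipation, together with $|p^a\tilde v^a-p^b\tilde v^b|\le C[|g^0|+|p^b|(|\vec\theta_b-\vec\theta_a|+|f|+|z|)]$ and Theorem~\ref{enle}. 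This leaves only $C(\epsilon_a+\epsilon_b)\epsilon^{-1}(1+t)^{-5}|\vec W_b-\vec W_a|^2$.

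Three smaller points.

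\begin{enumerate}
\item The $\tau$-derivative of $e^{-\epsilon(t-\tau)}$ produces $+\epsilon\int_{t/2}^t\int_{C_{\epsilon t}}e^{-\epsilon(t-\tau)}(\epsilon^2|\vec x|^2+1)^{-9/2}|\partial^k_{\vec x}\vec Y_1|^2$ on the right-hand side. This is the unfavourable sign, not a favourable one. It is harmless only because it is absorbed by the dissipation for small $\epsilon$.

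\item For the $z$-terms, put $\partial^j_{\vec x}z$ in $L^2$, where it grows like $(1+t)$ by \eqref{ensz}, and put the individual-solution factor in $L^\infty$, where every relevant derivative decays at least like $(1+t)^{-5/2}$. This gives $\epsilon^{-1}(1+t)^{-3}$ uniformly. Your pairing of $\|z\|_{L^\infty}$ with the $L^2$ rates of Lemma~\ref{decay} fails, for example at $m=2$: there $\|\nabla^3 p^b\|_{L^2}\lesssim(1+t)^{-9/4}$ only yields $(1+t)^{-5/2}$.

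\item The term $\vec h\cdot\nabla s^a$ belongs to the $z$-equation and does not enter $N_m$. The non-decaying $s^a$ inside $v^a$ is not handled by comparing $(|\vec x|^2+1)^{-9/4}$ with the weight. It is handled by the variable symmetrizer $b_1=\gamma\tilde p^a\tilde\rho^a$, which absorbs $v^a\partial^k_{\vec x}\nabla g^0$, and by commutators whose coefficients satisfy $\|\nabla v^a\|_{W^{|k|-1,\infty}}\le C\epsilon_a$.
\end{enumerate}
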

\begin{proof}
	First, similar to \eqref{leen1}, compute the inner product of $\bigl(\eqref{kg0}, \eqref{kh}, \eqref{kg0} - \eqref{kfb3}\bigr)$ and $\bigl(\partial_{\vec{x}}^k g^0, b_1 \partial_{\vec{x}}^k \vec{h},$ $b (\partial_{\vec{x}}^k g^0 - b_2 \partial_{\vec{x}}^k f)\bigr)$, with $b_1, b_2$ defined in \eqref{b123}. Then multiply the resulting inner product by a weight function $e^{-\epsilon (t-\tau)} (\epsilon^2 |\vec{x}|^2 + 1)^{-9/2}$, integrate it with respect to $\vec{x}$ over the domain $C_{\epsilon t}$ and with respect to $\tau$ over the interval $[t/2, t]$, and obtain that
	\begin{equation} \label{lenken1}
		\begin{aligned}
			& \int_{C_{\epsilon t}} (\epsilon^2 |\vec{x}|^2 + 1)^{-\frac{9}{2}} \left[ \left(\partial_{\vec{x}}^k g^0\right)^2 + b_1 \left|\partial_{\vec{x}}^k \vec{h}\right|^2 + b \left( \partial_{\vec{x}}^k g^0 - b_2 \partial_{\vec{x}}^k f \right)^2 \right] \! (\vec{x},t) \, d\vec{x}\\
			& \quad + \int_{t/2}^{t} \int_{C_{\epsilon t}} e^{-\epsilon (t-\tau)} (\epsilon^2 |\vec{x}|^2 + 1)^{-\frac{9}{2}} \left| \tilde{\rho}^a \partial_{\vec{x}}^k f\right|^2 \!(\vec{x},\tau) \, d\vec{x} d\tau\\
			& \le \int_{t/2}^{t} \int_{C_{\epsilon t}} e^{-\epsilon (t-\tau)} (\epsilon^2 |\vec{x}|^2 + 1)^{-\frac{9}{2}} b_1 \partial_{\vec{x}}^k \vec{h} \cdot \left( \partial_{\vec{x}}^k R_{\vec h}^{\RN{1}} + \partial_{\vec{x}}^k R_{\vec h}^{\RN{2}}+ \vec{u}^a \cdot \partial_{\vec{x}}^k \nabla \vec{h} + v^a \partial_{\vec{x}}^k \nabla g^0 \right) \! (\vec{x},\tau) \, d\vec{x} d\tau \\
			& \quad + b \int_{t/2}^{t} \int_{C_{\epsilon t}} e^{-\epsilon (t-\tau)} (\epsilon^2 |\vec{x}|^2 + 1)^{-\frac{9}{2}} \Bigl( (1+b^{-1}) \partial_{\vec{x}}^k g^0 - b_2 \partial_{\vec{x}}^k f \Bigr) \Bigl( \partial_{\vec{x}}^kR_{g^0}^{\RN{1}}+ \partial_{\vec{x}}^k R_{g^0}^{\RN{2}}+ \vec{u}^b \cdot \partial_{\vec{x}}^k \nabla g^0 \\
			& \quad \quad \quad \quad \quad \quad \quad + \gamma p^a \partial_{\vec x}^k (\nabla \cdot \vec{h}) \Bigr)(\vec{x},\tau) \, d\vec{x} d\tau\\
			& \quad - b \int_{t/2}^{t} \int_{C_{\epsilon t}} e^{-\epsilon (t-\tau)} (\epsilon^2 |\vec{x}|^2 + 1)^{-\frac{9}{2}} b_2 \Bigl( \partial_{\vec{x}}^k g^0 - b_2 \partial_{\vec{x}}^k f \Bigr) \Bigl( \partial_{\vec{x}}^k R_{f}^{\RN{1}} + \partial_{\vec{x}}^k R_{f}^{\RN{2}} + \vec{u}^a \cdot \partial_{\vec{x}}^k \nabla f \\
			& \quad \quad \quad \quad \quad \quad \quad + \frac{\alpha_f}{\alpha} \bigl(p^b \tilde{v}^b + p_a v^a\bigr) \partial_{\vec x}^k (\nabla \cdot \vec{h}) \Bigr)(\vec{x},\tau) \,  d\vec{x} d\tau\\
			& \quad + C(\epsilon + \epsilon_a + \epsilon_b) \int_{t/2}^{t} \int_{C_{\epsilon t}} e^{-\epsilon (t-\tau)} (\epsilon^2 |\vec{x}|^2 + 1)^{-\frac{9}{2}} \left( \left|\partial_{\vec{x}}^k g^0\right|^2 + \left|\partial_{\vec{x}}^k \vec{h}\right|^2 \right) \! (\vec{x},\tau) \, d\vec{x} d\tau\\
			& \quad + C(\epsilon_a + \epsilon_b)\epsilon^{-1} (1+t)^{-5} \bigl|\vec{W}_b-\vec{W}_a\bigr|^2 + C \epsilon^{-3} (\epsilon^4 t^2 + 1)^{-\frac{7}{2}} \bigl|\vec{W}_b-\vec{W}_a\bigr|^2.
		\end{aligned}
	\end{equation}
In the above inequality \eqref{lenken1}, one invokes the bound 
$$\left\| \partial_{\vec{x}}^k \nabla g^0(\cdot,t) \right\|_{L^{\infty}}, \, \left\|\partial_{\vec x}^k (\nabla \cdot \vec{h})(\cdot,t) \right\|_{L^{\infty}} \le C (\epsilon_a + \epsilon_b) (1+t)^{-5/2} \quad \text{for all } |k| \ge 1;$$ 
this bound is a direct consequence of the ansatz \eqref{anas} and the decay estimate \eqref{decaygen}. Specifically, this bound is applied as follows: 
$$\begin{aligned}
		I_w(t) & \equiv \left|\int_{\epsilon t}(\epsilon^2 |\vec{x}|^2 + 1)^{-\frac{9}{2}}\left(\gamma b \tilde{\rho}^a \bigl( p^a \tilde{v}^a - p^b \tilde{v}^b \bigr) \! \left( \partial_{\vec{x}}^k g^0 - b_2 \partial_{\vec{x}}^k f \right)\partial_{\vec x}^k (\nabla \cdot \vec{h})-b b_2 \partial_{\vec{x}}^k f \bigl( \vec{h} \cdot \partial_{\vec{x}}^k \nabla g^0 \bigr)\right) \! (\vec{x},t)d\vec x\right|\\
&\le C (\epsilon_a + \epsilon_b)(1+t)^{-\frac{5}{2}} \int_{\epsilon t}(\epsilon^2 |\vec{x}|^2 + 1)^{-\frac{9}{2}}\left(|g^0|+|\vec{h}| + |p^b| \bigl( \bigl|\vec{\theta}_b-\vec{\theta}_a\bigr| + |f| + |z| \bigr)\right)\\
& \quad \quad \quad \quad \quad \quad \quad \quad \quad \quad \quad \times \Bigl(\left|\partial_{\vec{x}}^k g^0\right| + \left|\tilde{\rho}^a \partial_{\vec{x}}^k f \right| \Bigr)d\vec x,
\end{aligned}$$ 
which combined with Theorem~\ref{enle} results in that
$$\begin{aligned}
		\int_{t/2}^t e^{-\epsilon(t-\tau)}I_w(\tau)d\tau & \le C\epsilon^{-1} (\epsilon_a + \epsilon_b)(1+t)^{-5} \bigl|\vec{W}_b-\vec{W}_a\bigr|^2\\
		& \quad + C(\epsilon_a + \epsilon_b) \left( \int_{t/2}^{t} \int_{C_{\epsilon t}} e^{-\epsilon (t-\tau)} (\epsilon^2 |\vec{x}|^2 + 1)^{-\frac{9}{2}} \left( \left|\partial_{\vec{x}}^k g^0\right|^2 + \left|\tilde{\rho}^a \partial_{\vec{x}}^k f\right|^2 \right) \! (\vec{x},\tau) \, d\vec{x} d\tau\right)\! .\end{aligned}$$ 
Furthermore, the source terms and coupling terms could be estimated as follows:
\begin{equation}\nonumber
	\begin{aligned}
		& \left| \partial_{\vec x}^k R_{g^0}^{\RN{1}} \right|+ \left| \partial_{\vec x}^k R_{\vec h}^{\RN{1}} \right| + \left| \partial_{\vec x}^k R_f^{\RN{1}} \right| \le C \bigl|\vec{W}_b-\vec{W}_a\bigr| \!  \left( \left| \partial_{\vec x}^k \nabla(p^b,\vec{u}^b,\chi^b) \right| + \left| \partial_{\vec x}^k \chi^b \right| \right) \le C \epsilon_b (1+t)^{-\frac{5}{2}} \bigl|\vec{W}_b-\vec{W}_a\bigr|,
	\end{aligned}
\end{equation}
\begin{equation}\nonumber
	\begin{aligned}
	& \left|\partial_{\vec x}^kR_{g^0}^{\RN{2}}+ \vec{u}^b \cdot \partial_{\vec x}^k \nabla g^0 + \gamma p^a \partial_{\vec x}^k (\nabla \cdot \vec{h}) \right| + \left|\partial_{\vec x}^k R_{\vec h}^{\RN{2}} + \vec{u}^a \cdot \partial_{\vec x}^k \nabla \vec{h} + v^a \partial_{\vec x}^k \nabla g^0  \right|\\
	& \quad + \left|\partial_{\vec x}^k R_f^{\RN{2}}  + \vec{u}^a \cdot \partial_{\vec x}^k \nabla f + \frac{\alpha_f}{\alpha} \bigl(p^b \tilde{v}^b + p_a v^a\bigr) \partial_{\vec x}^k (\nabla \cdot \vec{h}) \right|\\
	& \le C \Biggl( \sum_{|j|=1}^{|k|} \left|\partial_{\vec x}^j g^0\right| \cdot \left\|\nabla v^a \right\|_{W^{|k|-1,\infty}} +  \sum_{|j|=0}^{|k|-1} \left|\partial_{\vec x}^j (\nabla \cdot \vec{h})\right| \cdot \left\|\nabla (p^b \tilde{v}^b + p_a v^a)\right\|_{W^{|k|,\infty}}\\
	& \quad + \sum_{|j|=0}^{|k|} \left| \partial_{\vec x}^j f \right| \cdot \left\| \frac{v^a}{\tilde{v}^b} \right\|_{W^{|k|,\infty}}\Biggr) + C\Biggl( \bigl|\vec{\theta}_b-\vec{\theta}_a\bigr| + \sum_{|j|=0}^{|k|} \left| \partial_{\vec x}^j \vec{Y} \right| \Biggr) \Biggl( \left\| \nabla (p^a, p^b, \vec{u}^a, \vec{u}^b, \chi^b) \right\|_{W^{|k|,\infty}}\\
	& \quad + \left\|\tilde{v}^b \nabla \cdot \vec{u}^a \right\|_{W^{|k|,\infty}} + \left\|p^a \nabla \cdot \vec{u}^a \right\|_{W^{|k|,\infty}} + \left\| \frac{\chi^b}{\tilde{v}^b} \right\|_{W^{|k|,\infty}} + \left\| \frac{v^a \chi^a}{\tilde{v}^a \tilde{v}^b} \right\|_{W^{|k|,\infty}} \Biggr)\\
	& \le C (\epsilon_a + \epsilon_b) \Biggl(|f| + \sum_{|j|=1}^{|k|}  \left| \partial_{\vec x}^j \vec{Y}_1 \right| \Biggr) + C(\epsilon_a + \epsilon_b)(1+t)^{-\frac{5}{2}} \Biggl( \bigl|\vec{\theta}_b-\vec{\theta}_a\bigr| + |g^0| + |\vec{h}| + \sum_{|j|=0}^{|k|} \left|\partial_{\vec x}^j z \right| \Biggr),
	\end{aligned}
\end{equation}
which combined with \eqref{lenken1} yield that
\begin{equation} \label{lenkg}
		\begin{aligned}
			& \int_{C_{\epsilon t}} (\epsilon^2 |\vec{x}|^2 + 1)^{-\frac{9}{2}} \left[ \left(\partial_{\vec x}^k g^0\right)^2 + b_1 \left|\partial_{\vec x}^k \vec{h}\right|^2 + b \left( \partial_{\vec x}^k g^0 -b_2 \partial_{\vec x}^k f \right)^2 \right] \! (\vec{x},t) \, d\vec{x}\\
			& \quad + \int_{t/2}^{t} \int_{C_{\epsilon t}} e^{-\epsilon (t-\tau)} (\epsilon^2 |\vec{x}|^2 + 1)^{-\frac{9}{2}} \left| \tilde{\rho}^a \partial_{\vec x}^k f\right|^2 \! (\vec{x},\tau) \,  d\vec{x} d\tau\\
			& \le C(\epsilon + \epsilon_a + \epsilon_b) \! \int_{t/2}^{t} \int_{C_{\epsilon t}} e^{-\epsilon (t-\tau)} (\epsilon^2 |\vec{x}|^2 + 1)^{-\frac{9}{2}} \left\lbrace \sum_{|j|=1}^{|k|} \left[ \left(\partial_{\vec{x}}^j g^0\right)^2 + \left| \partial_{\vec{x}}^j \vec{h} \right|^2 \right]  + \sum_{|j|=0}^{|k|} \left(\partial_{\vec{x}}^j f\right)^2 \right\rbrace \! (\vec{x}, \tau) \, d\vec{x} d\tau\\
			& \quad + C(\epsilon_a + \epsilon_b)\epsilon^{-4} (1+t)^{-3} \bigl|\vec{W}_b-\vec{W}_a\bigr|^2 + C \epsilon^{-3} (\epsilon^4 t^2 + 1)^{-\frac{7}{2}} \bigl|\vec{W}_b-\vec{W}_a\bigr|^2.
		\end{aligned} 
	\end{equation}
Here, one uses the estimate
$$\begin{aligned}
		& \int_{t/2}^{t} \int_{C_{\epsilon t}} e^{-\epsilon (t-\tau)} (\epsilon^2 |\vec{x}|^2 + 1)^{-\frac{9}{2}} (1+\tau)^{-5} \Biggl( \bigl|\vec{\theta}_b-\vec{\theta}_a\bigr|^2 + |g^0|^2 + |\vec{h}|^2 +\sum_{|j|=0}^{|k|} \left(\partial_{\vec{x}}^j z\right)^2 \Biggr) (\vec{x},\tau) \, d\vec{x} d\tau\\
		& \le C \epsilon^{-4} (1+t)^{-3} \bigl|\vec{W}_b-\vec{W}_a\bigr|^2
	\end{aligned}
$$
obtained from Theorem~\ref{enle}.
	
Next, begin with the equations for $\partial_{\vec{x}}^{k-1} f \, \partial_{\vec x}^{k-1} (\nabla \cdot \vec{h})$ and 
	$$\begin{aligned}
		\frac{\alpha_f}{\alpha} p_a v_a \left| \partial_{\vec x}^{k-1} (\nabla \cdot \vec{h})\right|^2 & = -\partial_t \left(\partial_{\vec{x}}^{k-1} f \, \partial_{\vec x}^{k-1} (\nabla \cdot \vec{h})\right) + \left( \partial_{\vec{x}}^{k-1} f \right) \nabla \cdot \left(- v_a \nabla \partial_{\vec{x}}^{k-1} g^0 + \partial_{\vec x}^{k-1} R_{\vec h}^{\RN{1}} + \partial_{\vec x}^{k-1} R_{\vec h}^{\RN{2}} \right)\\
		&\quad + \partial_{\vec x}^{k-1} (\nabla \cdot \vec{h}) \left(-\left(1+\frac{\alpha_f}{\alpha} \right)\partial_{\vec{x}}^{k-1} f + \partial_{\vec x}^{k-1} R_{f}^{\RN{1}}+ \partial_{\vec x}^{k-1} R_{f}^{\RN{2}} \right),
	\end{aligned}$$
	$$
	\begin{aligned}
		v_a \left| \partial_{\vec{x}}^{k-1} \nabla g^0 \right|^2 & = -\partial_t \left(\partial_{\vec{x}}^{k-1} \nabla g^0 \cdot \partial_{\vec{x}}^{k-1} \vec{h} \right)+ \partial_{\vec{x}}^{k-1} \nabla g^0 \cdot \left(\partial_{\vec x}^{k-1}  R_{\vec h}^{\RN{1}} + \partial_{\vec x}^{k-1}  R_{\vec h}^{\RN{2}} \right) \\
		& \quad + \partial_{\vec{x}}^{k-1} \vec{h} \cdot \nabla \left( - \gamma p_a \partial_{\vec x}^{k-1} (\nabla \cdot \vec{h})- \frac{2}{\alpha v_a} \partial_{\vec{x}}^{k-1} f + \partial_{\vec x}^{k-1}  R_{g^0}^{\RN{1}} + \partial_{\vec x}^{k-1}  R_{g^0}^{\RN{2}} \right). 
	\end{aligned}
$$
	Multiplying both sides of these identities by the weight function $e^{-\epsilon (t-\tau)} (\epsilon^2 |\vec{x}|^2 + 1)^{-9/2}$ and integrating the results over $\vec{x} \in C_{\epsilon t}$ and $\tau \in [t/2, t]$, one obtains that
\begin{equation} \label{lenkhg0}
	\begin{aligned}
		& \int_{t/2}^t \int_{C_{\epsilon t}} e^{-\epsilon (t-\tau)} (\epsilon^2 |\vec{x}|^2 + 1)^{-\frac{9}{2}} \left( \frac{\alpha_f}{4\alpha} p_a v_a  \left|\partial_{\vec{x}}^{k-1} \nabla \vec{h} \right|^2 + \frac{\sigma_3 v_a}{4} \left| \partial_{\vec{x}}^{k-1} \nabla g^0 \right|^2 \right) \! (\vec{x},\tau) \, d\vec{x} d\tau\\
		& \le C \int_{C_{\epsilon t}} (\epsilon^2 |\vec{x}|^2 + 1)^{-\frac{9}{2}} \left( \left|\partial_{\vec{x}}^{k-1} \vec{h}\right|^2 + \left|\partial_{\vec{x}}^{k-1} f\right|^2 + \left| \partial_{\vec{x}}^{k-1} \nabla g^0\right|^2 + \left|\partial_{\vec x}^{k-1} (\nabla \cdot \vec{h}) \right|^2 \right) \! (\vec{x},t) \, d\vec{x} \\
		& \quad + C(\epsilon + \epsilon_a + \epsilon_b) \! \int_{t/2}^{t} \int_{C_{\epsilon t}} e^{-\epsilon (t-\tau)} (\epsilon^2 |\vec{x}|^2 + 1)^{-\frac{9}{2}} \Bigg\lbrace \sum_{|j|=1}^{|k|-1} \left[ \left(\partial_{\vec{x}}^j g^0\right)^2 + \left| \partial_{\vec{x}}^j \vec{h} \right|^2 \right]  + \sum_{|j|=0}^{|k|-1} \left(\partial_{\vec{x}}^j f\right)^2 \! \Bigg\rbrace (\vec{x}, \tau) \, d\vec{x} d\tau\\
		& \quad + C\int_{t/2}^{t} \int_{C_{\epsilon t}} e^{-\epsilon (t-\tau)} (\epsilon^2 |\vec{x}|^2 + 1)^{-\frac{9}{2}} \left( \left|\partial_{\vec{x}}^{k-1} f\right|^2 + \left|\partial_{\vec{x}}^{k-1} \nabla f\right|^2 \right) \! (\vec{x},\tau) \, d\vec{x} d\tau\\
		& \quad + C(\epsilon_a + \epsilon_b)\epsilon^{-4} (1+t)^{-3} \bigl|\vec{W}_b-\vec{W}_a\bigr|^2 + C \epsilon^{-3} (\epsilon^4 t^2 + 1)^{-\frac{7}{2}} \bigl|\vec{W}_b-\vec{W}_a\bigr|^2,
	\end{aligned} 
\end{equation}
for a chosen sufficiently small constant $\sigma_3 \in (0,1)$. Here, the replacement of $\partial_{\vec x}^{k-1} (\nabla \cdot \vec{h})$ by $\partial_{\vec{x}}^{k-1} \nabla \vec{h}$ on the left-hand side of inequality \eqref{lenkhg0} is consistent with the process in Remark~\ref{transform}.

Choose a sufficiently small constant $\sigma_4 \in (0,1)$ and $\sum_{|k|=m}\eqref{lenkg}+\sigma_4 \eqref{lenkhg0}$ yields that
$$\label{leeng}
	\begin{aligned}
		& \sum_{|k|=m} \int_{C_{\epsilon t}} (\epsilon^2 |\vec{x}|^2 + 1)^{-\frac{9}{2}} \left[ \left(\partial_{\vec x}^k g^0\right)^2 +b_1 \left|\partial_{\vec x}^k \vec{h}\right|^2 + b \left( \partial_{\vec x}^k g^0-b_2 \partial_{\vec x}^k f \right)^2 \right] \! (\vec{x},t) \, d\vec{x}\\
		& \quad + \sum_{|k|=m} \int_{t/2}^{t} \int_{C_{\epsilon t}} e^{-\epsilon (t-\tau)} (\epsilon^2 |\vec{x}|^2 + 1)^{-\frac{9}{2}} \Bigl( \frac{\sigma_3 \sigma_4 v_a}{4} \left|\partial_{\vec{x}}^{k-1} \nabla g^0 \right|^2 + \frac{\sigma_4 \alpha_f}{4\alpha} p_a v_a  \left|\partial_{\vec{x}}^{k-1} \nabla \vec{h} \right|^2\\
		& \quad \quad \quad \quad \quad \quad \quad \quad  + \left|\tilde{\rho}^a \partial_{\vec x}^k f\right|^2 \Bigr)(\vec{x},\tau) \, d\vec{x} d\tau \\
		& \le C (\sigma_4 + \epsilon + \epsilon_a + \epsilon_b) N_{m-1}(t, \epsilon) + C(\epsilon_a + \epsilon_b)\epsilon^{-4} (1+t)^{-3} \bigl|\vec{W}_b-\vec{W}_a\bigr|^2 + C \epsilon^{-3} (\epsilon^{-4} t^2 + 1)^{-\frac{7}{2}} \bigl|\vec{W}_b-\vec{W}_a\bigr|^2,
	\end{aligned}
$$
and thus 
$$N_{m}(t, \epsilon) \le C N_{m-1}(t, \epsilon) + C(\epsilon_a + \epsilon_b)\epsilon^{-4} (1+t)^{-3} \bigl|\vec{W}_b-\vec{W}_a\bigr|^2 + C \epsilon^{-3} (\epsilon^4 t^2 + 1)^{-\frac{7}{2}} \bigl|\vec{W}_b-\vec{W}_a\bigr|^2,$$
which immediately verifies \eqref{recursive} and completes the proof of Lemma~\ref{nkle}.
\end{proof}
\begin{Remark}
Once it holds that for a positive constant $C$ and all $t\ge 0$,
\begin{equation}\nonumber
N_{1}(t, \epsilon) <C \epsilon^{-17} (1 + t)^{-3} |\vec{W}_b - \vec{W}_a|^2,
\end{equation}
one could combine it with Lemma~\ref{nkle} to obtain that for $t\ge 0$ and $2\le m\le 5$, $N_{m}(t, \epsilon) = \mathcal{O}(1) \epsilon^{-17} (1 + t)^{-3} |\vec{W}_b - \vec{W}_a|^2.$ This upper bound for $N_m(t,\epsilon)$ together with Theorem~\ref{enle} and the $L^2$-integrability of $\nabla^i s^a$ for $i=0,1,2$, after an analysis analogous to that in \eqref{phi}--\eqref{slp2}, yields that:
	\begin{equation} \label{saky1}
		\bigl|\nabla^i s^a\bigr| \left(\left|\partial_{\vec{x}}^k g^0\right| + \left|\partial_{\vec{x}}^k \vec{h}\right| + \left|\partial_{\vec{x}}^k f\right| \right) \! (\vec{x}, t) \le C_{\epsilon^{-1}} (1 + t)^{-\frac{3}{2}} \bigl| \vec{W}_b - \vec{W}_a \bigr|, \ \ \text{ for }i=0,1,2 \text{ and } 1\le |k|\le 5.
	\end{equation}
\end{Remark}

\subsection{Comparison in Nonlinear Level and $L^p$-Estimates}

\subsubsection{$L^p$-Estimates for $\vec{Y}_1$}

After basic energy estimates in Theorem~\ref{enle} and enhanced estimates established by Lemma~\ref{nkle}, we are prepared for the nonlinear propagation. Based on initial propagation \eqref{ini0}--\eqref{iniz}, one poses the following ansatz assumptions on the difference solutions $(g^0,\vec h,f,z)$:

\textbf{Ansatz Assumption.} 
There exists a constant $\mathcal{D}>C_{in}>0$ such that for all $\vec{x}\in\mathbb{R}^3$ and $t\ge0$, the irrotational solution $(g^0,\vec h,f,z)=(\vec{Y}_1,z)$ of \eqref{gbp} satisfies that
\begin{align}
	&  \left\|g^0(\cdot , t) \right\|_{L^{\infty}},  \left\|\vec{h}(\cdot , t) \right\|_{L^{\infty}} \le 4 \mathcal{D} (1 + t)^{-1} \bigl| \vec{W}_b - \vec{W}_a \bigr|, \label{anasg0} \\
	& \left\| f (\cdot , t) \right\|_{L^{\infty}}, 	\left\|\nabla \vec{Y}_1(\cdot , t) \right\|_{L^{\infty}} \le 4 \mathcal{D} (1 + t)^{-\frac{3}{2}} \bigl| \vec{W}_b - \vec{W}_a \bigr|. \label{anasf0} 
\end{align}

Under the ansatz assumptions \eqref{anasg0}--\eqref{anasf0}, combining the energy estimates \eqref{enghf}--\eqref{ensz} and Remark~\ref{rebetalq}, one could establish the following $L^2$ estimates for the source terms and coupling terms: 
\begin{equation} \label{stl2}
	\begin{aligned}
		 & \left\| R_{g_0}^{\RN{1}}(\cdot , t) \right\|_{L^2}, \left\| R_{\vec h}^{\RN{1}}(\cdot , t) \right\|_{L^2} \le C \epsilon_b (1+t)^{-\frac{5}{4}} \bigl|\vec{W}_b - \vec{W}_a\bigr|, \\
		  & \left\| R_{f}^{\RN{1}}(\cdot , t) \right\|_{L^2} \le C \epsilon_b \left( (1 + t)^{-\frac{7}{4}} \bigl|\vec{u}_b - \vec{u}_a\bigr|+ (1 + t)^{-\frac{5}{4}} \bigl|\vec{\theta}_b - \vec{\theta}_a\bigr| \right).
		 	\end{aligned}
\end{equation}
Furthermore, denote $\vec{R} := \bigl(R_{g^0}, R_{\vec{h}}, R_{f}\bigr) = \vec{R}_1 + \vec{R}_2 = \vec{R}_{1,N} + \vec{R}_{2,N}$, with $\vec{R}_1 := \bigl(R_{g_0}^{\RN{1}}, R_{\vec h}^{\RN{1}}, R_{f}^{\RN{1}}\bigr)$, 
$\vec{R}_2 := \bigl(R_{g_0}^{\RN{2}}, R_{\vec h}^{\RN{2}}, R_{f}^{\RN{2}}\bigr)$, 
and
$$\vec{R}_{1,N} := \left(R_{g^0}^{\RN{1}} - \frac{2}{\alpha} \left( \frac{1}{v_a} - \frac{1}{v_b} \right) \chi^b, R_{\vec h}^{\RN{1}}, R_{f}^{\RN{1}}\right), \quad \vec{R}_{2,N} := \left(R_{g^0}^{\RN{2}}  + \frac{2}{\alpha} \left( \frac{1}{v_a} - \frac{1}{v_b} \right) \chi^b, R_{\vec h}^{\RN{2}}, R_{f}^{\RN{2}}\right),$$ 
$${\vec{R}_{*} := \left( \frac{2(v^b - v^a)}{\alpha v_b \tilde{v}^b}\chi^b, -(v^b - v^a) \nabla p^b, -\frac{\alpha_f}{\alpha} p_b(v^b - v^a) \nabla \cdot \vec{u}^b \right)},$$ 
and thus it holds that
\begin{equation} \label{stl1}
	\left\|\vec{R}_{1,N}(\cdot , t) \right\|_{L^1} \le C \epsilon_b \bigl| \vec{W}_b - \vec{W}_a \bigr|, \quad \left\|\vec{R}_{2,N}(\cdot , t) \right\|_{L^1} \le C(\epsilon_a + \epsilon_b)\left( (1 + t)^{-\frac{1}{4}} \bigl|\vec{u}_b - \vec{u}_a\bigr|+ \bigl|\vec{\theta}_b - \vec{\theta}_a\bigr| \right),
\end{equation}
\begin{equation} \label{stl20}
	\begin{aligned}
				  & \left\| (\vec{R}_2-\vec{R}_{*} ) (\cdot , t) \right\|_{L^2} \le C (\epsilon_a + \epsilon_b) (1 + t)^{-\frac{3}{2}} \bigl|\vec{W}_b - \vec{W}_a\bigr|,\\
		 & \left\|\vec{R}_{*}(\cdot , t) \right\|_{L^2} \le C \bigl| \vec{\theta}_b - \vec{\theta}_a \bigr| \cdot \left\|(\nabla p^b,\nabla \cdot \vec{u}^b,\chi^b)(\cdot , t) \right\|_{L^2} + C \left\|( g^0, f, z)(\cdot , t) \right\|_{L^2} \cdot  \left\|(\nabla p^b,\nabla \cdot \vec{u}^b,\chi^b)(\cdot , t) \right\|_{L^\infty}\\
		 & \le C \epsilon_b \left( (1 + t)^{-\frac{3}{2}} \bigl|\vec{u}_b - \vec{u}_a\bigr|+ (1 + t)^{-\frac{5}{4}} \bigl|\vec{\theta}_b - \vec{\theta}_a\bigr| \right).
	\end{aligned}
\end{equation}
The above rearrangements about  $\vec{R}$ will be used later in different settings for nonlinear wave propagations.

First, one establishes the $L^2$ estimate for $\vec{Y}_1(\vec x,t)=(g^0,\vec h,f)(\vec x,t):=\bigl(\vec{Y}_{1,1}, \vec{Y}_{1,2}, \vec{Y}_{1,3}\bigr)(\vec x,t)$:
\begin{equation} \label{yl2}
	\begin{aligned}
		\left\|\vec{Y}_{1,i}(\cdot, t) \right\|_{L^2} & \le \left\|\mathbb{G}_i (\cdot, t) \underset{\vec{x}}{\ast} \vec{Y}_1(\cdot, 0)\right\|_{L^2} + \int_0^t  \left\|\mathbb{G}_{L,i}^{t-\tau} \vec{R}(\cdot, \tau) \right\|_{L^2} d\tau + \int_0^t  \left\|\mathbb{G}_{S,i}^{t-\tau} \vec{R}(\cdot, \tau) \right\|_{L^2} d\tau \\
		& \le C_{in} (1 + t)^{-\frac{3}{4}-d_i} \bigl|\vec{W}_b - \vec{W}_a\bigr| +  C(\epsilon_a + \epsilon_b)(1 + t)^{-d_i} \Bigl( (1 + t)^{-\frac{1}{4}} \left|\vec{u}_b - \vec{u}_a\right|\\
		& \quad + (1 + t)^{-\frac{3}{20}} \bigl|\vec{\theta}_b -\vec{\theta}_a\bigr| \Bigr).
	\end{aligned}
\end{equation}
Here, the initial propagation $\bigl\|\mathbb{G}_i (\cdot, t) \ast \vec{Y}_1(\cdot, 0)\bigr\|_{L^2} $ has been obtained in \eqref{ini2} and the nonlinear propagations $ \displaystyle\int_0^t  \bigl\|\mathbb{G}_{L,i}^{t-\tau} \vec{R}(\cdot, \tau) \bigr\|_{L^2} d\tau$, $ \displaystyle\int_0^t  \bigl\|\mathbb{G}_{S,i}^{t-\tau} \vec{R}(\cdot, \tau) \bigr\|_{L^2} d\tau$ are estimated as follows: From Remark~\ref{rebetalq}, Theorem~\ref{lpg}, and Lemma~\ref{impr}, one has that
\begin{equation} \label{y2lr1} 
	\begin{aligned}
		\int_0^t \left\|\mathbb{G}_{L,i}^{t-\tau} \vec{R}_{1,N}(\cdot , \tau) \right\|_{L^2} d\tau & = \int_0^{t/2} \Biggl\| -\vec{u}_b \cdot \nabla \mathbb{G}_{L,i}(\vec{x},t-\tau) \underset{\vec{x}}{\ast}
		\begin{pmatrix}
			p^b\\
			\vec{u}^b\\
			\chi^b
		\end{pmatrix}(\vec{x},\tau) + \nabla \cdot \left( \mathbb{G}_{L} \right)_{i2} (\vec{x},t-\tau) \underset{\vec{x}}{\ast} p^b(\vec{x},\tau)\\
		& \quad \quad \quad \quad \quad \cdot (v_a-v_b) + \vec{u}^{b,t} (\vec{x},\tau) \underset{\vec{x}}{\ast} \nabla \mathbb{G}_{L,i}(\vec{x},t-\tau)
		\begin{pmatrix}
			\gamma (p_a-p_b)\\
			0_{1 \times 3}\\
			\frac{\alpha_f}{\alpha} (p_a v_a-p_b v_b) 
		\end{pmatrix} \Bigg\|_{L^2} d\tau \\
		& \quad + \int_{t/2}^t \left\|\mathbb{G}_{L,i}^{t-\tau} \vec{R}_{1,N}(\cdot , \tau) \right\|_{L^2} d\tau\\
		& \le C \int_0^{t/2} (1 + t - \tau)^{-\frac{1}{2}-d_i} \left\|(p^b,\vec{u}^b,\chi^b)(\cdot, \tau) \right\|_{L^2} d\tau \cdot \bigl|\vec{W}_b - \vec{W}_a\bigr| \\
		&\quad + C \int_{t/2}^t (1+t-\tau)^{-d_i} \left\|\vec{R}_{1,N}(\cdot, \tau) \right\|_{L^2} d\tau\\
		& \le C \epsilon_b \int_0^{t/2} (1 + t - \tau)^{-\frac{1}{2}-d_i} (1+\tau)^{-\frac{3}{4}} \, d\tau \cdot \bigl|\vec{W}_b - \vec{W}_a\bigr|\\
		& \quad + C \epsilon_b \int_{t/2}^t (1 + t - \tau)^{-d_i} (1+\tau)^{-\frac{5}{4}} \, d\tau \cdot \bigl|\vec{W}_b - \vec{W}_a\bigr|\\
		& \le C \epsilon_b (1+t)^{-\frac{1}{4}-d_i} \bigl|\vec{W}_b - \vec{W}_a\bigr|,
	\end{aligned}	
\end{equation}
\begin{equation} \label{y2l0} 
	\begin{aligned}
		\int_0^{t^{3/5}}  \left\|\mathbb{G}_{L,i}^{t-\tau} \vec{R}_{2,N}(\cdot , \tau) \right\|_{L^2} d\tau & \le C \int_0^{t^{3/5}} (1 + t - \tau)^{-\frac{3}{4}-d_i} \left\|\vec{R}_{2,N}(\cdot, \tau) \right\|_{L^1} d\tau \\
		& \le C(\epsilon_a + \epsilon_b) \int_0^{t^{3/5}} (1 + t - \tau)^{-\frac{3}{4}-d_i} \left( (1 + \tau)^{-\frac{1}{4}} \bigl|\vec{u}_b - \vec{u}_a\bigr|+ \bigl|\vec{\theta}_b - \vec{\theta}_a\bigr| \right) d\tau\\
		& \le C(\epsilon_a + \epsilon_b)(1+t-t^{\frac{3}{5}})^{-\frac{3}{4}-d_i} \left((1+t^{\frac{3}{5}})^{\frac{3}{4}} \bigl|\vec{u}_b - \vec{u}_a\bigr|+  t^{\frac{3}{5}} \bigl|\vec{\theta}_b - \vec{\theta}_a\bigr| \right)\\
		& \le C(\epsilon_a + \epsilon_b) (1+t)^{-d_i} \left( (1+t)^{-\frac{3}{10}} \bigl|\vec{u}_b - \vec{u}_a\bigr|+ (1+t)^{-\frac{3}{20}} \bigl|\vec{\theta}_b - \vec{\theta}_a\bigr| \right),
	\end{aligned}
\end{equation}
\begin{equation} \label{y2l1} 
	\begin{aligned}
		\int_{t^{3/5}}^t  \left\|\mathbb{G}_{L,i}^{t-\tau} \vec{R}_{2,N}(\cdot , \tau) \right\|_{L^2} d\tau & \le C \int_{t^{3/5}}^t (1 + t - \tau)^{-d_i} \left\|\vec{R}_{2,N}(\cdot, \tau) \right\|_{L^2} d\tau \\
		& \le C(\epsilon_a + \epsilon_b)\left( \int_{t^{3/5}}^{t/2} + \int_{t/2}^t \right) (1 + t - \tau)^{-d_i}  \Bigl( (1 + \tau)^{-\frac{3}{2}} \bigl|\vec{u}_b - \vec{u}_a\bigr|\\
		& \quad \quad \quad \quad \quad \quad \quad \quad \quad \quad \quad \quad \quad + (1 + \tau)^{-\frac{5}{4}} \bigl|\vec{\theta}_b - \vec{\theta}_a\bigr| \Bigr) \, d\tau\\
		& \le C(\epsilon_a + \epsilon_b) \left( (1+t/2)^{-d_i} (1+t^{\frac{3}{5}})^{-\frac{1}{2}} + (1+t/2)^{-d_i+1-\frac{3}{2}} \right) \bigl|\vec{u}_b - \vec{u}_a\bigr|\\
		& \quad + C(\epsilon_a + \epsilon_b) \left( (1+t/2)^{-d_i}  (1+t^{\frac{3}{5}})^{-\frac{1}{4}} + (1+t/2)^{-d_i+1-\frac{5}{4}} \right) \bigl|\vec{\theta}_b - \vec{\theta}_a\bigr| \\
		& \le C(\epsilon_a + \epsilon_b) (1 + t)^{-d_i} \left(  (1 + t)^{-\frac{3}{10}} \bigl|\vec{u}_b - \vec{u}_a\bigr|+ (1 + t)^{-\frac{3}{20}} \bigl|\vec{\theta}_b - \vec{\theta}_a\bigr| \right), 
	\end{aligned}	
\end{equation}
and 
\begin{equation} \nonumber
	\begin{aligned}
		\int_0^t  \left\|\mathbb{G}_S^{t-\tau} \vec{R}(\cdot , \tau) \right\|_{L^2} d\tau & \le C_2 \int_0^t e^{-(t-\tau)/C_2} \left\|\vec{R}(\cdot, \tau) \right\|_{L^2} d\tau \\
		& \le C(\epsilon_a + \epsilon_b) \int_0^t e^{-(t-\tau)/C_2} (1+\tau)^{-\frac{5}{4}} \, d\tau \cdot \bigl|\vec{W}_b - \vec{W}_a\bigr|\\
		& \le C(\epsilon_a + \epsilon_b)(1+t)^{-\frac{5}{4}} \bigl|\vec{W}_b - \vec{W}_a\bigr|.
	\end{aligned}	
\end{equation}
In \eqref{y2lr1}--\eqref{y2l1}, $\mathbb{G}_{L,i}$ denotes the $i$-th row of the block matrix $\mathbb{G}_L$, and $(\mathbb{G}_L)_{ij}$ denotes its $(i,j)$-th entry; the constants $d_1 = d_2 = 0$ and $d_3 = 1/2$ are defined in \eqref{di}. Moreover, in \eqref{y2l0}, $t^{3/5} < t/2$ is required, which implies $t > 2^{5/2}$, and estimates for the case $t \le 2^{5/2}$ could be obtained directly after straightforward computations. 

To estimate $\|\vec{Y}_1(\cdot, t)\|_{L^\infty}$, one first establishes the following $L^{\infty}$-estimates of the source terms and coupling terms from Remark~\ref{rebetalq} and \eqref{saky1}:
\begin{equation} \label{stl0}
	\begin{aligned}
		& \left\| \vec{R}_1(\cdot, t) \right\|_{L^\infty}, \left\| \nabla R_{g^0}^{\RN{1}}(\cdot, t) \right\|_{L^\infty},  \left\| \nabla R_{\vec h}^{\RN{1}}(\cdot, t) \right\|_{L^\infty} \le C \epsilon_b (1+t)^{-\frac{5}{2}} \bigl|\vec{W}_b - \vec{W}_a\bigr|,\\
		& \left\|\vec{R}_2(\cdot, t) \right\|_{L^\infty}, \left\| \nabla R_{g^0}^{\RN{2}}(\cdot, t) \right\|_{L^\infty}, \left\| \nabla R_{\vec h}^{\RN{2}}(\cdot, t) \right\|_{L^\infty} \le C (\epsilon_a + \epsilon_b) (1 + t)^{-\frac{3}{2}} \bigl|\vec{W}_b - \vec{W}_a\bigr|.
	\end{aligned}
\end{equation}
Then one could combine \eqref{stl0} with Theorem~\ref{lpg} and \eqref{stl2} to obtain that: 
\begin{equation} \label{y0s}
	\begin{aligned}
		\int_0^t  \left\|\mathbb{G}_S^{t-\tau} \vec{R}(\cdot , \tau) \right\|_{L^\infty} d\tau & \le C_0 \int_0^t e^{-(t-\tau)/C_0} \left( \left\|\vec{R}(\cdot, \tau) \right\|_{L^\infty} + \left\|\nabla R_{g^0} (\cdot, \tau) \right\|_{L^\infty} + \left\|\nabla R_{\vec{h}} (\cdot, \tau) \right\|_{L^\infty} \right) d\tau \\
		& \le C(\epsilon_a + \epsilon_b) \int_0^t e^{-(t-\tau)/C_0} (1 + \tau)^{-\frac{3}{2}} \, d\tau \cdot \bigl|\vec{W}_b - \vec{W}_a\bigr|\\
		& \le C(\epsilon_a + \epsilon_b) (1 + t)^{-\frac{3}{2}} \bigl|\vec{W}_b - \vec{W}_a\bigr|,
	\end{aligned}	
\end{equation}
and for $i=1,2,3$,
\begin{equation} \label{y0l1}
	\begin{aligned}
		\int_0^{t/2}  \left\|\mathbb{G}_{L,i}^{t-\tau} \vec{R}(\cdot, \tau) \right\|_{L^\infty} d\tau & \le C \int_0^{t/2} (1 + t - \tau)^{-2-d_i} \left\|\vec{R}(\cdot, \tau) \right\|_{L^1} d\tau \\
		& \le C(\epsilon_a + \epsilon_b) \int_0^{t/2} (1 + t - \tau)^{-2-d_i} \, d\tau \cdot \bigl|\vec{W}_b - \vec{W}_a\bigr|\\
		& \le C(\epsilon_a + \epsilon_b)(1+t)^{-1-d_i} \bigl|\vec{W}_b - \vec{W}_a\bigr|,
	\end{aligned}	
\end{equation}
\begin{equation} \label{y0l2}
	\begin{aligned}
		\int_{t/2}^t \left\| \mathbb{G}_{L,i}^{t-\tau} (\vec{R}-\vec{R}_{*})(\cdot, \tau) \right\|_{L^\infty} d\tau & \le C \int_{t/2}^t (1+t-\tau)^{\frac{1}{2}-d_i} \left\|\vec{R}_1(\cdot, \tau) \right\|_{L^\infty} d\tau\\
		& \quad + C \int_{t/2}^t (1 + t - \tau)^{-\frac{3}{4}-d_i} \left\|(\vec{R}_2-\vec{R}_{*} )(\cdot, \tau) \right\|_{L^2} d\tau \\
		& \le  C (\epsilon_a + \epsilon_b) \int_{t/2}^t \Bigl[(1+t-\tau)^{\frac{1}{2}-d_i} (1+\tau)^{-\frac{5}{2}} \\
		& \quad + (1 + t - \tau)^{-\frac{3}{4}-d_i} (1 + \tau)^{-\frac{3}{2}} \Bigr] \, d\tau \cdot \bigl|\vec{W}_b - \vec{W}_a\bigr|\\
		& \le C(\epsilon_a + \epsilon_b) (1 + t)^{-1-d_i} \bigl|\vec{W}_b - \vec{W}_a\bigr|,
	\end{aligned}	
\end{equation}
\begin{equation} \label{gh0lstar}
	\begin{aligned}
		\int_{t/2}^t \left\| \mathbb{G}_{L,i}^{t-\tau} \vec{R}_{*}(\cdot , \tau) \right\|_{L^\infty} d\tau & \le C \int_{t/2}^t (1 + t - \tau)^{-\frac{3}{4}-d_i} \left\|\vec{R}_{*}(\cdot , \tau) \right\|_{L^2} d\tau \\
		& \le C \epsilon_b \int_{t/2}^t (1 + t - \tau)^{-\frac{3}{4}-d_i} \left( (1 + \tau)^{-\frac{3}{2}} \bigl|\vec{u}_b - \vec{u}_a\bigr|+ (1 + \tau)^{-\frac{5}{4}} \bigl|\vec{\theta}_b - \vec{\theta}_a\bigr| \right) d\tau\\
		& \le C\epsilon_b \left( (1 + t)^{-\frac{5}{4}-\frac{d_i}2} \bigl|\vec{u}_b - \vec{u}_a\bigr|+ (1 + t)^{-1-\frac{d_i}2} \bigl|\vec{\theta}_b - \vec{\theta}_a\bigr| \right).
	\end{aligned}	
\end{equation}
Here, $d_1=d_2=0, d_3=1/2$ are constants defined in \eqref{di}. Compared with \eqref{y0l1}--\eqref{y0l2}, the decay rate in \eqref{gh0lstar} is a bit slower for $i=3$ and it could be improved based on the structure of 
$\vec{R}_{*}$:
$$\bigl|\vec{R}_{*}\bigr| \le C \bigl(|\chi^b|+|\nabla p^b|+|\nabla\cdot\vec{u}^b|\bigr) \bigl( \bigl|\vec{\theta}_b - \vec{\theta}_a\bigr| + |g^0| + |f| + |z| \bigr),$$
which combined with $L^2$ bounds for $(g^0,f,z)$ given in Theorem~\ref{enle} results in that
\begin{equation} \label{f0lstar}
	\begin{aligned}
		\int_{t/2}^t \left\|\mathbb{G}_{L,3}^{t-\tau} \vec{R}_{*}(\cdot , \tau) \right\|_{L^\infty} d\tau & = \int_{t/2}^t \left\| \mathbb{G}_{L,3}(\vec{x},t-\tau) \underset{\vec{x}}{\ast} \vec{R}_{*}(\vec{x}, \tau) \right\|_{L^\infty} d\tau \\
		&\le C \int_{t/2}^t \left\|(\nabla p^b,\nabla \cdot \vec{u}^b,\chi^b)(\cdot, \tau) \right\|_{L^\infty} d\tau \cdot \bigl|\vec{\theta}_b - \vec{\theta}_a\bigr|\\
		& \quad + C \int_{t/2}^t (1+t-\tau)^{-\frac{5}{4}} \left\|(g^0, f, z)(\cdot, \tau) \right\|_{L^2} \cdot  \left\|(\nabla p^b,\nabla \cdot \vec{u}^b,\chi^b)(\cdot, \tau) \right\|_{L^\infty} d\tau\\
		& \le C \epsilon_b \int_{t/2}^t \left( (1+\tau)^{-\frac{5}{2}} \bigl|\vec{\theta}_b - \vec{\theta}_a\bigr| + (1 + t - \tau)^{-\frac{5}{4}} (1+\tau)^{-\frac{3}{2}} \bigl|\vec{W}_b - \vec{W}_a\bigr| \right) d\tau \\
		& \le C \epsilon_b (1 + t)^{-\frac{3}{2}} \bigl|\vec{W}_b - \vec{W}_a\bigr|.
	\end{aligned}	
\end{equation}
Then combining \eqref{diffrep}, \eqref{ini0} and \eqref{y0s}--\eqref{f0lstar}, one establishes the estimate for $\|\vec{Y}_1(\cdot,t)\|_{L^\infty}$ as follows:
\begin{equation} \label{ghfl0}
	\begin{aligned}
		\left\|\vec{Y}_{1,i}(\cdot , t) \right\|_{L^\infty} & \le \left\|\mathbb{G}_i (\cdot, t)  \underset{\vec{x}}{\ast} \vec{Y}_1(\cdot, 0)\right\|_{L^\infty} + \int_0^t  \left\|\mathbb{G}_{L,i}^{t-\tau} \vec{R}(\cdot, \tau) \right\|_{L^\infty} d\tau + \int_0^t  \left\|\mathbb{G}_{S,i}^{t-\tau} \vec{R}(\cdot, \tau) \right\|_{L^\infty} d\tau \\
		& \le C_{in} \left( 1 + t \right)^{-2-d_i} \left| \vec{W}_b-\vec{W}_a \right| + C(\epsilon_a + \epsilon_b) \left( 1 + t \right)^{-1-d_i} \left| \vec{W}_b-\vec{W}_a \right|, \quad \text{for } i=1,2,3.
	\end{aligned}
\end{equation}

To verify the ansatz assumption \eqref{anasf0}, one still needs to proceed to estimate $\bigl\|\nabla \vec{Y}_1(\cdot,t)\bigr\|_{L^\infty}$. In the ansatz \eqref{anasf0}, there is no difference in decaying rates of each component of  $\nabla \vec{Y}_1$ and thus it will be treated as a whole. Theorem~\ref{lpg}, \eqref{stl1}--\eqref{stl20}, \eqref{stl0} and \eqref{f0lstar} yield that:
\begin{equation} \label{f1l0}
	\begin{aligned}
		\int_0^t \left\| \nabla \mathbb{G}_{L}^{t-\tau} \vec{R}(\cdot , \tau) \right\|_{L^\infty} d\tau & \le C \left( \int_0^{t/2} (1 + t - \tau)^{-\frac 52} \left\|\vec{R}(\cdot , \tau) \right\|_{L^1} d\tau + \int_{t/2}^t  \left\|\vec{R}_1(\cdot, \tau) \right\|_{L^\infty} d\tau\right)\\
		&\quad + C \int_{t/2}^t \left(   (1 + t - \tau)^{-\frac{5}{4}} \left\|(\vec{R}_2-\vec{R}_{*})(\cdot, \tau) \right\|_{L^2}+\left\| \nabla \mathbb{G}_{L}^{t-\tau} \vec{R}_{*} (\cdot, \tau) \right\|_{L^\infty} \right) d\tau \\
		& \le C (\epsilon_a + \epsilon_b)(1 + t)^{-\frac{3}{2}} \bigl|\vec{W}_b - \vec{W}_a\bigr| + C\int_{t/2}^t   \left\| \nabla \mathbb{G}_{L}^{t-\tau} \vec{R}_{*} (\cdot, \tau) \right\|_{L^\infty} d\tau,
	\end{aligned}
\end{equation}
where
\begin{equation} \label{0f1l0}
	\begin{aligned}
		\int_{t/2}^t   \left\| \nabla \mathbb{G}_{L}^{t-\tau} \vec{R}_{*} (\cdot, \tau) \right\|_{L^\infty} d\tau & \le C \int_{t/2}^t  \left\|(\nabla p^b,\nabla \cdot \vec{u}^b,\chi^b)(\cdot , \tau) \right\|_{L^\infty}\\
		& \quad \quad \quad \quad \cdot  
		\Bigl( \bigl|\vec{\theta}_b - \vec{\theta}_a\bigr| + (1+t-\tau)^{-\frac{5}{4}}\left\|(g^0, f, z)(\cdot , \tau) \right\|_{L^2} \Bigr) \, d\tau \\
		& \le C (\epsilon_a + \epsilon_b)(1 + t)^{-\frac{3}{2}} \bigl|\vec{W}_b - \vec{W}_a\bigr|.
	\end{aligned}
\end{equation}
In the estimate of the short-wave component, high order derivatives are involved and one could obtain the following $L^\infty$-estimates for the first and second derivatives of source terms and coupling terms:
	\begin{align}
		\left\|\nabla R_f^{\RN{1}}(\cdot, t) \right\|_{L^\infty}, \left\|\nabla^2 R_{g^0}^{\RN{1}}(\cdot, t) \right\|_{L^\infty}, \left\|\nabla^2 R_{\vec h}^{\RN{1}}(\cdot, t) \right\|_{L^\infty}  & \le C \epsilon_b (1+t)^{-\frac{5}{2}} \bigl|\vec{W}_b - \vec{W}_a\bigr|, \label{st1l0}\\
		\left\|\nabla R_f^{\RN{2}}(\cdot, t) \right\|_{L^\infty}, \left\|\nabla^2 R_{g^0}^{\RN{2}}(\cdot, t) \right\|_{L^\infty}, \left\|\nabla^2 R_{\vec h}^{\RN{2}}(\cdot, t) \right\|_{L^\infty}  & \le C(\epsilon_a + \epsilon_b) (1 + t)^{-\frac{3}{2}} \bigl|\vec{W}_b - \vec{W}_a\bigr|.
		\label{st2l0}
	\end{align}
Here, \eqref{st1l0} is a direct consequence of Remark~\ref{rebetalq} and Lemma~\ref{decay}; while Theorem~\ref{enle} together with \eqref{saky1} and the ansatz \eqref{anasf0} yields the estimate \eqref{st2l0} as follows:
\begin{equation} \nonumber
	\begin{aligned}
		\left\| \nabla^2 R_{g^0}^{\RN{2}}(\cdot , t) \right\|_{L^\infty} & \le C \left( \left\|f\right\|_{L^\infty} \cdot \left\|\nabla^2 \frac{v^a}{v_b \tilde{v}^b}\right\|_{L^\infty} + \left\|\nabla f\right\|_{L^\infty} \cdot \left\|\nabla \frac{v^a}{v_b \tilde{v}^b}\right\|_{L^\infty} + \left\|v^a \nabla^2 f \right\|_{L^\infty}\right) \\
		& +C\left( \bigl|\vec{\theta}_b-\vec{\theta}_a\bigr|+\left\|(g^0,f,z)\right\|_{W^{2,\infty}}\right)\left( \left\|\chi^a\right\|_{W^{2,\infty}} + \left\|\chi^b\right\|_{W^{2,\infty}}\right)\\
		& +C\left( \left\|g^0\right\|_{W^{3,\infty}} \cdot \left\|\vec{u}^b\right\|_{W^{3,\infty}} +  \left\|\vec{h}\right\|_{W^{3,\infty}} \cdot \left\|p^a\right\|_{W^{3,\infty}}\right)\\
		& \le C\epsilon_a \left\|f\right\|_{W^{1,\infty}} + C \left( \left\|\nabla^2 f\right\|_{L^\infty} \cdot \left\|(p^a,\chi^a)\right\|_{L^\infty} +\left\|s^a \nabla^2 f\right\|_{L^\infty}\right)\\
		&\quad +C(\epsilon_a+\epsilon_b)\left[ (1+t)^{-\frac{5}{2}}\left( \bigl|\vec{\theta}_b-\vec{\theta}_a\bigr|+\left\|(g^0,f,z)\right\|_{W^{2,\infty}}\right)+(1+t)^{-2}\left\|(g^0,\vec{h})\right\|_{W^{3,\infty}}\right]\\
		& \le C(\epsilon_a + \epsilon_b) (1 + t)^{-\frac{3}{2}} \bigl|\vec{W}_b - \vec{W}_a\bigr|.
	\end{aligned}
\end{equation}
The remaining ones $\bigl\|\nabla R_f^{\RN{2}}(\cdot, t)\bigr\|_{L^\infty}$ and $\bigl\|\nabla^2 R_{\vec h}^{\RN{2}}(\cdot, t)\bigr\|_{L^\infty}$ in \eqref{st2l0} could be obtained similarly.
Thus, by combining Theorem~\ref{lpg}, \eqref{stl0} and \eqref{st1l0}--\eqref{st2l0}, one has that:
\begin{equation} \nonumber
	\begin{aligned}
		\int_0^t \left\| \nabla \mathbb{G}_S^{t-\tau} \vec{R}(\cdot , \tau) \right\|_{L^\infty} d\tau & \le C_0 \int_0^t e^{-(t-\tau)/C_0} \left( \left\| \nabla \vec{R}(\cdot , \tau) \right\|_{L^\infty} + \left\|\nabla^2 R_{g^0} (\cdot , \tau) \right\|_{L^\infty} + \left\|\nabla^2 R_{\vec{h}} (\cdot , \tau) \right\|_{L^\infty} \right) d\tau \\
		& \le C(\epsilon_a + \epsilon_b) \int_0^t e^{-(t-\tau)/C_0} (1 + \tau)^{-\frac{3}{2}} \, d\tau \cdot \bigl|\vec{W}_b - \vec{W}_a\bigr|\\
		& \le C(\epsilon_a + \epsilon_b) (1 + t)^{-\frac{3}{2}} \bigl|\vec{W}_b - \vec{W}_a\bigr|,
	\end{aligned}	
\end{equation}
which together with \eqref{ini0} and \eqref{f1l0}--\eqref{0f1l0} results in that 
\begin{equation} \label{ghf1l0}
	\begin{aligned}
		\left\|\nabla \vec{Y}_{1,i}(\cdot , t) \right\|_{L^\infty} & \le \left\|\nabla \mathbb{G}_i (\cdot, t) \underset{\vec{x}}{\ast} \vec{Y}_1(\cdot, 0)\right\|_{L^\infty} + \int_0^t  \left\|\nabla \mathbb{G}_{L,i}^{t-\tau} \vec{R}(\cdot , \tau) \right\|_{L^\infty} d\tau + \int_0^t  \left\|\nabla \mathbb{G}_{S,i}^{t-\tau} \vec{R}(\cdot , \tau) \right\|_{L^\infty} d\tau  \\
		& \le C_{in} \left( 1 + t \right)^{-\frac{5}{2}-d_i} \left|\vec{W}_b-\vec{W}_a\right| + C(\epsilon_a + \epsilon_b) (1 + t)^{-\frac{3}{2}} \bigl|\vec{W}_b - \vec{W}_a\bigr|, \quad \text{for } i=1,2,3.
	\end{aligned}
\end{equation}

Finally, one could combine \eqref{ghfl0} and \eqref{ghf1l0} to verify the ansatz assumptions \eqref{anasg0}--\eqref{anasf0} since $\mathcal{D} \ge C_{in}$ and $C(\epsilon_a + \epsilon_b) \ll \mathcal{D}$.

\subsubsection{$L^p$-Estimates for the Entropy Difference Function $z(\vec{x},t)$}
Multiply both sides of the equation 
$$\partial_t z=-\vec u_b\cdot\nabla s^b+R_z^{\RN{2}}$$
 by the sign function $\mathrm{sgn}(z)= \begin{cases}
	-1 & \text{for } z<0,\\
	0 & \text{for } z=0,\\
	1 & \text{for } z>0,
\end{cases}$
and then integrate it over $\mathbb{R}^3$ with respect to $\vec{x}$ to obtain that
$$\begin{aligned}
	\partial_t \left\| z(\cdot,t) \right\|_{L^1} & = \int_{\mathbb{R}^3} \mathrm{sgn}(z) \left( -\vec{u}_b \cdot \nabla s^b + R_z^{\RN{2}} \right) d\vec{x}\\
	& = \int_{\mathbb{R}^3} \mathrm{sgn}(z) \left(- \vec{u}^b \cdot \nabla z + \widetilde{R_z^{\RN{2}}} \right) d\vec{x}, 
\end{aligned}$$
where $$ \widetilde{R_z^{\RN{2}}} :=-\vec{u}_b \cdot \nabla s^b - \vec{h} \cdot \nabla s^a + \frac{2}{\alpha_f R} \left[\frac{(\chi^b + \chi^a) f}{\tilde{T}_1^b \tilde{T}_2^b} + \left(\frac{\tilde{T}_1^a - \tilde{T}_1^b}{\tilde{T}_2^b \tilde{T}_1^a \tilde{T}_1^b } + \frac{ \tilde{T}_2^a - \tilde{T}_2^b}{\tilde{T}_1^a \tilde{T}_2^a \tilde{T}_2^b} \right) \! (\chi^a)^2 \right].$$
It together with the $L^1$-integrability of $\nabla s^\beta (\beta=a,b)$, Remark~\ref{rebetalq} and \eqref{anasg0} yields that
$$\begin{aligned}
	\left\| z(\cdot ,t) \right\|_{L^1} & \le \left\| z(\cdot ,0) \right\|_{L^1} + \int_0^t \int_{\mathbb{R}^3}\left| \widetilde{R_z^{\RN{2}}} \right| \, d\vec{x} d\tau + \int_0^t \int_{\mathbb{R}^3} \left|\nabla \cdot \vec{u}^b\right| |z| \, d\vec{x} d\tau\\
	&\le C_{in} |\vec{W}_b-\vec{W}_a| +C (\epsilon_a + \epsilon_b) \int_0^t \left( \bigl|\vec{u}_b - \vec{u}_a\bigr|+ (1 + \tau)^{-1} \bigl|\vec{\theta}_b - \vec{\theta}_a\bigr| \right)d\tau\\
	&\quad\quad+ C \epsilon_b\int_0^t (1+\tau)^{-5/2} \left\| z(\cdot ,\tau) \right\|_{L^1} d\tau,
\end{aligned}$$
which combined with the Gronwall's integral inequality yields that
\begin{equation} \label{zl1}
	\left\| z(\cdot,t) \right\|_{L^1} \le C_{in} \bigl| \vec{W}_b - \vec{W}_a \bigr| + C(\epsilon_a + \epsilon_b) \left( (1+t) \bigl| \vec{u}_b - \vec{u}_a \bigr|+ \log(1+t) \bigl| \vec{\theta}_b - \vec{\theta}_a \bigr| \right).
\end{equation}

The estimate of $z(\vec{x},t)$ in $L^\infty$-norm could be obtained similarly. From Remark~\ref{rebetalq}, Theorem~\ref{enle}, and \eqref{anasg0}, it holds that:
\begin{equation} \label{zl0}
	\begin{aligned}
		\left\| z(\cdot,t) \right\|_{L^\infty} & \le \left\| z(\cdot,0) \right\|_{L^\infty} + \int_0^t \left\| \left( -\vec{u}_b \cdot \nabla s^b + R_z^{\RN{2}}\right) \!(\cdot, \tau) \right\|_{L^\infty} d\tau \\
		& \le C_{in} \bigl|\vec{W}_b - \vec{W}_a\bigr| + C(\epsilon_a + \epsilon_b) \int_0^t \left( \bigl|\vec{u}_b\bigr| + (1+\tau)^{-1} \bigl|\vec{\theta}_b - \vec{\theta}_a\bigr| \right) d\tau \\
		& \le C_{in} \bigl|\vec{W}_b - \vec{W}_a\bigr| + C(\epsilon_a + \epsilon_b) \left( (1+t) \bigl|\vec{u}_b - \vec{u}_a\bigr|+ \log(1+t) \bigl|\vec{\theta}_b - \vec{\theta}_a\bigr| \right),
	\end{aligned}
\end{equation}
and we finally finish the proof of Theorem~\ref{main2} by combining \eqref{yl2}, \eqref{ghfl0}, and \eqref{ghf1l0}--\eqref{zl0}.

%%%%%%%%%%%%%%%%%%%%%%%%%%%%%%%%%%%%%%%%%%%%%%%%%%%%%%%%%%%%%%%%%%%%%%%%%%%%%%%%%5
%
%
%                       Appendix: 
%
%
%%%%%%%%%%%%%%%%%%%%%%%%%%%%%%%%%%%%%%%%%%%%%%%%%%%%%%%%%%%%%%%%%%%%%%%%%%%%%%%%%

%%%%%%%%%%%%%%%%%%%%%%%%%%%%%%%%%%%%%%%%%%%%%%%%%%%%%%%%%%%%%%%%%%%%%%%%%%%%%%%%%5
%
%
%                      Appendix: 
%
%
%%%%%%%%%%%%%%%%%%%%%%%%%%%%%%%%%%%%%%%%%%%%%%%%%%%%%%%%%%%%%%%%%%%%%%%%%%%%%%%%%

%%%%%%%%%%%%%%%%%%%%%%%%%%%%%%%%%%%%%%%%%%%%%%%%%%%%%%%%%%%%%%%%%%%%%%%%%%%%%%%%%5
%
%
%                        Appendix: 
%
%
%%%%%%%%%%%%%%%%%%%%%%%%%%%%%%%%%%%%%%%%%%%%%%%%%%%%%%%%%%%%%%%%%%%%%%%%%%%%%%%%%

\section{Appendix $\RN{1}$: Modification of the Green's Function for Compliance with the Irrotationality Condition}

In this section, we present the detailed modifications of Theorem~1.3 in \cite{z1}, and the main difference is a retreatment of the operators $(-1)^j \Delta^{-j}$ for $j \ge 1$, where $\Delta^{-1}$ denotes the inverse Laplacian.

\subsection{Existing Theoretical Results about the Green's Function in \cite{z1}}
Pointwise estimates for the Green's function have been obtained in \cite{z1}. For convenience of readers, we quote them as follows: for $|\vec{x}| \le M(1+t)$,
\begin{equation} \label{green1}
	\begin{aligned}
		\partial_{\vec{x}}^{k} \mathbb{G}(\vec{x},t) &= H(t-1) G_{k}^{*} (\vec{x},t) + \mathcal{O}(1)(1 + t)^{-\frac{5+|k|}{2}} e^{-\frac{(|\vec{x}|-\bar{c} t)^2}{C_0(1 + t)}} \\
		& \quad- H(\tilde{c}_1 (1+t)-|\vec{x}|)  \operatorname{diag}\! \left( 0, \partial_{\vec{x}}^{k} (\Delta^{-1} \nabla \nabla^t \delta(\vec{x})), 0 \right) \\
		& \quad + G_{k,1}^s(\vec{x},t) + G_{k,2}^s(\vec{x},t),
	\end{aligned}
\end{equation}
and for $|\vec{x}| > M(1+t)$,
\begin{equation} \label{green2}
	\partial_{\vec{x}}^{k} \mathbb{G}(\vec{x},t)  = \mathcal{O}(1) e^{-(|\vec{x}|+t)/C_0} + \tilde{G}_{k,1}^s (\vec{x},t),
\end{equation}
where the Heaviside step function $H(r)$ is defined in \eqref{stepfun}, $M > \bar{c}_f\equiv \sqrt{\gamma \bar{p}\bar{v}}$, and $0 < \tilde{c}_1, \tilde{c}_2 < \bar{c} \equiv \sqrt{\gamma \bar{p}\bar{v}/(1+b)}$ are positive constants (with $\gamma, b$ defined in \eqref{equ}--\eqref{bb}); the functions $G_{k}^{*}$, $G_{k,1}^s$, $\tilde{G}_{k,1}^s$, and $G_{k,2}^s$ are defined as follows:
\begin{equation} \label{Gkstar}
	\begin{aligned}
		G_{k}^{*}(\vec{x},t) &= H(|\vec{x}|-\tilde{c}_2 (1+t))  \frac{1}{2(2 \pi \bar{\alpha}_1 t)^{3/2}} \partial_{\vec{x}}^k \left[ \frac{|\vec{x}| - \bar{c} t}{t} e^{-\frac{(|\vec{x}|-\bar{c}t)^2}{2 \bar{\alpha}_1 t}} \right]\\
		& \quad \times \begin{pmatrix}
			\frac{1}{\bar{c}} & \frac{\bar{c}\bar{\rho}t}{|\vec{x}|^2} \vec{x}^t & -\frac{\bar{p}_{e_1}}{\bar{c} {\zeta}}\\
			\frac{\bar{v} t}{\bar{c}|\vec{x}|^2} \vec{x} & \frac{t}{|\vec{x}|^3} \vec{x} \vec{x}^t & -\frac{\bar{v} \bar{p}_{e_1} t}{\bar{c} {\zeta} |\vec{x}|^2} \vec{x}\\
			0 & 0_{1 \times 3} & 0
		\end{pmatrix} \\
		& \quad + H(\bar{c} t-|\vec{x}|) \operatorname{diag}\! \left(0, \mathcal{O}(1)(1+t)^{-3-|k|},0 \right)\\
		&= \mathcal{O}(1) t^{-2-\frac{|k|}{2}} e^{-\frac{(|\vec{x}|-\bar{c}t)^2}{Kt}} + H(\bar{c} t-|\vec{x}|) \operatorname{diag}\! \left(0, \mathcal{O}(1)(1+t)^{-3-|k|},0 \right), 
	\end{aligned}
\end{equation}
\begin{align}
	G_{k,1}^s (\vec{x},t) &= \partial_{\vec{x}}^k \Bigg\lbrace \delta(\vec{x}) \operatorname{diag}\! \left(0, \mathbb{I}_{3 \times 3},0 \right) + e^{-{\mu}_2 t} \sum_{j=0}^{|k|+2} L_{1,j} (t,\nabla)\delta(\vec{x}) \nonumber\\
	& \quad + e^{-{\mu}_1 t} \Biggl[ \, \sum_{j=0}^{|k|+2} L_{2,j} (t,\nabla) \omega_t(\vec{x},t;\bar{c}_f) + \sum_{j=-1}^{|k|+2} L_{3,j}(t,\nabla) \omega(\vec{x},t; \bar{c}_f) \Biggr] \label{Gk1s}\\
	&\quad + te^{-{\mu}_1 t} \Biggl[ \, \sum_{j=1}^{|k|+2} L_{4,j}(t,\nabla) \omega_t (\vec{x},t; \bar{c}_f) + \sum_{j=0}^{|k|+2} L_{5,j}(t,\nabla) \omega(\vec{x},t;\bar{c}_f) \Biggr] \Bigg\rbrace,\nonumber\\
	\tilde{G}_{k,1}^s (\vec{x},t) &= \partial_{\vec{x}}^k \Big\lbrace \delta(\vec{x}) \operatorname{diag}\! \left(0, \mathbb{I}_{3 \times 3}, 0 \right) + e^{-{\mu}_2 t} L_{1,0} \delta(\vec{x}) + e^{-{\mu}_1 t} \bigl[ L_{2,0} \omega_t(\vec{x},t;\bar{c}_f) \label{Gk1slead}\\
	& \quad + (L_{3,-1}(t,\nabla) + L_{3,0}) \omega(\vec{x},t;\bar{c}_f) \bigr] + te^{-{\mu}_1 t} L_{5,0} \omega(\vec{x},t; \bar{c}_f) \Big\rbrace, \nonumber\\
	G_{k,2}^s(\vec{x},t) &= \partial_{\vec{x}}^k \left\lbrace  {{e^{-{\mu}_1t}}} \operatorname{diag}\! \left(0, \Delta^{-1} \nabla \nabla^t, 0 \right) \! \bigl[ \omega_t(\vec{x},t; \bar{c}_f) + \left( {\mu}_1 + {\mu}_1 ({\mu}_1/2 + {\mu}_2)t \right) \! \omega(\vec{x},t; \bar{c}_f) \bigr] \right\rbrace. \label{Gk2s}
\end{align}
Here, the overbar notation denotes the thermodynamic variables evaluated at the constant equilibrium state, $K$ is an arbitrarily chosen constant satisfying $K>2\bar{\alpha}_1=4\alpha_f\bar{p}\bar{v}/(\alpha^2\zeta^2)$, and $p_{e_1}$, $\zeta$, $\mu_1$, $\mu_2$ are defined in \eqref{equ} and \eqref{mumu} respectively; $L_{1,0}$, $L_{2,0}$, $L_{3,-1}(t,\nabla)$, $L_{3,0}$, $L_{5,0}$ are defined in \eqref{leading}, while the definitions of $L_{m,j}(t,\nabla)$ for $1 \le m \le 5$ and $j \ge 1$ are as follows:
\begin{equation}\label{138}
\begin{aligned}
			& L_{1,1}(t,\nabla):= -\Delta^{-1}\begin{pmatrix}
				0&0&0\\
				q_0^{121}\nabla&0&q_0^{123}\nabla\\
				0&q_0^{132}\nabla^t&0
			\end{pmatrix}, \\
			& L_{1,2j}(t,\nabla) :=(-1)^j\Delta^{-j} \begin{pmatrix}
				q_{j-1}^{111}(t)&0&q_{j-1}^{113}(t)\\
				0&q_{j-1}^{111}(t)\Delta^{-1}\nabla\nabla^t&0\\
				q_{j-1}^{131}(t)&0&q_{j-1}^{133}(t)
			\end{pmatrix},\quad j\ge 1,\\ 
			& L_{1,2j+1}(t,\nabla) :=(-1)^{j+1}\Delta^{-j-1} \begin{pmatrix}
			0&q_{j-1}^{112}(t)\nabla^t&0\\
				q_{j}^{121}(t)\nabla&0&q_{j}^{123}(t)\nabla\\
				0&q_{j}^{132}(t)\nabla^t&0
			\end{pmatrix},\quad j\ge 1,
			\end{aligned}
\end{equation}
\begin{equation}\label{1382}
\begin{aligned}
			& L_{2,2j-1}(t,\nabla) :=(-1)^{j}\Delta^{-j} \begin{pmatrix}
			0&q_{2j-2}^{212}(t)\nabla^t&0\\
				q_{2j-2}^{221}(t)\nabla&0&q_{2j-2}^{223}(t)\nabla\\
				0&q_{2j-2}^{232}(t)\nabla^t&0
			\end{pmatrix},\quad j\ge 1,\\
			& L_{2,2j}(t,\nabla) :=(-1)^j\Delta^{-j} \begin{pmatrix}
				q_{2j}^{211}(t)&0&q_{2j-2}^{213}(t)\\
				0&q_{2j}^{211}(t)\Delta^{-1}\nabla\nabla^t&0\\
				q_{2j}^{231}(t)&0&q_{2j-2}^{233}(t)
			\end{pmatrix},\quad j\ge 1,
			\end{aligned}
\end{equation}
\begin{equation}\label{1383}
\begin{aligned}
			& L_{3,2j-1}(t,\nabla) :=(-1)^{j}\Delta^{-j} \begin{pmatrix}
			0&q_{2j}^{312}(t)\nabla^t&0\\
				q_{2j}^{321}(t)\nabla&0&q_{2j-2}^{323}(t)\nabla\\
				0&q_{2j}^{332}(t)\nabla^t&0
			\end{pmatrix},\quad j\ge 1,\\
			& L_{3,2j}(t,\nabla) :=(-1)^j\Delta^{-j} \begin{pmatrix}
				q_{2j}^{311}(t)&0&q_{2j}^{313}(t)\\
				0&q_{2j}^{311}(t)\Delta^{-1}\nabla\nabla^t&0\\
				q_{2j}^{331}(t)&0&q_{2j}^{333}(t)
			\end{pmatrix},\quad j\ge 1,
			\end{aligned}
\end{equation}
\begin{equation}\label{1384}
\begin{aligned}
			& L_{4,1}(t,\nabla):= \frac{{\mu}_1({\mu}_1+2{\mu}_2)}{2\bar{c}_f^2}\left(-\Delta^{-1}\right)\begin{pmatrix}
				0&\bar{\rho}\bar{c}_f^2\nabla^t&0\\
				\bar{v}\nabla&0&0\\
				0&\bar{a}\nabla^t&0
			\end{pmatrix}, \\
			& L_{4,2j}(t,\nabla) :=(-1)^j\Delta^{-j} \begin{pmatrix}
				q_{2j-2}^{411}(t)&0&q_{2j-2}^{413}(t)\\
				0&q_{2j-2}^{411}(t)\Delta^{-1}\nabla\nabla^t&0\\
				q_{2j-2}^{431}(t)&0&q_{2j-2}^{433}(t)
			\end{pmatrix},\quad j\ge 1,\\ 
			& L_{4,2j+1}(t,\nabla) :=(-1)^{j+1}\Delta^{-j-1} \begin{pmatrix}
			0&q_{2j}^{412}(t)\nabla^t&0\\
				q_{2j}^{421}(t)\nabla&0&q_{2j-2}^{423}(t)\nabla\\
				0&q_{2j}^{432}(t)\nabla^t&0
			\end{pmatrix},\quad j\ge 1,
			\end{aligned}
\end{equation}
\begin{equation}\label{1385}
\begin{aligned}
			& L_{5,2j-1}(t,\nabla) :=(-1)^{j}\Delta^{-j} \begin{pmatrix}
			0&q_{2j-2}^{512}(t)\nabla^t&0\\
				q_{2j-2}^{521}(t)\nabla&0&q_{2j-2}^{523}(t)\nabla\\
				0&q_{2j-2}^{532}(t)\nabla^t&0
			\end{pmatrix},\quad j\ge 1,\\
			& L_{5,2j}(t,\nabla) :=(-1)^j\Delta^{-j} \begin{pmatrix}
				q_{2j}^{511}(t)&0&q_{2j-2}^{513}(t)\\
				0&q_{2j}^{511}(t)\Delta^{-1}\nabla\nabla^t&0\\
				q_{2j}^{531}(t)&0&q_{2j-2}^{533}(t)
			\end{pmatrix},\quad j\ge 1,
		\end{aligned}
\end{equation}
where $q_j^{kmn}(t)$ denotes a universal polynomial in $t$ of degree not larger than $j$. 

Notably, $L_{m,2j}(t,\nabla)$ and $L_{m,2j-1}(t,\nabla)$ (for $j \ge 1$) contain the operators $(-1)^j \Delta^{-j}$, arising from the Taylor expansion of short-wave components around $|\vec{\eta}|^{-2}$; this expansion is performed on $e^{\lambda_d(|\vec{\eta}|)t}P_d(\vec{\eta}) + e^{\lambda_{+}(|\vec{\eta}|)t}P_{+}(\vec{\eta}) + e^{\lambda_{-}(|\vec{\eta}|)t}P_{-}(\vec{\eta})$ in the setting $|\vec{\eta}| \to \infty$. Specifically, $\lambda_{d}(|\vec{\eta}|)$, $\lambda_{+}(|\vec{\eta}|)$, $\lambda_{-}(|\vec{\eta}|)$ denote the eigenvalues of the Fourier-transformed coefficient matrix $E(\vec{\eta})$:
$$E(\vec{\eta}) = -\begin{pmatrix}
	0 & \bar{\rho} \bar{c}_f^2 i\vec{\eta}^t & \frac{\bar{p}_{e_1}}{\bar{\tau}}\\
	\bar{v} i \vec{\eta} & 0_{3 \times 3} & 0_{3 \times 1}\\
	0 & \bar{a} i \vec{\eta}^t & \frac{{\zeta}}{\bar{\tau}}
\end{pmatrix},$$
which possesses a double zero eigenvalue and three simple eigenvalues $\lambda_{\pm}(|\vec{\eta}|)$ and $\lambda_d(|\vec{\eta}|)$.
As $|\vec{\eta}| \to \infty$,
$$\lambda_{\pm}(|\vec{\eta}|) = -{\mu}_1 + \tilde{\lambda}_R(|\vec{\eta}|^{-2}) \pm i|\vec{\eta}| \bigl[\bar{c}_f + \tilde{\lambda}_I(|\vec{\eta}|^{-2})\bigr], \quad \lambda_d(|\vec{\eta}|) = -{\mu}_2 - 2\tilde{\lambda}_R(|\vec{\eta}|^{-2}) $$
where $\tilde{\lambda}_R(|\vec{\eta}|^{-2})$ and $\tilde{\lambda}_I(|\vec{\eta}|^{-2})$ are analytic functions of the variable $|\vec{\eta}|^{-2}$; $P_{\pm}(\vec{\eta})$ and $P_{d}(\vec{\eta})$ are the eigenprojections associated with the three non-zero eigenvalues of $E(\vec{\eta})$ (see \cite{z1} for more details).

As established in Lemma~5.1 of \cite{z1}, $(-1)^j \Delta^{-j} \delta(\vec{x}) = (-1)^{j-1} (4\pi\,(2j-2)!)^{-1}|\vec{x}|^{2j-3}$ for $j \ge 1$, whence the operator $(-1)^j \Delta^{-j}$ exhibits a singularity at $|\vec{x}|=0$ for $j=1$ and tends to infinity as $|\vec{x}|\rightarrow\infty$ for $j \ge 2$. To circumvent this behavior, following the approach in \cite{dy}, one applies the Taylor expansion in $(1+|\vec{\eta}|^2)^{-1}$ instead of $|\vec{\eta}|^{-2}$; they are equivalent during $|\vec{x}|\rightarrow\infty$. Evidently, the inverse Fourier transform of $(1+|\vec{\eta}|^2)^{-j}$ for $j \ge 2$ is analytic and decays fast at $|\vec{x}|=\infty$. It leads to a replacement of $(-1)^j \Delta^{-j}\delta(\vec{x})$ with the convolution of $j$ copies of the three-dimensional Yukawa potential $\mathbb{Y}_3(\vec{x}) = (4\pi|\vec{x}|)^{-1} e^{-|\vec{x}|}.$

Moreover, $G_{k,1}^s$ and $G_{k,2}^s$ arise from the inverse Fourier transform of the Taylor-expanded spectral decomposition as $|\vec{\eta}| \to \infty$, and the spectral decomposition presented in \cite{z1} takes the form
\begin{equation}\label{eigende25}\begin{aligned}
			& (i \vec{\eta})^k \left[ e^{\lambda_d(|\vec{\eta}|)t}P_d(\vec{\eta}) + e^{\lambda_{+}(|\vec{\eta}|)t} P_{+}(\vec{\eta}) + e^{\lambda_{-}(|\vec{\eta}|)t} P_{-}(\vec{\eta}) + \operatorname{diag} \! \left(0, \mathbb{I}_{3 \times 3}, 0\right)\right] \\
			&= \mathbb{F}[G_{k,1}^s](\vec{\eta},t) + \mathbb{F}[G_{k,2}^s](\vec{\eta},t) + \mathbb{F}[G_{k,h}](\vec{\eta},t) + \mathcal{O}(1) e^{-\mu_m t} |\vec{\eta}|^{-4},
	\end{aligned}\end{equation}
with $\mu_m < \min \{ {\mu}_1, {\mu}_2 \}$, and 
\begin{equation} \label{Gkh0}
	\begin{aligned}
		&\mathbb{F}[G_{k,h}](\vec{\eta},t)\\ 
		&\quad =\begin{cases} e^{- {\mu}_2 t} (i \vec{\eta})^k |\vec{\eta}|^{-|k|-4} 
		\begin{pmatrix}
		0 & q_{\frac{|k|}{2}}^{12}(t) i \vec{\eta}^t & 0 \\
		q_{\frac{|k|}{2} +1}^{21}(t) i \vec{\eta} & 0_{3 \times 3} & q_{\frac{|k|}{2} +1}^{23}(t) i \vec{\eta}\\
		0 & q_{\frac{|k|}{2} +1}^{32}(t) i \vec{\eta}^t & 0
		\end{pmatrix}\\
		 \quad + e^{-{\mu}_1 t} (i\vec{\eta})^k \cos(\bar{c}_f |\vec{\eta}|t) |\vec{\eta}|^{-|k|-4} 
		\begin{pmatrix}
			0 & q_{|k|+3}^{12}(t) i \vec{\eta}^t & 0 \\
			q_{|k|+3}^{21}(t) i \vec{\eta} & 0_{3 \times 3} & q_{|k|+2}^{23}(t) i \vec{\eta}\\
			0 & q_{|k|+3}^{32}(t) i \vec{\eta}^t & 0
		\end{pmatrix},\\
		\quad\quad\quad\quad\text{ for }|k|\text{ even},\\
		\\
		e^{- {\mu}_2 t} (i \vec{\eta})^k |\vec{\eta}|^{-|k|-3} 
 		\begin{pmatrix}
 			q_{\frac{|k|+1}{2}}^{11}(t) & 0_{1 \times 3} & q_{\frac{|k|+1}{2}}^{13}(t)\\
 			0_{3 \times 1} & q_{\frac{|k|+1}{2}}^{11}(t) |\vec{\eta}|^{-2} \vec{\eta} \vec{\eta}^t & 0_{3 \times 1}\\
 			q_{\frac{|k|+1}{2}}^{31}(t) & 0_{1 \times 3} & q_{\frac{|k|+1}{2}}^{33}(t)
 		\end{pmatrix}\\
 		\quad + e^{-{\mu}_1 t} (i\vec{\eta})^k \cos(\bar{c}_f |\vec{\eta}|t) |\vec{\eta}|^{-|k|-3} \begin{pmatrix}
 			q_{|k|+3}^{11}(t) & 0_{1 \times 3} & q_{|k|+2}^{13}(t)\\
 			0_{3 \times 1} & q_{|k|+3}^{11}(t) |\vec{\eta}|^{-2} \vec{\eta} \vec{\eta}^t & 0_{3 \times 1}\\
 			q_{|k|+3}^{31}(t) & 0_{1 \times 3} & q_{|k|+2}^{33}(t)
 		\end{pmatrix},\\
		\quad\quad\quad\quad\text{ for }|k|\text{ odd}.\\
		\end{cases}
	\end{aligned}
\end{equation}
Here, $q_j^{ml}(t)$ denotes a universal polynomial in $t$ of degree no greater than $j$.

In addition to the pointwise estimates in \eqref{green1} and \eqref{green2}, Theorem~1.4 of \cite{z1} concludes the following uniform estimate for the Green's function:
\begin{equation} \label{greenpie}
	\begin{aligned}
		\partial_{\vec{x}}^{k} \mathbb{G}(\vec{x},t)&= H(t-1) \left[ G_{k}^{*} (\vec{x},t)+\mathcal{O}(1)(1+t)^{-\frac{5+|k|}{2}} e^{-\frac{(|\vec{x}|-\bar{c} t)^2}{Kt}} \right] \\
		&\quad - H(\tilde{c}_1 (1+t)-|\vec{x}|) \operatorname{diag}\! \left(0, \partial_{\vec{x}}^{k} (\Delta^{-1} \nabla \nabla^t \delta(\vec{x})), 0 \right)\\
		&\quad + G_{k,1}^s(\vec{x},t) + G_{k,2}^s(\vec{x},t) + \mathcal{O}(1) e^{-t/C_0}.
	\end{aligned}
\end{equation}
\subsection{Modifications to the Taylor Expansion in Spectral Decomposition}
In the Taylor expansion during the spectral decomposition, one changes $|\vec{\eta}|^{-2}$ to $(1+|\vec{\eta}|^2)^{-1}$. Consequently, the terms $|\vec{\eta}|^{-2j}$ for $j \ge 1$ in $\mathbb{F}[G_{k,1}^s]$, $\mathbb{F}[G_{k,2}^s]$ and $\mathbb{F}[G_{k,h}]$ are replaced with $(1+|\vec{\eta}|^2)^{-j}$ which allows the terms $(-1)^j \Delta^{-j}$ in $G_{k,1}^s$, $G_{k,2}^s$, and $G_{k,h}$ to be replaced by $\mathbb{F}^{-1}[(1+|\vec{\eta}|^2)^{-j}] = \mathbb{Y}_3 \underset{j}{\ast} \mathbb{Y}_3$, where one denotes
$$\mathbb{Y}_3 \underset{j}{\ast} \mathbb{Y}_3 := \underbrace{\mathbb{Y}_3 \underset{\vec{x}}{\ast} \mathbb{Y}_3 \underset{\vec{x}}{\ast} \cdots \underset{\vec{x}}{\ast} \mathbb{Y}_3}_{j \text{ times}},$$
i.e. the convolution of $j$ copies of the three-dimensional Yukawa potential $\mathbb{Y}_3$. As established in Lemma~2.1 of \cite{dy}, it holds that for all $j \ge 2$,
\begin{equation} \label{y3con} \mathbb{F}^{-1}\left[\frac{1}{(1+|\vec{\eta}|^2)^j}\right] \le C e^{-|\vec{x}|/C},
\end{equation} 
where $C > 0$ denotes a universal positive constant. 

The modified forms of $G_{k,1}^s$, $G_{k,2}^s$, and $G_{k,h}$ are rearranged and denoted by $\bar{G}_{k,1}^s$, $\bar{G}_{k,2}^s$, and $\bar{G}_{k,h}$ as follows:
 \begin{equation} \label{gk1s}
 	\begin{aligned}
 		\bar{G}_{k,1}^s (\vec{x},t) &= \partial_{\vec{x}}^k \Bigg\lbrace \delta(\vec{x}) \operatorname{diag}\! \left(0, \mathbb{I}_{3 \times 3},0 \right)  + e^{-{\mu}_2 t} \sum_{j=0}^{|k|+2} L_{1,j}^{*}(t,\nabla) \delta(\vec{x}) + e^{-{\mu}_1 t} \Biggl[ \, \sum_{j=0}^{|k|+2}  L_{2,j}^{*} (t,\nabla) \omega_t (\vec{x},t;\bar{c}_f) \\
 		& \quad + \sum_{j=-1}^{|k|+2} L_{3,j}^{*} (t,\nabla) \omega(\vec{x},t;\bar{c}_f) \Biggr] + t e^{-{\mu}_1 t} \Biggl[ \, \sum_{j=1}^{|k|+2} L_{4,j}^{*} (t, \nabla) \omega_t (\vec{x},t;\bar{c}_f) + \sum_{j=0}^{|k|+2} L_{5,j}^{*}(t,\nabla) \omega(\vec{x},t;\bar{c}_f) \Biggr] \Bigg\rbrace,
 	\end{aligned} 
 \end{equation}
\begin{equation} \label{gk2s}
	\begin{aligned}
		\bar{G}_{k,2}^s (\vec{x},t) = \partial_{\vec{x}}^k \Bigg\lbrace e^{-{\mu}_1 t} \operatorname{diag}\! \left(0, -\nabla \nabla^t \mathbb{Y}_3(\vec{x}),0 \right) \underset{\vec{x}}{\ast} \bigl[ \omega_t(\vec{x},t; \bar{c}_f) + \left({\mu}_1 + {\mu}_1 ({\mu}_1/2 + {\mu}_2) t\right) \! \omega(\vec{x},t; \bar{c}_f) \bigr] \Bigg\rbrace,
	\end{aligned}
\end{equation}
\begin{equation} \label{gkh0}
	\begin{aligned}
		&\bar{G}_{k,h}(\vec{x},t)\\
		 &\quad =\begin{cases} \partial_{\vec{x}}^k \left\lbrace  e^{-{\mu}_2 t} 	
		\begin{pmatrix}
			0 & q_{\frac{|k|}2}^{12}(t) \nabla^t & 0 \\
			q_{\frac{|k|}{2} +1}^{21}(t) \nabla & 0_{3 \times 3} & q_{\frac{|k|}{2} +1}^{23}(t) \nabla\\
			0 & q_{\frac{|k|}{2} +1}^{32}(t) \nabla^t & 0
		\end{pmatrix} \mathbb{Y}_3(\vec{x}) \underset{\frac{|k|+4}{2}}{\ast} \mathbb{Y}_3(\vec{x})\right.\\
		 \left.+ e^{-{\mu}_1 t} 
		\begin{pmatrix}
			0 & q_{|k|+3}^{12}(t) \nabla^t & 0 \\
			q_{|k|+3}^{21}(t) \nabla & 0_{3 \times 3} & q_{|k|+2}^{23}(t) \nabla\\
			0 & q_{|k|+3}^{32}(t) \nabla^t & 0
		\end{pmatrix}\mathbb{Y}_3(\vec{x}) \underset{\frac{|k|+4}{2}}{\ast} \mathbb{Y}_3(\vec{x}) \underset{\vec{x}}{\ast} \omega_t(\vec{x},t;\bar{c}_f)\right\rbrace,\\		\quad\quad\quad\quad\quad\quad\quad\quad\quad\quad\quad\quad\quad\quad\quad\quad\quad\quad\quad\quad\quad\quad\quad\quad\text{ for }|k|\text{ even},\\
		\\
		\partial_{\vec{x}}^k \left\lbrace  e^{-{\mu}_2 t} 	
		\begin{pmatrix}
			q_{\frac{|k|+1}{2}}^{11}(t) & 0_{1 \times 3} & q_{\frac{|k|+1}{2}}^{13}(t)\\
			0_{3 \times 1} & - q_{\frac{|k|+1}{2}}^{11}(t) \nabla \nabla^t \mathbb{Y}_3(\vec{x}) & 0_{3 \times 1}\\
			q_{\frac{|k|+1}{2}}^{31}(t) & 0_{1 \times 3} & q_{\frac{|k|+1}{2}}^{33}(t)
		\end{pmatrix}\underset{\vec{x}}{\ast}\mathbb{Y}_3(\vec{x}) \underset{\frac{|k|+3}{2}}{\ast} \mathbb{Y}_3(\vec{x})\right.\\
		 \left.+ e^{-{\mu}_1 t} 
		\begin{pmatrix}
			q_{|k|+3}^{11}(t) & 0_{1 \times 3} & q_{|k|+2}^{13}(t)\\
			0_{3 \times 1} & - q_{|k|+3}^{11}(t) \nabla \nabla^t \mathbb{Y}_3(\vec{x}) & 0_{3 \times 1}\\
			q_{|k|+3}^{31}(t) & 0_{1 \times 3} & q_{|k|+2}^{33}(t)
		\end{pmatrix}\underset{\vec{x}}{\ast}\mathbb{Y}_3(\vec{x}) \underset{\frac{|k|+3}{2}}{\ast} \mathbb{Y}_3(\vec{x}) \underset{\vec{x}}{\ast} \omega_t(\vec{x},t;\bar{c}_f) \right\rbrace,\\
\quad\quad\quad\quad\quad\quad\quad\quad\quad\quad\quad\quad\quad\quad\quad\quad\quad\quad\quad\quad\quad\quad\quad\quad\text{ for }|k|\text{ odd}.
		\end{cases}
	\end{aligned}
\end{equation}
Here, for $j=-1,0$, $L_{m,j}^*(t,\nabla) := L_{m,j}(t,\nabla)$ (see \eqref{leading} for the explicit expressions). For $j \ge 1$, $L_{m,j}(t,\nabla)$ is given in \eqref{138}--\eqref{1385}, while $L_{m,j}^*(t,\nabla)$ is the redefined vertion of $L_{m,j}(t,\nabla)$ obtained by replacing $(-1)^i \Delta^{-i}$ with $\mathbb{Y}_3 \underset{i}{\ast} \mathbb{Y}_3$ (for $j=2i$) or $(-1)^{i+1} \Delta^{-(i+1)}$ with $\mathbb{Y}_3 \underset{i+1}{\ast} \mathbb{Y}_3$ (for $j=2i+1$). For instance, the operators $L_{1,2j}^*(t,\nabla)$ and $L_{1,2j+1}^*(t,\nabla)$ are defined as follows:  for $j \ge 1$,
\begin{equation} \label{lmjstar}
	\begin{aligned}
		& L_{1,2j}^{*}(t,\nabla) = 
		\begin{pmatrix}
			q_{j-1}^{111}(t) &0_{1 \times 3}& q_{j-1}^{113}(t)\\
			0_{3 \times 1} & -q_{j-1}^{111}(t) \nabla \nabla^t \mathbb{Y}_3 (\vec{x}) & 0_{3 \times 1}\\
			q_{j-1}^{131}(t) &0_{1 \times 3}& q_{j-1}^{133}(t) 
		\end{pmatrix}\underset{\vec{x}}{\ast}\mathbb{Y}_3(\vec{x}) \underset{j}{\ast} \mathbb{Y}_3(\vec{x}),\\
		& L_{1,2j+1}^{*}(t,\nabla) = 
		\begin{pmatrix}
			0 & q_{j-1}^{112}(t) \nabla^t & 0\\
			q_{j}^{121}(t) \nabla & 0_{3 \times 3} & q_{j}^{123}(t) \nabla\\
			0 & q_{j}^{132}(t) \nabla^t & 0
		\end{pmatrix}\mathbb{Y}_3(\vec{x}) \underset{j+1}{\ast} \mathbb{Y}_3(\vec{x}).
	\end{aligned}
\end{equation}

Note that in \cite{z1}, $G_{k,h}$ needs to be estimated and then one could obtain \eqref{greenpie} from \eqref{eigende25} and the estimate of $G_{k,h}$ follows from Lemma~4.6 of \cite{z1}. This lemma states that for all $\vec{x} \in \mathbb{R}^3$ and $t \in \mathbb{R}_{+}$ and a multi-index $k$ with odd $|k|$,
$$\left|\mathrm{PV} \int_{|\vec{\eta}| \ge R} e^{i \vec{x} \cdot \vec{\eta}} (i \vec{\eta})^{k} |\vec{\eta}|^{-|k|-3} \, d\vec{\eta} \, \right|, \left|\mathrm{PV} \int_{|\vec{\eta}| \ge R} e^{i \vec{x} \cdot \vec{\eta}} (i \vec{\eta})^{k} |\vec{\eta}|^{-|k|-3} \cos(\bar{c}_f |\vec{\eta}| t) \, d\vec{\eta} \, \right| \le C,$$
where $R>0$ is a constant, and $\mathrm{PV}$ denotes the principal value. No analogous result holds when $|\vec{\eta}|^{-|k|-3}$ is replaced with $(1+|\vec{\eta}|^2)^{-(|k|+3)/2}$, and thus $\bar{G}_{k,h}$ remains in the revised estimates after modifying \eqref{Gk1s}, \eqref{Gk2s} and \eqref{Gkh0} to \eqref{gk1s}--\eqref{gkh0}. Specifically, \eqref{greenpie}  can be recast as
\begin{equation} \label{greenmodi}
	\begin{aligned}
		\partial_{\vec{x}}^{k} \mathbb{G}(\vec{x},t) &= H(t-1)\left[ G_{k}^{*} (\vec{x},t) + \mathcal{O}(1)(1+t)^{-\frac{5 + |k|}{2}} e^{-\frac{(|\vec{x}|-\bar{c} t)^2}{Kt}} \right] + \bar{G}_{k,1}^s(\vec{x},t) + \bar{G}_{k,2}^s(\vec{x},t) + \bar{G}_{k,h} (\vec{x},t) \\
		& \quad - H(\tilde{c}_1 (1+t)-|\vec{x}|) \operatorname{diag}\! \left( 0, \partial_{\vec{x}}^{k} (\Delta^{-1} \nabla \nabla^t \delta(\vec{x})), 0 \right)  + \mathcal{O}(1) e^{-t/C_0}.
	\end{aligned}
\end{equation}
Denote 
\begin{equation} \label{remain}
	\begin{aligned}R_{k}^{s}(\vec{x},t)&:=\partial_{\vec{x}}^{k} \mathbb{G}(\vec{x},t)-\Big(H(t-1) G_{k}^{*} (\vec{x},t) + \bar{G}_{k,1}^s(\vec{x},t) + \bar{G}_{k,2}^s(\vec{x},t) + \bar{G}_{k,h} (\vec{x},t)\\
&\quad- H(\tilde{c}_1 (1+t)-|\vec{x}|) \operatorname{diag}\! \left( 0, \partial_{\vec{x}}^{k} (\Delta^{-1} \nabla \nabla^t \delta(\vec{x})), 0 \right) \Big)
\end{aligned}
\end{equation} 
and it is bounded above by a moving heat kernel $C(1+t)^{-(5+|k|)/2} e^{-(|\vec{x}|-\bar{c} t)^2/(Kt)}$ and an exponentially decaying function $Ce^{-t/C_0}$.

Combining \eqref{green2}, \eqref{Gk1slead} and \eqref{greenmodi}--\eqref{remain}, one has that for $|\vec{x}|>M(1+t)$,
$$\begin{aligned}
	& \Bigl| H(t-1) G_{k}^{*} (\vec{x},t) + \bar{G}_{k,1}^s(\vec{x},t) - \tilde{G}_{k,1}^s(\vec{x},t) + \bar{G}_{k,2}^s(\vec{x},t) + \bar{G}_{k,h} (\vec{x},t) \\
	& -H(\tilde{c}_1 (1+t)-|\vec{x}|)\operatorname{diag}\!\left(0, \partial_{\vec{x}}^{k} (\Delta^{-1} \nabla \nabla^t \delta(\vec{x})), 0 \right) + R_{k}^{s}(\vec{x},t) \Bigr|\\
	&= \mathcal{O}(1) e^{-(|\vec{x}|+t)/C_0}.
\end{aligned}$$
Since $M > \bar{c}_f > \bar{c} > \tilde{c}_1$, it follows that $H(\tilde{c}_1 (1+t)-|\vec{x}|) = 0$ for $|\vec{x}| > M(1+t)$, which combined with \eqref{Gkstar}, \eqref{Gk1slead}, \eqref{138}--\eqref{1385}, \eqref{y3con}--\eqref{lmjstar}, and Corollary~\ref{kirch}, results in that there exists a positive constant $C > 0$ such that for $|\vec{x}| \in \mathbb{R}^3$, $t \in \mathbb{R}_+$,
$$\begin{aligned}
	\left| R_{k}^{s}(\vec{x},t) \right| & \le \left|H(t-1) G_{k}^{*} (\vec{x},t) \right| + \left|\bar{G}_{k,1}^s(\vec{x},t) - \tilde{G}_{k,1}^s(\vec{x},t) \right| + \left|  \bar{G}_{k,2}^s(\vec{x},t) \right| + \left|\bar{G}_{k,h} (\vec{x},t) \right| + \mathcal{O}(1) e^{-(|\vec{x}|+t)/C_0}\\
	& \le C e^{-(|\vec{x}|+t)/C}.
\end{aligned}$$
Finally it combined with \eqref{greenmodi} yields that 
$$\begin{aligned}
	\partial_{\vec{x}}^{k} \mathbb{G}(\vec{x},t) &= H(t-1) \left[ G_{k}^{*} (\vec{x},t) + \mathcal{O}(1) (1+t)^{-\frac{5 + |k|}{2}} e^{-\frac{(|\vec{x}| - \bar{c} t)^2}{Kt}} \right] + \bar{G}_{k,1}^s(\vec{x},t) + \bar{G}_{k,2}^s(\vec{x},t) + \bar{G}_{k,h} (\vec{x},t) \\
	& \quad - H(\tilde{c}_1 (1+t) - |\vec{x}|)\operatorname{diag}\! \left(0, \partial_{\vec{x}}^{k} (\Delta^{-1} \nabla \nabla^t \delta(\vec{x})), 0\right) + \mathcal{O}(1) e^{-(|\vec{x}|+t)/C}, \quad \text{for } \vec{x} \in \mathbb{R}^3, t \in \mathbb{R}_+.
\end{aligned}$$
A more precise description about $G_{k}^{*}$, $\bar{G}_{k,1}^s, \bar{G}_{k,2}^s$ and $\bar{G}_{k,h}$ is left to the following subsections and after the long-wave and short-wave decomposition.

One could decompose $\partial_{\vec{x}}^{k} \mathbb{G}(\vec{x},t)$ into long-wave and short-wave components after the spectral decomposition as follows:
\begin{equation} \label{greenmodi1}
	\partial_{\vec{x}}^{k} \left(\mathbb{G}-\mathbb{G}_0\right) \! (\vec{x},t)= \partial_{\vec{x}}^{k} \mathbb{G}_L (\vec{x},t) + \partial_{\vec{x}}^{k} \mathbb{G}_S (\vec{x},t),
\end{equation}
where $\mathbb{G}_0$ arises from the eigen-projection corresponding to the zero eigenvalues:
\begin{equation}
	 \mathbb{G}_0(\vec{x},t) = \partial_{\vec{x}}^{k} \operatorname{diag}\! \left(0, (\mathbb{I}_{3 \times 3} - \Delta^{-1} \nabla \nabla^t ) \delta(\vec{x}), 0 \right), \label{g0ini}
\end{equation}
and $\mathbb{G}_L$ and $\mathbb{G}_S$ correspond to the long-wave and the short-wave components of $\mathbb G-\mathbb{G}_0$ respectively:
\begin{align}
	& \partial_{\vec{x}}^{k} \mathbb{G}_L(\vec{x},t) = H(t-1) \left[ G_{k}^{*} (\vec{x},t) + \mathcal{O}(1) (1 + t)^{-\frac{5 + |k|}{2}} e^{-\frac{(|\vec{x}| - \bar{c} t)^2}{Kt}} \right] \label{glini}\\
	& \quad \quad \quad \quad \quad \quad + H(|\vec{x}|-\tilde{c}_1 (1+t)) \operatorname{diag}\! \left(0, \partial_{\vec{x}}^{k} (\Delta^{-1} \nabla \nabla^t \delta(\vec{x})), 0 \right) + \mathcal{O}(1) e^{-(|\vec{x}|+t)/C}, \nonumber \\
	& \partial_{\vec{x}}^{k} \mathbb{G}_S(\vec{x},t) = \bar{G}_{k,1}^s(\vec{x},t) + \bar{G}_{k,2}^s(\vec{x},t) + \bar{G}_{k,h} (\vec{x},t) - \partial_{\vec{x}}^{k} \delta(\vec{x}) \operatorname{diag}\! \left(0, \mathbb{I}_{3 \times 3}, 0 \right)  + \mathcal{O}(1) e^{-(|\vec{x}|+t)/C}.\label{gsini}
\end{align}

\subsection{Estimates for the Long-Wave Component $\partial_{\vec{x}}^{k} \mathbb{G}_L(\vec{x},t)$} 
The long-wave component $\partial_{\vec{x}}^{k} \mathbb{G}_L(\vec{x},t)$ defined in \eqref{glini} can be more precisely formulated as
$$\begin{aligned}
	\partial_{\vec{x}}^{k} \mathbb{G}_L(\vec{x},t) &= H(t-1) \Bigl[ G_{k}^{*} (\vec{x},t) + H(|\vec{x}| - \tilde{c}_1 (1+t)) \operatorname{diag}\! \left(0, \partial_{\vec{x}}^{k} (\Delta^{-1} \nabla \nabla^t \delta(\vec{x})), 0 \right) \\
	& \quad + \mathcal{O}(1)(1 + t)^{-\frac{5 + |k|}{2}} e^{-\frac{(|\vec{x}| - \bar{c} t)^2}{Kt}} \Bigr] + \mathcal{O}(1) e^{-(|\vec{x}|+t)/C}. 
\end{aligned}$$
Here, the first summand {{$G_{k}^{*} (\vec{x},t)$}} arises primarily from the inverse Fourier transform of the leading term in the long-wave expansion, namely 
$$\mathbb{F}[G_L](\vec{\eta},t) = e^{-\bar{\alpha}_1|\vec{\eta}|^2 t/2} \bigl[ \cos(\bar{c} |\vec{\eta}| t) P_5 + \sin(\bar{c} |\vec{\eta}| t) P_6 \bigr] $$
with
$$P_5=\begin{pmatrix}
	1 & 0_{1 \times 3} & -\frac{\bar{p}_{e_1}}{{\zeta}}\\
	0_{3 \times 1} & \frac{1}{|\vec{\eta}|^2} \vec{\eta} \vec{\eta}^t & 0_{3 \times 1}\\
	0 & 0_{1 \times 3} & 0
\end{pmatrix}, \quad
P_6 = \begin{pmatrix}
	0 & -\frac{\bar{c}\bar{\rho}}{|\vec{\eta}|} i \vec{\eta}^t & 0\\
	- \frac{\bar{v}}{\bar{c}|\vec{\eta}|} i \vec{\eta} & 0_{3 \times 3} & \frac{\bar{v} \bar{p}_{e_1}}{\bar{c} {\zeta} |\vec{\eta}|} i \vec{\eta} \\
	0 & 0_{1 \times 3} & 0
\end{pmatrix}.$$
Notably, as shown in \cite{z1}, both $H(\bar{c} t - |\vec{x}|) (1+t)^{-3-|k|}$ and $H(|\vec{x}|-\tilde{c}_1 (1+t)) \partial_{\vec{x}}^{k} (\Delta^{-1} \nabla \nabla^t \delta(\vec{x}))$ arise exclusively from the estimate for $\left(\partial_{\vec{x}}^k G_{L}\right)_{22}$:
$$\left(\partial_{\vec{x}}^k G_{L}\right)_{22}(\vec{x},t) := \partial_{\vec{x}}^k \mathbb{F}^{-1} \! \left[e^{-\bar{\alpha}_1 |\vec{\eta}|^2 t/2} \cos(\bar{c} |\vec{\eta}| t) |\vec{\eta}|^{-2} \vec{\eta} \vec{\eta}^t\right],$$
where $H(\bar{c} t - |\vec{x}|) (1+t)^{-3 - |k|}$ appears in the estimate \eqref{Gkstar} of $G_k^{*} (\vec{x},t)$. It could be modified by retaining the operator $\Delta^{-1} \nabla \nabla^t$, i.e., via the identity $$(\partial_{\vec{x}}^k G_{L})_{22}(\vec{x},t) = \Delta^{-1} \nabla \nabla^t (\partial_{\vec{x}}^k G_{L})_{11}(\vec{x},t),$$ and the corresponding bound for $(\partial_{\vec{x}}^k G_{L})_{11}$ is as follows:
$$\begin{aligned}
	\left( \partial_{\vec{x}}^k G_{L} \right)_{11} \! (\vec{x},t) &=
	\begin{cases}
		\frac{1}{2 \bar{c} (2 \pi \bar{\alpha}_1 t)^{3/2}} \partial_{\vec{x}}^k \left(\frac{|\vec{x}|-\bar{c}t}{t} e^{-\frac{(|\vec{x}|-\bar{c}t)^2}{2\bar{\alpha}_1 t}} \right) & \text{for } |k|=0 \text{ or } |\vec{x}| \ge \tilde{c}_2 (1+t), \\
		0 & \text{otherwise}
	\end{cases}\\
	& \quad + \mathcal{O}(1) t^{-\frac{5+|k|}{2}}e^{-\frac{(|\vec{x}|-\bar{c}t)^2}{Kt}} \sum_{j=0}^{|k|} t^{-\frac{j}{2}},
\end{aligned}$$ 
which has been proved in \cite{z1}. Thus, $\partial_{\vec{x}}^{k} \mathbb{G}_L$ can be further expressed as
\begin{equation}\nonumber
	\begin{aligned}
		\partial_{\vec{x}}^{k} \mathbb{G}_L(\vec{x},t) &= H(t-1) \Bigl[ \tilde{G}_{k}^{*} (\vec{x},t) + \operatorname{diag}\! \left(0, \partial_{\vec{x}}^{k} (\Delta^{-1} \nabla \nabla^t \theta_1(\vec{x},t)), 0 \right) \\
		&\quad + \mathcal{O}(1)(1+ t)^{-\frac{5 + |k|}{2}} e^{-\frac{(|\vec{x}|-\bar{c} t)^2}{Kt}} \Bigr] + \mathcal{O}(1) e^{-(|\vec{x}| + t)/C}, 
	\end{aligned}
\end{equation}
where $\theta_1(\vec{x},t):=(G_L)_{11}$ and
\begin{equation} \label{gkstar}
	\begin{aligned}
		\tilde{G}_{k}^{*}(\vec{x},t) &= H(|\vec{x}|-\tilde{c}_2 (1+t)) \frac{1}{2(2 \pi \bar{\alpha}_1 t)^{3/2}} \partial_{\vec{x}}^k \left[ \frac{|\vec{x}|-\bar{c} t}{t} e^{-\frac{(|\vec{x}|-\bar{c}t)^2}{2 \bar{\alpha}_1 t}} \right]\\
		& \quad \times \begin{pmatrix}
			\frac{1}{\bar{c}} & \frac{\bar{c}\bar{\rho}t}{|\vec{x}|^2} \vec{x}^t & -\frac{\bar{p}_{e_1}}{\bar{c} {\zeta}}\\
			\frac{\bar{v} t}{\bar{c}|\vec{x}|^2} \vec{x} & 0_{3 \times 3}& -\frac{\bar{v} \bar{p}_{e_1} t}{\bar{c} {\zeta} |\vec{x}|^2} \vec{x}\\
			0 & 0_{1 \times 3} & 0
		\end{pmatrix}\\
		&= \mathcal{O}(1) t^{-2-\frac{|k|}{2}} e^{-\frac{(|\vec{x}|-\bar{c}t)^2}{Kt}} .  
	\end{aligned}
\end{equation}
In addition, from the relation
$$\big(\partial_{\vec{x}}^k \mathbb{G}_{L}\big)_{22}(\vec{x},t) = \Delta^{-1} \nabla \nabla^t \big(\partial_{\vec{x}}^k \mathbb{G}_{L}\big)_{11}(\vec{x},t),$$
one obtains a sharper result:
\begin{equation}  \label{glrefin}
	\begin{aligned}
		\partial_{\vec{x}}^{k} \Big(\mathbb{G}_L(\vec{x},t) -\operatorname{diag}\! \left(0, \Delta^{-1} \nabla \nabla^t \left(\mathbb{G}_L\right)_{11}(\vec{x},t), 0 \right)\Big) &= H(t-1) \Bigl[ \tilde{G}_{k}^{*} (\vec{x},t) + \mathcal{O}(1) (1 + t)^{-\frac{5 + |k|}{2}} e^{-\frac{(|\vec{x}|-\bar{c} t)^2}{Kt}} \Bigr]  \\
		& \quad + \mathcal{O}(1) e^{-(|\vec{x}|+t)/C}.
	\end{aligned}
\end{equation}

\subsection{Estimates for the Short-Wave Component $\partial_{\vec{x}}^k\mathbb{G}_S(\vec{x},t)$} The short-wave component defined in \eqref{gsini} can be further simplified. Specifically, since $\Delta \mathbb{Y}_3 = -\delta + \mathbb{Y}_3$, it immediately follows that
\begin{equation} \label{yy3}
	-\nabla \nabla^t \mathbb{Y}_3(\vec{x}) = \Delta^{-1} \nabla \nabla^t (\delta - \mathbb{Y}_3)(\vec{x}).
\end{equation}
Moreover, the following convolution identities including $\mathbb{Y}_3$ can be derived via polar coordinate transformation and the residue theorem:
\begin{equation} \label{y3wwt}
	\begin{aligned}
		\mathbb{Y}_3 (\vec{x}) \underset{\vec{x}}{\ast} \omega (\vec{x},t;\bar{c}_f) &= \frac{1}{8 \pi \bar{c}_f |\vec{x}|} \left( e^{-\bigl| |\vec{x}| - \bar{c}_f t \bigr|} - e^{-(|\vec{x}| + \bar{c}_f t)} \right),\\
		\mathbb{Y}_3(\vec{x}) \underset{\vec{x}}{\ast} \omega_t(\vec{x},t;\bar{c}_f) &= \frac{1}{8 \pi |\vec{x}|} \left( e^{-(|\vec{x}| + \bar{c}_f t)} + e^{-\bigl| |\vec{x}| - \bar{c}_f t \bigr|} \right),\\
		\mathbb{Y}_3 (\vec{x}) \underset{\vec{x}}{\ast} \mathbb{Y}_3 (\vec{x}) \underset{\vec{x}}{\ast} \omega (\vec{x},t;\bar{c}_f) &= \frac{1}{16 \pi \bar{c}_f |\vec{x}|} \left[ \bigl( \bigl| |\vec{x}| - \bar{c}_f t \bigr| + 1 \bigr) e^{-\bigl| |\vec{x}| - \bar{c}_f t \bigr|} - (|\vec{x}| + \bar{c}_f t + 1)  e^{-(|\vec{x}| + \bar{c}_f t)} \right],\\
		\mathbb{Y}_3(\vec{x}) \underset{\vec{x}}{\ast} \mathbb{Y}_3(\vec{x}) \underset{\vec{x}}{\ast} \omega_t (\vec{x},t;\bar{c}_f) &= \frac{1}{16 \pi |\vec{x}|} \left[ (|\vec{x}| + \bar{c}_f t) e^{-(|\vec{x}| + \bar{c}_f t)} + (|\vec{x}| - \bar{c}_f t)  e^{-\bigl| |\vec{x}| - \bar{c}_f t \bigr|} \right].		
	\end{aligned}
\end{equation}
By combining the results in \eqref{wg}--\eqref{wtg}, \eqref{y3con}--\eqref{gkh0}, \eqref{gsini} and \eqref{yy3}--\eqref{y3wwt}, the estimate for the short-wave component 
\begin{equation} \label{gsmodi}
	\mathbb{G}_S(\vec{x},t) = \bar{G}_{0,1}^s(\vec{x},t) - \delta(\vec{x}) \operatorname{diag}\! \left(0, \mathbb{I}_{3 \times 3}, 0 \right) + \bar{G}_{0,2}^s(\vec{x},t) + \bar{G}_{0,h}(\vec{x},t) + \mathcal{O}(1) e^{-(|\vec{x}|+t)/C}
\end{equation}
can be further modified as follows.

For $\bar{G}_{0,1}^s(\vec{x},t) - \delta(\vec{x}) \operatorname{diag}(0, \mathbb{I}_{3 \times 3}, 0)$, it holds that
\begin{equation}\label{g01bar}
	\begin{aligned}
		& \bar{G}_{0,1}^s (\vec{x},t) - \delta(\vec{x}) \operatorname{diag}\! \left(0, \mathbb{I}_{3 \times 3}, 0 \right)-G_{0,1}^{s,*}(\vec{x},t) - \operatorname{diag}\! \left(0, \Delta^{-1} \nabla \nabla^t \theta_2 (\vec{x},t), 0 \right)\\
		& = e^{-{\mu}_2 t} \sum_{j=1}^{2} L_{1,j}^{*}(t,\nabla) \delta(\vec{x}) + e^{-{\mu}_1 t} \Biggl[ \, \sum_{j=1}^{2}  L_{2,j}^{*}(t,\nabla) \omega_t (\vec{x},t;\bar{c}_f) + \sum_{j=1}^{2} L_{3,j}^{*} (t,\nabla) \omega(\vec{x},t;\bar{c}_f) \Biggr]\\
		& \quad + t e^{-{\mu}_1 t} \Biggl[ \, \sum_{j=1}^{2} L_{4,j}^{*} (t, \nabla) \omega_t (\vec{x},t;\bar{c}_f) + \sum_{j=1}^{2} L_{5,j}^{*}(t,\nabla) \omega(\vec{x},t;\bar{c}_f) \Biggr]- \operatorname{diag}\! \left(0, \Delta^{-1} \nabla \nabla^t \theta_2 (\vec{x},t), 0 \right)\\
		& \le  C e^{-{\mu}_2 t} \Bigl( \left| \mathbb{Y}_3(\vec{x}) \right|  + \left|\nabla \mathbb{Y}_3 (\vec{x}) \right| \Bigr) + C q_3(t) e^{-{\mu}_1 t} \left( \Bigl|\mathbb{Y}_3(\vec{x}) \underset{\vec{x}}{*} \omega (\vec{x},t; \bar{c}_f) \Bigr| \right.\\
		& \quad \left.+ \Bigl|\mathbb{Y}_3(\vec{x}) \underset{\vec{x}}{\ast} \omega_t (\vec{x},t; \bar{c}_f) \Bigr| + \Bigl|\nabla \mathbb{Y}_3 (\vec{x}) \underset{\vec{x}}{\ast} \omega (\vec{x},t; \bar{c}_f) \Bigr| + \Bigl|\nabla \mathbb{Y}_3 (\vec{x}) \underset{\vec{x}}{\ast} \omega_t (\vec{x},t; \bar{c}_f) \Bigr| \right)\\
		& \le   C |\vec{x}|^{-2} e^{-|\vec{x}|/D_1} \bigl( e^{-{\mu}_1 t/4} + e^{-{\mu}_2 t} \bigr),
	\end{aligned}
\end{equation}
where the leading-order term $G_{0,1}^{s,*}(\vec{x},t)$ is defined by
\begin{equation}\label{g01star}
	\begin{aligned}
		G_{0,1}^{s,*}(\vec{x},t)&:= e^{-{\mu}_2 t}L_{1,0} \delta (\vec{x}) + e^{-{\mu}_1 t} \bigl[ L_{2,0} \omega_t(\vec{x}, t; \bar{c}_f) + (L_{3, -1}(t, \nabla) + L_{3,0}) \omega(\vec{x}, t; \bar{c}_f)\bigr]\\
		& \quad + t e^{-{\mu}_1 t} L_{5,0} \omega(\vec{x},t; \bar{c}_f),  
	\end{aligned}
\end{equation}
and the corresponding estimate for the auxiliary function $\theta_2(\vec{x},t)$ is as follows:
\begin{equation}\label{theta2} 
	\begin{aligned}
		\bigl| \theta_2 (\vec{x},t) \bigr|&=\Big| \Big[e^{-\mu_2 t} q_0^{111}(t) \delta(\vec{x}) + e^{-\mu_1 t} \bigl( q_2^{211}(t)+tq_0^{411}(t)\bigr)  \omega_t (\vec{x},t; \bar{c}_f )+ e^{-\mu_1 t} \bigl( q_2^{311}(t)+tq_2^{511}(t)\bigr) \omega (\vec{x},t; \bar{c}_f )\Big]\\ 
		&\quad \quad \quad \underset{\vec{x}}{\ast} \mathbb{Y}_3(\vec x) \underset{\vec{x}}{\ast}(\delta - \mathbb{Y}_3)(\vec{x})\Big| \\
		& \le C |\vec{x}|^{-1} e^{-|\vec{x}|/D_1} \bigl( e^{-{\mu}_1 t/4} + e^{-{\mu}_2 t} \bigr).
	\end{aligned}
\end{equation}

For the term $\bar{G}_{0,2}^s(\vec{x},t)$, similarly, from \eqref{Gk2s}, \eqref{gk2s} and \eqref{yy3}, one has that
\begin{equation}\label{g02bar}
	\begin{aligned}
		\bar{G}_{0,2}^s (\vec{x},t) & = e^{-{\mu}_1 t} \operatorname{diag}\! \left(0, \Delta^{-1} \nabla \nabla^t (\delta - \mathbb{Y}_3)(\vec{x}), 0 \right) \underset{\vec{x}}{\ast} \bigl[ \omega_t(\vec{x},t; \bar{c}_f) + \left( {\mu}_1 + {\mu}_1 ({\mu}_1/2 + {\mu}_2)t \right) \! \omega(\vec{x},t; \bar{c}_f) \bigr] \\
		& = G_{0,2}^s (\vec{x},t) + \operatorname{diag}\! \left(0, \Delta^{-1} \nabla \nabla^t \theta_3 (\vec{x},t), 0 \right), 
	\end{aligned}
\end{equation}
where the auxiliary function $\theta_3(\vec{x},t)$ satisfies
\begin{equation}\label{theta3}
	\begin{aligned}
		\bigl| \theta_3 (\vec{x},t) \bigr| & = \Bigl|- e^{-{\mu}_1 t} \mathbb{Y}_3(\vec{x}) \underset{\vec{x}}{\ast} \bigl[ \omega_t(\vec{x},t; \bar{c}_f) + \left( {\mu}_1 + {\mu}_1 ({\mu}_1/2 + {\mu}_2) t \right) \! \omega(\vec{x},t; \bar{c}_f) \bigr]  \Bigr|\\
		& \le C |\vec{x}|^{-1} e^{-{\mu}_1 t/4-2|\vec{x}|/D_1}.
	\end{aligned}
\end{equation}

Finally, for the remaining term $\bar{G}_{0,h}(\vec{x},t)$, it holds that
\begin{equation}\label{g0hbar}
	\begin{aligned}
		\bar{G}_{0,h} (\vec{x},t) & \le C q_1(t) e^{-{\mu}_2 t} \nabla \Bigl( \mathbb{Y}_3 (\vec{x}) \underset{\vec{x}}{\ast} \mathbb{Y}_3 (\vec{x}) \Bigr)+ C q_3(t) e^{-{\mu}_1 t} \nabla \Bigl( \mathbb{Y}_3(\vec{x}) \underset{\vec{x}}{\ast} \mathbb{Y}_3(\vec{x}) \underset{\vec{x}}{\ast} \omega_t (\vec{x},t; \bar{c}_f )\Bigr)\\
		& \le C |\vec{x}|^{-2} e^{-|\vec{x}|/D_1} \bigl( e^{-{\mu}_1 t/4} + e^{-{\mu}_2 t} \bigr).
	\end{aligned}
\end{equation}
In \eqref{g01bar}--\eqref{g0hbar}, one denotes $D_1 := 4\max \{ 4 \bar{c}_f / {\mu}_1, 1\}$; $q_i(t)$ denotes a polynomial in $t$ of degree no greater than $i$ and $G_{0,2}^s$ is given in \eqref{Gk2s}.

By synthesizing the results in \eqref{gsmodi}--\eqref{g0hbar}, it follows further that 
\begin{equation} \nonumber
	\begin{aligned}
		\mathbb{G}_S(\vec{x},t) &= G_{0,1}^{s,*}(\vec{x},t) +  G_{0,2}^{s}(\vec{x},t) + \operatorname{diag}\! \left( 0, \Delta^{-1} \nabla \nabla^t (\theta_2 + \theta_3)(\vec{x}, t), 0 \right) + \mathcal{O}(1) e^{-(|\vec{x}|+t)/C}\\
		&\quad + \mathcal{O}(1) |\vec{x}|^{-2} e^{-|\vec{x}|/D_1} \bigl( e^{-{\mu}_1 t/4} + e^{-{\mu}_2 t} \bigr),
	\end{aligned}
\end{equation}
which combined with \eqref{Gk2s}, \eqref{greenmodi1}--\eqref{glrefin}, \eqref{g01star}--\eqref{theta2} and \eqref{theta3} verifies the pointwise estimate of the Green's function stated in Theorem~\ref{green}.

\subsection{Proof of Theorem~\ref{lpg}}
\begin{proof}
	First, based on \eqref{g1} and the irrotationality assumption $\nabla \times \vec{h}_0 \equiv 0$, it can be derived that
	$$ \partial_{\vec{x}}^{k} \mathbb{G}^t \vec{g} = \partial_{\vec{x}}^{k} \mathbb{G}_L^t \vec{g} + \partial_{\vec{x}}^{k} \mathbb{G}_S^t \vec{g} + \partial_{\vec{x}}^{k} \operatorname{diag}\!\left( 0, (\mathbb{I}_{3 \times 3} - \Delta^{-1} \nabla \nabla^t ) \delta(\vec{x}), 0 \right) \underset{\vec{x}}{\ast} \vec{g}(\vec{x}) = \partial_{\vec{x}}^{k} \mathbb{G}_L^t \vec{g} + \partial_{\vec{x}}^{k} \mathbb{G}_S^t \vec{g}.$$
	
	Using \eqref{gL} and \eqref{gkstar0}, one establishes the estimate for the long-wave component:
	$$\begin{aligned}
		\partial_{\vec{x}}^{k} \mathbb{G}_L^t \vec{g} = \partial_{\vec{x}}^{k} \mathbb{G}_L(\vec{x},t) \underset{\vec{x}}{\ast} \vec{g}(\vec{x})&=  \Big\lbrace H(t-1) \Bigl[ \tilde{G}_{k}^{*} (\vec{x},t) + \mathcal{O}(1) (1 + t)^{-\frac{5 + |k|}2} e^{-\frac{(|\vec{x}| - \bar{c} t)^2}{Kt}} \Bigr]\\
		& \quad \quad +\operatorname{diag}\! \left(0, \partial_{\vec{x}}^{k} (\Delta^{-1} \nabla \nabla^t \mathbb{G}_{11}(\vec{x},t)), 0 \right) + \mathcal{O}(1) e^{-(|\vec{x}|+t)/D_0} \Big\rbrace \underset{\vec{x}}{\ast} \vec{g}(\vec{x}).
	\end{aligned}
	$$
	Specifically, the component-wise estimates are given as follows: for $i=1,2,3$, 
	$$\begin{aligned}
		\partial_{\vec{x}}^{k} \mathbb{G}_{L,i}^t \vec{g} = \partial_{\vec{x}}^{k} \mathbb{G}_{L,i}(\vec{x},t) \underset{\vec{x}}{\ast} \vec{g}(\vec{x}) & \le C \Bigl( (1 + t )^{-2-d_i-\frac{|k|}{2}} e^{-\frac{(|\vec{x}| - \bar{c} t )^2}{Kt}} + e^{-(|\vec{x}|+t)/D_0}\Bigr) \underset{\vec{x}}{\ast} \left| \vec{g}(\vec{x}) \right|.
	\end{aligned}$$
For $p \ge 2$, and given the index relation $1/p + 1 = 1/r + 1/q$ required by Young's inequality, the $L^p$ estimate of $\partial_{\vec{x}}^{k} \mathbb{G}_L^t \vec{g}$ can be deduced as follows:
	\begin{equation} \nonumber 
		\begin{aligned}
			\left\|\partial_{\vec{x}}^k \mathbb{G}_{L,i}^t \vec{g} \right\|_{L^p} & \le C \Bigl(\Bigl\|(1 + t )^{-2-d_i-\frac{|k|}{2}} e^{-\frac{(|\vec{x}| - \bar{c} t)^2}{Kt}} \Bigr\|_{L^r} + \Bigl\| e^{-(|\vec{x}|+t)/D_0} \Bigr\|_{L^r}\Bigr)\left\| \vec{g}\right\|_{L^q}\\
			& \le C \left( (1 + t )^{-2-d_i-\frac{|k|}{2} + \frac{5}{2r}} + e^{-t/D_0}\right) \left\| \vec{g}\right\|_{L^q} \le C (1 + t )^{\frac{5}{2} \left(\frac{1}{p}-\frac{1}{q} \right) + \frac{1-|k|}{2}-d_i} \left\| \vec{g}\right\|_{L^q},
		\end{aligned}
	\end{equation}
which verifies \eqref{glpl}.

Based on \eqref{gS} and \eqref{leading}, the short-wave component could be estimated as follows:
\begin{equation} \label{gskg}
	\begin{aligned}
		\partial_{\vec{x}}^{k} \mathbb{G}_S^t \vec{g} =  \mathbb{G}_S(\vec{x},t) \underset{\vec{x}}{\ast} \partial_{\vec{x}}^{k} \vec{g}(\vec{x})
		&= \Bigl[ G_{0,1}^{s,*}(\vec{x},t) + G_{0,2}^{s}(\vec{x},t) + \operatorname{diag}\! \left(0, \Delta^{-1} \nabla \nabla^t \theta (\vec{x}, t), 0 \right) + \mathcal{O}(1) e^{-(|\vec{x}| + t)/D_0} \\
		&\quad + \mathcal{O}(1) |\vec{x}|^{-2} e^{-|\vec{x}|/D_1} \bigl( e^{-\mu_1 t/4} + e^{-\mu_2 t} \bigr) \Bigr] \underset{\vec{x}}{\ast} \partial_{\vec{x}}^{k} \vec{g}(\vec{x})\\
		&\le \left( G_{0,1}^{s,*}(\vec{x},t) + G_{0,2}^{s}(\vec{x},t) \right) \underset{\vec{x}}{\ast} \partial_{\vec{x}}^{k} \vec{g}(\vec{x}) + C |\vec{x}|^{-2} e^{-(|\vec{x}| + t)/C} \underset{\vec{x}}{\ast} \left| \partial_{\vec{x}}^{k} \vec{g}(\vec{x}) \right|,
	\end{aligned}
\end{equation}
where $C$ is a positive constant satisfying $C>\max\big\lbrace D_0, D_1, 4/\mu_1, 1/\mu_2 \big\rbrace$. It is obvious that by Young's inequality, for $d \in [0,2], p\ge 2$ and positive constants $D_2, D_3$,
\begin{equation} \label{exponkglp}
	\left\| |\vec{x}|^{-d} e^{-|\vec{x}|/D_2-t/D_3} \underset{\vec{x}}{\ast} \left| \partial_{\vec{x}}^{k} \vec{g}(\vec{x}) \right| \right\|_{L^p} \le C \left\| |\vec{x}|^{-d} e^{-|\vec{x}|/D_2-t/D_3} \right\|_{L^1} \cdot \left\| \partial_{\vec{x}}^{k} \vec{g}(\vec{x}) \right\|_{L^p} \le C e^{-t/D_3} \left\| \partial_{\vec{x}}^{k} \vec{g}(\vec{x}) \right\|_{L^p}.
\end{equation}

One proceeds to compute the convolution of $G_{0,1}^{s,*}(\vec{x},t)$ and $\partial_{\vec{x}}^{k} \vec{g}(\vec{x})$; from \eqref{gS} and \eqref{leading}, one has that
\begin{equation} \label{g01kg}
	\begin{aligned}
		G_{0,1}^{s,*}(\vec{x},t) \underset{\vec{x}}{\ast} \partial_{\vec{x}}^{k} \vec{g}(\vec{x})
		&= \Big\lbrace e^{- \mu_2 t} L_{1,0} \delta (\vec{x}) + e^{- \mu_1 t} \bigl[ L_{2,0} \omega_t(\vec{x}, t; \bar{c}_f) + (L_{3, -1}(t, \nabla) + L_{3,0}) \omega(\vec{x}, t; \bar{c}_f)\bigr] \\
		&\quad + t e^{-\mu_1 t} L_{5,0} \omega(\vec{x},t; \bar{c}_f) \Big\rbrace \underset{\vec{x}}{\ast} \partial_{\vec{x}}^{k} \vec{g}(\vec{x})\\
		&\le e^{- \mu_2 t}
		\begin{pmatrix}
			0\\
			0\\
			-\frac{\bar{a}\bar{v}}{\bar{c}_f^2} \partial_{\vec{x}}^{k} g_0(\vec{x}) + \partial_{\vec{x}}^{k} f_0(\vec{x})
		\end{pmatrix}
		+ C e^{- \mu_1 t} \Bigl( \, \Bigl| \omega_t(\vec{x},t; \bar{c}_f) \underset{\vec{x}}{\ast} \partial_{\vec{x}}^{k} g_0 (\vec{x}) \Bigr|\\
		&\quad + \Bigl| \omega(\vec{x},t; \bar{c}_f) \underset{\vec{x}}{\ast} \partial_{\vec{x}}^{k} \nabla g_0 (\vec{x}) \Bigr| + \Bigl| \omega(\vec{x},t; \bar{c}_f) \underset{\vec{x}}{\ast} \partial_{\vec{x}}^{k} \nabla \cdot \vec{h}_0 (\vec{x}) \Bigr| + (1+t) \Bigl| \omega(\vec{x},t; \bar{c}_f) \underset{\vec{x}}{\ast} \partial_{\vec{x}}^{k} g_0 (\vec{x}) \Bigr|\\
		&\quad + \Bigl| \omega(\vec{x},t; \bar{c}_f) \underset{\vec{x}}{\ast} \partial_{\vec{x}}^{k} f_0 (\vec{x}) \Bigr|\, \Bigr).
	\end{aligned}
\end{equation}
The convolutions involving $\omega, \omega_t$ in the right-hand side of  \eqref{g01kg} could be estimated by Parseval's identity:
\begin{equation} \label{wgl2}
	\begin{aligned}
		& \left\| \omega_t(\vec{x},t;\bar{c}_f) \underset{\vec{x}}{\ast} \partial_{\vec{x}}^{k} \vec{g}(\vec{x}) \right\|_{L^2} = \left\| \mathbb{F}[\omega_t](\vec{\eta},t;\bar{c}_f) \, \vec{\eta}^k \, \mathbb{F}[\vec{g}](\vec{\eta}) \right\|_{L_{\vec{\eta}}^2} \le \left\| \vec{\eta}^k \, \mathbb{F}[\vec{g}](\vec{\eta}) \right\|_{L_{\vec{\eta}}^2} = \left\|\partial_{\vec{x}}^k \vec{g} \right\|_{L^2},\\
		& \left\| \omega(\vec{x},t;\bar{c}_f) \underset{\vec{x}}{\ast} \partial_{\vec{x}}^{k} \vec{g}(\vec{x}) \right\|_{L^2} = \left\| \mathbb{F}[\omega](\vec{\eta},t;\bar{c}_f) \, \vec{\eta}^k \, \mathbb{F}[\vec{g}](\vec{\eta}) \right\|_{L_{\vec{\eta}}^2} = \left\|\frac{\sin(\bar{c}_f |\vec{\eta}| t)}{\bar{c}_f |\vec{\eta}|} \, \vec{\eta}^k \, \mathbb{F}[\vec{g}](\vec{\eta}) \right\|_{L_{\vec{\eta}}^2} \\
		& \quad \quad \quad \quad \quad \quad \quad \quad \quad \quad \, \, \, \le t \left\| \vec{\eta}^k \, \mathbb{F}[\vec{g}](\vec{\eta}) \right\|_{L_{\vec{\eta}}^2} = t \left\|\partial_{\vec{x}}^k \vec{g} \right\|_{L^2},\\
		& \left\| \omega(\vec{x},t;\bar{c}_f) \underset{\vec{x}}{\ast} \partial_{\vec{x}}^{k} \nabla g_0(\vec{x}) \right\|_{L^2} = \left\| \frac{\sin(\bar{c}_f |\vec{\eta}| t)}{\bar{c}_f} \, \vec{\eta}^k \, \mathbb{F}[g_0](\vec{\eta}) \right\|_{L_{\vec{\eta}}^2}  \le \frac{1}{\bar{c}_f} \left\| \vec{\eta}^k \, \mathbb{F}[g_0](\vec{\eta}) \right\|_{L_{\vec{\eta}}^2} = \frac{1}{\bar{c}_f}  \left\|\partial_{\vec{x}}^k g_0 \right\|_{L^2},\\
		& \left\| \omega(\vec{x},t;\bar{c}_f) \underset{\vec{x}}{\ast} \partial_{\vec{x}}^{k} \nabla \cdot \vec{h}_0(\vec{x}) \right\|_{L^2} = \left\| \frac{\sin(\bar{c}_f |\vec{\eta}| t)}{\bar{c}_f |\vec{\eta}|} \, \vec{\eta}^k \vec{\eta}^t \, \mathbb{F}[\vec{h}_0](\vec{\eta}) \right\|_{L_{\vec{\eta}}^2} \le \frac{1}{\bar{c}_f} \left\| \vec{\eta}^k \, \mathbb{F}[\vec{h}_0](\vec{\eta}) \right\|_{L_{\vec{\eta}}^2} = \frac{1}{\bar{c}_f} \left\|\partial_{\vec{x}}^k \vec{h}_0 \right\|_{L^2}.
	\end{aligned}
\end{equation}
Meanwhile, applying the convolution formulas \eqref{wg} and \eqref{wtg}, one derives an $L^\infty$ estimate. Specifically, for a function $F(\vec{x}) \in W^{1,\infty}(\mathbb{R}^3)$, the following estimate holds:
\begin{equation} \label{wgl00}
	\begin{aligned}
		& \omega(\vec{x},t;\bar{c}_f) \underset{\vec{x}}{\ast} F(\vec{x}) = \frac{t}{4 \pi} \int_{|\vec{y}|=1} F(\vec{x} + \bar{c}_f t\vec{y}) \, dS(\vec{y}) \le t \| F \|_{L^\infty},\\
		& \omega_t(\vec{x},t;\bar{c}_f) \underset{\vec{x}}{\ast} F(\vec{x}) = \frac{1}{4 \pi} \int_{|\vec{y}|=1} F(\vec{x} + \bar{c}_f t\vec{y}) \, dS(\vec{y}) + \frac{\bar{c}_f t}{4 \pi} \int_{|\vec{y}|=1} \nabla F(\vec{x} + \bar{c}_f t\vec{y}) \cdot \vec{y} \, dS(\vec{y})\\
		& \quad \quad \quad \quad \quad \quad \quad \ \ \le \| F \|_{L^\infty} + \bar{c}_f t \| \nabla F \|_{L^\infty}. 
	\end{aligned}
\end{equation}
Thus, combining \eqref{g01kg}--\eqref{wgl00}, one deduces that
\begin{equation} \label{g01kgl2}
	\begin{aligned}
		\left\| G_{0,1}^{s,*}(\vec{x},t) \underset{\vec{x}}{\ast} \partial_{\vec{x}}^{k} \vec{g}(\vec{x}) \right\|_{L^2}
		& \le C e^{-\mu_2 t} \left( \left\| \partial_{\vec{x}}^k g_0 \right\|_{L^2} + \left\| \partial_{\vec{x}}^k f_0 \right\|_{L^2} \right) + C e^{-\mu_1 t} \Bigl[ \bigl(1 + t(1+t)\bigr) \left\|\partial_{\vec{x}}^k g_0 \right\|_{L^2} + \left\|\partial_{\vec{x}}^k \vec{h}_0 \right\|_{L^2} \\
		& \quad + t\left\|\partial_{\vec{x}}^k f_0 \right\|_{L^2} \Bigr]\\
		& \le C \left(e^{-\mu_2 t} + e^{-\mu_1 t/2}\right) \left\| \partial_{\vec{x}}^k \vec{g} \right\|_{L^2}
	\end{aligned}
\end{equation}
and
\begin{equation} \label{g01kgl00}
	\begin{aligned}
		\left\| G_{0,1}^{s,*}(\vec{x},t) \underset{\vec{x}}{\ast} \partial_{\vec{x}}^{k} \vec{g}(\vec{x}) \right\|_{L^\infty}
		& \le C e^{-\mu_2 t} \left( \left\| \partial_{\vec{x}}^k g_0 \right\|_{L^\infty} + \left\| \partial_{\vec{x}}^k f_0 \right\|_{L^\infty} \right) + C e^{-\mu_1 t} \Bigl[ \bigl(1 + t(1+t)\bigr) \left\|\partial_{\vec{x}}^k g_0 \right\|_{L^\infty} \\
		& \quad + t \left\|\partial_{\vec{x}}^k \nabla g_0 \right\|_{L^\infty} + t \left\|\partial_{\vec{x}}^k \nabla \cdot \vec{h}_0 \right\|_{L^\infty} + t\left\|\partial_{\vec{x}}^k f_0 \right\|_{L^\infty} \Bigr]\\
		& \le C \left(e^{-\mu_2 t} + e^{-\mu_1 t/2}\right) \left( \left\| \partial_{\vec{x}}^k g_0 \right\|_{L^\infty} + \left\| \partial_{\vec{x}}^k f_0 \right\|_{L^\infty} \right)\\
		& \quad + C e^{-\mu_1 t/2} \left( \left\|\partial_{\vec{x}}^k \nabla g_0 \right\|_{L^\infty} + \left\|\partial_{\vec{x}}^k \nabla \cdot \vec{h}_0 \right\|_{L^\infty} \right).  
	\end{aligned}
\end{equation}

Finally, noting that $\Delta^{-1} \nabla \nabla^t \vec{h}_0 = \vec{h}_0$ under the irrotationality condition $\nabla \times \vec{h}_0 = 0$, the following $L^2$ and $L^\infty$ estimates for the spatial convolution $G_{0,2}^s (\vec{x},t) \underset{\vec{x}}{\ast} \partial_{\vec{x}}^{k} \vec{g}(\vec{x})$ are obtained:
\begin{equation} \label{g02kgl2}
	\begin{aligned}
		\left\| G_{0,2}^s (\vec{x},t) \underset{\vec{x}}{\ast} \partial_{\vec{x}}^{k} \vec{g}(\vec{x}) \right\|_{L^2} & \le C e^{-\mu_1 t} \left( \left\| \omega_t(\vec{x},t; \bar{c}_f) \underset{\vec{x}}{\ast} \partial_{\vec{x}}^{k} \vec{h}_0 (\vec{x}) \right\|_{L^2} + (1+t) \left\| \omega (\vec{x},t; \bar{c}_f) \underset{\vec{x}}{\ast} \partial_{\vec{x}}^{k} \vec{h}_0 (\vec{x}) \right\|_{L^2} \right)\\
		& \le C \bigl( 1 + t(1+t) \bigr) e^{-\mu_1 t} \left\| \partial_{\vec{x}}^{k} \vec{h}_0 \right\|_{L^2} \le C e^{-\mu_1 t/2} \left\| \partial_{\vec{x}}^{k} \vec{h}_0 \right\|_{L^2}
	\end{aligned}
\end{equation}
and
\begin{equation} \label{g02kgl00}
	\begin{aligned}
		\left\| G_{0,2}^s (\vec{x},t) \underset{\vec{x}}{\ast} \partial_{\vec{x}}^{k} \vec{g}(\vec{x}) \right\|_{L^\infty} & \le C e^{-\mu_1 t} \left( \left\| \omega_t(\vec{x},t; \bar{c}_f) \underset{\vec{x}}{\ast} \partial_{\vec{x}}^{k} \vec{h}_0 (\vec{x}) \right\|_{L^\infty} + (1+t) \left\| \omega (\vec{x},t; \bar{c}_f) \underset{\vec{x}}{\ast} \partial_{\vec{x}}^{k} \vec{h}_0 (\vec{x}) \right\|_{L^\infty} \right)\\
		& \le C e^{-\mu_1 t} \left[ \bigl( 1 + t(1+t) \bigr) \left\| \partial_{\vec{x}}^{k} \vec{h}_0 \right\|_{L^\infty} + t \left\| \partial_{\vec{x}}^{k} \nabla \vec{h}_0 \right\|_{L^\infty} \right]\\
		& \le C e^{-\mu_1 t/2} \left(\left\| \partial_{\vec{x}}^{k} \vec{h}_0 \right\|_{L^\infty} + \left\| \partial_{\vec{x}}^{k} \nabla \vec{h}_0 \right\|_{L^\infty} \right). 
	\end{aligned}
\end{equation}

By synthesizing the results from \eqref{gskg}--\eqref{exponkglp} and \eqref{g01kgl2}--\eqref{g02kgl00}, one concludes that there exist positive constants $C_2$ and $C_0$ such that
\begin{equation} \nonumber
	\begin{aligned}
		\left\| \partial_{\vec{x}}^{k} \mathbb{G}_S^t \vec{g} \right\|_{L^2} \le C \left(e^{-\mu_2 t} + e^{-\mu_1 t/2} + e^{-t/C} \right) \left\| \partial_{\vec{x}}^k \vec{g} \right\|_{L^2} \le C_2 e^{-t/C_2} \left\| \partial_{\vec{x}}^k \vec{g} \right\|_{L^2} 
	\end{aligned}
\end{equation}
and
\begin{equation} \nonumber
	\begin{aligned}
		\left\| \partial_{\vec{x}}^{k} \mathbb{G}_S^t \vec{g} \right\|_{L^\infty} & \le C \left(e^{-\mu_2 t} + e^{-\mu_1 t/2} + e^{-t/C} \right) \left\| \partial_{\vec{x}}^k \vec{g} \right\|_{L^\infty} + C e^{-\mu_1 t/2} \left( \left\|\partial_{\vec{x}}^k \nabla g_0 \right\|_{L^\infty} + \left\|\partial_{\vec{x}}^k \nabla \vec{h}_0 \right\|_{L^\infty} \right)\\
		& \le C_0 e^{-t/C_0} \left(\left\| \partial_{\vec{x}}^k \vec{g} \right\|_{L^\infty} + \left\|\partial_{\vec{x}}^k \nabla g_0 \right\|_{L^\infty} + \left\|\partial_{\vec{x}}^k \nabla \vec{h}_0 \right\|_{L^\infty} \right),
	\end{aligned}
\end{equation}
and we finish the proof of \eqref{glps2} and \eqref{glps0}.
\end{proof}

\subsection{Proof of Lemma~\ref{impr}}
\begin{proof}
	From Lemma~3.5 of \cite{z1}, it holds that for $|\vec{\eta}| \le r\ll 1$,
	\begin{equation} \label{Fgl00}
		\begin{aligned}
			&  \left|e^{\lambda_{+}(|\vec{\eta}|)t}P_{+}(\vec{\eta}) + e^{\lambda_{-}(|\vec{\eta}|)t}P_{-}(\vec{\eta}) + e^{\lambda_{d}(|\vec{\eta}|)t}P_{d}(\vec{\eta})\right| \\
			& =\left| e^{\left[-\frac{\bar{\alpha}_1}{2} |\vec{\eta}|^2 + \lambda_R(|\vec{\eta}|^2)\right]t} \Bigl[ \cos\bigl( |\vec{\eta}|\bigl(\bar{c} + \lambda_I(|\vec{\eta}|^2)\bigr)t \bigr) \left( P_{+}(\vec{\eta}) + P_{-}(\vec{\eta}) \right) + i \sin\bigl( |\vec{\eta}|\bigl(\bar{c} + \lambda_I(|\vec{\eta}|^2)\bigr)t \bigr) \right.\\
			& \quad \left.\cdot \left( P_{+}(\vec{\eta}) -  P_{-}(\vec{\eta}) \right) \Bigr] + e^{\left[-\zeta + \bar{\alpha}_1 |\vec{\eta}|^2 - 2\lambda_R(|\vec{\eta}|^2)\right]t} P_d(\vec{\eta}) \right|\\
			& \le C e^{\left[-\frac{\bar{\alpha}_1}{2} |\vec{\eta}|^2 + \lambda_R(|\vec{\eta}|^2)\right]t} \left[ 
			\begin{pmatrix}
				1 & |\vec{\eta}| e_{*}^t & 1\\
				|\vec{\eta}| e_{*} & e_{*} e_{*}^t & |\vec{\eta}| e_{*}\\
				|\vec{\eta}|^2 & |\vec{\eta}| e_{*}^t & |\vec{\eta}|^2
			\end{pmatrix} + 	
			\begin{pmatrix}
				|\vec{\eta}| & e_{*}^t & |\vec{\eta}| \\
				e_{*} & |\vec{\eta}| e_{*} e_{*}^t & e_{*}\\
				|\vec{\eta}| & |\vec{\eta}|^2 e_{*}^t & |\vec{\eta}|
			\end{pmatrix}\right]\\
			 &\quad + C e^{\left[-\zeta + \bar{\alpha}_1 |\vec{\eta}|^2 - 2\lambda_R(|\vec{\eta}|^2)\right]t}
			\begin{pmatrix}
				|\vec{\eta}|^2 & |\vec{\eta}| e_{*}^t & 1\\
				|\vec{\eta}|^3 e_{*} & |\vec{\eta}|^2 e_{*} e_{*}^t & |\vec{\eta}| e_{*}\\
				|\vec{\eta}|^2 & |\vec{\eta}| e_{*}^t & 1
			\end{pmatrix},
		\end{aligned}
	\end{equation}
	where $e_{*} := (1,1,1)^t$, and $\lambda_{\pm}$, 
	$\lambda_{d}$, $0$ and $P_{\pm}$, $P_d$, $P_0$ denote the eigenvalues and eigenprojections corresponding of $E(\vec{\eta}):= -
	\begin{pmatrix}
		0 & \gamma \bar{p} i \vec{\eta}^t & \frac{2}{\alpha \bar{v}}\\
		\bar{v} i \vec{\eta} & 0 & 0\\
		0 & \frac{\alpha_f}{\alpha} \bar{p} \bar{v} i \vec{\eta} & \left(1 + \frac{\alpha_f}{\alpha}\right)
	\end{pmatrix}$, the Fourier transform of the coefficient matrix given in \eqref{greeng}, respectively. Moreover, in \eqref{Fgl00}, $\lambda_R(|\vec{\eta}|^2)=\mathcal{O}(|\vec{\eta}|^4)$ and $\lambda_I(|\vec{\eta}|^2)=\mathcal{O}(|\vec{\eta}|^2)$ are analytic functions of $|\vec{\eta}|^2$. Thus, one has that
	\begin{equation} \label{Fgles}
		\begin{aligned}
			\left|\mathbb{F}[G_L](\vec{\eta},t)\right| &= \left|H(r- |\vec{\eta}|) \left(e^{\lambda_{+}(|\vec{\eta}|)t}P_{+}(\vec{\eta}) + e^{\lambda_{-}(|\vec{\eta}|)t}P_{-}(\vec{\eta}) + e^{\lambda_{d}(|\vec{\eta}|)t}P_{d}(\vec{\eta})\right)\right| \\
			&\le CH(r- |\vec{\eta}|) \left[ e^{-\bar{\alpha}_1 |\vec{\eta}|^2 t/4} 
			\begin{pmatrix}
				1 & e_{*}^t & 1\\
				e_{*} & e_{*} e_{*}^t & e_{*}\\
				|\vec{\eta}| & |\vec{\eta}| e_{*}^t & |\vec{\eta}|
			\end{pmatrix}
		+e^{-\zeta t/2}
			\begin{pmatrix}
				|\vec{\eta}|^2 & |\vec{\eta}| e_{*}^t & 1\\
				|\vec{\eta}|^3 e_{*} & |\vec{\eta}|^2 e_{*} e_{*}^t & |\vec{\eta}| e_{*}\\
				|\vec{\eta}|^2 & |\vec{\eta}| e_{*}^t & 1
			\end{pmatrix}.\right] 
		\end{aligned}
	\end{equation}
	Here, $e^{-\bar{\alpha}_1 |\vec{\eta}|^2 t/4} = e^{\bar{\alpha}_1 |\vec{\eta}|^2/4} \cdot  e^{-\bar{\alpha}_1 |\vec{\eta}|^2 (1+t)/4} \le e^{\bar{\alpha}_1 r^2/4} \cdot e^{-\bar{\alpha}_1 |\vec{\eta}|^2 (1+t)/4}$.
	
	Combining \eqref{Fgles} with Parseval's identity, one obtains that:
	$$\begin{aligned}
		\left\|\partial_{\vec{x}}^k \mathbb{G}_{L,i}^t \vec{g} \right\|_{L^2} = \left\|\mathbb{F}[G_{L,i}](\vec{\eta},t) \, \vec{\eta}^k \, \mathbb{F}[\vec{g}](\vec{\eta}) \right\|_{L_{\vec{\eta}}^2} & \le C(1+t)^{-\frac{|k|}{2}-d_i} \left\|\mathbb{F}[\vec{g}] \right\|_{L_{\vec{\eta}}^2}\\
		& = C(1+t)^{-\frac{|k|}{2}-d_i} \left\|\vec{g}\right\|_{L^2},
	\end{aligned}$$
	which verifies \eqref{impr0} and completes the proof.
\end{proof}

\section{Appendix $\RN{2}$: Energy Estimates for High Order Derivatives}

\begin{proof} [\textbf{Proof of Theorem~\ref{genes}.}] First, assume that the a priori assumption
\begin{equation} \label{prioglo}
		\sup_{0 \le t \le T} \left\|(\rho-\bar{\rho}, \vec{u}, e_1-\bar{e}_1, q-\bar{q})(\cdot, t)\right\|_{H^l} \le \delta_l,
	\end{equation}
	holds for a sufficiently small positive constant $\delta_l$.
	
	For some multi-index $k$ with $1\le |k|\le l$, using the positive definite matrix {{$\begin{pmatrix}1+b&0&-\frac{c_f^2}{a}b\rho\\0&\gamma\rho p&0\\-\frac{c_f^2}{a}b\rho&0&\left(\frac{c_f^2}{a}\rho\right)^2b\end{pmatrix}$}}, applying $\partial^k_{\vec{x}}$ to \eqref{vnep} yields that
	{{\begin{equation} \label{puxkt}
	\begin{cases}
	\left(\partial^k_{\vec x}p\right)_t+\vec u\cdot\nabla\partial^k_{\vec x}p=-\frac{2}{\alpha}\partial^k_{\vec x}\left(\rho\chi\right)-\gamma \partial^k_{\vec x}\left(p\nabla\cdot \vec u\right)-\Big[\partial^k_{\vec x},\vec u\cdot\nabla\Big]p,\\
	\left(\partial^k_{\vec x}\vec u\right)_t+\vec u\cdot\nabla\partial^k_{\vec x}\vec u=-\partial^k_{\vec x}\left(v\nabla p\right)-\Big[\partial^k_{\vec x},\vec u\cdot\nabla\Big]\vec u,\\
	\left(\partial^k_{\vec x}p-\frac{c_f^2}{a}\rho\partial^k_{\vec x}\chi\right)_t+\vec u\cdot\nabla\left(\partial^k_{\vec x}p-\frac{c_f^2}{a}\rho\partial^k_{\vec x}\chi\right)-\left(\frac{c_f^2}a+1\right)\rho \partial^k_{\vec x}\chi\\
	\ \ \ \ \ \ \ \ \ \ \ \ \ \ \ \ \ \ \ \ \ \ \ =\frac{c_f^2}a\rho\nabla\cdot \vec u\partial^k_{\vec x}\chi-\Big[\partial^k_{\vec x},\vec u\cdot\nabla\Big]p+\frac{c_f^2}a\rho\Big[\partial^k_{\vec x},\vec u\cdot\nabla\Big]\chi+\gamma \rho \Big[\partial^k_{\vec x},v\Big] \! \left(p\nabla\cdot \vec u\right)-\frac 2\alpha\Big[\partial^k_{\vec x},\rho\Big]\chi,
	\end{cases}
	\end{equation}
	and
	\begin{equation}\label{eem2}
	\left(\partial^k_{\vec x}s\right)_t+\vec u\cdot\nabla\partial^k_{\vec x}s=\frac 2{\alpha_fR}\partial^k_{\vec x}\left(\frac{\chi^2}{T_1T_2}\right)-\Big[\partial^k_{\vec x},\vec u\cdot\nabla\Big]s.
	\end{equation}}}
	Multiply \eqref{puxkt} by $\left(\partial^k_{\vec x}p,c_f^2\rho^2\partial^k_{\vec x}\vec u,b\left(\partial^k_{\vec x}p-\frac{c_f^2}{a}\rho\partial^k_{\vec x}\chi\right)\right)^t$ to obtain that
	\begin{equation} \label{hl221}
		\begin{aligned}
			& \frac{1}{2} \frac{d}{dt} \left( \left| \partial_{\vec{x}}^k p\right|^2 + \gamma \rho p \left| \partial_{\vec{x}}^k \vec{u} \right|^2 + b\left|\partial_{\vec{x}}^k p - \frac{c_f^2}{a} \rho \partial_{\vec{x}}^k \chi \right|^2 \right) + 2\alpha \frac{c_f^2}{a} \left|\rho \partial_{\vec{x}}^k \chi \right|^2 \\
			& \quad + \frac{1}{2} \nabla \cdot \left(\vec{u} \Biggl( \left| \partial_{\vec{x}}^k p \right|^2 + \gamma \rho p \left|\partial_{\vec{x}}^k \vec{u} \right|^2 + b\left|\partial_{\vec{x}}^k p - \frac{c_f^2}{a} \rho \partial_{\vec{x}}^k \chi \right|^2 \Biggr) + 2 \gamma p \partial_{\vec{x}}^k \vec{u} \, \partial_{\vec{x}}^k p \right) \\
			& = \frac{1}{2} \nabla \cdot \vec{u} \left( (1+b) \left|\partial_{\vec{x}}^k p \right|^2-\rho p \left|\gamma \partial_{\vec{x}}^k \vec{u} \right|^2-b\left|\frac{c_f^2}{a} \rho \partial_{\vec{x}}^k \chi \right|^2 \right) + \gamma \left(\nabla p \cdot \partial_{\vec{x}}^k \vec{u} \, \partial_{\vec{x}}^k p -\frac{\chi}{\alpha} \left| \rho \partial_{\vec{x}}^k \vec{u} \right|^2 \right) \\
			& \quad -\left( (1+b) \partial_{\vec{x}}^k p-b\frac{c_f^2}{a} \rho \partial_{\vec{x}}^k \chi \right) \! \left(\bigl[\partial_{\vec{x}}^k, \vec{u} \cdot \nabla \bigr]p + \frac{2}{\alpha} \bigl[\partial_{\vec{x}}^k, \rho\bigr] \chi \right) \\
			& \quad +b \rho \left(\partial_{\vec{x}}^k p - \frac{c_f^2}{a} \rho \partial_{\vec{x}}^k \chi \right) \! \left( \frac{c_f^2}{a} \bigl[\partial_{\vec{x}}^k, \vec{u} \cdot \nabla\bigr]\chi + \gamma \bigl[\partial_{\vec{x}}^k,v\bigr](p \nabla \cdot \vec{u}) \right) \\
			& \quad -\gamma \partial_{\vec{x}}^k p \, \bigl[\partial_{\vec{x}}^k,p\bigr] (\nabla \cdot \vec{u}) - \gamma \rho p \partial_{\vec{x}}^k \vec{u} \cdot \left( \bigl[\partial_{\vec{x}}^k, \vec{u} \cdot \nabla\bigr] \vec{u} + \bigl[\partial_{\vec{x}}^k,v\bigr]\nabla p\right).
		\end{aligned}
	\end{equation}
Summing \eqref{hl221} over the multi-index $k$ satisfying $|k| \in [m,l]$ and integrating the sum over $\mathbb{R}^3$ with respect to the spatial variable $\vec{x}$, one obtains the following inequality:
	\begin{equation} \label{puxm}
		\frac{d}{dt} \left\|\nabla^m (p, \vec{u}, \chi)(\cdot, t)\right\|_{H^{l-m}}^2 + \left\| \nabla^m \chi (\cdot, t)\right\|_{H^{l-m}}^2 \le C \delta_l \left(\left\|\nabla^m (p, \vec{u}, {{\chi}})(\cdot, t)\right\|_{H^{l-m}}^2 + {{\left\| (\nabla p, \nabla \cdot \vec{u}, \chi)(\cdot, t)\right\|_{L^{\infty}}^2}} \right),
	\end{equation}	
under the assumption \eqref{prioglo} and the Sobolev inequalities \eqref{sob1}--\eqref{sob2}.	
	
	Next, one treats the equations in \eqref{vnep} separately. Applying $\partial^k_{\vec{x}} (|k|\le l-1)$ to the equation for $\chi$ in \eqref{vnep} and then multiplying it by $\partial^k_{\vec{x}} \left(\nabla\cdot\vec u\right)$ yields
	\begin{equation} \label{hl223}
		\begin{aligned}
			\frac{\alpha_f}{\alpha} pv \left| \partial_{\vec{x}}^k (\nabla \cdot \vec{u} )\right|^2 = & -\frac{d}{dt} \left(\partial_{\vec{x}}^k \chi \, \partial_{\vec{x}}^k (\nabla \cdot \vec{u}) \right) + \nabla \cdot \left(\partial_{\vec{x}}^k \chi \, \partial_{\vec{x}}^k \partial_t \vec{u} \right) -\partial_{\vec{x}}^k(\vec{u} \cdot \nabla \vec{u} + v \nabla p) \cdot \nabla \partial_{\vec{x}}^k \chi \\
			& -\left(\zeta \partial_{\vec{x}}^k \chi + \partial_{\vec{x}}^k(\vec{u} \cdot \nabla \chi) + \frac{\alpha_f}{\alpha} [\partial_{\vec{x}}^k, pv](\nabla \cdot \vec{u}) \right) \! \partial_{\vec{x}}^k(\nabla \cdot \vec{u}),
		\end{aligned}
		\end{equation}
where one also makes use of the equations for $\vec u$ in \eqref{vnep}. Similarly, one could also use the equations for $p$ and $\vec u$ to obtain that
	\begin{equation} \label{hl224}
		\begin{aligned}
			v\left|\nabla \partial_{\vec{x}}^k p \right|^2 &= -\frac{d}{dt} \left( \partial_{\vec{x}}^k \vec{u} \cdot \partial_{\vec{x}}^k \nabla p \right) + \nabla \cdot \left(\partial_{\vec{x}}^k \vec{u} \, \partial_{\vec{x}}^k \partial_t p \right) + \partial_{\vec{x}}^k \! \left(\vec{u} \cdot \nabla p + \gamma p \nabla \cdot \vec{u} + \frac{2}{\alpha} \rho \chi \right) \! \partial_{\vec{x}}^k (\nabla \cdot \vec{u}) \\
			&\quad - \left(\partial_{\vec{x}}^k (\vec{u} \cdot \nabla \vec{u}) + \bigl[\partial_{\vec{x}}^k,v\bigr] \nabla p\right) \cdot  \partial_{\vec{x}}^k \nabla p.
		\end{aligned}
	\end{equation}
	Summing \eqref{hl223} and \eqref{hl224} respectively over the multi-index $k$ with $|k| \in [m, l-1]$ and integrating the sums over $\mathbb{R}^3$, one could find a small constant $\tilde{C}_0 \in (0,1)$ by Cauchy-Schwarz inequality such that for any $\tilde{\epsilon}_0 \in (0,\tilde{C}_0)$ the following estimates hold:
	\begin{equation} \label{uimpro}
		\begin{aligned}
			& \frac{d}{dt} \sum_{m \le |k| \le l-1} \int_{\mathbb{R}^3} \partial_{\vec{x}}^k \chi \, \partial_{\vec{x}}^k (\nabla \cdot \vec{u}) \, d\vec{x} + \left\| \nabla^m (\nabla \cdot \vec{u})(\cdot,t) \right\|_{H^{l-m-1}}^2 \\
			& \le C\left( \tilde{\epsilon}_0^{-1} + \delta_l \right) \left\| \nabla^{{{m+1}}} \chi (\cdot,t) \right\|_{H^{l-m-1}}^2 + C \tilde{\epsilon}_0 \left(\left\| \nabla^{m+1} p(\cdot,t) \right\|_{H^{l-m-1}}^2 {{+ \left\| \nabla p(\cdot,t) \right\|_{L^\infty}^2}} \right)\\
			& \quad {{+ C \delta_l \left(\left\| \nabla^{l-2} (\nabla \cdot \vec{u})(\cdot,t) \right\|_{L^2}^2 + \left\| \nabla \vec{u}(\cdot,t) \right\|_{L^\infty}^2 + \left\| \nabla \chi (\cdot,t) \right\|_{L^\infty}^2  \right),}}
		\end{aligned}
	\end{equation}
	\begin{equation} \label{pimpro}
		\begin{aligned}
			& \frac{d}{dt} \sum_{m \le |k| \le l-1} \int_{\mathbb{R}^3} \partial_{\vec{x}}^k \vec{u} \cdot \partial_{\vec{x}}^k \nabla p  \, d\vec{x} + \left\| \nabla^{m+1} p (\cdot,t) \right\|_{H^{l-m-1}}^2 \\
			& \le C\left( 1 + \delta_l \right) \left\| \nabla^{m+1} \vec{u} (\cdot,t) \right\|_{H^{l-m-1}}^2 + C\left(  \left\| \nabla^{m+1} \chi (\cdot,t) \right\|_{H^{l-m-2}}^2 + \left\| \chi (\cdot,t) \right\|_{L^\infty}^2 \right) + C \delta_l  \left\| \nabla p(\cdot,t) \right\|_{L^\infty}^2.
		\end{aligned}
	\end{equation}
	Here one uses the fact that
	\begin{equation} \label{irrotation}
		\left\|\nabla \vec{u}(\cdot,t) \right\|_{H^{k}}=\left\|\nabla \cdot \vec{u}(\cdot,t) \right\|_{H^{k}}\ \ \forall k\in\mathbb{N},
	\end{equation}
	which could be verified by combining the irrotationality of $\vec{u}$ and the Hodge's decomposition
	\begin{equation}\label{hodge}
	\vec u=\Delta^{-1}\nabla\nabla\cdot \vec u-\Delta^{-1}\nabla\times\nabla\times\vec u.
	\end{equation}
Consider \eqref{uimpro}$+\tilde{\epsilon}_1 \, \eqref{pimpro}$ by choosing a sufficiently small constant $\tilde{\epsilon}_1 \in (0,1)$ and if $\tilde{\epsilon}_0 \ll \tilde{\epsilon}_1$ it holds that
	\begin{equation} \label{puimpro}
		\begin{aligned}
			& \frac{d}{dt} \sum_{m \le |k| \le l-1} \int_{\mathbb{R}^3} \left( \partial_{\vec{x}}^k \chi \, \partial_{\vec{x}}^k (\nabla \cdot \vec{u}) + \tilde{\epsilon}_1 \partial_{\vec{x}}^k \vec{u} \cdot  \partial_{\vec{x}}^k \nabla p \right) d\vec{x}  \\
			& \quad + \frac{1}{2} \left\| \nabla^m (\nabla \cdot \vec{u})(\cdot,t) \right\|_{H^{l-m-1}}^2+ \frac{\tilde{\epsilon}_1}{2} \left\| \nabla^{m+1} p (\cdot,t) \right\|_{H^{l-m-1}}^2 \\
			& \le C\left( \tilde{\epsilon}_0^{-1} + \delta_l + \tilde{\epsilon}_1 \right) \left\| \nabla^{m+1} \chi (\cdot,t) \right\|_{H^{l-m-1}}^2 + C \tilde{\epsilon}_1 \left\| \chi (\cdot,t) \right\|_{L^\infty}^2 + C \left( \tilde{\epsilon}_0 + \tilde{\epsilon}_1 \delta_l \right) \left\| \nabla p(\cdot,t) \right\|_{L^\infty}^2 \\
			& \quad + C \delta_l \left( \left\| \nabla \vec{u}(\cdot,t) \right\|_{L^\infty}^2 + \left\| \nabla \chi (\cdot,t) \right\|_{L^\infty}^2  \right),
		\end{aligned}
	\end{equation}
where $l \ge m+2$ is required to ensure that the term $\delta_l\| \nabla^{l-2} (\nabla \cdot \vec{u})(\cdot,t)\|_{L^2}^2$ in \eqref{uimpro} could be controlled by the left-hand side term $\| \nabla^m (\nabla \cdot \vec{u})(\cdot,t)\|_{H^{l-m-1}}^2$ for $0<\delta_l\ll1$. 

Similarly, there exists a sufficiently small constant $\tilde{\epsilon}_2 \in (0,1)$ such that \eqref{puxm} $+ \tilde{\epsilon}_2 \, \eqref{puimpro}$ yields that
	\begin{equation} \label{puxenes}
		\begin{aligned}
			& \frac{d}{dt} \left\|\nabla^m (p, \vec{u}, \chi) (\cdot, t)\right\|_{H^{l-m}}^2 + \tilde{\epsilon}_2 \frac{d}{dt} \sum_{m \le |k| \le l-1} \int_{\mathbb{R}^3} \left( \partial_{\vec{x}}^k \chi \, \partial_{\vec{x}}^k (\nabla \cdot \vec{u}) + \tilde{\epsilon}_1  \partial_{\vec{x}}^k \vec{u} \cdot \partial_{\vec{x}}^k \nabla p \right) d\vec{x} \\
			& \quad + \frac{\tilde{\epsilon}_1 \tilde{\epsilon}_2}{4} \left\| \nabla^{m+1} p(\cdot,t) \right\|_{H^{l-m-1}}^2 + \frac{\tilde{\epsilon}_2}{4} \left\| \nabla^m (\nabla \cdot \vec{u})(\cdot,t) \right\|_{H^{l-m-1}}^2  + \frac{1}{2} \left\| \nabla^m \chi (\cdot,t) \right\|_{H^{l-m}}^2 \\
			& \le C \delta_l \left\|\nabla^m (p, \vec{u})(\cdot, t)\right\|_{L^2}^2 + C \bigl( \delta_l + \tilde{\epsilon}_2 \left( \tilde{\epsilon}_0 + \tilde{\epsilon}_1 \delta_l \right) \bigr) \left\| \nabla p(\cdot,t) \right\|_{L^\infty}^2 + C \delta_l \left( 1 + \tilde{\epsilon}_2 \right) \left\| \nabla \vec{u}(\cdot,t) \right\|_{L^\infty}^2 \\
			& \quad + C \left( \delta_l + \tilde{\epsilon}_1 \tilde{\epsilon}_2 \right)\left\| \chi (\cdot,t) \right\|_{L^\infty}^2 + C \tilde{\epsilon}_2 \delta_l \left\| \nabla \chi (\cdot,t) \right\|_{L^\infty}^2.  
		\end{aligned}
	\end{equation}
	
		Note that $\chi(\vec{x},t)$ decays faster than $p(\vec{x},t)$ and $\vec{u}(\vec{x},t)$ by a factor of $(1+t)^{-1/2}$ and the following procedure is adopted to preserve this difference: Differentiate the equation for $\chi$ in \eqref{vnep} to obtain the equation for $\partial_{\vec{x}}^k\chi$, then multiply it by $\partial_{\vec{x}}^k \chi$ and integrate the product over $\mathbb{R}^3$ to derive that
	\begin{equation} \label{xk}
		\frac{1}{2} \frac{d}{dt} \left\|\partial_{\vec{x}}^k \chi (\cdot, t)\right\|_{L^2}^2 + \zeta \left\|\partial_{\vec{x}}^k \chi (\cdot, t)\right\|_{L^2}^2 = \frac{1}{2} \int_{\mathbb{R}^3} (\nabla \cdot \vec{u}) ( \partial_{\vec{x}}^k \chi )^2 d\vec{x} - \int_{\mathbb{R}^3} \partial_{\vec{x}}^k \chi \left([\partial_{\vec{x}}^k, \vec{u} \cdot \nabla]\chi + \partial_{\vec{x}}^k (a \nabla \cdot \vec{u}) \right) d\vec{x},
	\end{equation}
	where $\zeta$ and $a$ are defined in \eqref{equ}. For $|k|=0$, \eqref{xk} goes back to  
	$$ \frac{d}{dt} \left\| \chi (\cdot, t)\right\|_{L^2}^2 + \left\| \chi (\cdot, t)\right\|_{L^2}^2 \le C\left\| \nabla \cdot \vec{u}(\cdot, t)\right\|_{L^2}^2,$$
	which is obtained in \cite{hl}.
	For $|k| \ge 1$, upon summing \eqref{xk} over $k$ with $|k|=m-1$, one has that 
	\begin{equation} \label{xm1}
		\begin{aligned}
			\frac{d}{dt} \left\|\nabla^{m-1} \chi (\cdot, t)\right\|_{L^2}^2 + \zeta \left\|\nabla^{m-1} \chi (\cdot, t)\right\|_{L^2}^2 & \le C\delta_{m_3}{{ \left( \left\|\nabla \cdot \vec{u} (\cdot, t)\right\|_{L^\infty}^2 +  \left\|\nabla \chi (\cdot, t)\right\|_{L^\infty}^2 \right)}}\\
			& \quad + C \tilde{\epsilon}_3^{-1} \left\|\nabla^{m-1} (\nabla \cdot \vec{u})(\cdot, t)\right\|_{L^2}^2
		\end{aligned}
	\end{equation}
	for $m_3:=\max\{3,m-1\}$ and a sufficiently small constant $\tilde{\epsilon}_3 \in (0,1)$. Here, the requirement $m_3 \ge 3$ comes from the estimate ${{\|\nabla (\vec{u},\chi)(\cdot, t)\|_{L^\infty} \le C\|\nabla^2 (\vec{u}, \chi)(\cdot, t)\|_{H^1}}} \le C \delta_3 \le C \delta_{m_3}$ used in the aforementioned derivation steps.
	
	Finally, one could combine \eqref{puxenes} and \eqref{xm1} to obtain that
	\begin{equation} \label{puximpro0}
		\begin{aligned}
			& \frac{d}{dt} \left( \left\|\nabla^{m-1} \chi(\cdot, t)\right\|_{H^{l-m+1}}^2  + \left\|\nabla^m (p, \vec{u})(\cdot, t)\right\|_{H^{l-m}}^2 \right) + \tilde{\epsilon}_2 \frac{d}{dt}  \sum_{m \le |k| \le l-1} \int_{\mathbb{R}^3} \Bigl( \partial_{\vec{x}}^k \chi \, \partial_{\vec{x}}^k (\nabla \cdot \vec{u})\\
			& \quad + \tilde{\epsilon}_1 \partial_{\vec{x}}^k \vec{u} \cdot  \partial_{\vec{x}}^k \nabla p \Bigr) d\vec{x} + \frac{\tilde{\epsilon}_1 \tilde{\epsilon}_2}{4} \left\| \nabla^m p(\cdot,t) \right\|_{H^{l-m}}^2 + \frac{\tilde{\epsilon}_2}{4} \left\| \nabla^{m-1} (\nabla \cdot \vec{u})(\cdot,t) \right\|_{H^{l-m}}^2  + \frac{1}{2} \left\| \nabla^{m-1} \chi(\cdot,t) \right\|_{H^{l-m+1}}^2 \\
			& \le C \left\|\nabla^m (p, \vec{u})(\cdot, t)\right\|_{L^2}^2 + 
			C \left( \delta_l + \tilde{\epsilon}_0 \tilde{\epsilon}_2 \right) \left\| \nabla p(\cdot,t) \right\|_{L^\infty}^2 + 
			C \delta_l  \left\| \nabla \vec{u}(\cdot,t) \right\|_{L^\infty}^2 + C \left( \delta_l + \tilde{\epsilon}_1 \tilde{\epsilon}_2 \right) \left\| \chi(\cdot,t) \right\|_{W^{1,\infty}}^2,
		\end{aligned}\end{equation}
		where one also adds $\|\nabla^m (p, \vec{u})(\cdot, t)\|_{L^2}^2$ to both sides so that the terms $\| \nabla^{m+1} p(\cdot,t)\|_{H^{j}}^2$, $\| \nabla^{m} (\nabla \cdot \vec{u})(\cdot,t)\|_{H^{j}}^2$ turn into $\|\nabla^m p(\cdot,t)\|_{H^{j+1}}^2$, $\| \nabla^{m-1} (\nabla \cdot \vec{u})(\cdot,t) \|_{H^{j+1}}^2$, respectively, with $j=l-m-1$. 
	
	Take $m=1$ in \eqref{puxenes}, together with the smallness of $\delta_l, \tilde{\epsilon}_0, \tilde{\epsilon}_1$ and $\tilde{\epsilon}_2$, the parameter hierarchy $\tilde{\epsilon}_0 \ll \tilde{\epsilon}_1$, and the Sobolev embedding inequality
	\begin{equation} \label{sobemb}
		\left\| f(\cdot,t) \right\|_{L^\infty} \le C \left\| \nabla f(\cdot,t) \right\|_{H^1} \le \left\| \nabla f(\cdot,t) \right\|_{H^{l-2}} , \quad \text{for } l \ge 3
	\end{equation}
	to obtain that
	\begin{equation} \nonumber
		\begin{aligned}
			& \frac{d}{dt} \left\|\nabla (p, \vec{u}, \chi)(\cdot, t)\right\|_{H^{l-1}}^2 + \tilde{\epsilon}_2 \frac{d}{dt} \sum_{1 \le |k| \le l-1} \int_{\mathbb{R}^3} \left( \partial_{\vec{x}}^k \chi \, \partial_{\vec{x}}^k (\nabla \cdot \vec{u}) + \tilde{\epsilon}_1 \partial_{\vec{x}}^k \vec{u} \cdot  \partial_{\vec{x}}^k \nabla p \right) d\vec{x} \\
			& \quad + \frac{\tilde{\epsilon}_1 \tilde{\epsilon}_2}{8} \left\| \nabla^2 p(\cdot,t) \right\|_{H^{l-2}}^2 + \frac{\tilde{\epsilon}_2}{8} \left\| \nabla (\nabla \cdot \vec{u})(\cdot,t) \right\|_{H^{l-2}}^2  + \frac{1}{4} \left\| \nabla \chi (\cdot,t) \right\|_{H^{l-1}}^2 \le C\delta_l \left\|\nabla (p, \vec{u})(\cdot, t)\right\|_{L^2}^2.
		\end{aligned}
	\end{equation}
	It together with the following basic energy estimates 
	\begin{equation} \nonumber
		\frac{d}{dt} \left\|(\rho-\bar{\rho}, \vec{u}, e_1-\bar{e}_1, q-\bar{q})(\cdot, t)\right\|_{L^2}^2 + c_0 \left\|\chi (\cdot, t)\right\|_{L^2}^2 \le 0,
	\end{equation}
	\begin{equation} \nonumber
		\frac{d}{dt} \int_{\mathbb{R}^3} \left( \chi \, \nabla \cdot \vec{u} + c_1 \vec{u} \cdot \nabla p \right) d\vec{x} + c_2 \left\|(\nabla p, \nabla \cdot \vec{u})(\cdot, t)\right\|_{L^2}^2 \le C \left\|\chi (\cdot, t)\right\|_{H^1}^2,
	\end{equation}
	obtained in Proposition 2.2 of \cite{hl}
	where $c_0,c_1,c_2$ are positive constants, yields that 
	\begin{equation} \label{vueqenes}
		\begin{aligned}
			& \frac{d}{dt} \left( \left\|(\rho-\bar{\rho}, \vec{u}, e_1-\bar{e}_1, q-\bar{q})(\cdot, t)\right\|_{L^2}^2 + \left\|\nabla (p, \vec{u}, \chi)(\cdot, t)\right\|_{H^{l-1}}^2 \right) + \min\left\{\frac{c_0}{2},\frac{1}{8}\right\} \left\|\chi (\cdot, t)\right\|_{H^l}^2 \\
			& \quad + \frac{\tilde{\epsilon}_1 \tilde{\epsilon}_2}{8} \left\| \nabla p(\cdot,t) \right\|_{H^{l-1}}^2 + \tilde{\epsilon}_2 \min\left\{ \frac{c_2}{2}, \frac{1}{8} \right\} \left\| (\nabla \cdot \vec{u})(\cdot,t) \right\|_{H^{l-1}}^2  \le 0,
		\end{aligned}
	\end{equation}
	if $\tilde{\epsilon}_2 \ll \min\{c_0/2, 1/8\}$.
	
	Now we turn to the estimates for the entropy function given in \eqref{vnepre}: multiply \eqref{eem2} by $\partial_{\vec{x}}^k s(\vec{x},t)$ and integrate the product over $\mathbb{R}^3$ to result in that
	\begin{equation} \label{skl2}
		\frac{1}{2} \frac{d}{dt} \left\|\partial_{\vec{x}}^k s(\cdot, t)\right\|_{L^2}^2 = \frac{1}{2} \int_{\mathbb{R}^3} \nabla \cdot \vec{u}\,(\partial_{\vec{x}}^k s)^2 \, d\vec{x} + \int_{\mathbb{R}^3} \partial_{\vec{x}}^k s \left( \frac{2}{\alpha_f R} \partial_{\vec{x}}^k \left( \frac{\chi^2}{T_1T_2}\right)  - [\partial_{\vec{x}}^k, \vec{u} \cdot \nabla] s \right) d\vec{x}. 
	\end{equation}
Summing \eqref{skl2} over $k$ with $1\le|k|\le l$, together with the irrotationality condition \eqref{irrotation} and the Sobolev embedding inequality \eqref{sobemb}, one derives the following estimate:
	$$\begin{aligned}
		\frac{d}{dt} \left\|\nabla s(\cdot, t)\right\|_{H^{l-1}}^2 & \le C \left\|\nabla (\nabla \cdot \vec{u})(\cdot, t)\right\|_{H^{l-2}} \cdot  \left\|\nabla s(\cdot, t)\right\|_{H^{l-1}}^2 + C (1+\delta_l)  \left\|\nabla s(\cdot, t)\right\|_{H^{l-1}} \cdot  \left\|\nabla \chi(\cdot, t)\right\|_{H^{l-1}}^2 \\
		& \le C \left\|\nabla (\nabla \cdot \vec{u})(\cdot, t)\right\|_{H^{l-2}} \cdot  \left\|\nabla s(\cdot, t)\right\|_{H^{l-1}}^2 + C \delta_l \left\|\nabla \chi(\cdot, t)\right\|_{H^{l-1}}^2,
	\end{aligned}$$
	which together with Gronwall's inequality results in that
	\begin{equation} \label{skenest}
		\begin{aligned}
			\left\|\nabla s(\cdot, t)\right\|_{H^{l-1}}^2 &\le \exp\left\{ \int_0^t C \left\|\nabla (\nabla \cdot \vec{u})(\cdot, \tau)\right\|_{H^{l-2}} d\tau \right\} \cdot  \left\|\nabla s_0\right\|_{H^{l-1}}^2 \\
			& \quad + C \delta_l \int_0^t \exp\left\{  \int_{\tau}^t  C \left\|\nabla (\nabla \cdot \vec{u})(\cdot, t_1)\right\|_{H^{l-2}} dt_1 \right\} \cdot \left\|\nabla \chi(\cdot, \tau)\right\|_{H^{l-1}}^2 d\tau.
		\end{aligned}
	\end{equation}
	It requires a decay estimate of $\left\|(\nabla \cdot \vec{u})(\cdot, t)\right\|_{H^{l-1}}$. Define the time-dependent functional
	$$ \mathcal{M}_0(t):= \underset{0 \le \tau \le t}{\sup} (1+\tau)^{\frac{5}{2}} \left( \left\|\chi (\cdot, \tau)\right\|_{H^l}^2 + \left\|\nabla (p, \vec{u})(\cdot, \tau)\right\|_{H^{l-1}}^2 \right).$$ 
	Similar to the proof of Proposition~2.3 in \cite{hl}, applying Theorem~\ref{lpg}, Lemma~\ref{impr} and Duhamel’s principle, one has that
	\begin{equation} \label{puderi1}
		\begin{aligned}
			\left\|\nabla (p, \vec{u})(\cdot, t)\right\|_{L^2} &\le C(1+t)^{-\frac{5}{4}} \left\|(p_0-\bar{p},\vec{u}_0,\chi_0)\right\|_{L^1 \cap H^1} + C \int_0^{t} (1+t-\tau)^{-\frac{5}{4}} \left\|(R_p, R_{\vec{u}}, R_{\chi})(\cdot, \tau)\right\|_{L^1 \cap H^1} d\tau\\
			& \le C(1+t)^{-\frac{5}{4}} \left\|(p_0-\bar{p},\vec{u}_0,\chi_0)\right\|_{L^1 \cap H^1} + C \delta_3 \int_0^{t} (1+t-\tau)^{-\frac{5}{4}} (1+\tau)^{-\frac{5}{4}} \sqrt{\mathcal{M}_0(\tau)} d\tau\\
			& \le C(1+t)^{-\frac{5}{4}} \left( \left\|(p_0-\bar{p},\vec{u}_0,\chi_0)\right\|_{L^1 \cap H^1} + \delta_3 \sqrt{\mathcal{M}_0(t)}\right). 
		\end{aligned}
	\end{equation}
Taking $m=1$ in \eqref{puximpro0}, applying Gronwall's inequality and then using \eqref{puderi1}, one has that
	\begin{equation} \nonumber
		\begin{aligned} \left\|\chi (\cdot, t)\right\|_{H^l}^2 + \left\|\nabla (p, \vec{u})(\cdot, t)\right\|_{H^{l-1}}^2
	&\le Ce^{-t/C}\left(\left\|\chi (\cdot, 0)\right\|_{H^l}^2 + \left\|\nabla (p, \vec{u})(\cdot, 0)\right\|_{H^{l-1}}^2\right)\\
	&+
	C\int_0^t e^{-(t-\tau)/C}(1+\tau)^{-\frac 52}\left\|(p_0-\bar{p},\vec{u}_0,\chi_0)\right\|_{L^1 \cap H^1}^2 d\tau\\
	&+
	C\left(\delta_3\right)^2\mathcal{M}_0(t)\int_0^t e^{-(t-\tau)/C}(1+\tau)^{-\frac 52}d\tau\\
	&\le C(1+t)^{-\frac{5}{2}} \left(\left\|(p_0-\bar{p},\vec{u}_0,\chi_0)\right\|_{L^1 \cap H^l}^2 + \left(\delta_3\right)^2\mathcal{M}_0(t) \right) 
	\end{aligned}
	\end{equation}
	and thus
	$$\mathcal{M}_0(t) \le C \left\|(p_0-\bar{p},\vec{u}_0,\chi_0) \right\|_{L^1 \cap H^l}^2,$$ which yields that
	$$\left\|\chi (\cdot, t)\right\|_{H^l}+ \left\|\nabla (p, \vec{u})(\cdot, t)\right\|_{H^{l-1}} \le C(1+t)^{-\frac{5}{4}} \left\|(p_0-\bar{p},\vec{u}_0,\chi_0) \right\|_{L^1 \cap H^l}.$$
	It combined with \eqref{skenest} results in that
	\begin{equation} \label{skenes}
		\left\|\nabla s(\cdot, t)\right\|_{H^{l-1}}^2 \le  e^{C\left\| (p_0 - \bar{p}, \vec{u}_0, \chi_0 ) \right\|_{L^1 \cap H^l}} \left\|\nabla s_0\right\|_{H^{l-1}}^2 + C \delta_l \int_0^t \left\|\nabla \chi(\cdot, \tau)\right\|_{H^{l-1}}^2 d\tau.
	\end{equation}
	
	Finally one could combine \eqref{vueqenes}, \eqref{skenes} and the smallness of $\delta_l$ to obtain that
	$$\begin{aligned}
		& \left\|(\rho-\bar{\rho}, \vec{u}, e_1-\bar{e}_1, q-\bar{q})(\cdot, t)\right\|_{L^2}^2 + \left\|\nabla (p, \vec{u}, \chi, s)(\cdot, t)\right\|_{H^{l-1}}^2  \\
		& \quad + \int_0^t \Bigl(\left\|\chi(\cdot, \tau)\right\|_{H^l}^2+ \left\| (\nabla p, \nabla \cdot \vec{u})(\cdot,\tau) \right\|_{H^{l-1}}^2 \Bigr) \, d\tau \\
		& \le C \left(  \left\|(\rho_0-\bar{\rho}, \vec{u}_0, e_{1,0}-\bar{e}_1, q_0-\bar{q})\right\|_{L^2}^2 + \left\|\nabla (p_0, \vec{u}_0, \chi_0) \right\|_{H^{l-1}}^2 \right.\\
		& \left. \quad \quad \quad+ e^{C \left\| (p_0 - \bar{p}, \vec{u}_0, \chi_0 ) \right\|_{L^1 \cap H^l}} \left\|\nabla s_0\right\|_{H^{l-1}}^2 \right)
	\end{aligned}$$
	for a positive constant $C$. It verifies the a priori assumption \eqref{prioglo} 
and we also finish the proof of Theorem~\ref{genes}.
\end{proof}

\section{Appendix $\RN{3}$: Proofs of Computational Lemmas in Section 2}

In this section, we provide the proofs for all the computational lemmas used for the wave propagations.

\subsection{Proof of Lemma~\ref{lemmains}}

\begin{proof}
For $d \ge 0$ , $i \ge 0$, and $m > 1$, consider the region $|\vec x|\le2\bar{c}t$ and one has that
\begin{equation}\nonumber
	\begin{aligned}
		& \int_0^t (1+\tau)^{-d} (1 + |\vec{x}| + \sqrt{\tau})^{-i} \left( (|\vec{x}|-\bar{c}\tau)^2 + \tau + 1 \right)^{-m} d\tau \\
		&=\left( \int_0^{|\vec{x}|/(2\bar{c})} + \int_{|\vec{x}|/(2\bar{c})}^{\min\{2|\vec{x}|/\bar{c},t\}} + \int_{\min\{2|\vec{x}|/\bar{c},t\}}^t \right) (1+\tau)^{-d} (1 + |\vec{x}| + \sqrt{\tau})^{-i} \left( (|\vec{x}|-\bar{c}\tau)^2 + \tau + 1 \right)^{-m} d\tau \\
		&\le (|\vec{x}| + 1)^{-i} \left[ (|\vec{x}|^2/4 + 1 )^{-m}  \int_0^{|\vec{x}|/(2\bar{c})} (1+\tau)^{-d} d\tau \right.\\
		& \quad + \big(|\vec{x}|/(2\bar{c}) + 1 \big)^{-d-m} \int_{|\vec{x}|/(2\bar{c})}^{\min\{2|\vec{x}|/\bar{c},t\}} \left( \left( \frac{\bar{c} \tau}{\sqrt{2|\vec x|/\bar{c}+1}}-\frac{|\vec x|}{\sqrt{2|\vec x|/\bar{c}+1}}\right)^2 + 1 \right)^{-m} d\tau \\
		& \quad \left.+ ( 2|\vec{x}|/\bar{c} + 1)^{-d}  \int_{\min\{2|\vec{x}|/\bar{c},t\}}^t (|\vec{x}|^2 + \tau + 1)^{-m} d\tau \right] \\
		&\le C(|\vec{x}| + 1)^{-i} \left[(|\vec{x}|^2/4 + 1)^{-m} \int_0^{|\vec{x}|/(2\bar{c})} (1+\tau)^{-d} d\tau +  \sqrt{|\vec x|+1} \big( |\vec{x}|/(2\bar{c}) + 1 \big)^{-d-m} \right. \\
		& \quad \left.+ ( 2|\vec{x}|/\bar{c} + 1)^{-d} \left(|\vec{x}|^2+  1  \right)^{-m+1} \right]\\
		&\le C (|\vec{x}|^2 + 1)^{-\frac{i}{2}-m} 
		\begin{cases}
			(1+|\vec{x}|/(2\bar{c}))^{-d+1} & \text{for } 0 \le d < 1 \\
			\ln(1+|\vec{x}|/(2\bar{c})) & \text{for } d=1\\
			1 & \text{for } d>1
		\end{cases}\\
		& \quad + C(|\vec{x}|^2 + 1)^{-\frac{i+d+m}{2}+\frac{1}{4}} + C(|\vec{x}|^2 + 1)^{-\frac{i+d}{2}-m+1},
	\end{aligned}
\end{equation}
and \eqref{tconv1} is verified for $|\vec x|\le2\bar{c}t$. For $|\vec x|>2\bar{c}t$, it is trivial since it holds that $(|\vec{x}|-\bar{c}\tau)^2 + \tau + 1 \ge |\vec x|^2/4+\tau+1$ and thus
\begin{equation}\nonumber
	\begin{aligned}
		& \int_0^t (1+\tau)^{-d} (1 + |\vec{x}| + \sqrt{\tau})^{-i} \left( (|\vec{x}|-\bar{c}\tau)^2 + \tau + 1 \right)^{-m} d\tau \\
		&\le C (|\vec{x}|^2 + 1)^{-\frac{i}{2}-m} \int_0^t (1+\tau)^{-d}d\tau\\
		&\le C (|\vec{x}|^2 + 1)^{-\frac{i}{2}-m} 
		\begin{cases}
			(1+|\vec{x}|/(2\bar{c}))^{-d+1} & \text{for } 0 \le d < 1, \\
			\ln(1+|\vec{x}|/(2\bar{c})) & \text{for } d=1, \\
			1 & \text{for } d>1.
		\end{cases}
	\end{aligned}
\end{equation}

To verify \eqref{tconv2}, one only needs to consider the region $|\vec x|\le\bar{c}t$, since $H(\bar{c}\tau - |\vec{x}|)\equiv 0$ for $|\vec x|>\bar{c}t$ and $\tau\in[0,t]$: 
\begin{equation}\nonumber
	\begin{aligned}
		& \int_0^t (1+\tau)^{-d} \frac{H(\bar{c}\tau - |\vec{x}|)}{1 + (|\vec{x}| + \sqrt{\tau})^i(\bar{c}\tau - |\vec{x}| + \sqrt{\tau})^j} d\tau \\
		& \le C H(\bar{c}t - |\vec{x}|)  (|\vec{x}| + 1)^{-d}  \int_{|\vec{x}|/\bar{c}}^t  (1+|\vec{x}| + \sqrt{\tau})^{-i}(\bar{c}\tau - |\vec{x}| + \sqrt{\tau} + 1)^{-j} d\tau \\
		& \le C H(\bar{c}t - |\vec{x}|) (|\vec{x}| + 1)^{-d-i} \left( \int_{|\vec{x}|/\bar{c}}^{\min\{2|\vec{x}|/\bar{c},t\}} + \int_ {\min\{2|\vec{x}|/\bar{c},t\}}^t \right) (\bar{c}\tau - |\vec{x}| + \sqrt{\tau} + 1)^{-j} d\tau\\
		& \le C H(\bar{c}t - |\vec{x}|) (|\vec{x}| + 1)^{-d-i} \left[\int_{|\vec{x}|/\bar{c}}^{\min\{2|\vec{x}|/\bar{c},t\}} (\sqrt{\tau}+1)^{-j} d\tau + \int_{\min\{2|\vec{x}|/\bar{c},t\}}^t (\bar{c} \tau/2 +1)^{-j} d\tau \right] \\
		& \le C H(\bar{c}t - |\vec{x}|) (|\vec{x}| + 1)^{-d-i}	\begin{cases}
			\ln(1+2|\vec{x}|/\bar{c}) + (|\vec{x}|+1)^{-1} & \text{for } j=2, \\
			(|\vec{x}|+1)^{-\frac{j}{2}+1} & \text{for } j>2.
		\end{cases}
	\end{aligned}
\end{equation}
\end{proof}

\subsection{Proof of Lemma~\ref{lemmain}}
\begin{proof}
First, it is obvious that \eqref{iniconv2} holds since $\displaystyle \int_{\mathbb{R}^3}|\vec x|^{-j}e^{-\mu|\vec x|}d\vec x\le C$. 

Next, one could verify \eqref{iniconv1} as follows:
\begin{equation}\nonumber
\begin{aligned}
& \int_{\mathbb{R}^3}e^{-\frac{(|\vec{x}-\vec y|-\bar{c}t)^2}{K(1+t)}} (|\vec{y}|^2+1)^{-d}d\vec y \\
&=\left( \int_{\left|\vec y\right|\le \big||\vec x|-\bar{c}t\big|/2}+\int_{\left|\vec y\right|\ge \big||\vec x|-\bar{c}t\big|/2}\right)e^{-\frac{(|\vec{x}-\vec y|-\bar{c}t)^2}{K(1+t)}} (|\vec{y}|^2+1)^{-d}d\vec y \\
&\le\left(H\big( \sqrt{t}-\bigl||\vec x|-\bar{c}t\bigr|\big)+ e^{-\frac{(|\vec{x}|-\bar{c}t)^2}{4K(1+t)}}\right)\int_{\mathbb{R}^3} (|\vec{y}|^2+1)^{-d}d\vec y\\
&\quad+C H\big(\bigl||\vec x|-\bar{c}t\bigr|-\sqrt{t}\big) \big(\big||\vec x|-\bar{c}t\big|^2+1\big)^{-d} \int_{\mathbb{R}^3}e^{-\frac{(|\vec{x}-\vec y|-\bar{c}t)^2}{K(1+t)}}d\vec y\\
&\le Ce^{-\frac{(|\vec{x}|-\bar{c}t)^2}{4K(1+t)}}+C(1+t)^{\frac{5}{2}}\left((|\vec x|-\bar{c}t)^2+t+1\right)^{-d}.
\end{aligned}
\end{equation}
Here one uses $\displaystyle\int_{\mathbb{R}^3}e^{-\frac{(|\vec{x}-\vec y|-\bar{c}t)^2}{K(1+t)}}d\vec y\le C(1+t)^{5/2}$ which could be proved similar to Lemma~5.2 in \cite{liuwang}.

Finally, for $|\vec x|\ge 2\bar{c}_ft$ and $|\vec y|=1$, it holds that $|\vec{x}+\bar{c}_ft\vec y|\ge |\vec x|-\bar{c}_ft|\vec y|>|\vec x|/2$; for $|\vec x|\le 2\bar{c}_ft$, one has that $e^{-\mu t}\le e^{-\mu t/2-\mu |\vec x|/(4\bar{c}_f)}$. Thus one obtains that
\begin{equation}\nonumber
\begin{aligned}
&e^{-\mu t} \omega(\vec{x},t;\bar{c}_f) \underset{\vec{x}}{\ast} (|\vec{x}|^2+1)^{-d}\le Cte^{-\mu t}\int_{|\vec y|=1}
(|\vec{x}+\bar{c}_ft\vec y|^2+1)^{-d}dS(\vec y)\\
&\le Cte^{-\mu t/2}\Big((|\vec{x}|^2+1)^{-d}+e^{-\mu |\vec x|/(4\bar{c}_f)}\Big)\int_{|\vec y|=1}dS(\vec y)\\
&\le Cte^{-\mu t/2}\Big((|\vec{x}|^2+1)^{-d}+e^{-\mu |\vec x|/(4\bar{c}_f)}\Big)
\end{aligned}
\end{equation}
and verifies \eqref{iniconv3}.
\end{proof}

\subsection{Proof of Lemma~\ref{lemmainker}}

\begin{proof}
In \eqref{heke2}, one has that
$$e^2_1(\vec x,t)\le C\left(\left(\left(|\vec x|-\bar{c}t\right)^2+t+1\right)^{-4}+\frac{H(\bar{c}t-|\vec x|)}{1+\left(|\vec x|+\sqrt{t}\right)^2\left(\bar{c}t-|\vec x|+\sqrt{t}\right)^4}\right)$$
and thus one considers the following integral first:
\begin{equation} \label{123ker}
		\begin{aligned}
			&\int_0^t \int_{\mathbb{R}^3} (1+t-\tau)^{-2-\frac{m}{2}} e^{-\frac{(|\vec{x}-\vec{y}|-\bar{c}(t-\tau))^2}{K(t-\tau+1)}} (1+\tau)^{-\frac{1}{2}} \left((|\vec y|-\bar{c}\tau)^2 + \tau + 1 \right)^{-4} d\vec{y} d\tau\\
			& = H(|\vec{x}|-\bar{c}t) \int_0^t \left( \int_{|\vec{y}| -\bar{c}\tau \le (|\vec{x}|-\bar{c}t)/2} +  \int_{|\vec{y}| -\bar{c}\tau > (|\vec{x}|-\bar{c}t)/2}\right)  (1+t-\tau)^{-2-\frac{m}{2}} e^{-\frac{(|\vec{x}-\vec{y}|-\bar{c}(t-\tau))^2}{K(t-\tau+1)}} (1+\tau)^{-\frac{1}{2}} \\
			& \quad \quad \quad \quad \quad \quad \quad \times \left((|\vec{y}|-\bar c\tau)^2 + \tau + 1 \right)^{-4} d\vec{y} d\tau\\
			& \quad + H(\bar{c}t-|\vec{x}|) \left( \int_0^{(t-|\vec{x}|/\bar{c}+\sqrt{t})/16}+\int_{(t-|\vec{x}|/\bar{c}+\sqrt{t})/16}^{t/8}\right)  \int_{\mathbb{R}^3} (1+t-\tau)^{-2-\frac{m}{2}} e^{-\frac{(|\vec{x}-\vec{y}|-\bar{c}(t-\tau))^2}{K(t-\tau+1)}} (1+\tau)^{-\frac{1}{2}} \\
			& \quad \quad \quad \quad \quad \quad \quad \quad \times \left((|\vec{y}|-\bar{c}\tau)^2 + \tau + 1 \right)^{-4} d\vec{y} d\tau\\
			& \quad + H(\bar{c}t-|\vec{x}|) \int_{t/8}^t \int_{\mathbb{R}^3} (1+t-\tau)^{-2-\frac{m}{2}} e^{-\frac{(|\vec{x}-\vec{y}|-\bar{c}(t-\tau))^2}{K(t-\tau+1)}} (1+\tau)^{-\frac{1}{2}} \left((|\vec{y}|-\bar{c}\tau)^2 + \tau + 1 \right)^{-4} d\vec{y} d\tau\\
			&:=(\RN{1})+(\RN{2})+(\RN{3}).
		\end{aligned}
	\end{equation}
	
	As shown in \cite{liuwang}, it holds that
	\begin{equation} \nonumber
		\begin{aligned}
			&\int_{\mathbb{R}^3} (1+\tau)^{-\frac{1}{2}} \left((|\vec y|-\bar c\tau)^2 + \tau + 1 \right)^{-2} d\vec{y}\\
			& \le C \int_0^{\infty} r^2 (1+\tau)^{-\frac{5}{2}} \left(\frac{(r-\bar c\tau)^2}{\tau+1} +1 \right)^{-2} dr\\
			& \le C(1+\tau)^{-\frac{3}{2}} \int_0^{\infty} \left(\frac{(r-\bar c\tau)^2}{\tau+1} + \bar{c}^2 \tau \right) \left(\frac{(r-\bar c\tau)^2}{\tau+1} +1 \right)^{-2} dr\\
			& \le C(1+\tau)^{-1} \left[ \int_0^{\infty} (r^2+1)^{-1}  d\tau + \tau \int_0^{\infty} (r^2+1)^{-2} dr\right] \le C.
		\end{aligned}
	\end{equation}
	Moreover, for $|\vec{x}| \ge \bar{c} t$ and $|\vec{y}| - \bar{c}\tau \le (|\vec{x}| - \bar{c}t)/2$, one has that $|\vec{x} - \vec{y}| - \bar{c}(t - \tau) \ge (|\vec{x}| - \bar{c}t)/2$. Hence,
	\begin{equation} \label{1ker}
		\begin{aligned}
			(\RN{1}) &\le C\left( e^{-\frac{(|\vec{x}| - \bar{c}t)^2}{8K(t + 1)}} + \left(\frac{(|\vec x|-\bar{c}t)^2}{4(t+1)}+1 \right)^{-2}\right)\\
			& \quad \times \left[ \int_0^{t/2} (1+t-\tau)^{-2-\frac{m}{2}} (1+\tau)^{-2} \int_{\mathbb{R}^3} (1+\tau)^{-\frac{1}{2}} \left((|\vec y|-\bar c\tau)^2 + \tau + 1 \right)^{-2} d\vec{y} d\tau \right.\\
			&\quad \quad \quad \left.+ \int_{t/2}^t (1+t-\tau)^{\frac{1}{2}-\frac{m}{2}} (1+\tau)^{-\frac{9}{2}} \int_{\mathbb{R}^3} (1+t-\tau)^{-\frac{5}{2}} e^{-\frac{(|\vec{x}-\vec{y}|-\bar{c}(t-\tau))^2}{2K(t-\tau+1)}} d\vec{y} d\tau \right]\\
			& \le C\left(\frac{(|\vec{x}|-\bar{c}t)^2}{t+1}+1 \right)^{-2} \left[(1+t)^{-2-\frac{m}{2}} + (1+t)^{-\frac{9}{2}} \begin{cases}
				(1+t/2)^{\frac{3-m}{2}} & \text{for } 0 \le m < 3 \\
				\ln(1+t/2) & \text{for } m=3 \\
				1 & \text{for } m > 3
			\end{cases} \right]\\
			& \le C (1+t)^{-\frac{\min\{m,5\}}{2}}\left((|\vec{x}|-\bar{c}t)^2 +t+1\right)^{-2}.
		\end{aligned}
	\end{equation}

	For $\tau \le (t - |\vec{x}|/\bar{c} + \sqrt{t})/16 \le t/8$, when $|\vec{y}| \le (\bar{c}t - |\vec{x}| + \bar{c}\sqrt{t})/8$, one has that:
	$$\bar{c} (t-\tau)-|\vec{x}-\vec{y}| +\bar{c} \sqrt{t-\tau} \ge \bar{c}t -|\vec{x}|- \frac{3(\bar{c}t-|\vec{x}|+\bar{c} \sqrt{t})}{16} + \bar{c} \sqrt{\frac{7t}{8}}= \frac{13(\bar{c}t-|\vec x|)}{16}+\left(\frac{\sqrt{14}}{4} - \frac{3}{16}\right)\bar{c} \sqrt{t}$$
	and thus
	\begin{equation} \label{exieq}
		e^{-\frac{(|\vec{x}-\vec{y}|-\bar{c}(t-\tau))^2}{K(t-\tau+1)}} \le e^{-\frac{(|\vec{x}-\vec{y}|-\bar{c}(t-\tau))^2 + \bar{c}^2 (t-\tau) }{K(t-\tau+1)}+\frac{\bar{c}^2}{K}} \le C e^{-\frac{(|\vec{x}-\vec{y}|-\bar{c}(t-\tau)+\bar{c}\sqrt{t-\tau})^2  }{2K(t-\tau+1)}} \le Ce^{-\frac{(|\vec{x}|-\bar{c}t)^2}{8K(t+1)}}.
	\end{equation}
	When $|\vec y| \ge (\bar{c}t-|\vec x|+\bar{c}\sqrt{t})/8$, one has
	\begin{equation} \nonumber 
		|\vec{y}|-\bar{c} \tau \ge (\bar{c}t-|\vec{x}|+\bar{c}\sqrt{t})/16.
	\end{equation}
	Thus,
	\begin{equation} \label{2ker}
		\begin{aligned}
			(\RN{2}) &\le C H(\bar{c}t-|\vec{x}|)\left[ \left( e^{-\frac{(|\vec{x}|-\bar{c}t)^2}{8K(t+1)}} + \left(\frac{(|\vec{x}|-\bar{c}t)^2}{(16)^2(t+1)}+1 \right)^{-2} \right) \right.\\ 
			&\quad \times \int_0^{t/8} (1+t-\tau)^{-2-\frac{m}{2}}(1+\tau)^{-2} \int_{\mathbb{R}^3} (1+\tau)^{-\frac{1}{2}} \left((|\vec{y}|-\bar c\tau)^2 + \tau + 1 \right)^{-2} d\vec{y} d\tau\\
			&\quad \left.+ \int_{(t-|\vec{x}|/\bar{c}+\sqrt{t})/16}^{t/8} (1+t-\tau)^{-2-\frac{m}{2}}(1+\tau)^{-2} \int_{\mathbb{R}^3} (1+\tau)^{-\frac{1}{2}} \left((|\vec{y}|-\bar{c}\tau)^2 + \tau + 1 \right)^{-2} d\vec{y} d\tau \right]\\
			& \le C H(\bar{c}t-|\vec{x}|)(1+t)^{-2-\frac{m}{2}}\left[ \left(\frac{(|\vec{x}|-\bar{c}t)^2}{t+1}+1 \right)^{-2} + (\bar{c}t-|\vec{x}|+\sqrt{t+1})^{-1}\right].
		\end{aligned}
	\end{equation}

	The final part $(\RN{3})$ could be estimated by straightforward computations:
	\begin{equation} \label{3ker}
		\begin{aligned}
			(\RN{3}) &\le H(\bar{c}t-|\vec{x}|) \int_{t/8}^t (1+t-\tau)^{\frac{1}{2}-\frac{m}{2}} (1+\tau)^{-\frac{9}{2}} \int_{\mathbb{R}^3} (1+\tau)^{-\frac{5}{2}} e^{-\frac{(|\vec{x}-\vec{y}|-\bar{c}(t-\tau))^2}{K(t-\tau+1)}} d\vec{y} d\tau\\
			& \le C H(\bar{c}t-|\vec{x}|)(1+t)^{-\frac{9}{2}} \begin{cases}
				(1+t)^{\frac{3-m}{2}} & \text{for } 0 \le m < 3, \\
				\ln(1+t) & \text{for } m=3, \\
				1 & \text{for } m > 3.
			\end{cases}
		\end{aligned}
	\end{equation}

	Combining \eqref{123ker}, \eqref{1ker}, \eqref{2ker}, and \eqref{3ker}, one has that
	\begin{equation} \label{kere11}
		\begin{aligned}
			&\int_0^t \int_{\mathbb{R}^3} (1+t-\tau)^{-2-\frac{m}{2}} e^{-\frac{(|\vec{x}-\vec{y}|-\bar{c}(t-\tau))^2}{K(t-\tau+1)}} (1+\tau)^{-\frac{1}{2}} \left((|\vec{y}|-\bar{c}\tau)^2 + \tau + 1 \right)^{-4} d\vec{y} d\tau\\
			& \le C(1+t)^{-\frac{\min\{m,5\}}{2}}\left((|\vec{x}|-\bar{c}t)^2 +t+1\right)^{-2}\\
			&\quad + C H(\bar{c}t - |\vec{x}|)\left[(1+t)^{-2-\frac{m}{2}} \left( (|\vec{x}| - \bar{c} t)^2 + t+1\right)^{-\frac{1}{2}} + (1+t)^{-\frac{9}{2}} 
			\begin{cases}
				(1+t)^{\frac{3-m}{2}} & \text{for } 0 \le m < 3, \\
				\ln(1+t) & \text{for } m=3, \\
				1 & \text{for } m > 3.
			\end{cases} \right]
		\end{aligned}
	\end{equation}

	To verify \eqref{heke2}, it remains to consider the integral below:
	\begin{equation} \label{456ker}
		\begin{aligned}
			&\int_0^t \int_{\mathbb{R}^3} (1+t-\tau)^{-2-\frac{m}{2}} e^{-\frac{(|\vec{x}-\vec{y}|-\bar{c}(t-\tau))^2}{K(t-\tau+1)}} (1+\tau)^{-\frac{1}{2}} \frac{H(\bar{c}\tau - |\vec{y}|)}{1 + (|\vec{y}| + \sqrt{\tau})^2(\bar{c}\tau - |\vec{y}| + \sqrt{\tau})^4} d\vec{y} d\tau\\
			& = H(|\vec{x}|-\bar{c}t) \left(\int_0^{t/2} + \int_{t/2}^t \right)\int_{|\vec{y}|\le \bar{c}\tau} (1+t-\tau)^{-2-\frac{m}{2}} e^{-\frac{(|\vec{x}-\vec{y}|-\bar{c}(t-\tau))^2}{K(t-\tau+1)}} \frac{(1+\tau)^{-\frac{1}{2}}}{1 + (|\vec{y}| + \sqrt{\tau})^2(\bar{c}\tau - |\vec{y}| + \sqrt{\tau})^4} d\vec{y} d\tau\\
			& \quad + H(\bar{c}t-|\vec{x}|) \left( \int_0^{(t-|\vec{x}|/\bar{c}+\sqrt{t})/16}+\int_{(t-|\vec{x}|/\bar{c}+\sqrt{t})/16}^{t/8}\right)  \int_{|\vec{y}|\le \bar{c}\tau} (1+t-\tau)^{-2-\frac{m}{2}} e^{-\frac{(|\vec{x}-\vec{y}|-\bar{c}(t-\tau))^2}{K(t-\tau+1)}} (1+\tau)^{-\frac{1}{2}} \\
			& \quad \quad \quad \quad \quad \quad \quad \quad \times \frac{1}{1 + (|\vec{y}| + \sqrt{\tau})^2(\bar{c}\tau - |\vec{y}| + \sqrt{\tau})^4} d\vec{y} d\tau\\
			& \quad + H(\bar{c}t-|\vec{x}|) \int_{t/8}^t \int_{|\vec{y}|\le \bar{c}\tau} (1+t-\tau)^{-2-\frac{m}{2}} e^{-\frac{(|\vec{x}-\vec{y}|-\bar{c}(t-\tau))^2}{K(t-\tau+1)}} \frac{(1+\tau)^{-\frac{1}{2}}}{1 + (|\vec{y}| + \sqrt{\tau})^2(\bar{c}\tau - |\vec{y}| + \sqrt{\tau})^4} d\vec{y} d\tau\\
			&:=(\RN{4})+(\RN{5})+(\RN{6}).
		\end{aligned}
	\end{equation}

	For $|\vec{x}| \ge \bar{c} t$ and $|\vec{y}| \le \bar{c}\tau$, one has that 
	$|\vec{x} - \vec{y}| - \bar{c}(t - \tau) \ge (|\vec{x}| - \bar{c}t)+(\bar{c}\tau-|\vec{y}|) \ge |\vec{x}|-\bar{c}t$, which together with
	\begin{equation} \label{intebou}
		\begin{aligned}
			&\left( \int_{|\vec{y}| \le \bar{c}\tau/2}+\int_{ \bar{c}\tau/2 <|\vec{y}|\le \bar{c} \tau}\right) (1+|\vec{y}| + \sqrt{\tau})^{-2}(1+\bar{c}\tau - |\vec{y}| + \sqrt{\tau})^{-4} d\vec y \\
			& \le C\tau^3 (1+\sqrt{\tau})^{-2}(1+\bar{c}\tau/2)^{-4} + (1+\bar{c}\tau/2)^{-2} \int_{ \bar{c}\tau/2 <|\vec{y}|\le \bar{c} \tau} \left(1+(|\vec{y}|-\bar{c}\tau)^2 + \tau  \right)^{-2} d\vec y\\
			& \le C(1+\tau)^{-2} + C(1+\tau)^{-{{\frac{3}{2}}}} \le C(1+\tau)^{-\frac{3}{2}},
		\end{aligned}
	\end{equation}
	yields that
	\begin{equation} \label{4ker}
		\begin{aligned}
			(\RN{4}) &\le C e^{-\frac{(|\vec{x}| - \bar{c}t)^2}{2K(t + 1)}} \left[ \int_0^{t/2} (1+t-\tau)^{-2-\frac{m}{2}} (1+\tau)^{-2} d\tau \right.\\
			&\quad \quad \quad \quad \quad \quad \quad \left.+ \int_{t/2}^t (1+t-\tau)^{\frac{1}{2}-\frac{m}{2}} (1+\tau)^{-\frac{9}{2}} \int_{\mathbb{R}^3} (1+t-\tau)^{-\frac{5}{2}} e^{-\frac{(|\vec{x}-\vec{y}|-\bar{c}(t-\tau))^2}{2K(t-\tau+1)}} d\vec{y} d\tau \right]\\
			& \le Ce^{-\frac{(|\vec{x}| - \bar{c}t)^2}{2K(t + 1)}} \left[(1+t)^{-2-\frac{m}{2}} + (1+t)^{-\frac{9}{2}} \begin{cases}
				(1+t/2)^{\frac{3-m}{2}} & \text{for } 0 \le m < 3 \\
				\ln(1+t/2) & \text{for } m=3 \\
				1 & \text{for } m > 3
			\end{cases} \right]\\
			& \le C (1+t)^{-2-\frac{\min\{m,5\}}{2}}e^{-\frac{(|\vec{x}| - \bar{c}t)^2}{2K(t + 1)}}.
		\end{aligned}
	\end{equation}

	By virtue of \eqref{exieq} and \eqref{intebou}, it holds that
	\begin{equation} \label{5ker}
		\begin{aligned}
			(\RN{5}) &\le C H(\bar{c}t-|\vec{x}|)\left[ e^{-\frac{(|\vec{x}|-\bar{c}t)^2}{8K(t+1)}} \int_0^{(t-|\vec{x}|/\bar{c}+\sqrt{t})/16} (1+t-\tau)^{-2-\frac{m}{2}} (1+\tau)^{-2} d\tau \right.\\
			& \left. \quad \quad \quad \quad \quad \quad \quad \quad + \int_{(t-|\vec{x}|/\bar{c}+\sqrt{t})/16}^{t/8} (1+t-\tau)^{-2-\frac{m}{2}} (1+\tau)^{-2} d\tau \right]\\
			& \le CH(\bar{c}t-|\vec{x}|)(1+t)^{-2-\frac{m}{2}} \left(  e^{-\frac{(|\vec{x}| - \bar{c}t)^2}{8K(t + 1)}}  + (\bar{c}t-|\vec{x}|+\sqrt{t+1})^{-1}\right),
		\end{aligned}
	\end{equation}
while direct computations lead to
	\begin{equation} \label{6ker}
		\begin{aligned}
			(\RN{6}) &\le C H(\bar{c}t-|\vec{x}|) \int_{t/8}^t (1+t-\tau)^{\frac{1}{2}-\frac{m}{2}} (1+\tau)^{-\frac{9}{2}} \int_{\mathbb{R}^3} (1+t-\tau)^{-\frac{5}{2}} e^{-\frac{(|\vec{x}-\vec{y}|-\bar{c}(t-\tau))^2}{K(t-\tau+1)}} d\vec{y} d\tau \\
			&\le C H(\bar{c}t-|\vec{x}|) (1+t)^{-\frac{9}{2}} \begin{cases}
				(1+t)^{\frac{3-m}{2}} & \text{for } 0 \le m < 3, \\
				\ln(1+t) & \text{for } m=3, \\
				1 & \text{for } m > 3.
			\end{cases}
		\end{aligned}
	\end{equation}

	Combining \eqref{456ker} with \eqref{4ker}--\eqref{6ker} results in that
		\begin{equation} \label{kere12}
		\begin{aligned}
			&\int_0^t \int_{\mathbb{R}^3} (1+t-\tau)^{-2-\frac{m}{2}} e^{-\frac{(|\vec{x}-\vec{y}|-\bar{c}(t-\tau))^2}{K(t-\tau+1)}} (1+\tau)^{-\frac{1}{2}} \frac{H(\bar{c}\tau - |\vec{y}|)}{1 + (|\vec{y}| + \sqrt{\tau})^2(\bar{c}\tau - |\vec{y}| + \sqrt{\tau})^4} d\vec{y} d\tau\\
			& \le C (1+t)^{-2-\frac{\min\{m,5\}}{2}}e^{-\frac{(|\vec{x}| - \bar{c}t)^2}{8K(t + 1)}}\\
			&\quad + CH(\bar{c}t-|\vec{x}|)\left[ (1+t)^{-2-\frac{m}{2}} (\bar{c}t-|\vec{x}|+\sqrt{t+1})^{-1} + (1+t)^{-\frac{9}{2}} \begin{cases}
				(1+t)^{\frac{3-m}{2}} & \text{for } 0 \le m < 3 \\
				\ln(1+t) & \text{for } m=3 \\
				1 & \text{for } m > 3
			\end{cases} \right],
		\end{aligned}
	\end{equation}
	and one verifies \eqref{heke2} by \eqref{kere11} and \eqref{kere12}.
	
	To estimate the first convolution in \eqref{heke20}, one could follow the region decompositions similar to the ones in \eqref{123ker} and combine with the following inequalities:
	\begin{equation} \label{hboun2}
		\begin{aligned}
			(1+\tau)^{-\frac{1}{2}} (|\vec{y}|^2+1)^{-d} \left((|\vec y|-\bar{c}\tau)^2 + \tau + 1 \right)^{-2}&\le CH(\bar{c}\tau/2 - |\vec{y}|)(1+\tau)^{-\frac{9}{2}} (|\vec{y}|^2+1)^{-d}\\
			& \quad + H(|\vec{y}|-\bar{c}\tau/2) (1+\tau)^{-\frac{5}{2}-2d}\\
			& \le C(1+\tau)^{-\frac{5}{2}-\min\{2,2d\}}, \ \ \text{ for }d\ge 0,                                                                                                                                                                                                                                                                                                                                                                                                                                                                                                                                                                                                                                                                  
		\end{aligned}
	\end{equation}
	\begin{equation} \label{intebou2}
		\begin{aligned}
			&\left( \int_{|\vec{y}| \le \bar{c}\tau/2}+\int_{ |\vec{y}| \ge \bar{c}\tau/2}\right) (1+\tau)^{-\frac{1}{2}} (|\vec{y}|^2+1)^{-\frac{9}{4}} \left((|\vec y|-\bar{c}\tau)^2 + \tau + 1 \right)^{-2} d\vec{y} \\
			& \le (1+\tau)^{-\frac{1}{2}} \left( (\bar{c} \tau)^2/4+1\right)^{-2} \int_{|\vec{y}| \le \bar{c}\tau/2} (|\vec{y}|^2+1)^{-\frac{9}{4}} d\vec{y}\\
			&\quad + \left( (\bar{c} \tau)^2/4+1\right)^{-\frac{9}{4}} \int_{ |\vec{y}| \ge \bar{c}\tau/2} (1+\tau)^{-\frac{1}{2}} \left((|\vec y|-\bar{c}\tau)^2 + \tau + 1 \right)^{-2} d\vec{y}\\
			& \le C(1+\tau)^{-\frac{9}{2}}.
		\end{aligned}
	\end{equation}
Thus one has that
	\begin{equation} \label{789ker}
		\begin{aligned}
			&\int_0^t \int_{\mathbb{R}^3} (1+t-\tau)^{-2-\frac{m}{2}} e^{-\frac{(|\vec{x}-\vec{y}|-\bar{c}(t-\tau))^2}{K(t-\tau+1)}} (1+\tau)^{-\frac{1}{2}} (|\vec{y}|^2+1)^{-\frac{9}{4}} \left((|\vec y|-\bar{c}\tau)^2 + \tau + 1 \right)^{-2} d\vec{y} d\tau\\
			& = H(|\vec{x}|-\bar{c}t) \int_0^t \left( \int_{|\vec{y}| -\bar{c}\tau \le (|\vec{x}|-\bar{c}t)/2} +  \int_{|\vec{y}| -\bar{c}\tau > (|\vec{x}|-\bar{c}t)/2}\right)  (1+t-\tau)^{-2-\frac{m}{2}} e^{-\frac{(|\vec{x}-\vec{y}|-\bar{c}(t-\tau))^2}{K(t-\tau+1)}} (1+\tau)^{-\frac{1}{2}} \\
			& \quad \quad \quad \quad \quad \quad \quad \times (|\vec{y}|^2+1)^{-\frac{9}{4}} \left((|\vec{y}|-\bar c\tau)^2 + \tau + 1 \right)^{-2} d\vec{y} d\tau\\
			& \quad + H(\bar{c}t-|\vec{x}|) \left( \int_0^{(t-|\vec{x}|/\bar{c}+\sqrt{t})/16}+\int_{(t-|\vec{x}|/\bar{c}+\sqrt{t})/16}^{t/8}\right)  \int_{\mathbb{R}^3} (1+t-\tau)^{-2-\frac{m}{2}} e^{-\frac{(|\vec{x}-\vec{y}|-\bar{c}(t-\tau))^2}{K(t-\tau+1)}} (1+\tau)^{-\frac{1}{2}} \\
			& \quad \quad \quad \quad \quad \quad \quad \quad \times (|\vec{y}|^2+1)^{-\frac{9}{4}} \left((|\vec{y}|-\bar{c}\tau)^2 + \tau + 1 \right)^{-2} d\vec{y} d\tau\\
			& \quad + H(\bar{c}t-|\vec{x}|) \int_{t/8}^t \int_{\mathbb{R}^3} (1+t-\tau)^{-2-\frac{m}{2}} e^{-\frac{(|\vec{x}-\vec{y}|-\bar{c}(t-\tau))^2}{K(t-\tau+1)}} (1+\tau)^{-\frac{1}{2}} (|\vec{y}|^2+1)^{-\frac{9}{4}}\\
			& \quad \quad \quad \quad \quad \quad \quad \quad \times \left((|\vec{y}|-\bar{c}\tau)^2 + \tau + 1 \right)^{-2} d\vec{y} d\tau\\
			&:=(\RN{7})+(\RN{8})+(\RN{9}),
		\end{aligned}
	\end{equation}
with
	\begin{equation} \label{7ker}
		\begin{aligned}
			(\RN{7}) &\le C e^{-\frac{(|\vec{x}| - \bar{c}t)^2}{8K(t + 1)}}  \int_0^{t} (1+t-\tau)^{-2-\frac{m}{2}} (1+\tau)^{-\frac{9}{2}} d\tau \\
			& \quad + C\left(\frac{(|\vec{x}|-\bar{c}t)^2}{4(t+1)}+1 \right)^{-2} \left[\int_0^{t/2} (1+t-\tau)^{-2-\frac{m}{2}} (1+\tau)^{-\frac{5}{2}} \int_{\mathbb{R}^3} (|\vec{y}|^2+1)^{-\frac{9}{4}} d\vec{y} d\tau \right.\\
			& \quad \left.+ \int_{t/2}^t (1+t-\tau)^{\frac{1}{2}-\frac{m}{2}} (1+\tau)^{-\frac{5}{2}} \left( (\bar{c}\tau)^2+1\right)^{-\frac{9}{4}} \int_{\mathbb{R}^3} (1+t-\tau)^{-\frac{5}{2}} e^{-\frac{(|\vec{x}-\vec{y}|-\bar{c}(t-\tau))^2}{K(t-\tau+1)}} d\vec{y} d\tau \right] \\
			& \le Ce^{-\frac{(|\vec{x}| - \bar{c}t)^2}{8K(t + 1)}} \left[(1+t)^{-2-\frac{m}{2}} + (1+t)^{-\frac{9}{2}} \right]\\
			& \quad + C\left(\frac{(|\vec{x}|-\bar{c}t)^2}{t+1}+1 \right)^{-2} \left[(1+t)^{-2-\frac{m}{2}} + (1+t)^{-7} \begin{cases}
				(1+t/2)^{\frac{3-m}{2}} & \text{for } 0 \le m < 3 \\
				\ln(1+t/2) & \text{for } m=3 \\
				1 & \text{for } m > 3
			\end{cases} \right]\\
			& \le C(1+t)^{-\frac{\min\{m,5\}}{2}}\left((|\vec{x}|-\bar{c}t)^2 +t+1\right)^{-2},
		\end{aligned}
	\end{equation}
		\begin{equation} \label{8ker}
		\begin{aligned}
			(\RN{8}) &\le C H(\bar{c}t-|\vec{x}|)\left[ e^{-\frac{(|\vec{x}|-\bar{c}t)^2}{8K(t+1)}} \int_0^{(t-|\vec{x}|/\bar{c}+\sqrt{t})/16} (1+t-\tau)^{-2-\frac{m}{2}}(1+\tau)^{-\frac{9}{2}} d\tau \right.\\
			&\quad \quad \quad + \int_0^{(t-|\vec{x}|/\bar{c}+\sqrt{t})/16} \int_{|\vec{y}| \ge (t-|\vec{x}|/\bar{c}+\sqrt{t})/8}   (1+t-\tau)^{-2-\frac{m}{2}} (1+\tau)^{-\frac{1}{2}}  \left((|\vec{y}|-\bar{c}\tau)^2 + \tau + 1 \right)^{-2} d\vec{y} d\tau\\
			&\quad \quad \quad \left.  \times \left((t-|\vec{x}|/\bar{c}+\sqrt{t})^2/64 +1\right)^{-\frac{9}{4}} + \int_{(t-|\vec{x}|/\bar{c}+\sqrt{t})/16}^{t/8} (1+t-\tau)^{-2-\frac{m}{2}}(1+\tau)^{-\frac{9}{2}} d\tau \right]\\
			& \le C H(\bar{c}t-|\vec{x}|)(1+t)^{-2-\frac{m}{2}}\left[ e^{-\frac{(|\vec{x}|-\bar{c}t)^2}{8K(t+1)}} + (1+t) \left((|\vec{x}|-\bar{c}t)^2 +t+1\right)^{-\frac{9}{4}}\right],
		\end{aligned}
	\end{equation}
\begin{equation} \label{9ker}
	(\RN{9}) \le C H(\bar{c}t-|\vec{x}|) \int_{t/8}^t (1+t-\tau)^{-2-\frac{m}{2}}(1+\tau)^{-\frac{9}{2}} d\tau \le C H(\bar{c}t-|\vec{x}|)(1+t)^{-\frac{9}{2}}.
\end{equation}
Combining \eqref{789ker} with \eqref{7ker}--\eqref{9ker}, one obtains that
\begin{equation} \label{kerse11}
	\begin{aligned}
		&\int_0^t \int_{\mathbb{R}^3} (1+t-\tau)^{-2-\frac{m}{2}} e^{-\frac{(|\vec{x}-\vec{y}|-\bar{c}(t-\tau))^2}{K(t-\tau+1)}} (1+\tau)^{-\frac{1}{2}} (|\vec{y}|^2+1)^{-\frac{9}{4}} \left((|\vec y|-\bar{c}\tau)^2 + \tau + 1 \right)^{-2} d\vec{y} d\tau\\
		& \le C(1+t)^{-\frac{\min\{m,5\}}{2}}\left((|\vec{x}|-\bar{c}t)^2 +t+1\right)^{-2}\\
		& \quad + C H(\bar{c}t-|\vec{x}|)\left[ (1+t)^{-1-\frac{m}{2}} \left((|\vec{x}|-\bar{c}t)^2 +t+1\right)^{-\frac{9}{4}}+ (1+t)^{-\frac{9}{2}}\right].
	\end{aligned}
\end{equation}

The other convolution in \eqref{heke20} is similar to \eqref{456ker}:
\begin{equation} \label{1012ker}
	\begin{aligned}
		&\int_0^t \int_{\mathbb{R}^3} (1+t-\tau)^{-2-\frac{m}{2}} e^{-\frac{(|\vec{x}-\vec{y}|-\bar{c}(t-\tau))^2}{K(t-\tau+1)}} (1+\tau)^{-\frac{1}{2}} (|\vec{y}|^2+1)^{-\frac{9}{4}} \frac{H(\bar{c}\tau - |\vec{y}|)}{1 + (|\vec{y}| + \sqrt{\tau})(\bar{c}\tau - |\vec{y}| + \sqrt{\tau})^2} d\vec{y} d\tau\\
		& = H(|\vec{x}|-\bar{c}t) \int_0^t \int_{|\vec{y}|\le \bar{c}\tau} (1+t-\tau)^{-2-\frac{m}{2}} e^{-\frac{(|\vec{x}-\vec{y}|-\bar{c}(t-\tau))^2}{K(t-\tau+1)}} (1+\tau)^{-\frac{1}{2}}(|\vec{y}|^2+1)^{-\frac{9}{4}} \\
		& \quad \quad \quad \quad \quad \quad \quad \times \frac{1}{1 + (|\vec{y}| + \sqrt{\tau})(\bar{c}\tau - |\vec{y}| + \sqrt{\tau})^2} d\vec{y} d\tau\\
		& \quad + H(\bar{c}t-|\vec{x}|) \left( \int_0^{(t-|\vec{x}|/\bar{c}+\sqrt{t})/16}+\int_{(t-|\vec{x}|/\bar{c}+\sqrt{t})/16}^{t/8}\right)  \int_{|\vec{y}|\le \bar{c}\tau} (1+t-\tau)^{-2-\frac{m}{2}} e^{-\frac{(|\vec{x}-\vec{y}|-\bar{c}(t-\tau))^2}{K(t-\tau+1)}} (1+\tau)^{-\frac{1}{2}} \\
		& \quad \quad \quad \quad \quad \quad \quad \quad \times (|\vec{y}|^2+1)^{-\frac{9}{4}} \frac{1}{1 + (|\vec{y}| + \sqrt{\tau})(\bar{c}\tau - |\vec{y}| + \sqrt{\tau})^2} d\vec{y} d\tau\\
		& \quad + H(\bar{c}t-|\vec{x}|) \int_{t/8}^t \int_{|\vec{y}|\le \bar{c}\tau} (1+t-\tau)^{-2-\frac{m}{2}} e^{-\frac{(|\vec{x}-\vec{y}|-\bar{c}(t-\tau))^2}{K(t-\tau+1)}}  \frac{(1+\tau)^{-\frac{1}{2}}(|\vec{y}|^2+1)^{-\frac{9}{4}}}{1 + (|\vec{y}| + \sqrt{\tau})(\bar{c}\tau - |\vec{y}| + \sqrt{\tau})^2} d\vec{y} d\tau\\
		&:=(\RN{10})+(\RN{11})+(\RN{12}).
	\end{aligned}
\end{equation}

Similar to \eqref{hboun2} and \eqref{intebou2}, one has that
\begin{equation} \label{hboun3}
	\begin{aligned}
		&(1+\tau)^{-\frac{1}{2}}(|\vec{y}|^2+1)^{-d} \frac{H(\bar{c}\tau - |\vec{y}|)}{1 + (|\vec{y}| + \sqrt{\tau})(\bar{c}\tau - |\vec{y}| + \sqrt{\tau})^2}\\
		&\le C H(\bar{c}\tau/2 - |\vec{y}|)(1+\tau)^{-3} (|\vec{y}|^2+1)^{-d}\\
		& \quad + C\bigl(H(\bar{c}\tau - |\vec{y}|)- H(\bar{c}\tau/2 - |\vec{y}|)\bigr) (1+\tau)^{-\frac{5}{2}} \left( (\bar{c} \tau)^2/4+1\right)^{-d} \le C(1+\tau)^{-\frac{5}{2}-\min\{\frac{1}{2},2d\}},\ \ \text{ for }d\ge 0                                                                                                                                                                                                                                                                                                                                                                                                                                                                                                                                                                                                                                                                                                                                          
	\end{aligned}
\end{equation}
and
\begin{equation} \nonumber 
	\begin{aligned}
		&\left( \int_{|\vec{y}| \le \bar{c}\tau/2}+\int_{\bar{c}\tau/2 \le |\vec{y}| \le \bar{c}\tau }\right) (1+\tau)^{-\frac{1}{2}} (|\vec{y}|^2+1)^{-\frac{9}{4}} (1+|\vec{y}| + \sqrt{\tau})^{-1}(\bar{c}\tau - |\vec{y}| + \sqrt{\tau} +1)^{-2} d\vec{y}\\
		& \le C(1+\tau)^{-1}  \left(\bar{c} \tau/2+1\right)^{-2} \int_{|\vec{y}| \le \bar{c}\tau/2} (|\vec{y}|^2+1)^{-\frac{9}{4}} d\vec{y}+ C(1+\tau)^{-\frac{5}{2}}\left( (\bar{c} \tau)^2/4+1\right)^{-\frac{9}{4}} \int_{ |\vec{y}| \le \bar{c}\tau} d\vec{y}\le C(1+\tau)^{-3},
	\end{aligned}
\end{equation}
which together with \eqref{exieq} lead to
\begin{equation} \label{10ker}
		(\RN{10}) \le C e^{-\frac{(|\vec{x}| - \bar{c}t)^2}{K(t + 1)}} \int_0^t (1+t-\tau)^{-2-\frac{m}{2}} (1+\tau)^{-3} d\tau
		\le C (1+t)^{-2-\frac{\min\{m,2\}}2}e^{-\frac{(|\vec{x}| - \bar{c}t)^2}{K(t + 1)}}
\end{equation}
and
\begin{equation} \label{11ker}
	\begin{aligned}
		(\RN{11}) &\le C H(\bar{c}t-|\vec{x}|)\left[ e^{-\frac{(|\vec{x}|-\bar{c}t)^2}{8K(t+1)}} \int_0^{(t-|\vec{x}|/\bar{c}+\sqrt{t})/16} (1+t-\tau)^{-2-\frac{m}{2}}(1+\tau)^{-3} d\tau \right.\\
		&\left. \quad \quad \quad \quad \quad \quad \quad \quad + \int_{(t-|\vec{x}|/\bar{c}+\sqrt{t})/16}^{t/8} (1+t-\tau)^{-2-\frac{m}{2}}(1+\tau)^{-3} d\tau \right]\\
		& \le C H(\bar{c}t-|\vec{x}|)(1+t)^{-2-\frac{m}{2}}\left[ e^{-\frac{(|\vec{x}|-\bar{c}t)^2}{8K(t+1)}} + (\bar{c}t-|\vec{x}|+\sqrt{t+1})^{-2}\right].
	\end{aligned}
\end{equation}

The part $(\RN{12})$ should be estimated under a more tedious region decomposition since a direct use of \eqref{hboun3} does not result in fast enough decay rates:
	\begin{equation} \label{12ker}
		\begin{aligned}
			(\RN{12}) &\le C H(\bar{c}t-|\vec{x}|) \left[\int_{t/8}^t (1+t-\tau)^{\frac{1}{2}-\frac{m}{2}} (1+\tau)^{-7} \int_{|\vec{y}|\ge \bar{c}\tau/2} (1+t-\tau)^{-\frac{5}{2}} e^{-\frac{(|\vec{x}-\vec{y}|-\bar{c}(t-\tau))^2}{K(t-\tau+1)}} d\vec{y} d\tau \right.\\
			& \quad \quad + \left( \int_{t/8}^{t-\sqrt{t}}+H(4\bar{c}\sqrt{t}-|\vec{x}|)\int_{t-\sqrt{t}}^t \right) (1+t-\tau)^{-2-\frac{m}{2}} (1+\tau)^{-3} \int_{|\vec{y}| \le \bar{c}\tau/2} (|\vec{y}|^2+1)^{-\frac{9}{4}} d\vec{y}d\tau\\
			& \quad \quad  + H(|\vec{x}|-4\bar{c}\sqrt{t})\int_{t-\sqrt{t}}^t (1+t-\tau)^{-2-\frac{m}{2}}(1+\tau)^{-3} \left( \int_{2\bar{c}\sqrt{t} \le |\vec{y}|\le \bar{c}\tau/2}+ \int_{|\vec{y}|\le 2\bar{c}\sqrt{t}}\right)\\ 
			& \left. \quad \quad \quad (|\vec{y}|^2+1)^{-\frac{9}{4}} e^{-\frac{(|\vec{x}-\vec{y}|-\bar{c}(t-\tau))^2}{K(t-\tau+1)}} d\vec{y} d\tau \right]\\
			& \le C H(\bar{c}t-|\vec{x}|)\Bigg[ (1+t)^{-7} \begin{cases}
				(1+t)^{\frac{3-m}{2}} & \text{for } 0 \le m < 3 \\
				\ln(1+t) & \text{for } m=3 \\
				1 & \text{for } m > 3
			\end{cases} + (1+t)^{-3} (1+\sqrt{t})^{-1-\frac{m}{2}} \\
		  &\quad \quad + H(4\bar{c}\sqrt{t}-|\vec{x}|)(1+t)^{-3} +H(|\vec{x}|-4\bar{c}\sqrt{t})\left((4\bar{c}^2 t+1)^{-\frac{9}{4}} \int_{t-\sqrt{t}}^t (1+t-\tau)^{\frac{1}{2}-\frac{m}{2}} (1+\tau)^{-3} d\tau \right.\\
		  & \left.\quad \quad + e^{-\frac{\bar{c}^2t}{K(\sqrt{t}+1)}}  \int_{t-\sqrt{t}}^t (1+t-\tau)^{-2-\frac{m}{2}}(1+\tau)^{-3} d\tau \right) \Bigg]\\
		  & \le C H(\bar{c}t-|\vec{x}|)\left[ (1+t)^{-\frac{21}{4}} \begin{cases}
		  	(1+t)^{\frac{3-m}{2}} & \text{for } 0 \le m < 3 \\
		  	\ln(1+t) & \text{for } m=3 \\
		  	1 & \text{for } m > 3
		  \end{cases} + (1+t)^{-\frac{7}{2}-\frac{m}{4}} \right] \\
		  &\quad + CH(4\bar{c}\sqrt{t}-|\vec{x}|)(1+t)^{-3}\\
		  & \le C H(\bar{c}t-|\vec{x}|)(1+t)^{-\frac{7}{2}-\frac{\min\{m,7\}}4} + CH(4\bar{c}\sqrt{t}-|\vec{x}|)(1+t)^{-3}.
		\end{aligned}
	\end{equation}

One could combine \eqref{1012ker} and \eqref{10ker}--\eqref{12ker} to deduce that
\begin{equation} \nonumber 
	\begin{aligned}
		&\int_0^t \int_{\mathbb{R}^3} (1+t-\tau)^{-2-\frac{m}{2}} e^{-\frac{(|\vec{x}-\vec{y}|-\bar{c}(t-\tau))^2}{K(t-\tau+1)}} (1+\tau)^{-\frac{1}{2}} (|\vec{y}|^2+1)^{-\frac{9}{4}} \frac{H(\bar{c}\tau - |\vec{y}|)}{1 + (|\vec{y}| + \sqrt{\tau})(\bar{c}\tau - |\vec{y}| + \sqrt{\tau})^2} d\vec{y} d\tau\\
		& \le C (1+t)^{-2-\frac{\min\{m,2\}}2}e^{-\frac{(|\vec{x}| - \bar{c}t)^2}{8K(t + 1)}} + CH(4\bar{c}\sqrt{t}-|\vec{x}|)(1+t)^{-3}\\
		& \quad +C H(\bar{c}t-|\vec{x}|)\left[  (1+t)^{-2-\frac{m}{2}} (\bar{c}t-|\vec{x}|+\sqrt{t+1})^{-2} + (1+t)^{-\frac{7}{2}-\frac{\min\{m,7\}}4}  \right], 
	\end{aligned}
\end{equation}
which together with \eqref{kerse11} verifies \eqref{heke20}.

Finally, the convolution in \eqref{heke21} could be evaluated similar to \eqref{123ker} after establishing the following estimates:
\begin{equation} \nonumber 
	\begin{aligned}
		&(1+|\vec{y}|+\sqrt{\tau})^{-2} \left((|\vec y|-\bar{c}\tau)^2 + \tau + 1 \right)^{-3}\\
		&\le CH( \bar{c}\tau/2-|\vec{y}|)(1+\tau)^{-7} +CH(|\vec{y}|-\bar{c}\tau/2) ( 1+\tau)^{-5} \le C(1+\tau)^{-5} ,                                                                                                                                                                                                                                                                                                                                                                                                                                                                                                                                                                                                                                                                                                                                                  
	\end{aligned}
\end{equation}
\begin{equation} \nonumber 
	\begin{aligned}
		&\left( \int_{|\vec{y}| \le \bar{c}\tau/2}+\int_{ |\vec{y}| \ge \bar{c}\tau/2}\right) (1+|\vec{y}|+\sqrt{\tau})^{-2} \left((|\vec y|-\bar{c}\tau)^2 + \tau + 1 \right)^{-3} d\vec{y} \\
		& \le \left( (\bar{c} \tau)^2/4+1\right)^{-3} \int_{|\vec{y}| \le \bar{c}\tau/2} (1+|\vec{y}|+\sqrt{\tau})^{-2} d\vec{y}\\
		&\quad + (1+\bar{c} \tau/2)^{-2}(1+\tau)^{-1} \int_{ |\vec{y}| \ge \bar{c}\tau/2} \left((|\vec y|-\bar{c}\tau)^2 + \tau + 1 \right)^{-2} d\vec{y}\\
		& \le C(1+\tau)^{-\frac{5}{2}}.
	\end{aligned}
\end{equation}
Thus one has that
\begin{equation} \nonumber
	\begin{aligned}
		&\int_0^t \int_{\mathbb{R}^3} (1+t-\tau)^{-2-\frac{m}{2}} e^{-\frac{(|\vec{x}-\vec{y}|-\bar{c}(t-\tau))^2}{K(t-\tau+1)}} (1+|\vec{y}|+\sqrt{\tau})^{-2} \left((|\vec y|-\bar{c}\tau)^2 + \tau + 1 \right)^{-3} d\vec{y} d\tau\\
		& = H(|\vec{x}|-\bar{c}t) \int_0^t \left( \int_{|\vec{y}| -\bar{c}\tau \le (|\vec{x}|-\bar{c}t)/2} +  \int_{|\vec{y}| -\bar{c}\tau > (|\vec{x}|-\bar{c}t)/2}\right)  (1+t-\tau)^{-2-\frac{m}{2}} e^{-\frac{(|\vec{x}-\vec{y}|-\bar{c}(t-\tau))^2}{K(t-\tau+1)}}\\
		& \quad \quad \quad \quad \quad \quad \quad \times (1+|\vec{y}|+\sqrt{\tau})^{-2} \left((|\vec y|-\bar{c}\tau)^2 + \tau + 1 \right)^{-3} d\vec{y} d\tau\\
		& \quad + H(\bar{c}t-|\vec{x}|) \left( \int_0^{(t-|\vec{x}|/\bar{c}+\sqrt{t})/16}+\int_{(t-|\vec{x}|/\bar{c}+\sqrt{t})/16}^{t/8}\right)  \int_{\mathbb{R}^3} (1+t-\tau)^{-2-\frac{m}{2}} e^{-\frac{(|\vec{x}-\vec{y}|-\bar{c}(t-\tau))^2}{K(t-\tau+1)}}  \\
		& \quad \quad \quad \quad \quad \quad \quad \quad \times (1+|\vec{y}|+\sqrt{\tau})^{-2} \left((|\vec y|-\bar{c}\tau)^2 + \tau + 1 \right)^{-3} d\vec{y} d\tau\\
		& \quad + H(\bar{c}t-|\vec{x}|) \int_{t/8}^t \int_{\mathbb{R}^3} (1+t-\tau)^{-2-\frac{m}{2}} e^{-\frac{(|\vec{x}-\vec{y}|-\bar{c}(t-\tau))^2}{K(t-\tau+1)}} (1+|\vec{y}|+\sqrt{\tau})^{-2} \left((|\vec y|-\bar{c}\tau)^2 + \tau + 1 \right)^{-3} d\vec{y} d\tau\\
		&:=(\RN{13})+(\RN{14})+(\RN{15}),
	\end{aligned}
\end{equation}
with
	\begin{equation} \nonumber
	\begin{aligned}
		(\RN{13}) &\le Ce^{-\frac{(|\vec{x}| - \bar{c}t)^2}{8K(t + 1)}} \left[ \int_0^{t/2} (1+t-\tau)^{-2-\frac{m}{2}} (1+\tau)^{-\frac{5}{2}} d\tau \right.\\
		&\quad \quad \quad \quad \quad \quad \left.+ \int_{t/2}^t (1+t-\tau)^{\frac{1}{2}-\frac{m}{2}} (1+\tau)^{-5} \int_{\mathbb{R}^3} (1+t-\tau)^{-\frac{5}{2}} e^{-\frac{(|\vec{x}-\vec{y}|-\bar{c}(t-\tau))^2}{K(t-\tau+1)}} d\vec{y} d\tau \right]\\
		& \quad +C \int_0^{t/2} (1+t-\tau)^{-2-\frac{m}{2}} \left((|\vec{x}|-\bar{c}t)/2+\sqrt{\tau}+1\right)^{-2} \left((|\vec{x}|-\bar{c} t)^2/4+\tau+1\right)^{-1} \int_{|\vec{y}| -\bar{c}\tau > (|\vec{x}|-\bar{c}t)/2}\\
		& \quad \quad \quad \quad \quad \left((|\vec y|-\bar c\tau)^2 + \tau + 1 \right)^{-2} d\vec{y} d\tau \\
		&\quad + C \left(\frac{(|\vec{x}|-\bar{c}t)^2}{4(t+1)}+1 \right)^{-3} \int_{t/2}^t (1+t-\tau)^{\frac{1}{2}-\frac{m}{2}} (1+\tau)^{-5} \int_{\mathbb{R}^3} (1+t-\tau)^{-\frac{5}{2}} e^{-\frac{(|\vec{x}-\vec{y}|-\bar{c}(t-\tau))^2}{K(t-\tau+1)}} d\vec{y} d\tau\\
		& \le C(1+t)^{-\frac{\min\{m,6\}}2}\left((|\vec{x}|-\bar{c}t)^2+t+1\right)^{-2},
	\end{aligned}
\end{equation}
\begin{equation} \nonumber 
	\begin{aligned}
		(\RN{14}) &\le C H(\bar{c}t-|\vec{x}|)\left[ e^{-\frac{(|\vec{x}|-\bar{c}t)^2}{8K(t+1)}}  \int_0^{t/8} (1+t-\tau)^{-2-\frac{m}{2}}(1+\tau)^{-\frac{5}{2}} d\tau\right.\\ 
		& \quad +\left(\frac{(|\vec{x}|-\bar{c}t)^2}{(16)^2(t+1)}+1 \right)^{-1}\left(1+\frac{\bar{c}t-|\vec{x}|+\bar{c}\sqrt{t}}{8}\right)^{-2} \int_0^{(t-|\vec{x}|/\bar{c}+\sqrt{t})/16} (1+t-\tau)^{-2-\frac{m}{2}}\\
		& \quad \quad \quad \times (1+\tau)^{-1} \int_{|\vec{y}|  > (\bar{c}t-|\vec{x}|+\bar{c}\sqrt{t})/8} \left((|\vec y|-\bar{c}\tau)^2 + \tau + 1 \right)^{-2} d\vec{y} d\tau\\
		&\quad \left.+ \int_{(t-|\vec{x}|/\bar{c}+\sqrt{t})/16}^{t/8} (1+t-\tau)^{-2-\frac{m}{2}}(1+\tau)^{-\frac{5}{2}} d\tau \right]\\
		& \le C H(\bar{c}t-|\vec{x}|)\left[ (1+t)^{-2-\frac{m}{2}}\left(e^{-\frac{(|\vec{x}|-\bar{c}t)^2}{8K(t+1)}} + (\bar{c}t-|\vec{x}|+\sqrt{t+1})^{-\frac{3}{2}}\right) \right.\\
		& \left. \quad \quad \quad \quad \quad \quad \quad \quad + (1+t)^{-\frac{1}{2}-\frac{m}{2}} \left((|\vec{x}|-\bar{c}t)^2+t+1\right)^{-2}\right],
	\end{aligned}
\end{equation}
\begin{equation} \nonumber
	\begin{aligned}
		(\RN{15}) &\le C H(\bar{c}t-|\vec{x}|) \int_{t/8}^t (1+t-\tau)^{\frac{1}{2}-\frac{m}{2}}(1+\tau)^{-5} \int_{\mathbb{R}^3} (1+t-\tau)^{-\frac{5}{2}} e^{-\frac{(|\vec{x}-\vec{y}|-\bar{c}(t-\tau))^2}{K(t-\tau+1)}} d\vec{y} d\tau\\
		&\le C H(\bar{c}t-|\vec{x}|) (1+t)^{-5} 
		\begin{cases}
			(1+t)^{\frac{3-m}{2}} & \text{for } 0 \le m < 3, \\
			\ln(1+t) & \text{for } m=3, \\
			1 & \text{for } m > 3,
		\end{cases}
	\end{aligned}
\end{equation}
and finally obtains that
\begin{equation} \nonumber 
	\begin{aligned}
		&\int_0^t \int_{\mathbb{R}^3} (1+t-\tau)^{-2-\frac{m}{2}} e^{-\frac{(|\vec{x}-\vec{y}|-\bar{c}(t-\tau))^2}{K(t-\tau+1)}} (1+|\vec{y}|+\sqrt{\tau})^{-2} \left((|\vec y|-\bar{c}\tau)^2 + \tau + 1 \right)^{-3} d\vec{y} d\tau\\
		& \le C(1+t)^{-\frac{\min\{m,6\}}2}\left((|\vec{x}|-\bar{c}t)^2+t+1\right)^{-2} \\
		& \quad +C H(\bar{c}t-|\vec{x}|)\left[  (1+t)^{-2-\frac{m}{2}} (\bar{c}t-|\vec{x}|+\sqrt{t+1})^{-\frac{3}{2}}+(1+t)^{-5} 
		\begin{cases}
			(1+t)^{\frac{3-m}{2}} & \text{for } 0 \le m < 3, \\
			\ln(1+t) & \text{for } m=3, \\
			1 & \text{for } m > 3,
		\end{cases}\right]
	\end{aligned}
\end{equation}
which verifies \eqref{heke21} and we finish the proof.
\end{proof}

\subsection{Proof of Lemma~\ref{lemmainexp}}

\begin{proof}
	For $i \ge 0$, $m \ge 0$, and $j=0,1,2$, it holds that
	\begin{equation} \label{12exp}
		\begin{aligned}
			&\int_0^t \int_{\mathbb{R}^3} |\vec{x}-\vec{y}|^{-j}  e^{-(|\vec{x}-\vec{y}|+t-\tau)/D_0} (1+\tau)^{-i} \left((|\vec y|-\bar{c}\tau)^2 + \tau + 1 \right)^{-2m} d\vec{y} d\tau\\
			& = H(|\vec{x}|-\bar{c}t) \int_0^t \left( \int_{|\vec{x}-\vec{y}| \le (|\vec{x}|-\bar{c}t)/2} +  \int_{|\vec{x}-\vec{y}|> (|\vec{x}|-\bar{c}t)/2}\right) |\vec{x}-\vec{y}|^{-j}  e^{-(|\vec{x}-\vec{y}|+t-\tau)/D_0} (1+\tau)^{-i}\\
			& \quad \quad \quad \quad \quad \quad \quad \times \left((|\vec{y}|-\bar c\tau)^2 + \tau + 1 \right)^{-2m} d\vec{y} d\tau\\
			& \quad + H(\bar{c}t-|\vec{x}|) \int_0^t \int_{\mathbb{R}^3} |\vec{x}-\vec{y}|^{-j}  e^{-(|\vec{x}-\vec{y}|+t-\tau)/D_0} (1+\tau)^{-i} \left((|\vec{y}|-\bar{c}\tau)^2 + \tau + 1 \right)^{-2m} d\vec{y} d\tau\\
		 &\le CH(|\vec{x}|-\bar{c}t)\left( \left(\frac{(|\vec{x}|-\bar{c}t)^2}{4(t+1)}+1 \right)^{-2m} + e^{-(|\vec{x}|-\bar{c}t)/(4D_0)} \right) \int_0^t e^{-(t-\tau)/D_0}(1+\tau)^{-i-2m} d\tau\\
		&\quad \quad \quad \quad \quad \quad \times \int_{\mathbb{R}^3} |\vec{x}-\vec{y}|^{-j} e^{-|\vec{x}-\vec{y}|/(2D_0)}d\vec{y}\\
		&\quad +CH(\bar{c}t-|\vec{x}|) \int_0^t e^{-(t-\tau)/D_0}(1+\tau)^{-i-2m} d\tau \int_{\mathbb{R}^3} |\vec{x}-\vec{y}|^{-j} e^{-|\vec{x}-\vec{y}|/D_0}d\vec{y}\\
		& \le CH(|\vec{x}|-\bar{c}t)(1+t)^{-i-2m}\left( \left(\frac{(|\vec{x}|-\bar{c}t)^2}{t+1}+1 \right)^{-2m}+e^{-(|\vec{x}|-\bar{c}t)/(4D_0)}\right)+CH(\bar{c}t-|\vec{x}|)(1+t)^{-i-2m}\\
		&\le C (1+t)^{-i}\left( (|\vec{x}| - \bar{c} t)^2 + t + 1\right)^{-2m} +CH(\bar{c}t-|\vec{x}|)(1+t)^{-i-2m}.
	\end{aligned}
\end{equation}
Here we split the analysis into two cases, $|\vec{x}| - \bar{c}t \le \bar{c}t$ and $|\vec{x}| - \bar{c}t > \bar{c}t$, to treat the exponential term $\exp\left\lbrace -(|\vec{x}|-\bar{c}t)/(4D_0)\right\rbrace$.

Next one evaluates the following integral:
\begin{equation} \nonumber 
	\begin{aligned}
		&\int_0^t \int_{\mathbb{R}^3} |\vec{x}-\vec{y}|^{-j}  e^{-(|\vec{x}-\vec{y}|+t-\tau)/D_0} (1+\tau)^{-i}\frac{H(\bar{c}\tau - |\vec{y}|)}{1 + (|\vec{y}| + \sqrt{\tau})^m(\bar{c}\tau - |\vec{y}| + \sqrt{\tau})^{2m}} d\vec{y} d\tau\\
		& \le C \left( H(|\vec{x}|-\bar{c}t) e^{-(|\vec{x}|-\bar{c}t)/(2D_0)} + H(\bar{c}t-|\vec{x}|)\right) \int_0^t e^{-(t-\tau)/D_0} (1+\tau)^{-i-2m} d\tau\\
		& \quad \quad \times \int_{\mathbb{R}^3} |\vec{x}-\vec{y}|^{-j}  e^{-(|\vec{x}-\vec{y}|)/(2D_0)}d\vec{y}\\
		& \le C(1+t)^{-i-2m} \left( H(|\vec{x}|-\bar{c}t) e^{-(|\vec{x}|-\bar{c}t)/(2D_0)} + H(\bar{c}t-|\vec{x}|) \right)\\
		& \le C(1+t)^{-i-2m} \left( e^{-\frac{(|\vec{x}|-\bar{c}t)^2}{2D_0\bar{c}t}} + e^{-\frac{(|\vec{x}|+2\bar{c}t)}{8D_0}} + H(\bar{c}t-|\vec{x}|) \right),
	\end{aligned}
\end{equation}
which together with \eqref{12exp} verifies \eqref{exponxt}.
 
 The convolution in \eqref{exponxt0} could be estimated as follows:
 \begin{equation} \nonumber
 	\begin{aligned}
 		&\int_0^t \int_{\mathbb{R}^3} |\vec{x}-\vec{y}|^{-j}  e^{-(|\vec{x}-\vec{y}|+t-\tau)/D_0} (1+\tau)^{-\frac{1}{2}} (|\vec{y}|^2+1)^{-\frac{9}{4}} \left((|\vec y|-\bar{c}\tau)^2 + \tau + 1 \right)^{-2} d\vec{y} d\tau\\
 		& = H(|\vec{x}|-\bar{c}t) \int_0^t \left( \int_{|\vec{x}-\vec{y}| \le (|\vec{x}|-\bar{c}t)/2} +  \int_{|\vec{x}-\vec{y}|> (|\vec{x}|-\bar{c}t)/2}\right) |\vec{x}-\vec{y}|^{-j}  e^{-(|\vec{x}-\vec{y}|+t-\tau)/D_0} (1+\tau)^{-\frac{1}{2}}\\
 		& \quad \quad \quad \quad \quad \quad \quad \times (|\vec{y}|^2+1)^{-\frac{9}{4}} \left((|\vec{y}|-\bar c\tau)^2 + \tau + 1 \right)^{-2} d\vec{y} d\tau\\
 		& \quad + H(\bar{c}t-|\vec{x}|) \int_0^t \int_{\mathbb{R}^3} |\vec{x}-\vec{y}|^{-j}  e^{-(|\vec{x}-\vec{y}|+t-\tau)/D_0} (1+\tau)^{-\frac{1}{2}} (|\vec{y}|^2+1)^{-\frac{9}{4}} \left((|\vec{y}|-\bar{c}\tau)^2 + \tau + 1 \right)^{-2} d\vec{y} d\tau\\
		&\le C\Biggl[ H(|\vec{x}|-\bar{c}t)(|\vec{x}|^2+1)^{-\frac{9}{4}}\left( \left(\frac{(|\vec{x}|-\bar{c}t)^2}{4(t+1)}+1 \right)^{-2} + e^{-(|\vec{x}|-\bar{c}t)/(2D_0)} \right) \int_0^t e^{-(t-\tau)/D_0}(1+\tau)^{-\frac{5}{2}} d\tau\\
		&\quad \quad \quad +H(\bar{c}t-|\vec{x}|) \int_0^t e^{-(t-\tau)/D_0}(1+\tau)^{-\frac{9}{2}} d\tau\Biggr] \times \int_{\mathbb{R}^3} |\vec{x}-\vec{y}|^{-j} e^{-|\vec{x}-\vec{y}|/(4D_0)}d\vec{y}\\
		& \le CH(|\vec{x}|-\bar{c}t)(1+t)^{-\frac{1}{2}}(|\vec{x}|^2+1)^{-\frac{9}{4}} \left((|\vec{x}|-\bar{c}t)^2+t+1\right)^{-2} + CH(\bar{c}t-|\vec{x}|)(1+t)^{-\frac{9}{2}},
	\end{aligned}
\end{equation}

\begin{equation} \nonumber 
	\begin{aligned}
		&\int_0^t \int_{\mathbb{R}^3} |\vec{x}-\vec{y}|^{-j}  e^{-(|\vec{x}-\vec{y}|+t-\tau)/D_0} (1+\tau)^{-\frac{1}{2}} (|\vec{y}|^2+1)^{-\frac{9}{4}} \frac{H(\bar{c}\tau - |\vec{y}|)}{1 + (|\vec{y}| + \sqrt{\tau})(\bar{c}\tau - |\vec{y}| + \sqrt{\tau})^2} d\vec{y} d\tau\\
		&=\int_0^t \left( H(|\vec{x}|-\bar{c}t) \int_{\mathbb{R}^3} + H(\bar{c}t-|\vec{x}|)\left( \int_{|\vec{y}|\le \bar{c}\tau/2} + \int_{|\vec{y}|\ge \bar{c}\tau/2} \right) \right) |\vec{x}-\vec{y}|^{-j}  e^{-(|\vec{x}-\vec{y}|+t-\tau)/D_0} (1+\tau)^{-\frac{1}{2}}\\
		&\quad \quad \quad \times (|\vec{y}|^2+1)^{-\frac{9}{4}}\frac{H(\bar{c}\tau - |\vec{y}|)}{1 + (|\vec{y}| + \sqrt{\tau})(\bar{c}\tau - |\vec{y}| + \sqrt{\tau})^2} d\vec{y} d\tau\\
		& \le CH(|\vec{x}|-\bar{c}t) (1+t)^{-\frac{5}{2}} (|\vec{x}|^2+1)^{-\frac{9}{4}} e^{-(|\vec{x}|-\bar{c}t)/(2D_0)} + CH(\bar{c}t-|\vec{x}|)\left[ (|\vec{x}|^2+1)^{-\frac{9}{4}}(1+t)^{-3}+(1+t)^{-7}\right]. 
	\end{aligned}
\end{equation}

Finally, \eqref{exponxt1} could be verified similarly as follows: 
\begin{equation} \nonumber
	\begin{aligned}
		&\int_0^t \int_{\mathbb{R}^3} |\vec{x}-\vec{y}|^{-j}  e^{-(|\vec{x}-\vec{y}|+t-\tau)/D_0} (1+
		|\vec{y}|+\sqrt{\tau})^{-2} \left((|\vec y|-\bar{c}\tau)^2 + \tau + 1 \right)^{-3} d\vec{y} d\tau\\
		& \le CH(|\vec{x}|-\bar{c}t) (|\vec{x}|^2+1)^{-1} \left[ \left(\frac{(|\vec{x}|-\bar{c}t)^2}{4(t+1)}+1 \right)^{-3} + e^{-(|\vec{x}|-\bar{c}t)/(2D_0)} \right] \int_0^t e^{-(t-\tau)/D_0}(1+\tau)^{-3} d\tau\\
		& \quad \times \int_{\mathbb{R}^3} |\vec{x}-\vec{y}|^{-j} e^{-|\vec{x}-\vec{y}|/(4D_0)}d\vec{y}\\
		& \quad +CH(\bar{c}t-|\vec{x}) \int_0^t e^{-(t-\tau)/D_0}(1+\tau)^{-5} d\tau \int_{\mathbb{R}^3} |\vec{x}-\vec{y}|^{-j} e^{-|\vec{x}-\vec{y}|/D_0}d\vec{y}\\
		& \le CH(|\vec{x}|-\bar{c}t)(|\vec{x}|^2+1)^{-1}  \left((|\vec{x}|-\bar{c}t)^2+t+1\right)^{-3}+CH(\bar{c}t-|\vec{x}|)(1+t)^{-5}.
	\end{aligned}
\end{equation}

\end{proof}

\subsection{Proof of Lemma~\ref{lemmainexpw}}
\begin{proof}
By Kirchhoff's formula \eqref{wg}, it holds that
	\begin{equation} \label{12expwwt}
		\begin{aligned}
			& e^{-\mu t} \omega(\vec{x},t;\bar{c}_f) \underset{(\vec{x},t)}{\ast} (1+t)^{-l} (|\vec{x}|^2+1)^{-d} \left((|\vec{x}|-\bar{c}t)^2+t+1\right)^{-2} \\
			& =\frac{1}{4 \pi} \int_0^t (t-\tau) e^{-\mu(t-\tau)}\int_{|\vec{y}|=1} (1+\tau)^{-l} (|\vec{x}+\bar{c}_f(t-\tau)\vec{y}|^2 + 1)^{-d}\\
			& \quad \quad \quad \times \left((|\vec{x}+\bar{c}_f(t-\tau)\vec{y}|-\bar{c}\tau)^2 + \tau + 1 \right)^{-2}dS(\vec{y}) d\tau\\
			&\le C\bigl( H(|\vec{x}|-\bar{c}t)+ H(\bar{c}t-|\vec{x}|)\bigr)\int_0^t  e^{-\mu(t-\tau)/2}\int_{|\vec{y}|=1} (1+\tau)^{-l} (|\vec{x}+\bar{c}_f(t-\tau)\vec{y}|^2 + 1)^{-d}\\
			&\quad \quad \times \left((|\vec{x}+\bar{c}_f(t-\tau)\vec{y}|-\bar{c}\tau)^2 + \tau + 1 \right)^{-2}dS(\vec{y}) d\tau\\
			&:=(\RN{1}_w)+(\RN{2}_w).
		\end{aligned}
	\end{equation}
For $(\RN{1}_w)$, since $|\vec{x}| \ge \bar{c}t$ and $|\vec{y}|=1$, if furthermore $\bar{c}_f(t-\tau) \le (|\vec{x}|-\bar{c}t)/2$, one has that
\begin{equation} \label{xcfy}
	|\vec{x}+\bar{c}_f(t-\tau)\vec{y}|-\bar{c}\tau \ge |\vec{x}|-\bar{c}_f(t-\tau) -\bar{c}t \ge (|\vec{x}|-\bar{c}t)/2, \quad 	|\vec{x}+\bar{c}_f(t-\tau)\vec{y}| \ge |\vec{x}|-\bar{c}_f(t-\tau) \ge (|\vec{x}|+\bar{c}t)/2.
\end{equation}
Otherwise, if $\bar{c}_f(t-\tau) \ge (|\vec{x}|-\bar{c}t)/2$, it holds that
\begin{equation} \label{expxy}
	e^{-\mu(t-\tau)/4} \le e^{-\mu(|\vec{x}|-\bar{c}t)/(8\bar{c}_f)}.
\end{equation}
Moreover, for both $(\RN{1}_w)$ and $(\RN{2}_w)$, it is valid that
\begin{equation} \label{wxppoly}
	\begin{aligned}
		e^{-\mu(t-\tau)/8} (|\vec{x}+\bar{c}_f(t-\tau)\vec{y}|^2 + 1)^{-d} &\le C H(\bar{c}_f(t-\tau)-|\vec{x}|/2)  e^{-\mu|\vec{x}|/(16\bar{c}_f)}\\
		& \quad + CH(|\vec{x}|/2-\bar{c}_f(t-\tau))(|\vec{x}|^2/4 + 1)^{-d}\\
		&\le C(|\vec{x}|^2 + 1)^{-d}.
	\end{aligned}
\end{equation}
The above estimates \eqref{xcfy}--\eqref{wxppoly} together with \eqref{hboun2} yield that
\begin{equation} \label{1wwt}
	\begin{aligned}
		(\RN{1}_w) &\le C H(|\vec{x}|-\bar{c}t) (|\vec{x}|^2 + 1)^{-d} \left[ \left(\frac{(|\vec{x}|-\bar{c}t)^2}{4(t+1)}+1 \right)^{-2} + e^{-\frac{\mu(|\vec{x}|-\bar{c}t)}{8\bar{c}_f}}\right] \int_0^t e^{-\mu(t-\tau)/8} (1+\tau)^{-l-2}d\tau \\
		&\le C H(|\vec{x}|-\bar{c}t)(1+t)^{-l}(|\vec{x}|^2 + 1)^{-d}  \left((|\vec{x}|-\bar{c}t)^2+t+1\right)^{-2}
	\end{aligned}
\end{equation}
and 
\begin{equation} \label{2wwt}
	\begin{aligned}
		(\RN{2}_w) &\le CH(\bar{c}t-|\vec{x}|)\left[ (|\vec{x}|^2+1)^{-d} \int_0^t e^{-\mu(t-\tau)/4}(1+\tau)^{-l-4} d\tau + \int_0^t e^{-\mu(t-\tau)/4}(1+\tau)^{-l-2-2d} d\tau \right]\\
		&\le CH(\bar{c}t-|\vec{x}|)\left[(|\vec{x}|^2+1)^{-d}(1+t)^{-l-4}+(1+t)^{-l-2-2d}\right]. 
	\end{aligned}
\end{equation}
Thus one could combine \eqref{1wwt}--\eqref{2wwt} with \eqref{12expwwt} to obtain that
\begin{equation} \label{wwte11}
	\begin{aligned}
		&e^{-\mu t} \omega(\vec{x},t;\bar{c}_f) \underset{(\vec{x},t)}{\ast} (1+t)^{-l} (|\vec{x}|^2+1)^{-d} \left((|\vec{x}|-\bar{c}t)^2+t+1\right)^{-2}\\
		 &\le C H(|\vec{x}|-\bar{c}t)(1+t)^{-l}(|\vec{x}|^2 + 1)^{-d}  \left((|\vec{x}|-\bar{c}t)^2+t+1\right)^{-2}\\
		 &\quad +CH(\bar{c}t-|\vec{x}|)\left[(|\vec{x}|^2+1)^{-d}(1+t)^{-l-4}+(1+t)^{-l-2-2d}\right]. 
	\end{aligned}
\end{equation}

For $|\vec{x}| \ge \bar{c}t$ and $|\vec{y}|=1$, if $ |\vec{x}+\bar{c}_f(t-\tau)\vec{y}|\le \bar{c}\tau$, it holds that
$$0\le |\vec x|-\bar{c}t\le  |\vec{x}+\bar{c}_f(t-\tau)\vec{y}|+\bar{c}_f(t-\tau)|\vec y|-\bar{c}t\le \left(\bar{c}_f-\bar{c}\right)(t-\tau),$$
which combined with \eqref{hboun3} and \eqref{wxppoly} yields that
\begin{equation}\nonumber
	\begin{aligned}
		& e^{-\mu t} \omega(\vec{x},t;\bar{c}_f) \underset{(\vec{x},t)}{\ast} (1+t)^{-l} (|\vec{x}|^2+1)^{-d} \frac{H(\bar{c}t - |\vec{x}|)}{1 + (|\vec{x}| + \sqrt{t})(\bar{c}t - |\vec{x}| + \sqrt{t})^2} \\
		& =\frac{1}{4 \pi} \int_0^t (t-\tau) e^{-\mu(t-\tau)}\int_{|\vec{y}|=1} (1+\tau)^{-l} (|\vec{x}+\bar{c}_f(t-\tau)\vec{y}|^2 + 1)^{-d} \\
		& \quad \quad \quad \times \frac{ H(\bar{c}\tau - |\vec{x}+\bar{c}_f(t-\tau)\vec{y}|)}{1 + (|\vec{x}+\bar{c}_f(t-\tau)\vec{y}| + \sqrt{\tau})(\bar{c}\tau - |\vec{x}+\bar{c}_f(t-\tau)\vec{y}| + \sqrt{\tau})^2} dS(\vec{y}) d\tau\\
		&\le CH(|\vec{x}|-\bar{c}t) (|\vec{x}|^2+1)^{-d} e^{-\mu(|\vec{x}|-\bar{c}t)/(4\bar{c}_f)} \int_0^t  e^{-\mu(t-\tau)/8} (1+\tau)^{-l-2} d\tau\\
		&\quad +CH(\bar{c}t-|\vec{x}|)\left[ (|\vec{x}|^2+1)^{-d} \int_0^t  e^{-\mu(t-\tau)/4} (1+\tau)^{-l-\frac{5}{2}} d\tau + \int_0^t e^{-\mu(t-\tau)/2} (1+\tau)^{-l-2-2d} d\tau\right]\\
		& \le CH(|\vec{x}|-\bar{c}t) (1+t)^{-l-2} (|\vec{x}|^2+1)^{-d} e^{-\mu(|\vec{x}|-\bar{c}t)/(4\bar{c}_f)}\\
		& \quad +CH(\bar{c}t-|\vec{x}|)\left[ (|\vec{x}|^2+1)^{-d}(1+t)^{-l-\frac{5}{2}} + (1+t)^{-l-2-2d}\right],
	\end{aligned}
\end{equation}
which together with \eqref{wwte11} verifies \eqref{exponwwt}.

Next, for the convolution
$e^{-\mu t} \omega(\vec{x},t;\bar{c}_f) \underset{(\vec{x},t)}{\ast} H(|\vec{x}| - \epsilon \sqrt{t}) (1+t)^{-5/{2}} (|\vec{x}|^2 + 1)^{-9/{4}}$ and the region $|\vec{x}| \ge \bar{c}t$, one considers the cases $\bar{c}_f(t-\tau) \le |\vec{x}|/2$ and $\bar{c}_f(t-\tau) \ge |\vec{x}|/2$ separately:
\begin{equation} \nonumber
	\begin{aligned}
		&e^{-\mu t} \omega(\vec{x},t;\bar{c}_f) \underset{(\vec{x},t)}{\ast} H(|\vec{x}| - \epsilon \sqrt{t}) (1+t)^{-\frac{5}{2}} (|\vec{x}|^2 + 1)^{-\frac{9}{4}}\\
		& =\frac{1}{4 \pi} \int_0^t (t-\tau) e^{-\mu(t-\tau)}\int_{|\vec{y}|=1} H(|\vec{x}+\bar{c}_f(t-\tau)\vec{y}|- \epsilon \sqrt{\tau}) (1+\tau)^{-\frac{5}{2}} (|\vec{x}+\bar{c}_f(t-\tau)\vec{y}|^2 + 1)^{-\frac{9}{4}} dS(\vec{y}) d\tau\\
		&\le CH(|\vec{x}|-\bar{c}t) \left[ (|\vec{x}|^2/4+1)^{-\frac{9}{4}} + e^{-\mu|\vec{x}|/(8\bar{c}_f)}\right] \int_0^t  e^{-\mu(t-\tau)/4} (1+\tau)^{-\frac{5}{2}} d\tau \int_{|\vec{y}|=1} 1 \, dS(\vec{y})\\
		&\quad + CH(\bar{c}t-|\vec{x}|) \int_0^t e^{-\mu(t-\tau)}(1+\tau)^{-\frac{5}{2}}(\epsilon^2 \tau +1)^{-\frac{9}{4}} d\tau \int_{|\vec{y}|=1} 1 \, dS(\vec{y})\\
		& \le CH(|\vec{x}|-\bar{c}t) (1+t)^{-\frac{5}{2}}(|\vec{x}|^2+1)^{-\frac{9}{4}}+ C\epsilon^{-\frac{9}{2}} H(\bar{c}t-|\vec{x}|) (1+t)^{-\frac{19}{4}},
	\end{aligned}
\end{equation}
which verifies \eqref{exponwwt0}.

Finally, for $|\vec{x}| \ge 2\epsilon \sqrt{t}$ and $|\vec{y}|=1$, if $|\vec{x}+\bar{c}_f(t-\tau)\vec{y}|\le \epsilon \sqrt{\tau}$, one has that
\begin{equation} \nonumber
	\bar{c}_f(t-\tau)\ge |\vec{x}+\bar{c}_f(t-\tau)\vec{y}|+\bar{c}_f(t-\tau)|\vec{y}|-\epsilon \sqrt{t}\ge |\vec{x}|-\epsilon \sqrt{t} \ge \frac{\epsilon \sqrt{t}}{2} + \frac{|\vec{x}|}{4},
\end{equation}
which implies
\begin{equation} \nonumber
	\begin{aligned}
		&e^{-\mu t} \omega(\vec{x},t;\bar{c}_f) \underset{(\vec{x},t)}{\ast} H(\epsilon \sqrt{t} - |\vec{x}|) (1 + t)^{-3}\\
		& =\frac{1}{4 \pi} \int_0^t (t-\tau) e^{-\mu(t-\tau)}\int_{|\vec{y}|=1} H(\epsilon \sqrt{\tau}-|\vec{x}+\bar{c}_f(t-\tau)\vec{y}|) (1+\tau)^{-3} dS(\vec{y}) d\tau\\
		& \le CH(|\vec{x}|-2\epsilon\sqrt{t}) e^{-\mu(|\vec{x}|-\epsilon\sqrt{t})/(4\bar{c}_f)} \int_0^t e^{-\mu(t-\tau)/4}(1+\tau)^{-3}d\tau\\
		& \quad +CH(2\epsilon\sqrt{t}-|\vec{x}|)\int_0^t e^{-\mu(t-\tau)/2} (1+\tau)^{-3} d\tau\\
		&\le C (1+t)^{-3} \left( H(|\vec{x}|-2\epsilon\sqrt{t}) e^{-\frac{\mu(|\vec{x}|+2\epsilon \sqrt{t})}{16\bar{c}_f}} + H(2\epsilon\sqrt{t}-|\vec{x}|) \right)
	\end{aligned}
\end{equation}
and thus verifies \eqref{exponwwt1}.
\end{proof}

\subsection{Proof of Lemma~\ref{lemmain1}}

\begin{proof}
	After direct computations, one has that
	\begin{equation} \nonumber
		\begin{aligned}
			&\int_0^t \int_{\mathbb{R}^3} |\vec{x}-\vec{y}|^{-j}  e^{-(|\vec{x}-\vec{y}|+t-\tau)/D_0} (1+\tau)^{-\frac{5}{2}} H(|\vec{y}|-\epsilon \sqrt{\tau})(|\vec{y}|^2+1)^{-\frac{9}{4}} d\vec{y} d\tau\\
			& \le CH(|\vec{x}|-\bar{c}t) (|\vec{x}|^2+1)^{-\frac{9}{4}}   \int_0^t e^{-(t-\tau)/D_0}(1+\tau)^{-\frac{5}{2}} d\tau \int_{\mathbb{R}^3} |\vec{x}-\vec{y}|^{-j} e^{-|\vec{x}-\vec{y}|/(2D_0)}d\vec{y}\\
			& \quad +CH(\bar{c}t-|\vec{x}|) \int_0^t e^{-(t-\tau)/D_0}(1+\tau)^{-\frac{5}{2}} (\epsilon^2 \tau+1)^{-\frac{9}{4}} d\tau \int_{\mathbb{R}^3} |\vec{x}-\vec{y}|^{-j} e^{-|\vec{x}-\vec{y}|/D_0}d\vec{y}\\
			& \le CH(|\vec{x}|-\bar{c}t)(1+t)^{-\frac{5}{2}}(|\vec{x}|^2+1)^{-\frac{9}{4}}+ C\epsilon^{-\frac{9}{2}} H(\bar{c}t-|\vec{x}|) (1+t)^{-\frac{19}{4}},
		\end{aligned}
	\end{equation}
which together with
\begin{equation} \nonumber
	\begin{aligned}
		&\int_0^t \int_{\mathbb{R}^3} |\vec{x}-\vec{y}|^{-j}  e^{-(|\vec{x}-\vec{y}|+t-\tau)/D_0} H(\epsilon \sqrt{\tau}-|\vec{y}|) (1+\tau)^{-3} d\vec{y}\\
		& \le C\left( H(|\vec{x}|-2\epsilon \sqrt{t}) e^{-(|\vec{x}|-\epsilon \sqrt{t})/(2D_0)} + H(2\epsilon \sqrt{t}-|\vec{x}|)\right)\int_0^t e^{-(t-\tau)/D_0}(1+\tau)^{-3} d\tau\\
		& \quad \quad \times \int_{\mathbb{R}^3} |\vec{x}-\vec{y}|^{-j} e^{-|\vec{x}-\vec{y}|/(2D_0)}d\vec{y}\\
		& \le C(1+t)^{-3}\left( H(|\vec{x}|-2\epsilon \sqrt{t}) e^{-(|\vec{x}|+2\epsilon \sqrt{t})/(8D_0)} + H(2\epsilon \sqrt{t}-|\vec{x}|)\right)
	\end{aligned}
\end{equation}
verifies \eqref{exponxt1t3}.
\end{proof}

\section*{Acknowledgement} 
The authors are supported by National Key R$\&$D Program of China (No. 2022YFA1007300).

\end{document}